\title{Эргодичность границы Мартина графа Юнга--Фибоначчи. II}
\author{В. Ю. Евтушевский}
\begin{document}

\maketitle

\tableofcontents

\newpage

\newtheorem{Lemma}{Лемма}

\newtheorem{Alg}{Алгоритм}

\newtheorem{Col}{Следствие}

\newtheorem{theorem}{Теорема}

\newtheorem{Def}{Определение}

\newtheorem{Prop}{Утверждение}

\newtheorem{Problem}{Задача}

\newtheorem{Zam}{Замечание}

\newtheorem{Oboz}{Обозначение}

\newtheorem{Ex}{Пример}

\newtheorem{Nab}{Наблюдение}

\section{Введение}

Эта работа является продолжением работы \cite{Evtuh3}. Здесь мы завершаем доказательство гипотезы Керова -- Гудмана об эргодичности всех мер из границы Мартина графа Юнга -- Фибоначчи. 

\renewcommand{\labelenumi}{\arabic{enumi}$)$}
\renewcommand{\labelenumii}{\arabic{enumi}.\arabic{enumii}$^\circ$}
\renewcommand{\labelenumiii}{\arabic{enumi}.\arabic{enumii}.\arabic{enumiii}$^\circ$}

Напомним, что вершинами ранга $n$ графа Юнга -- Фибоначчи служат слова над алфавитом $\{1,2\}$ с данной суммой цифр $n$. Рёбра ``вверх'' из данного
слова $x$ ведут в слова, получаемые из $x$ одной из двух операций:
\begin{enumerate}
    \item  заменить самую левую единицу на двойку;

    \item вставить единицу левее чем самая левая единица.
\end{enumerate}

\begin{center}
\includegraphics[width=12cm, height=10cm]{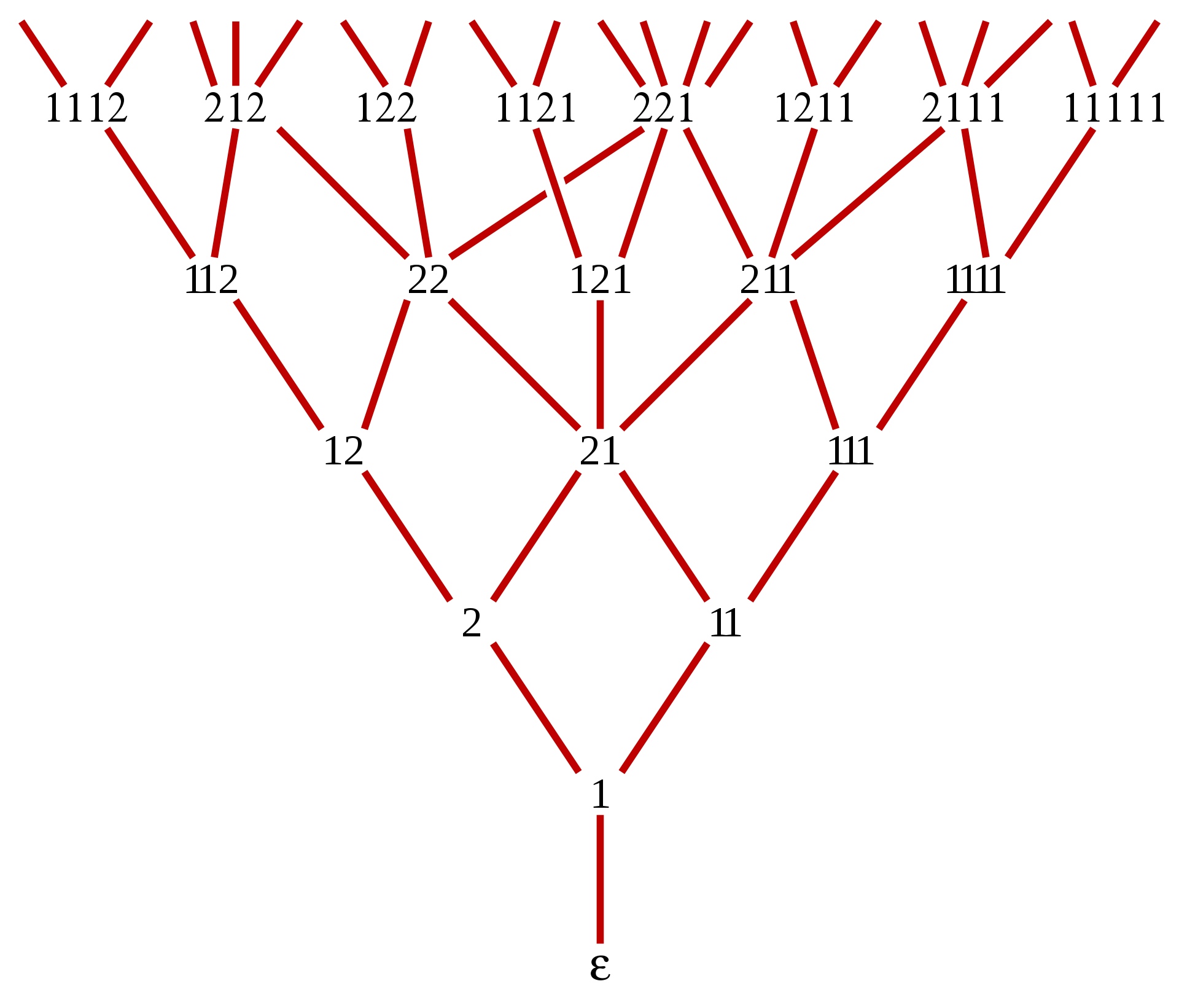}
\end{center}

Керов и Гудман доказали, что список интересующих нас центральных мер исчерпывается следующими мерами:
\begin{enumerate}
    \item Мера Планшереля: мера множества путей, проходящих через данную вершину $v$, равна $\frac{d(\varepsilon,v)^2}{n!}$, где $d(u,v)$ -- количество путей ``вниз'' из $v$ в $u$.
    \item Меры $\mu_{\{w_i'\}}$, параметризующиеся некоторой бесконечной последовательностью вершин графа Юнга--Фибоначчи. Нам удобнее другое эквивалентное определение в терминах некоторого бесконечного слова $w$ (содержащего ``достаточно мало'' двоек) и числа $\beta\in(0,1]$. См. подробнее Лемму \ref{yavsyo}.
\end{enumerate}

Доказательство эргодичности меры Планшереля было получено Керовым и Гнединым \cite{KerGned}. Оно основано на следующей Лемме: мера Планшереля сосредоточена на путях, вершины которых содержат ``достаточно много'' двоек. Мы доказываем аналогичное утверждение для остальных мер $\mu_{w,\beta}$, откуда стандартным рассуждением получается эргодичность.

Ключевыми утверждениями являются Теорема \ref{t2} и Теорема \ref{t3}, при этом используется результат работы Бочкова -- Евтушевского \cite{Evtuh3}.

\newpage

\renewcommand{\labelenumi}{\arabic{enumi}$^\circ$}
\renewcommand{\labelenumii}{\arabic{enumi}.\arabic{enumii}$^\circ$}
\renewcommand{\labelenumiii}{\arabic{enumi}.\arabic{enumii}.\arabic{enumiii}$^\circ$}

\section{Подготовка к доказательству гипотезы}

\renewcommand{\labelenumi}{\arabic{enumi}$^\circ$}
\renewcommand{\labelenumii}{\arabic{enumi}.\arabic{enumii}$^\circ$}
\renewcommand{\labelenumiii}{\arabic{enumi}.\arabic{enumii}.\arabic{enumiii}$^\circ$}

\begin{Oboz}
Пусть $\mathbb{YF}$ -- это граф Юнга -- Фибоначчи.
\end{Oboz}

\begin{Def}
Пусть $x \in \mathbb{YF}$. Тогда номером $x$ будем называть слово из единиц и двоек, соответствующее вершине $x$.
\end{Def}

\begin{Def}
Пусть $\{\alpha_i\}_{i=1}^\infty\in\{1,2\}^\infty$ -- бесконечная последовательность из единиц и двоек. Этой последовательности сопоставим ``бесконечно удалённую вершину'' графа Юнга -- Фибоначчи с номером $x=\ldots\alpha_2\alpha_1$. 
\end{Def}

\begin{Oboz}
Пусть $\mathbb{YF}_\infty$ -- это множество ``бесконечно удалённых вершин'' графа Юнга--Фибоначчи. 
\end{Oboz}

\begin{Oboz}

$\;$

\begin{itemize}
    \item Пусть $x\in \mathbb{YF},$ $n\in\mathbb{N}_0$, $\{\alpha_i\}_{i=1}^n\in\{1,2\}^n:$ номер $x$ -- это $\alpha_n\ldots\alpha_2\alpha_1$. Тогда будем писать, что $x=\alpha_n\ldots\alpha_2\alpha_1$.
    \item Пусть $x\in \mathbb{YF}_\infty$, $\{\alpha_i\}_{i=1}^\infty\in\{1,2\}^\infty:$ номер $x$ -- это $\ldots\alpha_2\alpha_1$. Тогда будем писать, что $x=\ldots\alpha_2\alpha_1$.
\end{itemize}
\end{Oboz}

\begin{Oboz}

$\;$

\begin{itemize}
    \item Пусть $x \in \mathbb{YF}$. Тогда сумму цифр в номере $x$ обозначим за $|x|$.
    \item Пусть $x \in \mathbb{YF}_\infty$. Тогда скажем, что $|x|=\infty$.
\end{itemize}
\end{Oboz}

\begin{Oboz}
Пусть $n\in\mathbb{N}_0$. Тогда
$$\mathbb{YF}_n:=\left\{v\in\mathbb{YF}:\;|v|=n\right\}.$$
\end{Oboz}

\begin{Zam}

$\;$

\begin{itemize}
    \item Пусть $x\in\mathbb{YF}$. Тогда $|x|$ -- это ранг вершины $x$ в графе Юнга -- Фибоначчи.
    \item Пусть $n\in\mathbb{N}_0$. Тогда $\mathbb{YF}_n$ -- это множество вершин графа Юнга -- Фибоначчи ранга $n$.
\end{itemize}
\end{Zam}

\begin{Oboz} 
Пусть $n,m\in\mathbb{N}_0:$ $m\le n$. Тогда
\begin{itemize}
    \item $$\overline{n}:=\{0,1,\ldots,n\};$$
    \item $$\overline{m,n}:=\{m,m+1\ldots,n\}.$$
\end{itemize}
\end{Oboz}

\begin{Def}\label{vniz}
Пусть $x,y,\{y_i\}_{i=0}^n\in \mathbb{YF}:$ $n\in\mathbb{N}_0$, 
$$y=y_0y_1y_2...y_n=x$$
-- это такой путь в графе Юнга--Фибоначчи, что $\forall i \in \overline{n}$ $$|y_i|=|y|-i.$$
Тогда путь
$$y=y_0y_1y_2...y_n=x$$
назовём $yx$-путём ``вниз'' в $\mathbb{YF}$.
\end{Def}

\begin{Zam}
В Определении \ref{vniz} $n=|y|-|x|$.
\end{Zam}

\begin{Oboz}
Пусть $x,y \in \mathbb{YF}$. Тогда количество $yx$-путей ``вниз'' в $\mathbb{YF}$ будем обозначать как $d(x,y)$.
\end{Oboz}

\begin{Zam}
Пусть $x,y \in \mathbb{YF}:$ $|y|<|x|$. Тогда
$$d(x,y)=0.$$
\end{Zam}

\begin{Oboz}
Пусть $x,y\in \mathbb{YF}$. Тогда множество всех $yx$-путей ``вниз'' обозначим за $T(x,y)$.
\end{Oboz}

\begin{Oboz}
$$T(\mathbb{YF}):=\bigcup_{\left\{(x,y)\in \mathbb{YF}^2\right\}}{T(x,y)}.$$
\end{Oboz}

\begin{Oboz}
Пусть $x,y\in \mathbb{YF},$ $t\in T(\mathbb{YF}):$ $t\in T(x,y)$. Тогда будем обозначать вершины этого пути как 
$$y=t(|y|), \; t(|y|-1),\ldots,\; t(|x|+1),\; t(|x|)=x,$$
а также считать, что если $z\in\left(\mathbb{N}_0\textbackslash\overline{|x|,|y|}\right)$, то $t(z)$ не определено.
\end{Oboz}

\begin{Zam}
Ясно, что $t(z)$ -- это вершина, через которую путь $t$ проходит на уровне $z\in\left(\mathbb{N}_0\textbackslash\overline{|x|,|y|}\right)$.
\end{Zam}

\begin{Oboz}

$\;$

\begin{itemize}
    \item Пусть $x \in \mathbb{YF}$. Тогда количество цифр в номере $x$ обозначим за $\#x$.
    \item Пусть $x \in \mathbb{YF}_\infty$. Тогда скажем, что $\#x=\infty$.
\end{itemize}
\end{Oboz}

\begin{Oboz}
Пусть $x \in \left(\mathbb{YF}\cup \mathbb{YF}_\infty\right)$. Тогда:
\begin{itemize} 
    \item Количество единиц в номере $x$ обозначим за $e(x)$;
    \item Количество двоек в номере $x$ обозначим за $d(x)$.
\end{itemize}
\end{Oboz}

\begin{Zam}
Пусть $x \in \mathbb{YF}_\infty$. Тогда:
\begin{itemize} 
    \item $d(x)$ может быть равно бесконечности;
    \item $e(x)$ может быть равно бесконечности.
\end{itemize}
\end{Zam}

\begin{Zam}
Пусть $x \in \left(\mathbb{YF}\cup \mathbb{YF}_\infty\right)$. Тогда:
\begin{itemize} 
    \item $e(x)+d(x)=\#x;$
    \item $e(x)+2d(x)=|x|;$
    \item $\#x+d(x)=|x|.$
\end{itemize}
\end{Zam}

\begin{Oboz}
$$f(x,y,z): \left\{(x,y,z)\subseteq\mathbb{YF}\times \mathbb{N}_0\times\mathbb{N}_0:\;y\in\overline{|x|},\;z\in\overline{\#x}\right\}\to\mathbb{R}$$
-- это функция, определённая следующим образом:

При $z=0$:
\begin{itemize}
\item Если $x\in \mathbb{YF}$ представляется в виде $x=\alpha_1...\alpha_m\alpha_{m+1}...\alpha_n$, где $|\alpha_{m+1}...\alpha_n|=y,$ $ \alpha_i \in \{1,2 \}$, то
$$f(x,y,0):=\frac{1}{(\alpha_{m+1})(\alpha_{m+1}+\alpha_{m+2})...(\alpha_{m+1}+...+\alpha_{n})}\cdot(-1)^{n-m}\cdot$$
$$\cdot\frac{1}{(\alpha_m)(\alpha_m+\alpha_{m-1})(\alpha_m+\alpha_{m-1}+\alpha_{m-2})...(\alpha_m+...+\alpha_1)}=$$
$$=\frac{1}{(-\alpha_{m+1})(-\alpha_{m+1}-\alpha_{m+2})...(-\alpha_{m+1}-...-\alpha_{n})}\cdot$$
$$\cdot\frac{1}{(\alpha_m)(\alpha_m+\alpha_{m-1})(\alpha_m+\alpha_{m-1}+\alpha_{m-2})...(\alpha_m+...+\alpha_1)};$$
\item Если $x\in \mathbb{YF}$ не представляется в виде $x=\alpha_1...\alpha_m\alpha_{m+1}...\alpha_n$, где  $|\alpha_{m+1}...\alpha_n|=y,$ $\alpha_i \in \{1,2 \}$, то
$$f(x,y,0)=0.$$
\end{itemize}

При $z>0$ (рекурсивное определение):
\begin{itemize}
    \item Если $y=0$, то
$$f(x1,0,z)=f(x1,0,0);$$
    \item Если $y>0$, то
$$f(x1,y,z)=f(x1,y,0)+f(x,y-1,z-1);$$
    \item 
\begin{equation*}
f(x2,y,z)= 
 \begin{cases}
   $$\frac{f(x11,y,z+1)}{1-y}$$ &\text {если $y \ne 1$}\\
   0 &\text{если $y=1$.}
 \end{cases}
\end{equation*}
\end{itemize}
\end{Oboz}

\begin{Ex} \label{exf1}
Значения $f(x,y,z)$ при всех возможных тройках $(x,y,z)\subseteq\mathbb{YF}\times \mathbb{N}_0\times\mathbb{N}_0:$ $x=21221,$ $y\in\overline{|x|},$ $z=0:$

\begin{itemize}
\item $$f(21221,0,0)=\frac{1}{1\cdot3\cdot5\cdot6\cdot8}=\frac{1}{720};$$
\item $$f(21221,1,0)=\frac{1}{(-1)\cdot2\cdot4\cdot5\cdot7}=-\frac{1}{280};$$
\item $$f(21221,2,0)=0;$$
\item $$f(21221,3,0)=\frac{1}{(-2)\cdot(-3)\cdot2\cdot3\cdot5}=\frac{1}{180};$$
\item $$f(21221,4,0)=0;$$
\item $$f(21221,5,0)=\frac{1}{(-2)\cdot(-4)\cdot(-5)\cdot1\cdot3}=-\frac{1}{120};$$
\item $$f(21221,6,0)=\frac{1}{(-1)\cdot(-3)\cdot(-5)\cdot(-6)\cdot2}=\frac{1}{180};$$
\item $$f(21221,7,0)=0;$$
\item $$f(21221,8,0)=\frac{1}{(-2)\cdot(-3)\cdot(-5)\cdot(-7)\cdot(-8)}=-\frac{1}{1680}.$$
\end{itemize}
\end{Ex}

\begin{Ex} \label{exf2}
Значения $f(x,y,z)$ при всех возможных тройках $(x,y,z)\subseteq\mathbb{YF}\times \mathbb{N}_0\times\mathbb{N}_0:$ $|x|\in\overline{4},$ $y\in\overline{|x|},$ $z\in\overline{\#x}:$

\begin{itemize}
    \item $x=\varepsilon$
\begin{center}
\begin{tabular}{ | m{0.5cm} || m{0.5cm} | } 
  \hline
 & $y=0$ \\
  \hline \hline
$z=0$ & $1$ \\ 
  \hline
\end{tabular}
\end{center}
    \item $x=1$
\begin{center}
\begin{tabular}{ | m{0.5cm} || m{0.5cm} | m{0.5cm} | } 
  \hline
 & $y=0$ & $y=1$ \\
  \hline \hline
$z=0$ & $1$ & $-1$ \\
  \hline
$z=1$ & $1$ & $0$ \\ 
  \hline
\end{tabular}
\end{center}
    \item $x=2$
\begin{center}
\begin{tabular}{ | m{0.5cm} || m{0.5cm} | m{0.5cm} | m{0.5cm} | } 
  \hline
 & $y=0$ & $y=1$ & $y=2$ \\
  \hline \hline
$z=0$ & $\frac{1}{2}$ & $0$ & $-\frac{1}{2}$\\
  \hline
$z=1$ & $\frac{1}{2}$ & $0$ &$-\frac{1}{2}$ \\ 
  \hline
\end{tabular}
\end{center}
    \item $x=11$
\begin{center}
\begin{tabular}{ | m{0.5cm} || m{0.5cm} | m{0.5cm} | m{0.5cm} | } 
  \hline
 & $y=0$ & $y=1$ & $y=2$ \\
  \hline \hline
$z=0$ & $\frac{1}{2}$ & $-1$ & $\frac{1}{2}$\\
  \hline
$z=1$ & $\frac{1}{2}$ & $0$ &$-\frac{1}{2}$ \\ 
  \hline
$z=2$ & $\frac{1}{2}$ & $0$ & $\frac{1}{2}$\\ 
  \hline
\end{tabular}
\end{center}
    \item $x=12$
\begin{center}
\begin{tabular}{ | m{0.5cm} || m{0.5cm} | m{0.5cm} | m{0.5cm} | m{0.5cm} | } 
  \hline
 & $y=0$ & $y=1$ & $y=2$ & $y=3$\\
  \hline \hline
$z=0$ & $\frac{1}{6}$ & $0$ & $-\frac{1}{2}$ &$\frac{1}{3}$\\
  \hline
$z=1$ & $\frac{1}{6}$ & $0$ &$-\frac{1}{2}$ &$\frac{1}{3}$\\ 
  \hline
$z=2$ & $\frac{1}{6}$ & $0$ & $-\frac{1}{2}$ &$-\frac{1}{6}$\\ 
  \hline
\end{tabular}
\end{center}
    \item $x=21$
\begin{center}
\begin{tabular}{ | m{0.5cm} || m{0.5cm} | m{0.5cm} | m{0.5cm} | m{0.5cm} | } 
  \hline
 & $y=0$ & $y=1$ & $y=2$ & $y=3$\\
  \hline \hline
$z=0$ & $\frac{1}{3}$ & $-\frac{1}{2}$ & $0$ &$\frac{1}{6}$\\
  \hline
$z=1$ & $\frac{1}{3}$ & $0$ &$0$ &$-\frac{1}{3}$\\ 
  \hline
$z=2$ & $\frac{1}{3}$ & $0$ & $0$ &$-\frac{1}{3}$\\ 
  \hline
\end{tabular}
\end{center}
    \item $x=111$
\begin{center}
\begin{tabular}{ | m{0.5cm} || m{0.5cm} | m{0.5cm} | m{0.5cm} | m{0.5cm} | } 
  \hline
 & $y=0$ & $y=1$ & $y=2$ & $y=3$\\
  \hline \hline
$z=0$ & $\frac{1}{6}$ & $-\frac{1}{2}$ & $\frac{1}{2}$ &$-\frac{1}{6}$\\
  \hline
$z=1$ & $\frac{1}{6}$ & $0$ &$-\frac{1}{2}$ &$\frac{1}{3}$\\ 
  \hline
$z=2$ & $\frac{1}{6}$ & $0$ & $\frac{1}{2}$ &$-\frac{2}{3}$\\ 
  \hline
$z=3$ & $\frac{1}{6}$ & $0$ & $\frac{1}{2}$ &$\frac{1}{3}$\\ 
  \hline
\end{tabular}
\end{center}
    \item $x=112$
\begin{center}
\begin{tabular}{ | m{0.5cm} || m{0.5cm} | m{0.5cm} | m{0.5cm} | m{0.5cm} | m{0.5cm} | } 
  \hline
 & $y=0$ & $y=1$ & $y=2$ & $y=3$ & $y=4$\\
  \hline \hline
$z=0$ & $\frac{1}{24}$ & $0$ & $-\frac{1}{4}$ &$\frac{1}{3}$ &$-\frac{1}{8}$\\
  \hline
$z=1$ & $\frac{1}{24}$ & $0$ &$-\frac{1}{4}$ &$\frac{1}{3}$ &$-\frac{1}{8}$\\ 
  \hline
$z=2$ & $\frac{1}{24}$ & $0$ & $-\frac{1}{4}$ &$-\frac{1}{6}$ &$\frac{5}{24}$\\ 
  \hline
$z=3$ & $\frac{1}{24}$ & $0$ & $-\frac{1}{4}$ &$-\frac{1}{6}$ &$-\frac{1}{8}$\\ 
  \hline
\end{tabular}
\end{center}
    \item $x=22$
\begin{center}
\begin{tabular}{ | m{0.5cm} || m{0.5cm} | m{0.5cm} | m{0.5cm} | m{0.5cm} | m{0.5cm} | } 
  \hline
 & $y=0$ & $y=1$ & $y=2$ & $y=3$ & $y=4$\\
  \hline \hline
$z=0$ & $\frac{1}{8}$ & $0$ & $-\frac{1}{4}$ & $0$ &$\frac{1}{8}$\\
  \hline
$z=1$ & $\frac{1}{8}$ & $0$ & $-\frac{1}{4}$ & $0$ &$\frac{1}{8}$\\
  \hline
$z=2$ & $\frac{1}{8}$ & $0$ & $-\frac{1}{4}$ & $0$ &$\frac{1}{8}$\\
  \hline
\end{tabular}
\end{center}
\item $x=121$
\begin{center}
\begin{tabular}{ | m{0.5cm} || m{0.5cm} | m{0.5cm} | m{0.5cm} | m{0.5cm} | m{0.5cm} | } 
  \hline
 & $y=0$ & $y=1$ & $y=2$ & $y=3$ & $y=4$\\
  \hline \hline
$z=0$ & $\frac{1}{12}$ & $-\frac{1}{6}$ & $0$ &$\frac{1}{6}$ &$-\frac{1}{12}$\\
  \hline
$z=1$ & $\frac{1}{12}$ & $0$ &$0$ &$-\frac{1}{3}$ &$\frac{1}{4}$\\ 
  \hline
$z=2$ & $\frac{1}{12}$ & $0$ & $0$ &$-\frac{1}{3}$ &$\frac{1}{4}$\\
  \hline
$z=3$ & $\frac{1}{12}$ & $0$ & $0$ &$-\frac{1}{3}$ &$-\frac{1}{4}$\\
  \hline
\end{tabular}
\end{center}
\item $x=211$
\begin{center}
\begin{tabular}{ | m{0.5cm} || m{0.5cm} | m{0.5cm} | m{0.5cm} | m{0.5cm} | m{0.5cm} | } 
  \hline
 & $y=0$ & $y=1$ & $y=2$ & $y=3$ & $y=4$\\
  \hline \hline
$z=0$ & $\frac{1}{8}$ & $-\frac{1}{3}$ & $\frac{1}{4}$ & $0$ &$-\frac{1}{24}$\\
  \hline
$z=1$ & $\frac{1}{8}$ & $0$ &$-\frac{1}{4}$ &$0$ &$\frac{1}{8}$\\ 
  \hline
$z=2$ & $\frac{1}{8}$ & $0$ & $\frac{1}{4}$ &$0$ &$-\frac{3}{8}$\\ 
  \hline
$z=3$ & $\frac{1}{8}$ & $0$ & $\frac{1}{4}$ &$0$ &$-\frac{3}{8}$\\ 
  \hline
\end{tabular}
\end{center}
\item $x=1111$
\begin{center}
\begin{tabular}{ | m{0.5cm} || m{0.5cm} | m{0.5cm} | m{0.5cm} | m{0.5cm} | m{0.5cm} | } 
  \hline
 & $y=0$ & $y=1$ & $y=2$ & $y=3$ & $y=4$\\
  \hline \hline
$z=0$ & $\frac{1}{24}$ & $-\frac{1}{6}$ & $\frac{1}{4}$ &$-\frac{1}{6}$ &$\frac{1}{24}$\\
  \hline
$z=1$ & $\frac{1}{24}$ & $0$ &$-\frac{1}{4}$ &$\frac{1}{3}$ &$-\frac{1}{8}$\\ 
  \hline
$z=2$ & $\frac{1}{24}$ & $0$ & $\frac{1}{4}$ &$-\frac{2}{3}$ &$\frac{3}{8}$\\ 
  \hline
$z=3$ & $\frac{1}{24}$ & $0$ & $\frac{1}{4}$ &$\frac{1}{3}$ &$-\frac{5}{8}$\\ 
  \hline
$z=4$ & $\frac{1}{24}$ & $0$ & $\frac{1}{4}$ &$\frac{1}{3}$ &$\frac{3}{8}$\\ 
  \hline
\end{tabular}
\end{center}
\end{itemize}
\end{Ex}

\begin{Oboz}
$$g(x,y):\left\{(x,y)\in\left(\mathbb{YF}\cup\mathbb{YF}_\infty\right)\times\mathbb{N}:\;y\le d(x)\right\} \to \mathbb{N}$$
-- это функция, определённая следующим образом:

Рассмотрим представление $x\in\mathbb{YF}$ в виде $$x=\ldots 2\underbrace{1\ldots1}_{\beta_m}2\ldots2\underbrace{1\ldots1}_{\beta_1}2\underbrace{1\ldots1}_{\beta_0}$$
и определим:
\begin{itemize}
    \item $g(x,1)=\beta_0+1;$
    \item $g(x,2)=\beta_0+\beta_1+3;$
    \item $\ldots$
    \item $g(x,m)=\beta_0+\ldots+\beta_{m-1}+2m-1;$
    \item $\ldots$.
\end{itemize}

\end{Oboz}

\begin{Oboz}
Пусть $x\in\left(\mathbb{YF}\cup\mathbb{YF}_\infty\right),$ $y \in \mathbb{YF}$. Тогда вершину графа Юнга -- Фибоначчи, номер которой -- это конкатенация номеров $x$ и $y$, обозначим за $xy$. 
\end{Oboz}

\begin{Oboz}
$\;$
\begin{itemize}
    \item Пусть $x,y \in \left(\mathbb{YF}\cup\mathbb{YF}_\infty\right)$, и при этом не выполняется то, что $x=y\in\mathbb{YF}_\infty$. Тогда максимальное $z\in\mathbb{N}_0:$ $\exists x',y'\in \left(\mathbb{YF}\cup\mathbb{YF}_\infty\right),$ $z'\in\mathbb{YF}:$ $ x=x'z'$, $y=y'z'$, $|z'|=z$, обозначим за $h'(x,y)$.
    \item Пусть $x=y\in \mathbb{YF}_\infty$. Тогда будем считать, что $h'(x,y)=\infty$.
\end{itemize}
\end{Oboz}

\begin{Oboz}
$\;$
\begin{itemize}
    \item Пусть $x,y \in \left(\mathbb{YF}\cup\mathbb{YF}_\infty\right)$, и при этом не выполняется то, что $x=y\in\mathbb{YF}_\infty$. Тогда максимальное $z\in\mathbb{N}_0:$ $\exists x',y'\in \left(\mathbb{YF}\cup\mathbb{YF}_\infty\right),$ $z'\in\mathbb{YF}:$ $x=x'z'$, $y=y'z'$, $\#z'=z$, обозначим за $h(x,y)$.
    \item Пусть $x=y\in \mathbb{YF}_\infty$. Тогда будем считать, что $h(x,y)=\infty$.
\end{itemize}
\end{Oboz}

\begin{Zam}

Пусть $x,y \in \left(\mathbb{YF}\cup\mathbb{YF}_\infty\right)$. Тогда 

\begin{itemize}
    \item $h'(x,y)$ -- это сумма цифр в самом длинном общем суффиксе номеров $x$ и $y$;
    \item $h(x,y)$ -- это количество цифр в самом длинном общем суффиксе номеров $x$ и $y$;
    \item $h'(x,y)=h'(y,x)$;
    \item $h(x,y)=h(y,x)$.
\end{itemize}
\end{Zam}

\begin{theorem}[Теорема 1\cite{Evtuh1}, Теорема 1\cite{Evtuh2}] \label{evtuh}
Пусть $x,y \in \mathbb{YF}:$ $|y|\ge|x|$. Тогда
$$d(x,y)=\sum_{i=0}^{|x|}\left( {f\left(x,i,h(x,y)\right)}\prod_{j=1}^{d(y)}\left(g\left(y,j\right)-i\right)\right).$$ 
\end{theorem}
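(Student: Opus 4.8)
Обозначим правую часть через
\[
F(x,y):=\sum_{i=0}^{|x|}f\bigl(x,i,h(x,y)\bigr)\prod_{j=1}^{d(y)}\bigl(g(y,j)-i\bigr).
\]
План состоит в том, чтобы показать, что $F$ удовлетворяет тем же начальным условиям и той же рекурсии по рёбрам ``вниз'', что и $d$. А именно, каждый путь ``вниз'' из $y$ в $x$ на первом шаге попадает в одну из вершин $y'\lessdot y$, накрываемых вершиной $y$, поэтому
\[
d(x,y)=\sum_{y'\lessdot y}d(x,y')\qquad(|y|>|x|),
\]
причём $d(x,y)=\delta_{x,y}$ при $|y|=|x|$ и $d(x,y)=0$ при $x\not\le y$. Если я проверю, что $F$ подчиняется этим же соотношениям, то индукция по $|y|$ завершит доказательство.

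Начальные условия проверяются прямым вычислением по рекурсивному определению $f$: при $y=x$ надо установить, что $\sum_{i=0}^{|x|}f(x,i,\#x)\prod_{j=1}^{d(x)}(g(x,j)-i)=1$, а при $|y|=|x|$, $y\ne x$ (когда $h(x,y)=0$) — что эта сумма равна нулю; эти тождества я доказал бы индукцией по $\#x$, раскручивая определение $f$ (значения для малых $x$ приведены в Примерах \ref{exf1} и \ref{exf2}).

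Основная часть — индукционный шаг $F(x,y)=\sum_{y'\lessdot y}F(x,y')$. Нужно аккуратно описать множество вершин $y'\lessdot y$. Если номер $y$ начинается с блока двоек $\underbrace{2\ldots2}_{a}$, то обращение первой операции даёт $a$ рёбер ``вниз'': каждое превращает одну из этих двоек в единицу (при этом число двоек убывает на $1$), а обращение второй операции даёт ещё одно ребро — удаление крайней левой единицы (число двоек сохраняется). Подставив определение $F$ и поменяв порядок суммирования, я свёл бы требуемое равенство к двум согласованным фактам. Во-первых, к полиномиальному по $i$ тождеству
\[
\prod_{j=1}^{d(y)}\bigl(g(y,j)-i\bigr)=\sum_{y'\lessdot y}\ \prod_{j=1}^{d(y')}\bigl(g(y',j)-i\bigr),
\]
которое справедливо, когда в номере $y$ есть хотя бы одна единица (старший член степени $d(y)$ поставляется именно ребром-удалением единицы), и которое я доказал бы отдельной индукцией по числу двоек, опираясь на явный вид $g$. Во-вторых, к рекурсии для $f$: когда операция ``вниз'' затрагивает крайнюю справа цифру (а значит, меняет наибольший общий суффикс с $x$ и сдвигает третий аргумент $z=h(x,y)$), возникающая разность ``срезов'' $f(\cdot,\cdot,z)-f(\cdot,\cdot,z-1)$ в точности описывается соотношениями $f(x1,i,z)=f(x1,i,0)+f(x,i-1,z-1)$ и $f(x2,i,z)=\dfrac{f(x11,i,z+1)}{1-i}$.

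Главным препятствием я считаю именно это согласование $f$- и $g$-частей на границе, то есть в тех вырожденных конфигурациях, где ведущий блок двоек доходит до общего суффикса (в частности, случай $y=\underbrace{2\ldots2}_{a}$, когда ребра-удаления единицы нет вовсе и старший член полиномиального тождества приходится восстанавливать иначе). Здесь нужно проследить, как одновременно преобразуются многочлен $\prod_j(g(y,j)-i)$ и параметр $z$, и убедиться, что лишние слагаемые, порождённые сдвигом $z$, при спаривании с соответствующими линейными множителями сокращаются; множитель $\dfrac{1}{1-i}$ в определении $f$ и обращение $f(x2,1,z)=0$ при $i=1$ подобраны ровно так, чтобы уравновесить переход ``двойка $\leftrightarrow$ $11$''. Аккуратный разбор этих пограничных случаев и составляет техническое ядро доказательства.
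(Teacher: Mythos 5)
Прежде всего: в этой статье Теорема \ref{evtuh} не доказывается — она процитирована как готовый результат из работ \cite{Evtuh1}, \cite{Evtuh2}, так что сравнивать ваш план с «доказательством из статьи» здесь не с чем. Оцениваю набросок по существу. Общая стратегия (проверить, что правая часть удовлетворяет ветвящейся рекурсии $d(x,y)=\sum_{y'\lessdot y}d(x,y')$ и тем же начальным условиям) законна, и вы верно описали множество нижних соседей ($a$ рёбер «двойка $\to$ единица» в ведущем блоке плюс одно ребро «удалить крайнюю левую единицу», если единица есть), а полиномиальное тождество для $\prod_j(g(\cdot,j)-i)$ действительно выполняется при наличии единицы в номере $y$ (я проверил его на нескольких примерах).

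Тем не менее в наброске есть как фактическая ошибка, так и существенный пробел. Ошибка: утверждение, что при $|y|=|x|$, $y\ne x$ обязательно $h(x,y)=0$, неверно — например, $x=112$, $y=22$ дают $h=1$; значит, «нулевое» начальное условие нужно проверять для всех возможных значений третьего аргумента $f$, а не только для $z=0$, и это отдельное семейство нетривиальных тождеств (как и тождество $\sum_i f(x,i,\#x)\prod_j(g(x,j)-i)=1$), которые вы лишь анонсируете. Пробел: весь индукционный шаг держится на том, что (а) в случае $y=2^a$ полиномиальное тождество ломается по степени (слева степень $d(y)$, справа $d(y)-1$) и дефект должен гаситься $f$-частью, и (б) когда ведущий блок достаёт до общего суффикса, $h(x,y')$ меняется от соседа к соседу (уже для $x=2$, $y=22$: $h(2,12)=1$, но $h(2,21)=0$), и нужное сокращение должно обеспечиваться рекурсиями $f(x1,y,z)=f(x1,y,0)+f(x,y-1,z-1)$ и $f(x2,y,z)=f(x11,y,z+1)/(1-y)$. Вы сами называете это «техническим ядром», но именно оно и не выполнено: нет ни точной формулировки компенсирующего тождества, ни его доказательства, и априори не видно, что бухгалтерия сойдётся. В таком виде это план атаки с правильно расставленными флажками опасности, а не доказательство.
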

\begin{Col}[Следствие 1\cite{Evtuh3}]
Пусть $y \in \mathbb{YF}$. Тогда
$$d(\varepsilon,y)=\prod_{j=1}^{d(y)}g\left(y,j\right).$$ 
\end{Col}

\begin{Oboz}
Пусть $w\in \mathbb{YF}_{\infty}$, $m\in\mathbb{N}_0$. Тогда за $w_m\in\mathbb{YF}$ обозначим такую вершину графа Юнга--Фибоначчи, что $\#w_m=m$ и $\exists w'\in\mathbb{YF}_\infty:$ $w=w'w_m$.
\end{Oboz}

\begin{Zam} 

\;

\begin{itemize}
    \item Очевидно, что для любых $w\in \mathbb{YF}_{\infty}$ и $m\in\mathbb{N}_0$ такая вершина существует и однозначно определена.
    \item Ясно, что это просто вершина, номер которой -- это последние (то есть самые правые) $m$ символов номера $w$.
\end{itemize}
\end{Zam}

\begin{Oboz}
Пусть $w\in\mathbb{YF}_\infty$, $v\in\mathbb{YF}$, $m\in\mathbb{N}_0$. Тогда
    $$\mu_w(v,m):= \frac{d(\varepsilon,v)d(v,w_m)}{d(\varepsilon,w_m)}. $$
\end{Oboz}



\begin{Prop}[Утверждение 1\cite{Evtuh3}]
Пусть $w\in\mathbb{YF}_\infty,$ $v\in\mathbb{YF}$. Тогда существует предел
$$\lim_{m \to \infty}\mu_w(v,m)=\lim_{m \to \infty} \frac{d(\varepsilon,v)d(v,w_m)}{d(\varepsilon,w_m)}.$$
\end{Prop}

\begin{Oboz}
Пусть $w\in\mathbb{YF}_\infty$, $v\in\mathbb{YF}$. Тогда
    $$\mu_w(v):=\lim_{m \to \infty}\mu_w(v,m)=\lim_{m \to \infty} \frac{d(\varepsilon,v)d(v,w_m)}{d(\varepsilon,w_m)}. $$
\end{Oboz}

\begin{Oboz}
Пусть $x\in \mathbb{YF}$, $y\in\left(\mathbb{YF}\cup \mathbb{YF}_{\infty}\right),$  $\beta\in(0,1]:$ $|y| \ge |x|$. Тогда
$$d'_{\beta}(x,y):=\sum_{i=0}^{|x|}\left(\beta^i {f\left(x,i,h(x,y)\right)}\prod_{j=1}^{d(y)}\frac{\left(g\left(y,j\right)-i\right)}{g\left(y,j\right)}\right).$$
\end{Oboz}

\begin{Zam}
Из определения функции $g$ ясно, что данное выражение определено.
\end{Zam}

\begin{Oboz} \label{bitvaextasensov}
Пусть $w\in\mathbb{YF}_\infty$, $\beta\in(0,1]$, $v\in\mathbb{YF}$. Тогда
$$\mu_{w,\beta}(v):=d(\varepsilon,v)\cdot d_\beta'(v,w). $$
\end{Oboz}

\begin{Oboz}
Пусть $\{w_i'\}_{i=1}^\infty\in\left(\mathbb{YF}\right)^\infty$, $v\in\mathbb{YF}$, $m\in\mathbb{N}_0$. Тогда
    $$\mu_{\{w_i'\}}(v,m):= \frac{d(\varepsilon,v)d(v,w_m')}{d(\varepsilon,w_m')}. $$
\end{Oboz}

\begin{Zam}
В данном обозначении $\{w_i'\}_{i=1}^\infty\in\left(\mathbb{YF}\right)^\infty$ -- это произвольная бесконечная последовательность вершин графа Юнга -- Фибоначчи.
\end{Zam}

\begin{Def}
Пусть $\{w_i'\}_{i=1}^\infty\in\left(\mathbb{YF}\right)^\infty$, $w\in\mathbb{YF}_\infty:$ $\forall h\in\mathbb{N}_0$ $\exists M\in\mathbb{N}_0:$ $\forall m\ge M$
$$h(w_m',w)> h.$$
Тогда скажем, что бесконечная последовательность $\{w_i'\}_{i=1}^\infty\in\left(\mathbb{YF}\right)^\infty$  вершин графа Юнга -- Фибоначчи сходится к ``бесконечно удалённой вершине'' $w$ графа Юнга -- Фибоначчи.
\end{Def}

\begin{Oboz}
Пусть $\{w_i'\}_{i=1}^\infty\in\left(\mathbb{YF}\right)^\infty$, $w\in\mathbb{YF}_\infty$ такие, что бесконечная последовательность $\{w_i'\}_{i=1}^\infty\in\left(\mathbb{YF}\right)^\infty$  вершин графа Юнга -- Фибоначчи сходится к ``бесконечно удалённой вершине'' $w$ графа Юнга -- Фибоначчи. Тогда будем писать, что
$$\{w_i'\}\xrightarrow{i\to\infty}w.$$
\end{Oboz}

\begin{Oboz}
Пусть $w\in \left(\mathbb{YF}\cup\mathbb{YF}_\infty\right)$. Тогда 
$$\pi(w):=\prod_{i:g(w,i)> 1  } \frac{g(w,i)-1}{g(w,i)}.$$
\end{Oboz}
\begin{Oboz}
Пусть $w\in \left(\mathbb{YF}\cup\mathbb{YF}_\infty\right),$ $k\in\mathbb{N}_0:$ $k\ge 2$. Тогда 
$$\pi_k(w):=\prod_{i:g(w,i)> k  } \frac{g(w,i)-k}{g(w,i)}.$$
\end{Oboz}

\begin{Zam} \label{promezhutok}
Ясно, что 
\begin{itemize}
    \item Если $w\in\mathbb{YF}$, то 
$$\pi(w)\in(0,1];$$
    \item Если $w\in\mathbb{YF}_\infty$, то 
$$\pi(w)\in[0,1];$$
    \item Если $w\in\mathbb{YF}$, $k\in\mathbb{N}_0:$ $k\ge 2$,  то 
$$\pi_k(w)\in(0,1];$$
    \item Если $w\in\mathbb{YF}_\infty$, $k\in\mathbb{N}_0:$ $k\ge 2$, то 
$$\pi_k(w)\in[0,1].$$
\end{itemize}
\end{Zam}

\begin{Oboz}
$$\mathbb{YF}_\infty^+:=\{w\in\mathbb{YF}_\infty:\;\pi(w)>0\}.$$
\end{Oboz}
\begin{Zam}
Ясно, что
$$\mathbb{YF}_\infty^+=\{w\in\mathbb{YF}_\infty:\;\pi(w)\in(0,1)\}.$$
\end{Zam}

\begin{Prop}[Proposition 8.6\cite{KerGood}]\label{KerovGoodman}
Пусть $\{w_i'\}_{i=1}^\infty\in\left(\mathbb{YF}\right)^\infty$, $w\in\mathbb{YF}_\infty^+$, $\beta\in(0,1]$, $v\in\mathbb{YF},$ $k\in\mathbb{N}_0:$ $k\ge 2,$ $\{w_i'\}\xrightarrow{i\to\infty}w$ и при этом существует предел
$$\lim_{m\to\infty}\frac{\pi(w'_m)}{\pi(w)}=\beta.$$

Тогда
$$\lim_{m\to\infty}\frac{\pi_k(w'_m)}{\pi_k(w)}=\beta^k.$$
\end{Prop}

\begin{Lemma}\label{yavsyo}
Пусть $\{w_i'\}_{i=1}^\infty\in\left(\mathbb{YF}\right)^\infty$, $w\in\mathbb{YF}_\infty^+,$ $\beta\in(0,1]$, $v\in\mathbb{YF}:$  $\{w_i'\}\xrightarrow{i\to\infty}w$ и при этом существует предел
$$\lim_{m\to\infty}\frac{\pi(w'_m)}{\pi(w)}=\beta.$$
Тогда предел
$$\lim_{m\to\infty}\mu_{\{w_i'\}}(v,m)=\lim_{m\to\infty} \frac{d(\varepsilon,v)d(v,w_m')}{d(\varepsilon,w_m')}$$
существует и равен
$$\mu_{w,\beta}(v).$$
\end{Lemma}
\begin{proof}
По определению если $\{w_i'\}\xrightarrow{i\to\infty}w$, то $\exists M\in\mathbb{N}_0:$ $\forall m\in\mathbb{N}_0:$ $m\ge M$
$$h(w_m',w)\ge |v|\Longrightarrow |w_m'|\ge\#w_m'\ge  h(w_m',w)\ge |v|,$$ 
а значит, по Теореме \ref{evtuh} при $v,w_m'\in\mathbb{YF}$
$$\lim_{m\to\infty}\mu_{\{w_i'\}}(v)=\lim_{m\to\infty}\frac{d(\varepsilon,v)d(v,w'_m)}{d(\varepsilon,w'_m)}
=\lim_{m\to\infty}\frac{\displaystyle d(\varepsilon,v) \sum_{i=0}^{|v|}\left( {f\left(v,i,h(v,w'_m)\right)}\prod_{j=1}^{d\left(w'_m\right)}{\left(g\left(w'_m,j\right)-i\right)}\right)}{\displaystyle\prod_{j=1}^{d\left(w'_m\right)}{g\left(w'_m,j\right)}}=$$
$$=d(\varepsilon,v)\lim_{m\to\infty}\left(\sum_{i=0}^{|v|}\left( {f\left(v,i,h(v,w'_m)\right)}\prod_{j=1}^{d\left(w'_m\right)}\frac{\left(g\left(w'_m,j\right)-i\right)}{g\left(w'_m,j\right)}\right)\right).$$

По определению если $\{w_i'\}\xrightarrow{i\to\infty}w$, то $\exists M\in\mathbb{N}_0:$ $\forall m\in\mathbb{N}_0:$ $m\ge M$
$$h(w_m',w)\ge |v|\Longrightarrow h(v,w_m')=h(v,w),$$
а это значит, что наше выражение равняется следующему:
$$d(\varepsilon,v)\lim_{m\to\infty}\left(\sum_{i=0}^{|v|}\left( {f\left(v,i,h(v,w)\right)}\prod_{j=1}^{d\left(w'_m\right)}\frac{\left(g\left(w'_m,j\right)-i\right)}{g\left(w'_m,j\right)}\right)\right)=$$
$$=d(\varepsilon,v)\sum_{i=0}^{|v|}\left( {f\left(v,i,h(v,w)\right)}\cdot\lim_{m\to\infty}\prod_{j=1}^{d\left(w'_m\right)}\frac{\left(g\left(w'_m,j\right)-i\right)}{g\left(w'_m,j\right)}\right).$$

Далее рассмотрим три случая:
\begin{enumerate}
    \item $v\in\mathbb{YF}:$ $|v|=0$.
    
    Ясно, что в данном случае $v=\varepsilon$. А значит наше выражение равняется следующему:
    $$d(\varepsilon,v)\sum_{i=0}^{|\varepsilon|}\left( {f\left(v,i,h(v,w)\right)}\cdot\lim_{m\to\infty}\prod_{j=1}^{d\left(w'_m\right)}\frac{\left(g\left(w'_m,j\right)-i\right)}{g\left(w'_m,j\right)}\right)=$$
    $$=d(\varepsilon,v)\sum_{i=0}^{0}\left( {f\left(v,i,h(v,w)\right)}\cdot\lim_{m\to\infty}\prod_{j=1}^{d\left(w'_m\right)}\frac{\left(g\left(w'_m,j\right)-i\right)}{g\left(w'_m,j\right)}\right)=$$
    $$=d(\varepsilon,v)\cdot {f\left(v,0,h(v,w)\right)}\cdot\lim_{m\to\infty}\prod_{j=1}^{d\left(w'_m\right)}\frac{\left(g\left(w'_m,j\right)-0\right)}{g\left(w'_m,j\right)}=$$
    $$=d(\varepsilon,v)\cdot {f\left(v,0,h(v,w)\right)}\cdot\lim_{m\to\infty}1=$$
    $$=d(\varepsilon,v)\cdot\beta^0 {f\left(v,0,h(v,w)\right)}\prod_{j=1}^{d(w)}\frac{\left(g\left(w,j\right)-0\right)}{g\left(w,j\right)}=$$
    $$=d(\varepsilon,v)\sum_{i=0}^{0}\left(\beta^i {f\left(v,i,h(v,w)\right)}\prod_{j=1}^{d(w)}\frac{\left(g\left(w,j\right)-i\right)}{g\left(w,j\right)}\right)=$$
    $$=d(\varepsilon,v)\sum_{i=0}^{|\varepsilon|}\left(\beta^i {f\left(v,i,h(v,w)\right)}\prod_{j=1}^{d(w)}\frac{\left(g\left(w,j\right)-i\right)}{g\left(w,j\right)}\right)=$$
    $$=d(\varepsilon,v)\sum_{i=0}^{|v|}\left(\beta^i {f\left(v,i,h(v,w)\right)}\prod_{j=1}^{d(w)}\frac{\left(g\left(w,j\right)-i\right)}{g\left(w,j\right)}\right)=d(\varepsilon,v) \cdot d'_{\beta}(v,w)=\mu_{w,\beta}(v),$$
что и требовалось.

В данном случае Лемма доказана.

    \item $v\in\mathbb{YF}:$ $|v|=1$.
    
    Ясно, что в данном случае $v=1$. А значит наше выражение равняется следующему:
    $$d(\varepsilon,v)\sum_{i=0}^{|1|}\left( {f\left(v,i,h(v,w)\right)}\cdot\lim_{m\to\infty}\prod_{j=1}^{d\left(w'_m\right)}\frac{\left(g\left(w'_m,j\right)-i\right)}{g\left(w'_m,j\right)}\right)=$$
    $$=d(\varepsilon,v)\sum_{i=0}^{0}\left( {f\left(v,i,h(v,w)\right)}\cdot\lim_{m\to\infty}\prod_{j=1}^{d\left(w'_m\right)}\frac{\left(g\left(w'_m,j\right)-i\right)}{g\left(w'_m,j\right)}\right)+$$
    $$+d(\varepsilon,v)\sum_{i=1}^{1}\left( {f\left(v,i,h(v,w)\right)}\cdot\lim_{m\to\infty}\prod_{j=1}^{d\left(w'_m\right)}\frac{\left(g\left(w'_m,j\right)-i\right)}{g\left(w'_m,j\right)}\right)=$$
    $$=d(\varepsilon,v)\cdot {f\left(v,0,h(v,w)\right)}\cdot\lim_{m\to\infty}\prod_{j=1}^{d\left(w'_m\right)}\frac{\left(g\left(w'_m,j\right)-0\right)}{g\left(w'_m,j\right)}+$$
    $$+d(\varepsilon,v)\cdot {f\left(v,1,h(v,w)\right)}\cdot\lim_{m\to\infty}\prod_{j=1}^{d\left(w'_m\right)}\frac{\left(g\left(w'_m,j\right)-1\right)}{g\left(w'_m,j\right)}=$$
    $$=d(\varepsilon,v)\cdot {f\left(v,0,h(v,w)\right)}\cdot\lim_{m\to\infty}1+$$
    $$+d(\varepsilon,v)\cdot {f\left(v,1,h(v,w)\right)}\cdot\lim_{m\to\infty}\left(\left(\prod_{j:\;g\left(w'_m,j\right)=1}\frac{\left(g\left(w'_m,j\right)-1\right)}{g\left(w'_m,j\right)}\right)\left(\prod_{j:\;g\left(w'_m,j\right)>1}\frac{\left(g\left(w'_m,j\right)-1\right)}{g\left(w'_m,j\right)}\right)\right).$$

По определению если $\{w_i'\}\xrightarrow{i\to\infty}w$, то $\exists M\in\mathbb{N}_0:$ $\forall m\in\mathbb{N}_0:m\ge M$
$$h(w_m',w)\ge 1.$$

А значит, из определения функции $g$ ясно, что $\exists M\in\mathbb{N}_0:$ $\forall m,j\in\mathbb{N}_0:$ $m\ge M,$ 
$$g(w,j)=1\Longleftrightarrow g(w_m',j)=1.$$

Применим это наблюдение к нашему выражению и поймём, что оно равняется следующему:
$$d(\varepsilon,v)\cdot {f\left(v,0,h(v,w)\right)}\lim_{m\to\infty}1+$$
$$+d(\varepsilon,v)\cdot {f\left(v,1,h(v,w)\right)}\lim_{m\to\infty}\left(\left(\prod_{j:\;g\left(w,j\right)=1}\frac{\left(g\left(w,j\right)-1\right)}{g\left(w,j\right)}\right)\left(\prod_{j:\;g\left(w'_m,j\right)>1}\frac{\left(g\left(w'_m,j\right)-1\right)}{g\left(w'_m,j\right)}\right)\right)=$$
$$=d(\varepsilon,v)\cdot {f\left(v,0,h(v,w)\right)}\cdot\lim_{m\to\infty}1+$$
$$+d(\varepsilon,v)\cdot {f\left(v,1,h(v,w)\right)}\left(\prod_{j:\;g(w
,j)=1}\frac{\left(g\left(w,j\right)-1\right)}{g\left(w,j\right)}\right)\left(\lim_{m\to\infty}\prod_{j:\;g(w'_m,j)>1}\frac{\left(g\left(w'_m,j\right)-1\right)}{g\left(w'_m,j\right)}\right)=$$
$$=(\text{По определению функции $\pi$})=$$
$$=d(\varepsilon,v)\cdot {f\left(v,0,h(v,w)\right)}\cdot\lim_{m\to\infty}1+$$
$$+d(\varepsilon,v)\cdot {f\left(v,1,h(v,w)\right)}\left(\prod_{j:\;g(w
,j)=1}\frac{\left(g\left(w,j\right)-1\right)}{g\left(w,j\right)}\right)\lim_{m\to\infty}\pi(w_m').$$

В нашем случае
$$\lim_{m\to\infty}\frac{\pi(w'_m)}{\pi(w)}=\beta\Longleftrightarrow \lim_{m\to\infty}{\pi(w_m')}=\beta {\pi(w)}.$$

Таким образом, наше выражение равняется следующему:
$$d(\varepsilon,v)\cdot {f\left(v,0,h(v,w)\right)}\cdot\lim_{m\to\infty}1+$$
$$+d(\varepsilon,v)\cdot {f\left(v,1,h(v,w)\right)}\left(\prod_{j:\;g(w
,j)=1}\frac{\left(g\left(w,j\right)-1\right)}{g\left(w,j\right)}\right)\beta\pi(w)=$$
$$=d(\varepsilon,v)\cdot\beta^0 {f\left(v,0,h(v,w)\right)}\prod_{j=1}^{d(w)}\frac{\left(g\left(w,j\right)-0\right)}{g\left(w,j\right)}+$$
$$+d(\varepsilon,v)\cdot\beta^1 {f\left(v,1,h(v,w)\right)}\left(\prod_{j:\;g(w
,j)=1}\frac{\left(g\left(w,j\right)-1\right)}{g\left(w,j\right)}\right)\pi(w)=$$
$$=(\text{По определению функции $\pi$})=$$
$$=d(\varepsilon,v)\cdot\beta^0 {f\left(v,0,h(v,w)\right)}\prod_{j=1}^{d(w)}\frac{\left(g\left(w,j\right)-0\right)}{g\left(w,j\right)}+$$
$$+d(\varepsilon,v)\cdot\beta^1 {f\left(v,1,h(v,w)\right)}\left(\prod_{j:\;g(w
,j)=1}\frac{\left(g\left(w,j\right)-1\right)}{g\left(w,j\right)}\right)\left(\prod_{j:\;g(w,j)>1}\frac{\left(g\left(w,j\right)-1\right)}{g\left(w,j\right)}\right)=$$
$$=d(\varepsilon,v)\cdot\beta^0 {f\left(v,0,h(v,w)\right)}\prod_{j=1}^{d(w)}\frac{\left(g\left(w,j\right)-0\right)}{g\left(w,j\right)}+$$
$$+d(\varepsilon,v)\cdot\beta^1 {f\left(v,1,h(v,w)\right)}\prod_{j=1}^{d(w)}\frac{\left(g\left(w,j\right)-1\right)}{g\left(w,j\right)}=$$
$$=d(\varepsilon,v)\sum_{i=0}^{0}\left(\beta^i {f\left(v,i,h(v,w)\right)}\prod_{j=1}^{d(w)}\frac{\left(g\left(w,j\right)-i\right)}{g\left(w,j\right)}\right)+$$
$$+d(\varepsilon,v)\sum_{i=1}^{1}\left(\beta^i {f\left(v,i,h(v,w)\right)}\prod_{j=1}^{d(w)}\frac{\left(g\left(w,j\right)-i\right)}{g\left(w,j\right)}\right)=$$
$$=d(\varepsilon,v)\sum_{i=0}^{1}\left(\beta^i {f\left(v,i,h(v,w)\right)}\prod_{j=1}^{d(w)}\frac{\left(g\left(w,j\right)-i\right)}{g\left(w,j\right)}\right)=$$
$$=d(\varepsilon,v)\sum_{i=0}^{|1|}\left(\beta^i {f\left(v,i,h(v,w)\right)}\prod_{j=1}^{d(w)}\frac{\left(g\left(w,j\right)-i\right)}{g\left(w,j\right)}\right)=$$
$$=d(\varepsilon,v)\sum_{i=0}^{|v|}\left(\beta^i {f\left(v,i,h(v,w)\right)}\prod_{j=1}^{d(w)}\frac{\left(g\left(w,j\right)-i\right)}{g\left(w,j\right)}\right)=d(\varepsilon,v)\cdot d'_{\beta}(v,w)=\mu_{w,\beta}(v),$$
что и требовалось.

В данном случае Лемма доказана.

\item $v\in\mathbb{YF}:$ $|v|\ge 2$.
    
Ясно, что в данном случае наше выражение равняется следующему:
$$d(\varepsilon,v)\sum_{i=0}^{0}\left( {f\left(v,i,h(v,w)\right)}\cdot\lim_{m\to\infty}\prod_{j=1}^{d\left(w'_m\right)}\frac{\left(g\left(w'_m,j\right)-i\right)}{g\left(w'_m,j\right)}\right)+$$
$$+d(\varepsilon,v)\sum_{i=1}^{1}\left( {f\left(v,i,h(v,w)\right)}\cdot\lim_{m\to\infty}\prod_{j=1}^{d\left(w'_m\right)}\frac{\left(g\left(w'_m,j\right)-i\right)}{g\left(w'_m,j\right)}\right)+$$
$$+d(\varepsilon,v)\sum_{i=2}^{|v|}\left( {f\left(v,i,h(v,w)\right)}\cdot\lim_{m\to\infty}\prod_{j=1}^{d\left(w'_m\right)}\frac{\left(g\left(w'_m,j\right)-i\right)}{g\left(w'_m,j\right)}\right)=$$
    $$=d(\varepsilon,v)\cdot {f\left(v,0,h(v,w)\right)}\cdot\lim_{m\to\infty}\prod_{j=1}^{d\left(w'_m\right)}\frac{\left(g\left(w'_m,j\right)-0\right)}{g\left(w'_m,j\right)}+$$
    $$+d(\varepsilon,v)\cdot {f\left(v,1,h(v,w)\right)}\cdot\lim_{m\to\infty}\prod_{j=1}^{d\left(w'_m\right)}\frac{\left(g\left(w'_m,j\right)-1\right)}{g\left(w'_m,j\right)}+$$
$$+d(\varepsilon,v)\sum_{i=2}^{|v|}\left( {f\left(v,i,h(v,w)\right)}\cdot\lim_{m\to\infty}\prod_{j=1}^{d\left(w'_m\right)}\frac{\left(g\left(w'_m,j\right)-i\right)}{g\left(w'_m,j\right)}\right)=$$
$$=d(\varepsilon,v)\cdot {f\left(v,0,h(v,w)\right)}\cdot\lim_{m\to\infty}1+$$
$$+d(\varepsilon,v)\cdot {f\left(v,1,h(v,w)\right)}\cdot\lim_{m\to\infty}\left(\left(\prod_{j:\;g(w'_m
,j)=1}\frac{\left(g\left(w'_m,j\right)-1\right)}{g\left(w'_m,j\right)}\right)\left(\prod_{j:\;g(w'_m,j)>1     
}\frac{\left(g\left(w'_m,j\right)-1\right)}{g\left(w'_m,j\right)}\right)\right)+$$
$$+d(\varepsilon,v)\sum_{i=2}^{|v|}\left( {f\left(v,i,h(v,w)\right)}\cdot\lim_{m\to\infty}\left(\left(\prod_{j:\;g(w'_m
,j)\le i}\frac{\left(g\left(w'_m,j\right)-i\right)}{g\left(w'_m,j\right)}\right)\left(\prod_{j:\;g(w'_m,j)>i}\frac{\left(g\left(w'_m,j\right)-i\right)}{g\left(w'_m,j\right)}\right)\right)\right).$$

По определению если $\{w_i'\}\xrightarrow{i\to\infty}w$, то $\exists M\in\mathbb{N}_0:$ $\forall m\in\mathbb{N}_0:m\ge M$
$$h(w_m',w)\ge |v|.$$

А значит, из определению функции $g$ ясно, что $\exists M\in\mathbb{N}_0:$ $\forall m,i,j\in\mathbb{N}_0:$ $i\in\overline{|v|},$ $m\ge M$
$$g(w_m',j)\le i\Longleftrightarrow g(w,j)\le i\Longrightarrow g(w,j)=g(w_m',j).$$

Применим это наблюдение к нашему выражению и поймём, что оно равняется следующему:
$$d(\varepsilon,v)\cdot {f\left(v,0,h(v,w)\right)}\cdot\lim_{m\to\infty}1+$$
$$+d(\varepsilon,v)\cdot {f\left(v,1,h(v,w)\right)}\cdot\lim_{m\to\infty}\left(\left(\prod_{j:\;g(w
,j)=1}\frac{\left(g\left(w,j\right)-1\right)}{g\left(w,j\right)}\right)\left(\prod_{j:\;g(w'_m,j)>1     
}\frac{\left(g\left(w'_m,j\right)-1\right)}{g\left(w'_m,j\right)}\right)\right)+$$
$$+d(\varepsilon,v)\sum_{i=2}^{|v|}\left( {f\left(v,i,h(v,w)\right)}\cdot\lim_{m\to\infty}\left(\left(\prod_{j:\;g(w
,j)\le i}\frac{\left(g\left(w,j\right)-i\right)}{g\left(w,j\right)}\right)\left(\prod_{j:\;g(w'_m,j)>i}\frac{\left(g\left(w'_m,j\right)-i\right)}{g\left(w'_m,j\right)}\right)\right)\right)=$$
$$=d(\varepsilon,v)\cdot {f\left(v,0,h(v,w)\right)}\cdot\lim_{m\to\infty}1+$$
$$+d(\varepsilon,v)\cdot {f\left(v,1,h(v,w)\right)}\left(\prod_{j:\;g(w
,j)=1}\frac{\left(g\left(w,j\right)-1\right)}{g\left(w,j\right)}\right)\left(\lim_{m\to\infty}\prod_{j:\;g(w'_m,j)>1}\frac{\left(g\left(w'_m,j\right)-1\right)}{g\left(w'_m,j\right)}\right)+$$
$$+d(\varepsilon,v)\sum_{i=2}^{|v|}\left( {f\left(v,i,h(v,w)\right)}\left(\prod_{j:\;g(w
,j)\le i}\frac{\left(g\left(w,j\right)-i\right)}{g\left(w,j\right)}\right)\left(\lim_{m\to\infty}\prod_{j:\;g(w'_m,j)>i}\frac{\left(g\left(w'_m,j\right)-i\right)}{g\left(w'_m,j\right)}\right)\right)=$$
$$=(\text{По определению функций $\pi$ и $\pi_i$})=$$
$$=d(\varepsilon,v)\cdot {f\left(v,0,h(v,w)\right)}\cdot\lim_{m\to\infty}1+$$
$$+d(\varepsilon,v)\cdot {f\left(v,1,h(v,w)\right)}\left(\prod_{j:\;g(w
,j)=1}\frac{\left(g\left(w,j\right)-1\right)}{g\left(w,j\right)}\right)\lim_{m\to\infty}\pi(w_m')+$$
$$+d(\varepsilon,v)\sum_{i=2}^{|v|}\left( {f\left(v,i,h(v,w)\right)}\left(\prod_{j:\;g(w
,j)\le i}\frac{\left(g\left(w,j\right)-i\right)}{g\left(w,j\right)}\right)\lim_{m\to\infty}\pi_i(w_m')\right).$$

По Утверждению \ref{KerovGoodman} при наших $\{w_i'\}_{i=1}^\infty\in\left(\mathbb{YF}\right)^\infty$, $w\in\mathbb{YF}_\infty,$ $\beta\in(0,1]$, $v\in\mathbb{YF}$ и произвольном   $k\in\overline{2,|v|}$
$$\lim_{m\to\infty}\frac{\pi_k(w_m')}{\pi_k(w)}=\beta^k\Longleftrightarrow \lim_{m\to\infty}{\pi_k(w_m')}=\beta^k {\pi_k(w)}.$$

Кроме того, в нашем случае
$$\lim_{m\to\infty}\frac{\pi(w'_m)}{\pi(w)}=\beta\Longleftrightarrow \lim_{m\to\infty}{\pi(w_m')}=\beta {\pi(w)}.$$

Таким образом, можно понять, что наше выражение равняется следующему:
$$d(\varepsilon,v)\cdot {f\left(v,0,h(v,w)\right)}\cdot\lim_{m\to\infty}1+$$
$$+d(\varepsilon,v)\cdot {f\left(v,1,h(v,w)\right)}\left(\prod_{j:\;g(w
,j)=1}\frac{\left(g\left(w,j\right)-1\right)}{g\left(w,j\right)}\right)\beta\pi(w)+$$
$$+d(\varepsilon,v)\sum_{i=2}^{|v|}\left( {f\left(v,i,h(v,w)\right)}\left(\prod_{j:\;g(w
,j)\le i}\frac{\left(g\left(w,j\right)-i\right)}{g\left(w,j\right)}\right)\beta^i\pi_i(w)\right)=$$
$$=d(\varepsilon,v)\cdot\beta^0 {f\left(v,0,h(v,w)\right)}\prod_{j=1}^{d(w)}\frac{\left(g\left(w,j\right)-0\right)}{g\left(w,j\right)}+$$
$$+d(\varepsilon,v)\cdot\beta^1 {f\left(v,1,h(v,w)\right)}\left(\prod_{j:\;g(w
,j)=1}\frac{\left(g\left(w,j\right)-1\right)}{g\left(w,j\right)}\right)\pi(w)+$$
$$+d(\varepsilon,v)\sum_{i=2}^{|v|}\left(\beta^i {f\left(v,i,h(v,w)\right)}\left(\prod_{j:\;g(w
,j)\le i}\frac{\left(g\left(w,j\right)-i\right)}{g\left(w,j\right)}\right)\pi_i(w)\right)=$$
$$=(\text{По определению функций $\pi$ и $\pi_i$})=$$
$$=d(\varepsilon,v)\cdot\beta^0 {f\left(v,0,h(v,w)\right)}\prod_{j=1}^{d(w)}\frac{\left(g\left(w,j\right)-0\right)}{g\left(w,j\right)}+$$
$$+d(\varepsilon,v)\cdot\beta^1 {f\left(v,1,h(v,w)\right)}\left(\prod_{j:\;g(w
,j)=1}\frac{\left(g\left(w,j\right)-1\right)}{g\left(w,j\right)}\right)\left(\prod_{j:\;g(w,j)>1}\frac{\left(g\left(w,j\right)-1\right)}{g\left(w,j\right)}\right)+$$
$$+d(\varepsilon,v)\sum_{i=2}^{|v|}\left(\beta^i {f\left(v,i,h(v,w)\right)}\left(\prod_{j:\;g(w
,j)\le i}\frac{\left(g\left(w,j\right)-i\right)}{g\left(w,j\right)}\right)\left(\prod_{j:\;g(w,j)>i}\frac{\left(g\left(w,j\right)-i\right)}{g\left(w,j\right)}\right)\right)=$$
$$=d(\varepsilon,v)\cdot\beta^0 {f\left(v,0,h(v,w)\right)}\prod_{j=1}^{d(w)}\frac{\left(g\left(w,j\right)-0\right)}{g\left(w,j\right)}+$$
$$+d(\varepsilon,v)\cdot\beta^1 {f\left(v,1,h(v,w)\right)}\prod_{j=1}^{d(w)}\frac{\left(g\left(w,j\right)-1\right)}{g\left(w,j\right)}+$$
$$+d(\varepsilon,v)\cdot\sum_{i=2}^{|v|}\left(\beta^i {f\left(v,i,h(v,w)\right)}\prod_{j=1}^{d(w)}\frac{\left(g\left(w,j\right)-i\right)}{g\left(w,j\right)}\right)=$$
$$=d(\varepsilon,v)\sum_{i=0}^{0}\left(\beta^i {f\left(v,i,h(v,w)\right)}\prod_{j=1}^{d(w)}\frac{\left(g\left(w,j\right)-i\right)}{g\left(w,j\right)}\right)+$$
$$+d(\varepsilon,v)\sum_{i=1}^{1}\left(\beta^i {f\left(v,i,h(v,w)\right)}\prod_{j=1}^{d(w)}\frac{\left(g\left(w,j\right)-i\right)}{g\left(w,j\right)}\right)+$$
$$+d(\varepsilon,v)\sum_{i=2}^{|v|}\left(\beta^i {f\left(v,i,h(v,w)\right)}\prod_{j=1}^{d(w)}\frac{\left(g\left(w,j\right)-i\right)}{g\left(w,j\right)}\right)=$$
$$=d(\varepsilon,v)\sum_{i=0}^{|v|}\left(\beta^i {f\left(v,i,h(v,w)\right)}\prod_{j=1}^{d(w)}\frac{\left(g\left(w,j\right)-i\right)}{g\left(w,j\right)}\right)=d(\varepsilon,v)\cdot d'_{\beta}(v,w)=\mu_{w,\beta}(v),$$
что и требовалось.

В данном случае Лемма доказана.
\end{enumerate}

Ясно, что все случаи разобраны, и в каждом из них Лемма доказана.

Таким образом, Лемма доказана.
\end{proof}

\begin{Oboz}
Пусть $\{w_i'\}_{i=1}^\infty\in\left(\mathbb{YF}\right)^\infty$, $w\in\mathbb{YF}_\infty^+,$ $\beta\in(0,1],$ $v\in\mathbb{YF}:$  $\{w_i'\}\xrightarrow{i\to\infty}w$ и при этом существует предел
$$\lim_{m\to\infty}\frac{\pi(w'_m)}{\pi(w)}=\beta.$$
Тогда
$$\mu_{\{w_i'\}}(v):=\lim_{m\to\infty}\mu_{\{w_i'\}}(v,m)=\lim_{m\to\infty} \frac{d(\varepsilon,v)d(v,w_m')}{d(\varepsilon,w_m')}=\mu_{w,\beta}(v).$$
\end{Oboz}

\begin{Col}\label{neotr}
Пусть $w\in\mathbb{YF}_\infty^+,$ $\beta\in(0,1],$ $v\in\mathbb{YF}.$ Тогда
$$\mu_{w,\beta}(v)\ge 0.$$

\end{Col}

\begin{Zam}
Пусть $\{w_i'\}_{i=1}^\infty\in\left(\mathbb{YF}\right)^\infty$, $v\in\mathbb{YF}$, $m,n\in\mathbb{N}_0:$ $|w_m'|\ge |v|=n$. Тогда
\begin{itemize}
    \item $$ {d(\varepsilon,v)d(v,w_m')}={|\left\{t\in T(\varepsilon,w_m'):\;t(n)=v\right\}|} ;$$

    \item $$\mu_{\{w_i'\}}(v,m)= \frac{d(\varepsilon,v)d(v,w_m')}{d(\varepsilon,w_m')}=\frac{|\left\{t\in T(\varepsilon,w_m'):\;t(n)=v\right\}|}{|\left\{T(\varepsilon,w_m')\right\}|} .$$
    
\end{itemize}
\end{Zam}

\begin{Prop}\label{mera2}
Пусть $\{w_i'\}_{i=1}^\infty\in\left(\mathbb{YF}\right)^\infty$, $m,n\in\mathbb{N}_0:$ $|w_m'|\ge |v|=n$. Тогда
$$\sum_{v\in\mathbb{YF}_n}\mu_{\{w_i'\}}(v,m)= \sum_{v\in\mathbb{YF}_n}\frac{d(\varepsilon,v)d(v,w_m')}{d(\varepsilon,w_m')}=1. $$
\end{Prop}
\begin{proof}
$$\sum_{v\in\mathbb{YF}_n}\mu_{\{w_i'\}}(v,m)= \sum_{v\in\mathbb{YF}_n}\frac{d(\varepsilon,v)d(v,w_m')}{d(\varepsilon,w_m')}=(\text{так как $|w_m'|\ge n$})=\sum_{v\in\mathbb{YF}_n}\frac{|\left\{t\in T(\varepsilon,w_m'):\;t(n)=v\right\}|}{|\left\{T(\varepsilon,w_m')\right\}|}=$$
$$=\frac{\displaystyle\sum_{v\in\mathbb{YF}_n}\left|\left\{t\in T(\varepsilon,w_m'):\;t(n)=v\right\}\right|}{\displaystyle|\left\{T(\varepsilon,w_m')\right\}|} =\frac{\displaystyle|\left\{T(\varepsilon,w_m')\right\}|}{\displaystyle|\left\{T(\varepsilon,w_m')\right\}|}=1. $$
\end{proof}

\begin{Col} \label{mera1}
Пусть $\{w_i'\}_{i=1}^\infty\in\left(\mathbb{YF}\right)^\infty$, $w\in\mathbb{YF}_\infty^+,$ $\beta\in(0,1],$ $n\in\mathbb{N}_0:$
$\{w_i'\}\xrightarrow{i\to\infty}w$ и при этом существует предел
$$\lim_{m\to\infty}\frac{\pi(w'_m)}{\pi(w)}=\beta.$$

Тогда
$$\sum_{v\in\mathbb{YF}_n}\mu_{\{w_i'\}}(v)=\sum_{v\in\mathbb{YF}_n}\mu_{w,\beta}(v)=1.$$
\end{Col}
\begin{proof}
$$\sum_{v\in\mathbb{YF}_n}\mu_{w,\beta}(v)=\left(\text{По Лемме \ref{yavsyo}, просуммированной по $v\in\mathbb{YF}_n$}\right)=$$
$$=\sum_{v\in\mathbb{YF}_n}\mu_{\{w_i'\}}(v)=\sum_{v\in\mathbb{YF}_n}\left(\lim_{m \to \infty} \frac{d(\varepsilon,v)d(v,w_m')}{d(\varepsilon,w_m')}\right)=\lim_{m \to \infty}\left(\sum_{v\in\mathbb{YF}_n}\left( \frac{d(\varepsilon,v)d(v,w_m')}{d(\varepsilon,w_m')}\right)\right).$$

По определению если $\{w_i'\}\xrightarrow{i\to\infty}w$, то $\exists M\in\mathbb{N}_0:$ $\forall m\in\mathbb{N}_0:$ $m\ge M$
$$h(w_m',w)\ge |v|\Longrightarrow |w_m'|\ge\#w_m'\ge  h(w_m',w)\ge |v|,$$
а значит наше выражение равняется 
$$\lim_{m\to\infty}1=1.$$
\end{proof}


\renewcommand{\labelenumi}{\arabic{enumi}$)$}
\renewcommand{\labelenumii}{\arabic{enumi}.\arabic{enumii}$^\circ$}
\renewcommand{\labelenumiii}{\arabic{enumi}.\arabic{enumii}.\arabic{enumiii}$^\circ$}

\begin{Oboz}
$$n(x,a):\left\{(x,a)\subseteq\mathbb{YF}\times\mathbb{N}_0:\; a\in\overline{\#x}\right\}\to \mathbb{YF}$$
-- это функция, определённая следующим образом:

Если $\exists x',x''\in \mathbb{YF}:$ $x=x'x''$ и $\#x''=a$, то $n(x,a)=x'$.

\end{Oboz}

\begin{Oboz}
$$k(x,a):\left\{(x,a)\subseteq\mathbb{YF}\times\mathbb{N}_0:\; a\in\overline{\#x}\right\}\to \mathbb{YF}$$
-- это функция, определённая следующим образом:

Если $\exists x',x''\in \mathbb{YF}:$ $x=x'x''$ и $\#x''=a$, то $k(x,a)=x''$.

\end{Oboz}

\begin{Zam}

\;

\begin{itemize}
    \item Ясно, что $\forall x\in\mathbb{YF}$ и $a\in\overline{\#x}$ значение функции $n(x,a)$ -- это вершина графа Юнга -- Фибоначчи, номер которой -- это первые (то есть самые левые) $(\#x-a)$ цифр номера $x$.
    \item Ясно, что $\forall x\in\mathbb{YF}$ и $a\in\overline{\#x}$ значение функции $k(x,a)$ -- это вершина графа Юнга -- Фибоначчи, номер которой -- это последние (то есть самые правые) $a$ цифр номера $x$.
    \item Очевидно, что $\forall x\in\mathbb{YF}$ и $a\in\overline{\#x}$ значение функции $n(x,a)$ всегда существует и однозначно определено.  
    \item Очевидно, что $\forall x\in\mathbb{YF}$ и $a\in\overline{\#x}$ значение функции $k(x,a)$ всегда существует и однозначно определено.
\end{itemize}
\end{Zam}

\begin{Zam} \label{prunk}
     Пусть $x\in\mathbb{YF}$, $a\in\mathbb{N}_0:$ $a\in\overline{ \#x}$. Тогда
     \begin{itemize}
        \item  $$x=n(x,a)k(x,a);$$
        \item  $$|x|=|n(x,a)|+|k(x,a)|.$$
     \end{itemize}
\end{Zam}
\begin{Zam} \label{mega}
     Пусть $x\in\mathbb{YF}$. Тогда
     \begin{itemize}
        \item $$n(x,0)=k(x,\#x)=x;$$
        \item $$n(x,\#x)=k(x,0)=\varepsilon.$$
     \end{itemize}
\end{Zam}

\begin{Prop} \label{rofl}
Пусть $x\in\mathbb{YF}$, $a\in\mathbb{N}_0$, $\alpha_0\in\{1,2\}:$ $a\in\overline{ \#x}$. Тогда
\begin{itemize}
    \item $$n(\alpha_0x,a)=\alpha_0n(x,a);$$
    \item $$k(\alpha_0x,a)=k(x,a).$$
\end{itemize}
\end{Prop}
\begin{proof}
Воспользуемся определением:
$$x=n(x,a)k(x,a), \; \#(k(x,a))=a \Longleftrightarrow$$ $$\Longleftrightarrow\alpha_0x=\alpha_0n(x,a)k(x,a),\; \#(k(x,a))=a \Longleftrightarrow$$
$$\Longleftrightarrow n(\alpha_0x,a)=\alpha_0n(x,a),\;k(\alpha_0x,a)=k(x,a),$$
что и требовалось.

Утверждение доказано.
\end{proof}

\renewcommand{\labelenumi}{\arabic{enumi})}

\renewcommand{\labelenumi}{\arabic{enumi}$^\circ$}
\renewcommand{\labelenumii}{\arabic{enumi}.\arabic{enumii}$^\circ$}
\renewcommand{\labelenumiii}{\arabic{enumi}.\arabic{enumii}.\arabic{enumiii}$^\circ$}

\begin{Oboz}
Пусть $n,y\in\mathbb{N}_0:$ $y\le n $. Тогда 
\begin{itemize}
    \item $$K(n,y):=\{v\in\mathbb{YF}_n:\;\exists v',v''\in\mathbb{YF}:\; v=v'v'' \text{ и } |v''|=y\};$$
    \item $$\overline{K}(n,y):=\{v\in\mathbb{YF}_n:\;\nexists v',v''\in\mathbb{YF}:\; v=v'v'' \text{ и } |v''|=y\}.$$
\end{itemize}
\end{Oboz}

\begin{Zam} \label{pruzhinka}
Пусть $n,y\in\mathbb{N}_0:$ $y\le n $. Тогда 
\begin{itemize}
    \item $$K(n,y)=\{v'v'':\; v'\in\mathbb{YF}_{n-y},\;v''\in\mathbb{YF}_y\}=\mathbb{YF}_{n-y}\cdot\mathbb{YF}_{y};$$
    \item $$\overline{K}(n,y)=\mathbb{YF}_n\textbackslash K(n,y)=\mathbb{YF}_n\textbackslash\left(\mathbb{YF}_{n-y}\cdot\mathbb{YF}_{y}\right);$$
    \item $$K(n,y)\cup \overline{K}(n,y)=\mathbb{YF}_n;$$
    \item $$K(n,y)\cap \overline{K}(n,y)=\varnothing.$$
\end{itemize}
\end{Zam}

\begin{Oboz} 
Пусть $v\in\mathbb{YF}$, $y\in\mathbb{N}_0:$ $y\le |v|$. Тогда
\begin{itemize}
    \item 
    \begin{equation*}
v(y):= \begin{cases}
$$v'$$
    &\text {если $\exists v',v''\in\mathbb{YF}:$ $v=v'v'' \text{ и } |v''|=y$
}\\
   \text{не определено} &\text{иначе}
 \end{cases};
\end{equation*}
    \item 
    \begin{equation*}
v'(y):= \begin{cases}
$$v''$$
    &\text {если $\exists v',v''\in\mathbb{YF}:$ $v=v'v'' \text{ и } |v''|=y$}\\
   \text{не определено} &\text{иначе}
 \end{cases}.
\end{equation*}
\end{itemize}
\end{Oboz}

\begin{Zam}
Пусть $v\in\mathbb{YF},$ $n,y\in\mathbb{N}_0:$ $v\in\mathbb{YF}_n$, $y\le n$.  Тогда
\begin{itemize}
    \item $$v(y) \text{ определено} \Longleftrightarrow v'(y) \text{ определено} \Longleftrightarrow v\in K(n,y) ;$$ 
    \item $$v(y) \text{ не определено} \Longleftrightarrow v'(y) \text{ не определено} \Longleftrightarrow v\in \overline{K}(n,y) .$$ 

\end{itemize}
\end{Zam}

\begin{Zam} \label{kerambus}
Пусть $v\in\mathbb{YF},$ $n,y\in\mathbb{N}_0:$ $y\le n$, $v\in K(n,y)$.  Тогда
     \begin{itemize}
        \item  $$v=v(y)v'(y);$$
        \item  $$|v|=|v(y)|+|v'(y)|.$$
     \end{itemize}
\end{Zam}
\begin{Zam} 
Пусть $v\in\mathbb{YF}$. Тогда
     \begin{itemize}
        \item $$v(0)=v'(|v|)=v;$$
        \item $$v(|v|)=v'(0)=\varepsilon.$$
     \end{itemize}
\end{Zam}

\begin{Oboz} \label{str}
Пусть $x\in \mathbb{YF}$, $\beta\in(0,1]$. Тогда
$$d_{\beta}(x):=\sum_{i=0}^{|x|}\left(\beta^i {f\left(x,i,0\right)}\right).$$

\end{Oboz}

\begin{Prop} \label{beta1}
Пусть $x\in \mathbb{YF}$, $y\in\mathbb{YF}_{\infty}$. Тогда
$$d'_{1}(x,y)= \sum_{i=0}^{\#x} \left(1^{|k(x,i)|}d_{1}(n(x,i))\cdot d'_1(k(x,i),y)\right).$$
\end{Prop}
\begin{proof}

По обозначению
$$d_{1}(n(x,i))=\sum_{j=0}^{|n(x,i)|}\left(1^i {f\left(n(x,i),j,0\right)}\right)=\sum_{j=0}^{|n(x,i)|} {f\left(n(x,i),j,0\right)}.$$

\begin{Prop}[Утверждение 5\cite{Evtuh1}, Утверждение 1.7\cite{Evtuh2}] \label{evtuh5}
Пусть $x \in \mathbb{YF}:$ $x\ne\varepsilon$. Тогда
$$\sum_{i=0}^{|x|} {f\left(x,i,0\right)}=0.$$
\end{Prop}

Подставим в Утверждение \ref{evtuh5} $n(x,i)$ на место $x$ и получим, что если $n(x,i)\ne\varepsilon$, то 
$$d_{1}(n(x,i))=\sum_{j=0}^{|n(x,i)|} {f\left(n(x,i),j,0\right)}=0.$$

Несложно заметить, что если $i\in\overline{\#x-1}$, то $n(x,i)\ne\varepsilon$, а если $i=\#x$, то $n(x,i)=\varepsilon$. А значит
$$\sum_{i=0}^{\#x} \left(1^{|k(x,i)|}d_{1}(n(x,i))\cdot d'_1(k(x,i),y)\right)=\sum_{i=0}^{\#x} \left(d_{1}(n(x,i))\cdot d'_1(k(x,i),y)\right)=$$
$$=\left(\sum_{i=0}^{\#x-1} \left(d_{1}(n(x,i))\cdot d'_1(k(x,i),y)\right)\right)+\left(\sum_{i=\#x}^{\#x} \left(d_{1}(n(x,i))\cdot d'_1(k(x,i),y)\right)\right)=$$
$$=\left(\sum_{i=0}^{\#x-1} \left(0 \cdot d'_1(k(x,i),y)\right)\right)+d_{1}(n(x,\#x))\cdot d'_1(k(x,\#x),y)= d_{1}(n(x,\#x))\cdot d'_1(k(x,\#x),y)=$$
$$=(\text{По Замечанию \ref{mega} при $x\in\mathbb{YF}$})=d_{1}(\varepsilon)\cdot d'_1(x,y)=$$
$$=\left(\sum_{i=0}^{|\varepsilon|}\left(1^if(\varepsilon,i,0)\right)\right)d'_1(x,y)=\left(\sum_{i=0}^{0}\left(1^if(\varepsilon,i,0)\right)\right)d'_1(x,y)=1^0f(\varepsilon,0,0)\cdot d'_1(x,y)=d'_1(x,y),$$
что и требовалось.

Утверждение доказано.
\end{proof}

\begin{Prop} \label{kusok}
Пусть $x\in \mathbb{YF}$, $y\in\mathbb{YF}_\infty,$ $\beta\in(0,1]$. Тогда
$$d'_{\beta}(x,y)= \sum_{i=0}^{\#x} \left(\beta^{|k(x,i)|}d_{\beta}(n(x,i))\cdot d'_1(k(x,i),y)\right).$$
\end{Prop}
\begin{proof}

Зафиксируем данный $y\in\mathbb{YF}_\infty$ и будем решать задачу по индукции по $\#x$.

\underline{\textbf{База}}. $x\in\mathbb{YF}:\;\#x=0\Longleftrightarrow x=\varepsilon$:
$$\sum_{i=0}^{\#x}\left( \beta^{|k(x,i)|}d_{\beta}(n(x,i))\cdot d'_1(k(x,i),y)\right)=\sum_{i=0}^{\#\varepsilon}\left( \beta^{|k(\varepsilon,i)|}d_{\beta}(n(\varepsilon,i))\cdot d'_1(k(\varepsilon,i),y)\right)=$$
$$=\sum_{i=0}^{0}\left( \beta^{|k(\varepsilon,i)|}d_{\beta}(n(\varepsilon,i))\cdot d'_1(k(\varepsilon,i),y)\right)=\beta^{|k(\varepsilon,0)|}d_{\beta}(n(\varepsilon,0))\cdot d'_1(k(\varepsilon,0),y)=$$
$$=\beta^{|\varepsilon|}d_{\beta}(\varepsilon)\cdot d'_1(\varepsilon,y)=\beta^0\left(\sum_{i=0}^{|\varepsilon|}\left(\beta^i {f\left(\varepsilon,i,0\right)}\right)\right)\sum_{i=0}^{|\varepsilon|}\left(1^i {f\left(\varepsilon,i,h(\varepsilon,y)\right)}\prod_{j=1}^{d(y)}\frac{\left(g\left(y,j\right)-i\right)}{g\left(y,j\right)}\right)=$$
$$=\beta^0\left(\sum_{i=0}^{0}\left(\beta^i {f\left(\varepsilon,i,0\right)}\right)\right)\sum_{i=0}^{0}\left(1^i {f\left(\varepsilon,i,h(\varepsilon,y)\right)}\prod_{j=1}^{d(y)}\frac{\left(g\left(y,j\right)-i\right)}{g\left(y,j\right)}\right)=$$
$$=\beta^0\cdot\beta^0 {f\left(\varepsilon,0,0\right)}\left(1^0 {f\left(\varepsilon,0,0\right)}\prod_{j=1}^{d(y)}\frac{\left(g\left(y,j\right)-0\right)}{g\left(y,j\right)}\right)=\prod_{j=1}^{d(y)}\frac{g\left(y,j\right)}{g(y,j)}=1.$$
$$d'_{\beta}(x,y)=d'_{\beta}(\varepsilon,y)=\sum_{i=0}^{|\varepsilon|}\left(\beta^i {f\left(\varepsilon,i,h(\varepsilon,y)\right)}\prod_{j=1}^{d(y)}\frac{\left(g\left(y,j\right)-i\right)}{g\left(y,j\right)}\right)=$$
$$=\sum_{i=0}^{0}\left(\beta^i {f\left(\varepsilon,i,h(\varepsilon,y)\right)}\prod_{j=1}^{d(y)}\frac{\left(g\left(y,j\right)-i\right)}{g\left(y,j\right)}\right)=\beta^0 {f\left(\varepsilon,0,0\right)}\prod_{j=1}^{d(y)}\frac{\left(g\left(y,j\right)-0\right)}{g\left(y,j\right)}=\prod_{j=1}^{d(y)}\frac{g\left(y,j\right)}{g(y,j)}=1.$$

Таким образом, мы доказали, что 
$$\sum_{i=0}^{\#\varepsilon}\left( \beta^{|k(\varepsilon,i)|}d_{\beta}(n(\varepsilon,i))\cdot d'_1(k(\varepsilon,i),y)\right)=1=d'_{\beta}(\varepsilon,y),$$
что и требовалось.

\underline{\textbf{База}} доказана.

\underline{\textbf{Переход}} к $x\in\mathbb{YF}:\;\#x\ge 1$:

Ясно, что $\#x\ge 1\Longrightarrow \exists \alpha_0\in\{1,2\},$ $ x'\in\mathbb{YF}:$ $x=\alpha_0x'$.

\begin{Prop}[Лемма 7\cite{Evtuh1}, Утверждение 1.8\cite{Evtuh2}]\label{evtuh7}
Пусть $x\in\mathbb{YF},$ $y,z\in\mathbb{N}_0,$ $\alpha_0 \in \{1,2\}:$ $y \in \overline{|x|},$ $z \in \overline{\#x}$. Тогда 
    $$f(x,y,z)=f(\alpha_0x,y,z)(|\alpha_0x|-y).$$
\end{Prop}

    Ясно, что в условии Утверждения $\ref{evtuh7}$ $|\alpha_0x|-y\ge |\alpha_0x|-|x|=|\alpha_0|+|x|-|x|=\alpha_0>0$, что значит, что при
    $x\in\mathbb{YF},$ $y,z\in\mathbb{N}_0,$ $\alpha_0 \in \{1,2\}:$ $y \in \overline{|x|},$ $z \in \overline{\#x}$
    $$f(\alpha_0x,y,z)=\frac{f(x,y,z)}{(|\alpha_0x|-y)}.$$
    
Посчитаем, воспользовавшись этим Утверждением. Рассмотрим два случая:
\begin{enumerate}
    \item $h(x,y)<\# x$. 
    
    Снова рассмотрим два случая:

\begin{enumerate}
    \item $\alpha_0=1$.
    
    Вначале посчитаем, чему равна левая часть нашего равенства:
$$d'_{\beta}(x,y)=\sum_{i=0}^{|x|}\left(\beta^i {f\left(x,i,h(x,y)\right)}\prod_{j=1}^{d(y)}\frac{\left(g\left(y,j\right)-i\right)}{g\left(y,j\right)}\right)=$$
$$=\sum_{i=0}^{|1x'|}\left(\beta^i {f\left(1x',i,h(1x',y)\right)}\prod_{j=1}^{d(y)}\frac{\left(g\left(y,j\right)-i\right)}{g\left(y,j\right)}\right)=$$
$$=\sum_{i=0}^{|x'|}\left(\beta^i {f\left(1x',i,h(1x',y)\right)}\prod_{j=1}^{d(y)}\frac{\left(g\left(y,j\right)-i\right)}{g\left(y,j\right)}\right)+$$
$$+\sum_{i=|1x'|}^{|1x'|}\left(\beta^i {f\left(1x',i,h(1x',y)\right)}\prod_{j=1}^{d(y)}\frac{\left(g\left(y,j\right)-i\right)}{g\left(y,j\right)}\right)=$$
$$=\sum_{i=0}^{|x'|}\left(\beta^i {f\left(1x',i,h(1x',y)\right)}\prod_{j=1}^{d(y)}\frac{\left(g\left(y,j\right)-i\right)}{g\left(y,j\right)}\right)+$$
$$+\beta^{|1x'|} {f\left(1x',|1x'|,h(1x',y)\right)}\prod_{j=1}^{d(y)}\frac{\left(g\left(y,j\right)-|1x'|\right)}{g\left(y,j\right)}.$$
Применим Утверждение \ref{evtuh7} при $x'\in\mathbb{YF},$ $i\in \overline{|x'|},$ $h(1x',y)\in\overline{\#x'} $ и $1\in\{1,2\}$ к каждому слагаемому суммы
(Действительно верно, что $h(1x',y)\in\overline{\#x'}$, так как в данном случае $h(1x',y)=h(x,y)<\#x\Longrightarrow h(1x',y)\le \#x -1 =\#(1x')-1=1+\#x'-1=\#x'$).

Таким образом, наше выражение равняется следующему:
$$\sum_{i=0}^{|x'|}\left(\beta^i \frac{f\left(x',i,h(1x',y)\right)}{|1x'|-i}\prod_{j=1}^{d(y)}\frac{\left(g\left(y,j\right)-i\right)}{g\left(y,j\right)}\right)+$$
$$+\beta^{|1x'|} f\left(1x',|1x'|,h(1x',y)\right)\prod_{j=1}^{d(y)}\frac{\left(g\left(y,j\right)-|1x'|\right)}{g\left(y,j\right)}.$$

Тут мы посчитали, чему равна левая сторона нашего равенства. Теперь будем считать, чему равна правая:
$$ \sum_{i=0}^{\#x} \left(\beta^{|k(x,i)|}d_{\beta}(n(x,i))\cdot d'_1(k(x,i),y)\right)=$$
$$=\sum_{i=0}^{\#x}\left(  \beta^{|k(x,i)|}\left(\sum_{j=0}^{|n(x,i)|}\left(\beta^j {f\left(n(x,i),j,0\right)}\right)\right)d'_1(k(x,i),y)\right)=$$
$$=\sum_{i=0}^{\#(1x')}\left(  \beta^{|k(1x',i)|}\left(\sum_{j=0}^{|n(1x',i)|}\left(\beta^j {f\left(n(1x',i),j,0\right)}\right)\right)d'_1(k(1x',i),y)\right)=$$
$$=\sum_{i=0}^{\#x'}\left(  \beta^{|k(1x',i)|}\left(\sum_{j=0}^{|n(1x',i)|}\left(\beta^j {f\left(n(1x',i),j,0\right)}\right)\right)d'_1(k(1x',i),y)\right)+$$
$$+\sum_{i=\#(1x')}^{\#(1x')}\left(  \beta^{|k(1x',i)|}\left(\sum_{j=0}^{|n(1x',i)|}\left(\beta^j {f\left(n(1x',i),j,0\right)}\right)\right)d'_1(k(1x',i),y)\right)=$$
$$=\sum_{i=0}^{\#x'}\left(  \beta^{|k(1x',i)|}\left(\sum_{j=0}^{|n(1x',i)|}\left(\beta^j {f\left(n(1x',i),j,0\right)}\right)\right)d'_1(k(1x',i),y)\right)+$$
$$ + \beta^{|k(1x',\#(1x'))|}\left(\sum_{j=0}^{|n(1x',\#(1x'))|}\left(\beta^j {f\left(n(1x',\#(1x')),j,0\right)}\right)\right)d'_1(k(1x',\#(1x')),y)=$$
$$=\text{(По Замечанию \ref{mega} при $(1x')\in\mathbb{YF}$)}=$$
$$=\sum_{i=0}^{\#x'}\left(  \beta^{|k(1x',i)|}\left(\sum_{j=0}^{|n(1x',i)|-1}\left(\beta^j {f\left(n(1x',i),j,0\right)}\right)\right)d'_1(k(1x',i),y)\right)+$$
$$+\sum_{i=0}^{\#x'}\left(  \beta^{|k(1x',i)|}\left(\sum_{j=|n(1x',i)|}^{|n(1x',i)|}\left(\beta^j {f\left(n(1x',i),j,0\right)}\right)\right)d'_1(k(1x',i),y)\right)+$$
$$ + \beta^{|1x'|}\left(\sum_{j=0}^{|\varepsilon|}\left(\beta^j {f\left(n(1x',\#(1x')),j,0\right)}\right)\right)d'_1(k(1x',\#(1x')),y)=$$
$$=\sum_{i=0}^{\#x'}\left(  \beta^{|k(1x',i)|}\left(\sum_{j=0}^{|n(1x',i)|-1}\left(\beta^j {f\left(n(1x',i),j,0\right)}\right)\right)d'_1(k(1x',i),y)\right)+$$
$$+\sum_{i=0}^{\#x'}\left(  \beta^{|k(1x',i)|}\left(\beta^{|n(1x',i)|} {f\left(n(1x',i),|n(1x',i)|,0\right)}\right)d'_1(k(1x',i),y)\right)+$$
$$ + \beta^{|1x'|}\left(\sum_{j=0}^{0}\left(\beta^j {f\left(n(1x',\#(1x')),j,0\right)}\right)\right)d'_1(k(1x',\#(1x')),y)=$$
$$=\text{(По Утверждению \ref{rofl} при $x'\in\mathbb{YF},$ $i\in\mathbb{N}_0,$ $1\in\{1,2\}$, ко всем слагаемым первой строчки)}=$$
$$=\sum_{i=0}^{\#x'}\left(  \beta^{|k(1x',i)|}\left(\sum_{j=0}^{|1n(x',i)|-1}\left(\beta^j {f\left(1n(x',i),j,0\right)}\right)\right)d'_1(k(1x',i),y)\right)+$$
$$+\sum_{i=0}^{\#x'}\left(  \beta^{|k(1x',i)|+|n(1x',i)|} {f\left(n(1x',i),|n(1x',i)|,0\right)}\cdot d'_1(k(1x',i),y)\right)+$$
$$ + \beta^{|1x'|}\cdot\beta^0 {f\left(n(1x',\#(1x')),0,0\right)}\cdot d'_1(k(1x',\#(1x')),y).$$

Применим Утверждение \ref{evtuh7} при $(n(x',i))\in\mathbb{YF},$ $j\in \overline{|1n(x',i)|-1}=\overline{|n(x',i)|},$ $0\in\overline{\#(n(x',i))} $ и $1\in\{1,2\}$ к каждому слагаемому первой строчки.

Кроме того, применим Замечание \ref{prunk} при $(1x')\in\mathbb{YF},$ $i\in\mathbb{N}_0$ к каждому слагаемому второй строчки  и получим, что наше выражение равняется следующему:
$$\sum_{i=0}^{\#x'}\left(  \beta^{|k(1x',i)|}\left(\sum_{j=0}^{|1n(x',i)|-1}\left(\beta^j \frac{f\left(n(x',i),j,0\right)}{|1n(x',i)|-j}\right)\right)d'_1(k(1x',i),y)\right)+$$
$$+\sum_{i=0}^{\#x'}\left(  \beta^{|1x'|} {f\left(n(1x',i),|n(1x',i)|,0\right)}\cdot d'_1(k(1x',i),y)\right)+$$
$$ + \beta^{|1x'|} {f\left(n(1x',\#(1x')),0,0\right)}\cdot d'_1(k(1x',\#(1x')),y)=$$
$$=\text{(По Утверждению \ref{rofl} при $x'\in\mathbb{YF},$ $i\in\mathbb{N}_0,$ $ 1\in\{1,2\}$ ко всем слагаемым первой строчки)}=$$
$$=\sum_{i=0}^{\#x'}\left(  \beta^{|k(1x',i)|}\left(\sum_{j=0}^{|n(1x',i)|-1}\left(\beta^j \frac{f\left(n(x',i),j,0\right)}{|n(1x',i)|-j}\right)\right)d'_1(k(1x',i),y)\right)+$$
$$+\beta^{|1x'|}\sum_{i=0}^{\#x'}\left(   {f\left(n(1x',i),|n(1x',i)|,0\right)}\cdot d'_1(k(1x',i),y)\right)+$$
$$ + \beta^{|1x'|} {f\left(n(1x',\#(1x')),0,0\right)}\cdot d'_1(k(1x',\#(1x')),y).$$

Очевидно, что $\forall i\in\overline{\#x'}$
$$|k(1x',i)|+(|n(1x',i)|-1)=(|k(1x',i)|+|n(1x',i)|)-1=$$
$$=(\text{По Замечанию \ref{prunk} при $(1x')\in\mathbb{YF},$ $i\in\mathbb{N}_0$})=$$
$$=|1x'|-1=1+|x'|-1=|x'|.$$
А это значит, что наше выражение равняется следующему:
$$\sum_{k=0}^{|x'|}\left(  \beta^{k}\sum_{\begin{smallmatrix}(l,m)\in\mathbb{N}_0\times\mathbb{N}_0: \\l+|k(1x',m)|=k              \end{smallmatrix}}\left( \frac{f\left(n(x',m),l,0\right)}{|n(1x',m)|-l}\cdot  d'_1(k(1x',m),y)\right)\right)+$$
$$+\beta^{|1x'|}\sum_{i=0}^{\#x'}\left(   {f\left(n(1x',i),|n(1x',i)|,0\right)}\cdot d'_1(k(1x',i),y)\right)+$$
$$ + \beta^{|1x'|} {f\left(n(1x',\#(1x')),0,0\right)}\cdot d'_1(k(1x',\#(1x')),y).$$

Очевидно, что если у нас есть пара $(l,m)\in\mathbb{N}_0\times\mathbb{N}_0:$ $l+|k(1x',m)|=k$, то по Замечанию \ref{prunk} при $(1x')\in\mathbb{YF},$ $m\in\mathbb{N}_0$ ясно, что $l+|1x'|-|n(1x',m)|=k,$ а это значит, что $|n(1x',m)|-l=|1x'|-k.$ Таким образом, наше выражение равно следующему:
$$\sum_{k=0}^{|x'|}\left(  \beta^{k}\sum_{\begin{smallmatrix}(l,m)\in\mathbb{N}_0\times\mathbb{N}_0: \\l+|k(1x',m)|=k              \end{smallmatrix}}\left( \frac{f\left(n(x',m),l,0\right)}{|1x'|-k}\cdot d'_1(k(1x',m),y)\right)\right)+$$
$$+\beta^{|1x'|}\sum_{i=0}^{\#x'}\left(   {f\left(n(1x',i),|n(1x',i)|,0\right)}\cdot d'_1(k(1x',i),y)\right)+$$
$$ + \beta^{|1x'|} {f\left(n(1x',\#(1x')),0,0\right)}\cdot d'_1(k(1x',\#(1x')),y).$$

Тут у нас есть пары $(l,m)\in\mathbb{N}_0\times\mathbb{N}_0:$ $l+|k(1x',m)|=k$ при $k\le |x'|$. Если $m=\#(1x')$, то 
$$k=l+|k(1x',m)|=l+|k(1x',\#(1x'))|=$$
$$=\text{(По Замечанию \ref{mega} при $(1x')\in\mathbb{YF}$)}=$$
$$=l+|1x'|=l+1+|x'|\ge 0+1+k>k.$$
Противоречие. А это значит, что $m<\#(1x')$, то есть $m\in\overline{\#x'}$. Таким образом, 
$$1x'=n(1x',m)k(1x',m),\; \#(k(1x',m))=m,\; m\in\overline{\#x'}\Longleftrightarrow$$
$$\Longleftrightarrow\text{(По Утверждению \ref{rofl} при $x'\in\mathbb{YF},\; m\in\mathbb{N}_0,\; 1\in\{1,2\}$)}\Longleftrightarrow$$
$$\Longleftrightarrow 1x'=1n(x',m)k(1x',m), \;\#(k(1x',m))=m,\; m\in\overline{\#x'}\Longleftrightarrow$$
$$\Longleftrightarrow x'=n(x',m)k(1x',m),\; \#(k(1x',m))=m,\; m\in\overline{\#x'}\Longleftrightarrow$$
$$\Longleftrightarrow k(x',m)=k(1x',m).$$
А значит наше выражение равно следующему:
$$\sum_{k=0}^{|x'|}\left(\beta^k  
\sum_{\begin{smallmatrix}(l,m)\in\mathbb{N}_0\times\mathbb{N}_0: \\l+|k(x',m)|=k              \end{smallmatrix}}\left( \frac{f\left(n(x',m),l,0\right)}{|1x'|-k}\cdot d'_1(k(x',m),y)\right)\right)+$$
$$+\beta^{|1x'|}\sum_{i=0}^{\#x'}\left(   {f\left(n(1x',i),|n(1x',i)|,0\right)}\cdot d'_1(k(1x',i),y)\right)+$$
$$ + \beta^{|1x'|} {f\left(n(1x',\#(1x')),0,0\right)}\cdot d'_1(k(1x',\#(1x')),y).$$

Заметим, что 
$$0=|\varepsilon|=\text{(По Замечанию \ref{mega} при $(1x')\in\mathbb{YF}$)}=|n(1x',\#(1x'))|.$$
Таким образом, наше выражение равно следующему:
$$\sum_{k=0}^{|x'|}\left(\beta^k  
\sum_{\begin{smallmatrix}(l,m)\in\mathbb{N}_0\times\mathbb{N}_0: \\l+|k(x',m)|=k              \end{smallmatrix}}\left( \frac{f\left(n(x',m),l,0\right)}{|1x'|-k}\cdot d'_1(k(x',m),y)\right)\right)+$$
$$+\beta^{|1x'|}\sum_{i=0}^{\#x'}\left(   {f\left(n(1x',i),|n(1x',i)|,0\right)}\cdot d'_1(k(1x',i),y)\right)+$$
$$ + \beta^{|1x'|} {f\left(n(1x',\#(1x')),|n(1x',\#(1x'))|,0\right)}\cdot d'_1(k(1x',\#(1x')),y)=$$
$$=\sum_{k=0}^{|x'|}\left(\beta^k  
\sum_{\begin{smallmatrix}(l,m)\in\mathbb{N}_0\times\mathbb{N}_0: \\l+|k(x',m)|=k              \end{smallmatrix}}\left( \frac{f\left(n(x',m),l,0\right)}{|1x'|-k}\cdot d'_1(k(x',m),y)\right)\right)+$$
$$+\beta^{|1x'|}\sum_{i=0}^{\#x'}\left(   {f\left(n(1x',i),|n(1x',i)|,0\right)}\cdot d'_1(k(1x',i),y)\right)+$$
$$ + \beta^{|1x'|} \sum_{i=\#(1x')}^{\#(1x')}\left(  {f\left(n(1x',i),|n(1x',i)|,0\right)}\cdot d'_1(k(1x',i),y)\right)=$$
$$=\sum_{k=0}^{|x'|}\left(\beta^k  
\sum_{\begin{smallmatrix}(l,m)\in\mathbb{N}_0\times\mathbb{N}_0: \\l+|k(x',m)|=k              \end{smallmatrix}}\left( \frac{f\left(n(x',m),l,0\right)}{|1x'|-k}\cdot d'_1(k(x',m),y)\right)\right)+$$
$$+\beta^{|1x'|}\sum_{i=0}^{\#(1x')}\left(   {f\left(n(1x',i),|n(1x',i)|,0\right)}\cdot d'_1(k(1x',i),y)\right).$$

Тут мы посчитали, чему равна правая сторона нашего равенства. Таким образом, мы поняли, что (вычтем правую сторону равенства из левого):
$$d'_{\beta}(x,y)- \sum_{i=0}^{\#x}\left( \beta^{|k(x,i)|}d_{\beta}(n(x,i))\cdot d'_1(k(x,i),y)\right)= $$
$$=\sum_{i=0}^{|x'|}\left(\beta^i \frac{f\left(x',i,h(1x',y)\right)}{|1x'|-i}\prod_{j=1}^{d(y)}\frac{\left(g\left(y,j\right)-i\right)}{g\left(y,j\right)}\right)+$$
$$+\beta^{|1x'|} f\left(1x',|1x'|,h(1x',y)\right)\prod_{j=1}^{d(y)}\frac{\left(g\left(y,j\right)-|1x'|\right)}{g\left(y,j\right)}-$$
$$-\sum_{k=0}^{|x'|}\left(\beta^k  
\sum_{\begin{smallmatrix}(l,m)\in\mathbb{N}_0\times\mathbb{N}_0: \\l+|k(x',m)|=k              \end{smallmatrix}}\left( \frac{f\left(n(x',m),l,0\right)}{|1x'|-k}\cdot d'_1(k(x',m),y)\right)\right)-$$
$$-\beta^{|1x'|}\sum_{i=0}^{\#(1x')}  \left( {f\left(n(1x',i),|n(1x',i)|,0\right)}\cdot d'_1(k(1x',i),y)\right).$$

Запомним это равенство.

Теперь воспользуемся предположением индукции при $x'\in\mathbb{YF}$:
$$d'_{\beta}(x',y)= \sum_{i=0}^{\#x'} \left(\beta^{|k(x',i)|}d_{\beta}(n(x',i))\cdot d'_1(k(x',i),y) \right)\Longleftrightarrow$$
$$\Longleftrightarrow \sum_{i=0}^{|x'|}\left(\beta^i {f\left(x',i,h(x',y)\right)}\prod_{j=1}^{d(y)}\frac{\left(g\left(y,j\right)-i\right)}{g\left(y,j\right)}\right)=\sum_{i=0}^{\#x'} \left(\beta^{|k(x',i)|}d_{\beta}(n(x',i))\cdot d'_1(k(x',i),y)\right)\Longleftrightarrow$$
$$\Longleftrightarrow(\text{По определению функции $h$, так как в данном случае $h(1x',y)<\#(1x')$})\Longleftrightarrow$$
$$\Longleftrightarrow \sum_{i=0}^{|x'|}\left(\beta^i {f\left(x',i,h(1x',y)\right)}\prod_{j=1}^{d(y)}\frac{\left(g\left(y,j\right)-i\right)}{g\left(y,j\right)}\right)=\sum_{i=0}^{\#x'}\left( \beta^{|k(x',i)|} d_{\beta}(n(x',i))\cdot d'_1(k(x',i),y)\right).$$

Заметим, что по обозначению
$$\sum_{i=0}^{\#x'} \left(\beta^{|k(x',i)|}d_{\beta}(n(x',i))\cdot d'_1(k(x',i),y)\right)=$$
$$=\sum_{i=0}^{\#x'}\left( \beta^{|k(x',i)|}\left(\sum_{j=0}^{|n(x',i)|}\left(\beta^j \left({f\left(n(x',i),j,0\right)}\right)\right)\right)d'_1(k(x',i),y)\right) =$$
$$=(\text{По Замечанию \ref{prunk} при $x'\in\mathbb{YF},$ $i\in\mathbb{N}_0$, если $i\in\overline{\#x'}$, то $|k(x',i)|+|n(x',i)|=|x'|$})=$$
$$=\sum_{k=0}^{|x'|}\left( \beta^{k}\sum_{\begin{smallmatrix}(l,m)\in\mathbb{N}_0\times\mathbb{N}_0: \\l+|k(x',m)|=k              \end{smallmatrix}} \left( {f\left(n(x',m),l,0\right)}\cdot d'_1(k(x',m),y)\right)\right).$$

А значит
$$ \sum_{i=0}^{|x'|}\left(\beta^i {f\left(x',i,h(1x',y)\right)}\prod_{j=1}^{d(y)}\frac{\left(g\left(y,j\right)-i\right)}{g\left(y,j\right)}\right)=$$
$$=\sum_{k=0}^{|x'|}\left( \beta^{k}\sum_{\begin{smallmatrix}(l,m)\in\mathbb{N}_0\times\mathbb{N}_0: \\l+|k(x',m)|=k              \end{smallmatrix}} \left( {f\left(n(x',m),l,0\right)}\cdot d'_1(k(x',m),y)\right)\right)\Longleftrightarrow$$
$$\Longleftrightarrow \sum_{i=0}^{|x'|}\left(\beta^i\left( {f\left(x',i,h(1x',y)\right)}\prod_{j=1}^{d(y)}\frac{\left(g\left(y,j\right)-i\right)}{g\left(y,j\right)}-\right.\right.$$
$$-\left.\left. \sum_{\begin{smallmatrix}(l,m)\in\mathbb{N}_0\times\mathbb{N}_0: \\l+|k(x',m)|=i              \end{smallmatrix}} \left( {f\left(n(x',m),l,0\right)}\cdot d'_1(k(x',m),y)\right)\right)\right)=0.$$

И это равенство верно для любого $\beta\in(0,1]$. А значит слева находится многочлен от $\beta$, тождественно равный нулю. А значит любой его коэффициент при $\beta^i$ при $i\in\overline{|x'|}$ равен нулю. То есть
$\forall i\in\overline{|x'|}$ 
$$ {f\left(x',i,h(1x',y)\right)}\prod_{j=1}^{d(y)}\frac{\left(g\left(y,j\right)-i\right)}{g\left(y,j\right)}-\sum_{\begin{smallmatrix}(l,m)\in\mathbb{N}_0\times\mathbb{N}_0: \\l+|k(x',m)|=i            \end{smallmatrix}} \left( {f\left(n(x',m),l,0\right)}\cdot d'_1(k(x',m),y)\right)=0\Longleftrightarrow$$
$$\Longleftrightarrow{f\left(x',i,h(1x',y)\right)}\prod_{j=1}^{d(y)}\frac{\left(g\left(y,j\right)-i\right)}{g\left(y,j\right)}=\sum_{\begin{smallmatrix}(l,m)\in\mathbb{N}_0\times\mathbb{N}_0: \\l+|k(x',m)|=i            \end{smallmatrix}} \left( {f\left(n(x',m),l,0\right)}\cdot d'_1(k(x',m),y)\right).$$

Теперь заметим, что если $i\in\overline{|x'|}$, то $|1x'|-i\ge |1x'|-|x'|=1+|x'|-|x'|=1>0$. А значит $\forall i\in\overline{|x'|}$
$$\beta^i \frac{f\left(x',i,h(1x',y)\right)}{|1x'|-i}\prod_{j=1}^{d(y)}\frac{\left(g\left(y,j\right)-i\right)}{g\left(y,j\right)}=$$
$$=\beta^i\frac{\displaystyle\sum_{\begin{smallmatrix}(l,m)\in\mathbb{N}_0\times\mathbb{N}_0: \\l+|k(x',m)|=i            \end{smallmatrix}} \left( {f\left(n(x',m),l,0\right)}\cdot d'_1(k(x',m),y)\right)}{\displaystyle|1x'|-i}=$$
$$=\beta^i  
\sum_{\begin{smallmatrix}(l,m)\in\mathbb{N}_0\times\mathbb{N}_0: \\l+|k(x',m)|=i              \end{smallmatrix}}\left( \frac{f\left(n(x',m),l,0\right)}{|1x'|-i}\cdot d'_1(k(x',m),y)\right).$$

Просуммируем данное равенство по $i\in\overline{|x'|}$:
$$\sum_{i=0}^{|x'|}\left(\beta^i \frac{f\left(x',i,h(1x',y)\right)}{|1x'|-i}\prod_{j=1}^{d(y)}\frac{\left(g\left(y,j\right)-i\right)}{g\left(y,j\right)}\right)=$$
$$=\sum_{i=0}^{|x'|}\left(\beta^i  
\sum_{\begin{smallmatrix}(l,m)\in\mathbb{N}_0\times\mathbb{N}_0: \\l+|k(x',m)|=i              \end{smallmatrix}}\left( \frac{f\left(n(x',m),l,0\right)}{|1x'|-i}\cdot d'_1(k(x',m),y)\right)\right)=$$
$$=\sum_{k=0}^{|x'|}\left(\beta^k  
\sum_{\begin{smallmatrix}(l,m)\in\mathbb{N}_0\times\mathbb{N}_0: \\l+|k(x',m)|=k              \end{smallmatrix}}\left( \frac{f\left(n(x',m),l,0\right)}{|1x'|-k}\cdot d'_1(k(x',m),y)\right)\right).$$

А это значит, что (вернёмся к запомненному равенству)
$$d'_{\beta}(x,y)- \sum_{i=0}^{\#x}\left( \beta^{|k(x,i)|}d_{\beta}(n(x,i))\cdot d'_1(k(x,i),y)\right)= $$
$$=\sum_{i=0}^{|x'|}\left(\beta^i \frac{f\left(x',i,h(1x',y)\right)}{|1x'|-i}\prod_{j=1}^{d(y)}\frac{\left(g\left(y,j\right)-i\right)}{g\left(y,j\right)}\right)+$$
$$+\beta^{|1x'|} f\left(1x',|1x'|,h(1x',y)\right)\prod_{j=1}^{d(y)}\frac{\left(g\left(y,j\right)-|1x'|\right)}{g\left(y,j\right)}-$$
$$-\sum_{k=0}^{|x'|}\left(\beta^k  
\sum_{\begin{smallmatrix}(l,m)\in\mathbb{N}_0\times\mathbb{N}_0: \\l+|k(x',m)|=k              \end{smallmatrix}}\left( \frac{f\left(n(x',m),l,0\right)}{|1x'|-k}\cdot d'_1(k(x',m),y)\right)\right)-$$
$$-\beta^{|1x'|}\sum_{i=0}^{\#(1x')}  \left( {f\left(n(1x',i),|n(1x',i)|,0\right)}\cdot d'_1(k(1x',i),y)\right)=$$
$$=\beta^{|1x'|} f\left(1x',|1x'|,h(1x',y)\right)\prod_{j=1}^{d(y)}\frac{\left(g\left(y,j\right)-|1x'|\right)}{g\left(y,j\right)}-$$
$$-\beta^{|1x'|}\sum_{i=0}^{\#(1x')}  \left( {f\left(n(1x',i),|n(1x',i)|,0\right)}\cdot d'_1(k(1x',i),y)\right)=$$
$$=\beta^{|1x'|}\left(f\left(1x',|1x'|,h(1x',y)\right)\prod_{j=1}^{d(y)}\frac{\left(g\left(y,j\right)-|1x'|\right)}{g\left(y,j\right)}-\right.$$
$$\left.-\sum_{i=0}^{\#(1x')}  \left( {f\left(n(1x',i),|n(1x',i)|,0\right)}\cdot d'_1(k(1x',i),y)\right)\right).$$

По Утверждению \ref{beta1} при наших $x\in\mathbb{YF}$ и $y\in\mathbb{YF}_\infty$
$$d'_{1}(x,y)= \sum_{i=0}^{\#x} \left(1^{|k(x,i)|}d_{1}(n(x,i))\cdot d'_1(k(x,i),y)\right), $$
а это значит, что (подставим $\beta = 1$ в равенство, к которому мы пришли)
$$0=d'_{1}(x,y)- \sum_{i=0}^{\#x}\left( 1^{|k(x,i)|}d_{1}(n(x,i))\cdot d'_1(k(x,i),y)\right)= $$
$$=1^{|1x'|}\left(f\left(1x',|1x'|,h(1x',y)\right)\prod_{j=1}^{d(y)}\frac{\left(g\left(y,j\right)-|1x'|\right)}{g\left(y,j\right)}-\right.$$
$$\left.-\sum_{i=0}^{\#(1x')}  \left( {f\left(n(1x',i),|n(1x',i)|,0\right)}\cdot d'_1(k(1x',i),y)\right)\right)\Longrightarrow$$
$$\Longrightarrow f\left(1x',|1x'|,h(1x',y)\right)\prod_{j=1}^{d(y)}\frac{\left(g\left(y,j\right)-|1x'|\right)}{g\left(y,j\right)}-$$
$$-\sum_{i=0}^{\#(1x')}  \left( {f\left(n(1x',i),|n(1x',i)|,0\right)}\cdot d'_1(k(1x',i),y)\right)=0.$$

Таким образом, ясно, что
$$d'_{\beta}(x,y)- \sum_{i=0}^{\#x}\left( \beta^{|k(x,i)|}d_{\beta}(n(x,i))\cdot d'_1(k(x,i),y)\right)= $$
$$=\beta^{|1x'|}\left(f\left(1x',|1x'|,h(1x',y)\right)\prod_{j=1}^{d(y)}\frac{\left(g\left(y,j\right)-|1x'|\right)}{g\left(y,j\right)}-\right.$$
$$\left.-\sum_{i=0}^{\#(1x')}  \left( {f\left(n(1x',i),|n(1x',i)|,0\right)}\cdot d'_1(k(1x',i),y)\right)\right)=\beta^{|1x'|}\cdot 0=0\Longrightarrow$$
$$\Longrightarrow d'_{\beta}(x,y)= \sum_{i=0}^{\#x}\left( \beta^{|k(x,i)|}d_{\beta}(n(x,i))\cdot d'_1(k(x,i),y)\right),$$
что и требовалось.

В данном случае \underline{\textbf{Переход}} доказан.

    \item $\alpha_0=2$: 
    
    Вначале посчитаем, чему равна левая часть нашего равенства:
$$d'_{\beta}(x,y)=\sum_{i=0}^{|x|}\left(\beta^i {f\left(x,i,h(x,y)\right)}\prod_{j=1}^{d(y)}\frac{\left(g\left(y,j\right)-i\right)}{g\left(y,j\right)}\right)=$$
$$=\sum_{i=0}^{|2x'|}\left(\beta^i {f\left(2x',i,h(2x',y)\right)}\prod_{j=1}^{d(y)}\frac{\left(g\left(y,j\right)-i\right)}{g\left(y,j\right)}\right)=$$
$$=\sum_{i=0}^{|x'|}\left(\beta^i {f\left(2x',i,h(2x',y)\right)}\prod_{j=1}^{d(y)}\frac{\left(g\left(y,j\right)-i\right)}{g\left(y,j\right)}\right)+$$
$$+\sum_{i=|x'|+1}^{|x'|+1}\left(\beta^i {f\left(2x',i,h(2x',y)\right)}\prod_{j=1}^{d(y)}\frac{\left(g\left(y,j\right)-i\right)}{g\left(y,j\right)}\right)+$$
$$+\sum_{i=|2x'|}^{|2x'|}\left(\beta^i {f\left(2x',i,h(2x',y)\right)}\prod_{j=1}^{d(y)}\frac{\left(g\left(y,j\right)-i\right)}{g\left(y,j\right)}\right)=$$
$$=\sum_{i=0}^{|x'|}\left(\beta^i {f\left(2x',i,h(2x',y)\right)}\prod_{j=1}^{d(y)}\frac{\left(g\left(y,j\right)-i\right)}{g\left(y,j\right)}\right)+$$
$$+\beta^{|x'|+1} {f\left(2x',|x'|+1,h(2x',y)\right)}\prod_{j=1}^{d(y)}\frac{\left(g\left(y,j\right)-|x'|-1\right)}{g\left(y,j\right)}+$$
$$+\beta^{|2x'|} {f\left(2x',|2x'|,h(2x',y)\right)}\prod_{j=1}^{d(y)}\frac{\left(g\left(y,j\right)-|2x'|\right)}{g\left(y,j\right)}.$$

\begin{Prop}[Утверждение 9.3\cite{Evtuh1}, Утверждение 1.11\cite{Evtuh2}]\label{evtuh93}
Пусть $x\in\mathbb{YF},$ $z\in\mathbb{N}_0:$ $z\in\overline{\#(2x)}$. Тогда
$$f(2x,|x|+1,z)=0.$$
\end{Prop}

Применим Утверждение \ref{evtuh93} при $x'\in\mathbb{YF}$, $h(2x',y)\in\mathbb{N}_0$ и поймём, что наше выражение равно следующему:
$$\sum_{i=0}^{|x'|}\left(\beta^i {f\left(2x',i,h(2x',y)\right)}\prod_{j=1}^{d(y)}\frac{\left(g\left(y,j\right)-i\right)}{g\left(y,j\right)}\right)+$$
$$+\beta^{|x'|+1}\cdot {0}\cdot\prod_{j=1}^{d(y)}\frac{\left(g\left(y,j\right)-|x'|-1\right)}{g\left(y,j\right)}+$$
$$+\beta^{|2x'|} {f\left(2x',|2x'|,h(2x',y)\right)}\prod_{j=1}^{d(y)}\frac{\left(g\left(y,j\right)-|2x'|\right)}{g\left(y,j\right)}=$$
$$=\sum_{i=0}^{|x'|}\left(\beta^i {f\left(2x',i,h(2x',y)\right)}\prod_{j=1}^{d(y)}\frac{\left(g\left(y,j\right)-i\right)}{g\left(y,j\right)}\right)+$$
$$+\beta^{|2x'|} {f\left(2x',|2x'|,h(2x',y)\right)}\prod_{j=1}^{d(y)}\frac{\left(g\left(y,j\right)-|2x'|\right)}{g\left(y,j\right)}.$$

Применим Утверждение \ref{evtuh7} при $x'\in\mathbb{YF},$ $i\in \overline{|x'|},$ $h(2x',y)\in\overline{\#x'} $ и $2\in\{1,2\}$ к каждому слагаемому суммы (Действительно верно, что $h(2x',y)\in\overline{\#x'}$, так как в данном случае $h(2x',y)=h(x,y)<\#x\Longrightarrow h(2x',y)\le \#x -1 =\#(2x')-1=1+\#x'-1=\#x'$):

Таким образом, наше выражение равняется следующему:
$$\sum_{i=0}^{|x'|}\left(\beta^i \frac{f\left(x',i,h(2x',y)\right)}{|2x'|-i}\prod_{j=1}^{d(y)}\frac{\left(g\left(y,j\right)-i\right)}{g\left(y,j\right)}\right)+$$
$$+\beta^{|2x'|} f\left(2x',|2x'|,h(2x',y)\right)\prod_{j=1}^{d(y)}\frac{\left(g\left(y,j\right)-|2x'|\right)}{g\left(y,j\right)}.$$

Тут мы посчитали, чему равна левая сторона нашего равенства. Теперь будем считать, чему равна правая:
$$\sum_{i=0}^{\#x} \left(\beta^{|k(x,i)|}d_{\beta}(n(x,i))\cdot d'_1(k(x,i),y)\right)=$$
$$=\sum_{i=0}^{\#x}\left(  \beta^{|k(x,i)|}\left(\sum_{j=0}^{|n(x,i)|}\left(\beta^j {f\left(n(x,i),j,0\right)}\right)\right)d'_1(k(x,i),y)\right)=$$
$$=\sum_{i=0}^{\#(2x')}\left(  \beta^{|k(2x',i)|}\left(\sum_{j=0}^{|n(2x',i)|}\left(\beta^j {f\left(n(2x',i),j,0\right)}\right)\right)d'_1(k(2x',i),y)\right)=$$
$$=\sum_{i=0}^{\#x'}\left(  \beta^{|k(2x',i)|}\left(\sum_{j=0}^{|n(2x',i)|}\left(\beta^j {f\left(n(2x',i),j,0\right)}\right)\right)d'_1(k(2x',i),y)\right)+$$
$$+\sum_{i=\#(2x')}^{\#(2x')}\left(  \beta^{|k(2x',i)|}\left(\sum_{j=0}^{|n(2x',i)|}\left(\beta^j {f\left(n(2x',i),j,0\right)}\right)\right)d'_1(k(2x',i),y)\right)=$$
$$=\sum_{i=0}^{\#x'}\left(  \beta^{|k(2x',i)|}\left(\sum_{j=0}^{|n(2x',i)|}\left(\beta^j {f\left(n(2x',i),j,0\right)}\right)\right)d'_1(k(2x',i),y)\right)+$$
$$ + \beta^{|k(2x',\#(2x'))|}\left(\sum_{j=0}^{|n(2x',\#(2x'))|}\left(\beta^j {f\left(n(2x',\#(2x')),j,0\right)}\right)\right)d'_1(k(2x',\#(2x')),y)=$$
$$=\text{(По Замечанию \ref{mega} при $(2x')\in\mathbb{YF}$)}=$$
$$=\sum_{i=0}^{\#x'}\left(  \beta^{|k(2x',i)|}\left(\sum_{j=0}^{|n(2x',i)|-2}\left(\beta^j {f\left(n(2x',i),j,0\right)}\right)\right)d'_1(k(2x',i),y)\right)+$$
$$+\sum_{i=0}^{\#x'}\left(  \beta^{|k(2x',i)|}\left(\sum_{j=|n(2x',i)|-1}^{|n(2x',i)|-1}\left(\beta^j {f\left(n(2x',i),j,0\right)}\right)\right)d'_1(k(2x',i),y)\right)+$$
$$+\sum_{i=0}^{\#x'}\left(  \beta^{|k(2x',i)|}\left(\sum_{j=|n(2x',i)|}^{|n(2x',i)|}\left(\beta^j {f\left(n(2x',i),j,0\right)}\right)\right)d'_1(k(2x',i),y)\right)+$$
$$ + \beta^{|2x'|}\left(\sum_{j=0}^{|\varepsilon|}\left(\beta^j {f\left(n(2x',\#(2x')),j,0\right)}\right)\right)d'_1(k(2x',\#(2x')),y)=$$
$$=\sum_{i=0}^{\#x'}\left(  \beta^{|k(2x',i)|}\left(\sum_{j=0}^{|n(2x',i)|-2}\left(\beta^j {f\left(n(2x',i),j,0\right)}\right)\right)d'_1(k(2x',i),y)\right)+$$
$$+\sum_{i=0}^{\#x'}\left(  \beta^{|k(2x',i)|}\left(\beta^{|n(2x',i)|-1} {f\left(n(2x',i),|n(2x',i)|-1,0\right)}\right)d'_1(k(2x',i),y)\right)+$$
$$+\sum_{i=0}^{\#x'}\left(  \beta^{|k(2x',i)|}\left(\beta^{|n(2x',i)|} {f\left(n(2x',i),|n(2x',i)|,0\right)}\right)d'_1(k(2x',i),y)\right)+$$
$$ + \beta^{|2x'|}\left(\sum_{j=0}^{0}\left(\beta^j {f\left(n(2x',\#(2x')),j,0\right)}\right)\right)d'_1(k(2x',\#(2x')),y)=$$
$$=\text{(По Утверждению \ref{rofl} при $x'\in\mathbb{YF},$ $i\in\mathbb{N}_0,$ $ 2\in\{1,2\}$ ко всем слагаемым первых строчек)}=$$
$$=\sum_{i=0}^{\#x'}\left(  \beta^{|k(2x',i)|}\left(\sum_{j=0}^{|2n(x',i)|-2}\left(\beta^j {f\left(2n(x',i),j,0\right)}\right)\right)d'_1(k(2x',i),y)\right)+$$
$$+\sum_{i=0}^{\#x'}\left(  \beta^{|k(2x',i)|}\left(\beta^{|n(2x',i)|-1} {f\left(2n(x',i),|2n(x',i)|-1,0\right)}\right)d'_1(k(2x',i),y)\right)+$$
$$+\sum_{i=0}^{\#x'}\left(  \beta^{|k(2x',i)|+|n(2x',i)|} {f\left(n(2x',i),|n(2x',i)|,0\right)}\cdot d'_1(k(2x',i),y)\right)+$$
$$ + \beta^{|2x'|}\cdot\beta^0 {f\left(n(2x',\#(2x')),0,0\right)}\cdot d'_1(k(2x',\#(2x')),y).$$

Применим Утверждение \ref{evtuh7} при $(n(x',i))\in\mathbb{YF},$ $j\in \overline{|2n(x',i)|-2}=\overline{|n(x',i)|},$ $0\in\overline{\#(n(x',i))} $ и $2\in\{1,2\}$ к каждому слагаемому первой строчки.

Кроме того, применим Замечание \ref{prunk} при $(2x')\in\mathbb{YF},$ $i\in\mathbb{N}_0$ к каждому слагаемому третьей строчки и получим, что наше выражение равняется следующему:
$$\sum_{i=0}^{\#x'}\left(  \beta^{|k(2x',i)|}\left(\sum_{j=0}^{|2n(x',i)|-2}\left(\beta^j \frac{f\left(n(x',i),j,0\right)}{|2n(x',i)|-j}\right)\right)d'_1(k(2x',i),y)\right)+$$
$$+\sum_{i=0}^{\#x'}\left(  \beta^{|k(2x',i)|}\left(\beta^{|n(2x',i)|-1} {f\left(2n(x',i),|n(x',i)|+1,0\right)}\right)d'_1(k(2x',i),y)\right)+$$
$$+\sum_{i=0}^{\#x'}\left(  \beta^{|2x'|} {f\left(n(2x',i),|n(2x',i)|,0\right)}\cdot d'_1(k(2x',i),y)\right)+$$
$$ + \beta^{|2x'|} {f\left(n(2x',\#(2x')),0,0\right)}\cdot d'_1(k(2x',\#(2x')),y)=$$
$$=\text{(По Утверждению \ref{rofl} при $x'\in\mathbb{YF},$ $i\in\mathbb{N}_0,$ $ 2\in\{1,2\}$ ко всем слагаемым первой строчки)}=$$
$$\sum_{i=0}^{\#x'}\left(  \beta^{|k(2x',i)|}\left(\sum_{j=0}^{|n(2x',i)|-2}\left(\beta^j \frac{f\left(n(x',i),j,0\right)}{|n(2x',i)|-j}\right)\right)d'_1(k(2x',i),y)\right)+$$
$$+\sum_{i=0}^{\#x'}\left(  \beta^{|k(2x',i)|}\left(\beta^{|n(2x',i)|-1} {f\left(2n(x',i),|n(x',i)|+1,0\right)}\right) d'_1(k(2x',i),y)\right)+$$
$$+\beta^{|2x'|}\sum_{i=0}^{\#x'}\left(   {f\left(n(2x',i),|n(2x',i)|,0\right)}\cdot d'_1(k(2x',i),y)\right)+$$
$$ + \beta^{|2x'|}{f\left(n(2x',\#(2x')),0,0\right)}\cdot d'_1(k(2x',\#(2x')),y).$$

Применим Утверждение \ref{evtuh93} к каждому слагаемому второй строчки при  $n(x',i)\in\mathbb{YF}$, $0\in\mathbb{N}_0$ и поймём, что наше выражение равно следующему:
$$\sum_{i=0}^{\#x'}\left(  \beta^{|k(2x',i)|}\left(\sum_{j=0}^{|n(2x',i)|-2}\left(\beta^j \frac{f\left(n(x',i),j,0\right)}{|n(2x',i)|-j}\right)\right)d'_1(k(2x',i),y)\right)+$$
$$+\sum_{i=0}^{\#x'}\left(  \beta^{|k(2x',i)|}\left(\beta^{|n(2x',i)|-1} \cdot 0\right)d'_1(k(2x',i),y)\right)+$$
$$+ \beta^{|2x'|}\sum_{i=0}^{\#x'}\left(   {f\left(n(2x',i),|n(2x',i)|,0\right)}\cdot d'_1(k(2x',i),y)\right)+$$
$$ + \beta^{|2x'|} {f\left(n(2x',\#(2x')),0,0\right)}\cdot d'_1(k(2x',\#(2x')),y)=$$
$$=\sum_{i=0}^{\#x'}\left(  \beta^{|k(2x',i)|}\left(\sum_{j=0}^{|n(2x',i)|-2}\left(\beta^j \frac{f\left(n(x',i),j,0\right)}{|n(2x',i)|-j}\right)\right)d'_1(k(2x',i),y)\right)+$$
$$+\beta^{|2x'|}\sum_{i=0}^{\#x'}\left(   {f\left(n(2x',i),|n(2x',i)|,0\right)}\cdot d'_1(k(2x',i),y)\right)+$$
$$ + \beta^{|2x'|}{f\left(n(2x',\#(2x')),0,0\right)}\cdot d'_1(k(2x',\#(2x')),y).$$

Очевидно, что $\forall i\in\overline{\#x'}$
$$|k(2x',i)|+(|n(2x',i)|-2)=(|k(2x',i)|+|n(2x',i)|)-2=$$
$$=(\text{По Замечанию \ref{prunk} при $(2x')\in\mathbb{YF},$ $i\in\mathbb{N}_0$})=$$
$$=|2x'|-2=2+|x'|-2=|x'|.$$

А это значит, что наше выражение равняется следующему:
$$\sum_{k=0}^{|x'|}\left(  \beta^{k}\sum_{\begin{smallmatrix}(l,m)\in\mathbb{N}_0\times\mathbb{N}_0: \\l+|k(2x',m)|=k              \end{smallmatrix}}\left( \frac{f\left(n(x',m),l,0\right)}{|n(2x',m)|-l}\cdot d'_1(k(2x',m),y)\right)\right)+$$
$$+\beta^{|2x'|}\sum_{i=0}^{\#x'}\left(   {f\left(n(2x',i),|n(2x',i)|,0\right)}\cdot d'_1(k(2x',i),y)\right)+$$
$$ + \beta^{|2x'|} {f\left(n(2x',\#(2x')),0,0\right)}\cdot d'_1(k(2x',\#(2x')),y).$$

Очевидно, что если у нас есть пара $(l,m)\in\mathbb{N}_0\times\mathbb{N}_0:$ $l+|k(2x',m)|=k$, то по Замечанию \ref{prunk} при $(2x')\in\mathbb{YF},$ $m\in\mathbb{N}_0$ ясно, что $l+|2x'|-|n(2x',m)|=k,$ а это значит, что $|n(2x',m)|-l=|2x'|-k.$ Таким образом, наше выражение равно следующему:
$$\sum_{k=0}^{|x'|}\left(  \beta^{k}\sum_{\begin{smallmatrix}(l,m)\in\mathbb{N}_0\times\mathbb{N}_0: \\l+|k(2x',m)|=k              \end{smallmatrix}}\left( \frac{f\left(n(x',m),l,0\right)}{|2x'|-k}\cdot d'_1(k(2x',m),y)\right)\right)+$$
$$+\beta^{|2x'|}\sum_{i=0}^{\#x'}\left(   {f\left(n(2x',i),|n(2x',i)|,0\right)}\cdot d'_1(k(2x',i),y)\right)+$$
$$ + \beta^{|2x'|} {f\left(n(2x',\#(2x')),0,0\right)}\cdot d'_1(k(2x',\#(2x')),y).$$

Тут у нас есть пары $(l,m)\in\mathbb{N}_0\times\mathbb{N}_0:$ $l+|k(2x',m)|=k$ при $k\le |x'|$. Если $m=\#(2x')$, то 
$$k=l+|k(2x',m)|=l+|k(2x',\#(2x'))|=$$
$$=\text{(По Замечанию \ref{mega} при $(2x')\in\mathbb{YF}$)}=$$
$$=l+|2x'|=l+2+|x'|\ge 0+2+k>k.$$

Противоречие. А это значит, что $m<\#(2x')$, то есть $m\in\overline{\#x'}$. Таким образом, 
$$2x'=n(2x',m)k(2x',m),\; \#(k(2x',m))=m,\; m\in\overline{\#x'}\Longleftrightarrow$$
$$\Longleftrightarrow\text{(По Утверждению \ref{rofl} при $x'\in\mathbb{YF},$ $ m\in\mathbb{N}_0,$ $ 2\in\{1,2\}$)}\Longleftrightarrow$$
$$\Longleftrightarrow2x'=2n(x',m)k(2x',m),\; \#(k(2x',m))=m,\; m\in\overline{\#x'}\Longleftrightarrow$$
$$\Longleftrightarrow x'=n(x',m)k(2x',m), \;\#(k(2x',m))=m,\; m\in\overline{\#x'}\Longleftrightarrow$$
$$\Longleftrightarrow k(x',m)=k(2x',m).$$

Таким образом, наше выражение равно следующему:
$$\sum_{k=0}^{|x'|}\left(\beta^k  
\sum_{\begin{smallmatrix}(l,m)\in\mathbb{N}_0\times\mathbb{N}_0: \\l+|k(x',m)|=k              \end{smallmatrix}}\left( \frac{f\left(n(x',m),l,0\right)}{|2x'|-k}\cdot d'_1(k(x',m),y)\right)\right)+$$
$$+\beta^{|2x'|}\sum_{i=0}^{\#x'}\left(   {f\left(n(2x',i),|n(2x',i)|,0\right)}\cdot d'_1(k(2x',i),y)\right)+$$
$$ + \beta^{|2x'|} {f\left(n(2x',\#(2x')),0,0\right)}\cdot d'_1(k(2x',\#(2x')),y).$$

Заметим, что 
$$0=|\varepsilon|=\text{(По Замечанию \ref{mega} при $(2x')\in\mathbb{YF}$)}=|n(2x',\#(2x'))|.$$
Таким образом, наше выражение равно следующему:
$$\sum_{k=0}^{|x'|}\left(\beta^k  
\sum_{\begin{smallmatrix}(l,m)\in\mathbb{N}_0\times\mathbb{N}_0: \\l+|k(x',m)|=k              \end{smallmatrix}}\left( \frac{f\left(n(x',m),l,0\right)}{|2x'|-k}\cdot d'_1(k(x',m),y)\right)\right)+$$
$$+\beta^{|2x'|}\sum_{i=0}^{\#x'}\left(   {f\left(n(2x',i),|n(2x',i)|,0\right)}\cdot d'_1(k(2x',i),y)\right)+$$
$$ + \beta^{|2x'|} {f\left(n(2x',\#(2x')),|n(2x',\#(2x'))|,0\right)}\cdot d'_1(k(2x',\#(2x')),y)=$$
$$=\sum_{k=0}^{|x'|}\left(\beta^k  
\sum_{\begin{smallmatrix}(l,m)\in\mathbb{N}_0\times\mathbb{N}_0: \\l+|k(x',m)|=k              \end{smallmatrix}}\left( \frac{f\left(n(x',m),l,0\right)}{|2x'|-k}\cdot d'_1(k(x',m),y)\right)\right)+$$
$$+\beta^{|2x'|}\sum_{i=0}^{\#x'}\left(   {f\left(n(2x',i),|n(2x',i)|,0\right)}\cdot d'_1(k(2x',i),y)\right)+$$
$$+\beta^{|2x'|}\sum_{i=\#(2x')}^{\#(2x')}\left(   {f\left(n(2x',i),|n(2x',i)|,0\right)}\cdot d'_1(k(2x',i),y)\right)=$$
$$=\sum_{k=0}^{|x'|}\left(\beta^k  
\sum_{\begin{smallmatrix}(l,m)\in\mathbb{N}_0\times\mathbb{N}_0: \\l+|k(x',m)|=k              \end{smallmatrix}}\left( \frac{f\left(n(x',m),l,0\right)}{|2x'|-k}\cdot d'_1(k(x',m),y)\right)\right)+$$
$$+\beta^{|2x'|}\sum_{i=0}^{\#(2x')}\left(   {f\left(n(2x',i),|n(2x',i)|,0\right)}\cdot d'_1(k(2x',i),y)\right).$$

Тут мы посчитали, чему равна правая сторона нашего равенства. Таким образом, мы поняли, что (вычтем правую сторону равенства из левого)
$$d'_{\beta}(x,y)- \sum_{i=0}^{\#x}\left( \beta^{|k(x,i)|}d_{\beta}(n(x,i))\cdot d'_1(k(x,i),y)\right)= $$
$$=\sum_{i=0}^{|x'|}\left(\beta^i \frac{f\left(x',i,h(2x',y)\right)}{|2x'|-i}\prod_{j=1}^{d(y)}\frac{\left(g\left(y,j\right)-i\right)}{g\left(y,j\right)}\right)+$$
$$+\beta^{|2x'|} f\left(2x',|2x'|,h(2x',y)\right)\prod_{j=1}^{d(y)}\frac{\left(g\left(y,j\right)-|2x'|\right)}{g\left(y,j\right)}-$$
$$-\sum_{k=0}^{|x'|}\left(\beta^k  
\sum_{\begin{smallmatrix}(l,m)\in\mathbb{N}_0\times\mathbb{N}_0: \\l+|k(x',m)|=k              \end{smallmatrix}}\left( \frac{f\left(n(x',m),l,0\right)}{|2x'|-k}\cdot d'_1(k(x',m),y)\right)\right)-$$
$$-\beta^{|2x'|}\sum_{i=0}^{\#(2x')}  \left( {f\left(n(2x',i),|n(2x',i)|,0\right)}\cdot d'_1(k(2x',i),y)\right).$$

Запомним это равенство.

Теперь воспользуемся предположением индукции при $x'\in\mathbb{YF}$:
$$d'_{\beta}(x',y)= \sum_{i=0}^{\#x'} \left(\beta^{|k(x',i)|}d_{\beta}(n(x',i))\cdot d'_1(k(x',i),y) \right)\Longleftrightarrow$$
$$\Longleftrightarrow \sum_{i=0}^{|x'|}\left(\beta^i {f\left(x',i,h(x',y)\right)}\prod_{j=1}^{d(y)}\frac{\left(g\left(y,j\right)-i\right)}{g\left(y,j\right)}\right)=\sum_{i=0}^{\#x'} \left(\beta^{|k(x',i)|}d_{\beta}(n(x',i))\cdot d'_1(k(x',i),y)\right)\Longleftrightarrow$$
$$\Longleftrightarrow(\text{По определению функции $h$, так как в данном случае $h(2x',y)<\#(2x')$})\Longleftrightarrow$$
$$\Longleftrightarrow \sum_{i=0}^{|x'|}\left(\beta^i {f\left(x',i,h(2x',y)\right)}\prod_{j=1}^{d(y)}\frac{\left(g\left(y,j\right)-i\right)}{g\left(y,j\right)}\right)=\sum_{i=0}^{\#x'}\left( \beta^{|k(x',i)|}d_{\beta}(n(x',i))\cdot d'_1(k(x',i),y)\right).$$

Заметим, что по обозначению
$$\sum_{i=0}^{\#x'} \left(\beta^{|k(x',i)|}d_{\beta}(n(x',i))\cdot d'_1(k(x',i),y)\right)=$$
$$=\sum_{i=0}^{\#x'}\left( \beta^{|k(x',i)|}\left(\sum_{j=0}^{|n(x',i)|}\left(\beta^j \left({f\left(n(x',i),j,0\right)}\right)\right)\right) d'_1(k(x',i),y)\right) =$$
$$=(\text{По Замечанию \ref{prunk} при $x\in\mathbb{YF},$ $i\in\mathbb{N}_0$ если $i\in\overline{\#x'}$, то $|k(x',i)|+|n(x',i)|=|x'|$})=$$
$$=\sum_{k=0}^{|x'|}\left( \beta^{k}\sum_{\begin{smallmatrix}(l,m)\in\mathbb{N}_0\times\mathbb{N}_0: \\l+|k(x',m)|=k              \end{smallmatrix}} \left( {f\left(n(x',m),l,0\right)}\cdot d'_1(k(x',m),y)\right)\right).$$

А значит
$$ \sum_{i=0}^{|x'|}\left(\beta^i {f\left(x',i,h(2x',y)\right)}\prod_{j=1}^{d(y)}\frac{\left(g\left(y,j\right)-i\right)}{g\left(y,j\right)}\right)=$$
$$=\sum_{k=0}^{|x'|}\left( \beta^{k}\sum_{\begin{smallmatrix}(l,m)\in\mathbb{N}_0\times\mathbb{N}_0: \\l+|k(x',m)|=k              \end{smallmatrix}} \left( {f\left(n(x',m),l,0\right)}\cdot d'_1(k(x',m),y)\right)\right)\Longleftrightarrow$$
$$\Longleftrightarrow \sum_{i=0}^{|x'|}\left(\beta^i\left( {f\left(x',i,h(2x',y)\right)}\prod_{j=1}^{d(y)}\frac{\left(g\left(y,j\right)-i\right)}{g\left(y,j\right)}-\right.\right.$$
$$\left.\left.- \sum_{\begin{smallmatrix}(l,m)\in\mathbb{N}_0\times\mathbb{N}_0: \\l+|k(x',m)|=i              \end{smallmatrix}} \left( {f\left(n(x',m),l,0\right)}\cdot d'_1(k(x',m),y)\right)\right)\right)=0.$$

И это равенство верно для любого $\beta\in(0,1]$. А значит слева находится многочлен от $\beta$, тождественно равный нулю. А значит любой его коэффициент при $\beta^i$ при $i\in\overline{|x'|}$ равен нулю. То есть
$\forall i\in\overline{|x'|}$ 
$$ {f\left(x',i,h(2x',y)\right)}\prod_{j=1}^{d(y)}\frac{\left(g\left(y,j\right)-i\right)}{g\left(y,j\right)}-\sum_{\begin{smallmatrix}(l,m)\in\mathbb{N}_0\times\mathbb{N}_0: \\l+|k(x',m)|=i            \end{smallmatrix}} \left( {f\left(n(x',m),l,0\right)}\cdot d'_1(k(x',m),y)\right)=0\Longleftrightarrow$$
$$\Longleftrightarrow{f\left(x',i,h(2x',y)\right)}\prod_{j=1}^{d(y)}\frac{\left(g\left(y,j\right)-i\right)}{g\left(y,j\right)}=\sum_{\begin{smallmatrix}(l,m)\in\mathbb{N}_0\times\mathbb{N}_0: \\l+|k(x',m)|=i            \end{smallmatrix}} \left( {f\left(n(x',m),l,0\right)}\cdot d'_1(k(x',m),y)\right).$$

Теперь заметим, что если $i\in\overline{|x'|}$, то $|2x'|-i\ge |2x'|-|x'|=2+|x'|-|x'|=2>0$. А значит $\forall i\in\overline{|x'|}$
$$\beta^i \frac{f\left(x',i,h(2x',y)\right)}{|2x'|-i}\prod_{j=1}^{d(y)}\frac{\left(g\left(y,j\right)-i\right)}{g\left(y,j\right)}=$$
$$=\beta^i\frac{\displaystyle\sum_{\begin{smallmatrix}(l,m)\in\mathbb{N}_0\times\mathbb{N}_0: \\l+|k(x',m)|=i            \end{smallmatrix}} \left( {f\left(n(x',m),l,0\right)}\cdot d'_1(k(x',m),y)\right)}{\displaystyle|2x'|-i}=$$
$$=\beta^i  
\sum_{\begin{smallmatrix}(l,m)\in\mathbb{N}_0\times\mathbb{N}_0: \\l+|k(x',m)|=i              \end{smallmatrix}}\left( \frac{f\left(n(x',m),l,0\right)}{|2x'|-i}\cdot d'_1(k(x',m),y)\right).$$

Просуммируем данное равенство по $i\in\overline{|x'|}$:
$$\sum_{i=0}^{|x'|}\left(\beta^i \frac{f\left(x',i,h(2x',y)\right)}{|2x'|-i}\prod_{j=1}^{d(y)}\frac{\left(g\left(y,j\right)-i\right)}{g\left(y,j\right)}\right)=$$
$$=\sum_{i=0}^{|x'|}\left(\beta^i  
\sum_{\begin{smallmatrix}(l,m)\in\mathbb{N}_0\times\mathbb{N}_0: \\l+|k(x',m)|=i              \end{smallmatrix}}\left( \frac{f\left(n(x',m),l,0\right)}{|2x'|-i}\cdot d'_1(k(x',m),y)\right)\right)=$$
$$=\sum_{k=0}^{|x'|}\left(\beta^k  
\sum_{\begin{smallmatrix}(l,m)\in\mathbb{N}_0\times\mathbb{N}_0: \\l+|k(x',m)|=k              \end{smallmatrix}}\left( \frac{f\left(n(x',m),l,0\right)}{|2x'|-k}\cdot d'_1(k(x',m),y)\right)\right).$$

А это значит, что (вернёмся к запомненному равенству)
$$d'_{\beta}(x,y)- \sum_{i=0}^{\#x}\left( \beta^{|k(x,i)|}d_{\beta}(n(x,i))\cdot d'_1(k(x,i),y)\right)= $$
$$=\sum_{i=0}^{|x'|}\left(\beta^i \frac{f\left(x',i,h(2x',y)\right)}{|2x'|-i}\prod_{j=1}^{d(y)}\frac{\left(g\left(y,j\right)-i\right)}{g\left(y,j\right)}\right)+$$
$$+\beta^{|2x'|} f\left(2x',|2x'|,h(2x',y)\right)\prod_{j=1}^{d(y)}\frac{\left(g\left(y,j\right)-|2x'|\right)}{g\left(y,j\right)}-$$
$$-\sum_{k=0}^{|x'|}\left(\beta^k  
\sum_{\begin{smallmatrix}(l,m)\in\mathbb{N}_0\times\mathbb{N}_0: \\l+|k(x',m)|=k              \end{smallmatrix}}\left( \frac{f\left(n(x',m),l,0\right)}{|2x'|-k}\cdot d'_1(k(x',m),y)\right)\right)-$$
$$-\beta^{|2x'|}\sum_{i=0}^{\#(2x')}  \left( {f\left(n(2x',i),|n(2x',i)|,0\right)}\cdot d'_1(k(2x',i),y)\right)=$$
$$=\beta^{|2x'|} f\left(2x',|2x'|,h(2x',y)\right)\prod_{j=1}^{d(y)}\frac{\left(g\left(y,j\right)-|2x'|\right)}{g\left(y,j\right)}-$$
$$-\beta^{|2x'|}\sum_{i=0}^{\#(2x')}  \left( {f\left(n(2x',i),|n(2x',i)|,0\right)}\cdot d'_1(k(2x',i),y)\right)=$$
$$=\beta^{|2x'|}\left(f\left(2x',|2x'|,h(2x',y)\right)\prod_{j=1}^{d(y)}\frac{\left(g\left(y,j\right)-|2x'|\right)}{g\left(y,j\right)}-\right.$$
$$\left.-\sum_{i=0}^{\#(2x')}  \left( {f\left(n(2x',i),|n(2x',i)|,0\right)}\cdot d'_1(k(2x',i),y)\right)\right).$$

По Утверждению \ref{beta1} при наших $x\in\mathbb{YF}$ и $y\in\mathbb{YF}_\infty$
$$d'_{1}(x,y)= \sum_{i=0}^{\#x} \left(1^{|k(x,i)|}d_{1}(n(x,i))\cdot d'_1(k(x,i),y)\right), $$
а это значит, что (подставим $\beta = 1$ в равенство, к которому мы пришли):
$$0=d'_{1}(x,y)- \sum_{i=0}^{\#x}\left( 1^{|k(x,i)|}d_{1}(n(x,i))\cdot d'_1(k(x,i),y)\right)= $$
$$=1^{|2x'|}\left(f\left(2x',|2x'|,h(2x',y)\right)\prod_{j=1}^{d(y)}\frac{\left(g\left(y,j\right)-|2x'|\right)}{g\left(y,j\right)}-\right.$$
$$\left.-\sum_{i=0}^{\#(2x')}  \left( {f\left(n(2x',i),|n(2x',i)|,0\right)}\cdot d'_1(k(2x',i),y)\right)\right)\Longrightarrow$$
$$\Longrightarrow f\left(2x',|2x'|,h(2x',y)\right)\prod_{j=1}^{d(y)}\frac{\left(g\left(y,j\right)-|2x'|\right)}{g\left(y,j\right)}-$$
$$-\sum_{i=0}^{\#(2x')}  \left( {f\left(n(2x',i),|n(2x',i)|,0\right)}\cdot d'_1(k(2x',i),y)\right)=0.$$

Таким образом, ясно, что
$$d'_{\beta}(x,y)- \sum_{i=0}^{\#x}\left( \beta^{|k(x,i)|}d_{\beta}(n(x,i))\cdot d'_1(k(x,i),y)\right)= $$
$$=\beta^{|2x'|}\left(f\left(2x',|2x'|,h(2x',y)\right)\prod_{j=1}^{d(y)}\frac{\left(g\left(y,j\right)-|2x'|\right)}{g\left(y,j\right)}-\right.$$
$$\left.-\sum_{i=0}^{\#(2x')}  \left( {f\left(n(2x',i),|n(2x',i)|,0\right)}\cdot d'_1(k(2x',i),y)\right)\right)=\beta^{|2x'|}\cdot 0=0\Longrightarrow$$
$$\Longrightarrow d'_{\beta}(x,y)= \sum_{i=0}^{\#x}\left( \beta^{|k(x,i)|}d_{\beta}(n(x,i))\cdot d'_1(k(x,i),y)\right),$$
что и требовалось.

В данном случае \underline{\textbf{Переход}} доказан.

\end{enumerate}
    \item $h(x,y)= \#x$.

    Опять рассмотрим два случая:

\begin{enumerate}
    \item $\alpha_0=1$. 
    
    Вначале посчитаем, чему равна левая часть нашего равенства:
$$d'_{\beta}(x,y)=\sum_{i=0}^{|x|}\left(\beta^i {f\left(x,i,h(x,y)\right)}\prod_{j=1}^{d(y)}\frac{\left(g\left(y,j\right)-i\right)}{g\left(y,j\right)}\right)=$$
$$=\sum_{i=0}^{|x|}\left(\beta^i {f\left(x,i,\#x\right)}\prod_{j=1}^{d(y)}\frac{\left(g\left(y,j\right)-i\right)}{g\left(y,j\right)}\right)=$$
$$=\sum_{i=0}^{|1x'|}\left(\beta^i {f\left(1x',i,\#(1x')\right)}\prod_{j=1}^{d(y)}\frac{\left(g\left(y,j\right)-i\right)}{g\left(y,j\right)}\right)=$$
$$=\sum_{i=0}^{|x'|}\left(\beta^i {f\left(1x',i,\#(1x')\right)}\prod_{j=1}^{d(y)}\frac{\left(g\left(y,j\right)-i\right)}{g\left(y,j\right)}\right)+$$
$$+\sum_{i=|1x'|}^{|1x'|}\left(\beta^i {f\left(1x',i,\#(1x')\right)}\prod_{j=1}^{d(y)}\frac{\left(g\left(y,j\right)-i\right)}{g\left(y,j\right)}\right)=$$
$$=\sum_{i=0}^{|x'|}\left(\beta^i {f\left(1x',i,\#(1x')\right)}\prod_{j=1}^{d(y)}\frac{\left(g\left(y,j\right)-i\right)}{g\left(y,j\right)}\right)+$$
$$+\beta^{|1x'|} {f\left(1x',|1x'|,\#(1x')\right)}\prod_{j=1}^{d(y)}\frac{\left(g\left(y,j\right)-|1x'|\right)}{g\left(y,j\right)}.$$

\begin{Prop}[Утверждение 9.2\cite{Evtuh1}, Утверждение 1.10\cite{Evtuh2}] \label{evtuh92}
Пусть $x\in\mathbb{YF}$, $y\in\mathbb{N}_0:$ $y\in\overline{|x|}.$ Тогда
$$f(1x,y,\#x)=f(1x,y,\#(1x)).$$
\end{Prop}

Применим Утверждение \ref{evtuh92} при $x'\in\mathbb{YF},$ $i\in\mathbb{N}_0$ к каждому слагаемому первой строчки и получим, что наше выражение равняется следующему:
$$\sum_{i=0}^{|x'|}\left(\beta^i {f\left(1x',i,\#x'\right)}\prod_{j=1}^{d(y)}\frac{\left(g\left(y,j\right)-i\right)}{g\left(y,j\right)}\right)+$$
$$+\beta^{|1x'|} {f\left(1x',|1x'|,\#(1x')\right)}\prod_{j=1}^{d(y)}\frac{\left(g\left(y,j\right)-|1x'|\right)}{g\left(y,j\right)}.$$

Применим Утверждение \ref{evtuh7} при $x'\in\mathbb{YF},$ $i\in \overline{|x'|},$ $\#x'\in\overline{\#x'}$ и $1\in\{1,2\}$ к каждому слагаемому суммы  и поймём, что наше выражение равняется следующему:
$$\sum_{i=0}^{|x'|}\left(\beta^i \frac{f\left(x',i,\#x'\right)}{|1x'|-i}\prod_{j=1}^{d(y)}\frac{\left(g\left(y,j\right)-i\right)}{g\left(y,j\right)}\right)+$$
$$+\beta^{|1x'|} f\left(1x',|1x'|,\#(1x')\right)\prod_{j=1}^{d(y)}\frac{\left(g\left(y,j\right)-|1x'|\right)}{g\left(y,j\right)}.$$

Тут мы посчитали, чему равна левая сторона нашего равенства. Теперь будем считать, чему равна правая:
$$\sum_{i=0}^{\#x} \left(\beta^{|k(x,i)|}d_{\beta}(n(x,i))\cdot  d'_1(k(x,i),y)\right)=$$
$$=\sum_{i=0}^{\#x}\left(  \beta^{|k(x,i)|}\left(\sum_{j=0}^{|n(x,i)|}\left(\beta^j {f\left(n(x,i),j,0\right)}\right)\right) d'_1(k(x,i),y)\right)=$$
$$=\sum_{i=0}^{\#(1x')}\left(  \beta^{|k(1x',i)|}\left(\sum_{j=0}^{|n(1x',i)|}\left(\beta^j {f\left(n(1x',i),j,0\right)}\right)\right)d'_1(k(1x',i),y)\right)=$$
$$=\sum_{i=0}^{\#x'}\left(  \beta^{|k(1x',i)|}\left(\sum_{j=0}^{|n(1x',i)|}\left(\beta^j {f\left(n(1x',i),j,0\right)}\right)\right)d'_1(k(1x',i),y)\right)+$$
$$+\sum_{i=\#(1x')}^{\#(1x')}\left(  \beta^{|k(1x',i)|}\left(\sum_{j=0}^{|n(1x',i)|}\left(\beta^j {f\left(n(1x',i),j,0\right)}\right)\right)d'_1(k(1x',i),y)\right)=$$
$$=\sum_{i=0}^{\#x'}\left(  \beta^{|k(1x',i)|}\left(\sum_{j=0}^{|n(1x',i)|}\left(\beta^j {f\left(n(1x',i),j,0\right)}\right)\right)d'_1(k(1x',i),y)\right)+$$
$$ + \beta^{|k(1x',\#(1x'))|}\left(\sum_{j=0}^{|n(1x',\#(1x'))|}\left(\beta^j {f\left(n(1x',\#(1x')),j,0\right)}\right)\right)d'_1(k(1x',\#(1x')),y)=$$
$$=\text{(По Замечанию \ref{mega} при $(1x')\in\mathbb{YF}$)}=$$
$$=\sum_{i=0}^{\#x'}\left(  \beta^{|k(1x',i)|}\left(\sum_{j=0}^{|n(1x',i)|-1}\left(\beta^j {f\left(n(1x',i),j,0\right)}\right)\right)d'_1(k(1x',i),y)\right)+$$
$$+\sum_{i=0}^{\#x'}\left(  \beta^{|k(1x',i)|}\left(\sum_{j=|n(1x',i)|}^{|n(1x',i)|}\left(\beta^j {f\left(n(1x',i),j,0\right)}\right)\right)d'_1(k(1x',i),y)\right)+$$
$$ + \beta^{|1x'|}\left(\sum_{j=0}^{|\varepsilon|}\left(\beta^j {f\left(n(1x',\#(1x')),j,0\right)}\right)\right)d'_1(k(1x',\#(1x')),y)=$$
$$=\sum_{i=0}^{\#x'}\left(  \beta^{|k(1x',i)|}\left(\sum_{j=0}^{|n(1x',i)|-1}\left(\beta^j {f\left(n(1x',i),j,0\right)}\right)\right)d'_1(k(1x',i),y)\right)+$$
$$+\sum_{i=0}^{\#x'}\left(  \beta^{|k(1x',i)|}\left(\beta^{|n(1x',i)|} {f\left(n(1x',i),|n(1x',i)|,0\right)}\right)d'_1(k(1x',i),y)\right)+$$
$$ + \beta^{|1x'|}\left(\sum_{j=0}^{0}\left(\beta^j {f\left(n(1x',\#(1x')),j,0\right)}\right)\right)d'_1(k(1x',\#(1x')),y)=$$
$$=\text{(По Утверждению \ref{rofl} при $x'\in\mathbb{YF},$ $i\in\mathbb{N}_0,$ $1\in\{1,2\}$ ко всем слагаемым первой строчки)}=$$
$$=\sum_{i=0}^{\#x'}\left(  \beta^{|k(1x',i)|}\left(\sum_{j=0}^{|1n(x',i)|-1}\left(\beta^j {f\left(1n(x',i),j,0\right)}\right)\right)d'_1(k(1x',i),y)\right)+$$
$$+\sum_{i=0}^{\#x'}\left(  \beta^{|k(1x',i)|+|n(1x',i)|} {f\left(n(1x',i),|n(1x',i)|,0\right)}\cdot d'_1(k(1x',i),y)\right)+$$
$$ + \beta^{|1x'|}\cdot\beta^0 {f\left(n(1x',\#(1x')),0,0\right)}\cdot d'_1(k(1x',\#(1x')),y).$$

Применим Утверждение \ref{evtuh7} при $(n(x',i))\in\mathbb{YF},$ $j\in \overline{|1n(x',i)|-1}=\overline{|n(x',i)|},$ $0\in\overline{\#(n(x',i))} $ и $1\in\{1,2\}$ к каждому слагаемому первой строчки.

Кроме того, применим Замечание \ref{prunk} при $(1x')\in\mathbb{YF},$ $i\in\mathbb{N}_0$ к каждому слагаемому второй строчки и получим, что наше выражение равняется следующему:
$$\sum_{i=0}^{\#x'}\left(  \beta^{|k(1x',i)|}\left(\sum_{j=0}^{|1n(x',i)|-1}\left(\beta^j \frac{f\left(n(x',i),j,0\right)}{|1n(x',i)|-j}\right)\right) d'_1(k(1x',i),y)\right)+$$
$$+\sum_{i=0}^{\#x'}\left(  \beta^{|1x'|} {f\left(n(1x',i),|n(1x',i)|,0\right)}\cdot d'_1(k(1x',i),y)\right)+$$
$$ + \beta^{|1x'|}{f\left(n(1x',\#(1x')),0,0\right)}\cdot d'_1(k(1x',\#(1x')),y)=$$
$$=\text{(По Утверждению \ref{rofl} при $x'\in\mathbb{YF},$ $i\in\mathbb{N}_0,$ $ 1\in\{1,2\}$ ко всем слагаемым первой строчки)}=$$
$$=\sum_{i=0}^{\#x'}\left(  \beta^{|k(1x',i)|}\left(\sum_{j=0}^{|n(1x',i)|-1}\left(\beta^j \frac{f\left(n(x',i),j,0\right)}{|n(1x',i)|-j}\right)\right)d'_1(k(1x',i),y)\right)+$$
$$+\beta^{|1x'|}\sum_{i=0}^{\#x'}\left(   {f\left(n(1x',i),|n(1x',i)|,0\right)}\cdot d'_1(k(1x',i),y)\right)+$$
$$ + \beta^{|1x'|} {f\left(n(1x',\#(1x')),0,0\right)}\cdot d'_1(k(1x',\#(1x')),y).$$

Очевидно, что $\forall i\in\overline{\#x'}$
$$|k(1x',i)|+(|n(1x',i)|-1)=(|k(1x',i)|+|n(1x',i)|)-1=$$
$$=(\text{По Замечанию \ref{prunk} при $(1x')\in\mathbb{YF},$ $i\in\mathbb{N}_0$})=$$
$$=|1x'|-1=1+|x'|-1=|x'|.$$

А это значит, что наше выражение равняется следующему:
$$\sum_{k=0}^{|x'|}\left(  \beta^{k}\sum_{\begin{smallmatrix}(l,m)\in\mathbb{N}_0\times\mathbb{N}_0: \\l+|k(1x',m)|=k              \end{smallmatrix}}\left( \frac{f\left(n(x',m),l,0\right)}{|n(1x',m)|-l}\cdot d'_1(k(1x',m),y)\right)\right)+$$
$$+\beta^{|1x'|}\sum_{i=0}^{\#x'}\left(   {f\left(n(1x',i),|n(1x',i)|,0\right)}\cdot d'_1(k(1x',i),y)\right)+$$
$$ + \beta^{|1x'|} {f\left(n(1x',\#(1x')),0,0\right)}\cdot d'_1(k(1x',\#(1x')),y).$$

Очевидно, что если у нас есть пара $(l,m)\in\mathbb{N}_0\times\mathbb{N}_0:$ $l+|k(1x',m)|=k$, то по Замечанию \ref{prunk} при $(1x')\in\mathbb{YF},$ $m\in\mathbb{N}_0$ ясно, что $l+|1x'|-|n(1x',m)|=k,$ а это значит, что $|n(1x',m)|-l=|1x'|-k.$ Таким образом, наше выражение равно следующему:
$$\sum_{k=0}^{|x'|}\left(  \beta^{k}\sum_{\begin{smallmatrix}(l,m)\in\mathbb{N}_0\times\mathbb{N}_0: \\l+|k(1x',m)|=k              \end{smallmatrix}}\left( \frac{f\left(n(x',m),l,0\right)}{|1x'|-k}\cdot d'_1(k(1x',m),y)\right)\right)+$$
$$+\beta^{|1x'|}\sum_{i=0}^{\#x'}\left(   {f\left(n(1x',i),|n(1x',i)|,0\right)}\cdot d'_1(k(1x',i),y)\right)+$$
$$ + \beta^{|1x'|}\left( {f\left(n(1x',\#(1x')),0,0\right)}\right)\cdot d'_1(k(1x',\#(1x')),y).$$

Тут у нас есть пары $(l,m)\in\mathbb{N}_0\times\mathbb{N}_0:$ $l+|k(1x',m)|=k$ при $k\le |x'|$. Если $m=\#(1x')$, то 
$$k=l+|k(1x',m)|=l+|k(1x',\#(1x'))|=$$
$$=\text{(По Замечанию \ref{mega} при $(1x')\in\mathbb{YF}$)}=$$
$$=l+|1x'|=l+1+|x'|\ge 0+1+k>k.$$
Противоречие. А это значит, что $m<\#(1x')$, то есть $m\in\overline{\#x'}$. Таким образом, 
$$1x'=n(1x',m)k(1x',m),\; \#(k(1x',m))=m,\; m\in\overline{\#x'}\Longleftrightarrow$$
$$\Longleftrightarrow\text{(По Утверждению \ref{rofl} при $x'\in\mathbb{YF},$ $ m\in\mathbb{N}_0,$ $1\in\{1,2\})$}\Longleftrightarrow$$
$$\Longleftrightarrow 1x'=1n(x',m)k(1x',m),\; \#(k(1x',m))=m,\; m\in\overline{\#x'}\Longleftrightarrow$$
$$\Longleftrightarrow x'=n(x',m)k(1x',m),\; \#(k(1x',m))=m,\; m\in\overline{\#x'}\Longleftrightarrow$$
$$\Longleftrightarrow k(x',m)=k(1x',m).$$

Таким образом, наше выражение равно следующему:
$$\sum_{k=0}^{|x'|}\left(\beta^k  
\sum_{\begin{smallmatrix}(l,m)\in\mathbb{N}_0\times\mathbb{N}_0: \\l+|k(x',m)|=k              \end{smallmatrix}}\left( \frac{f\left(n(x',m),l,0\right)}{|1x'|-k}\cdot d'_1(k(x',m),y)\right)\right)+$$
$$+\beta^{|1x'|}\sum_{i=0}^{\#x'}\left(   {f\left(n(1x',i),|n(1x',i)|,0\right)}\cdot d'_1(k(1x',i),y)\right)+$$
$$ + \beta^{|1x'|} {f\left(n(1x',\#(1x')),0,0\right)}\cdot d'_1(k(1x',\#(1x')),y).$$

Заметим, что 
$$0=|\varepsilon|=\text{(По Замечанию \ref{mega} при $(1x')\in\mathbb{YF}$)}=|n(1x',\#(1x'))|.$$
Таким образом, наше выражение равно следующему:
$$\sum_{k=0}^{|x'|}\left(\beta^k  
\sum_{\begin{smallmatrix}(l,m)\in\mathbb{N}_0\times\mathbb{N}_0: \\l+|k(x',m)|=k              \end{smallmatrix}}\left( \frac{f\left(n(x',m),l,0\right)}{|1x'|-k}\cdot d'_1(k(x',m),y)\right)\right)+$$
$$+\beta^{|1x'|}\sum_{i=0}^{\#x'}\left(   {f\left(n(1x',i),|n(1x',i)|,0\right)}\cdot d'_1(k(1x',i),y)\right)+$$
$$ + \beta^{|1x'|} {f\left(n(1x',\#(1x')),|n(1x',\#(1x'))|,0\right)}\cdot d'_1(k(1x',\#(1x')),y)=$$
$$=\sum_{k=0}^{|x'|}\left(\beta^k  
\sum_{\begin{smallmatrix}(l,m)\in\mathbb{N}_0\times\mathbb{N}_0: \\l+|k(x',m)|=k              \end{smallmatrix}}\left( \frac{f\left(n(x',m),l,0\right)}{|1x'|-k}\cdot d'_1(k(x',m),y)\right)\right)+$$
$$+\beta^{|1x'|}\sum_{i=0}^{\#x'}\left(   {f\left(n(1x',i),|n(1x',i)|,0\right)}\cdot d'_1(k(1x',i),y)\right)+$$
$$ + \beta^{|1x'|} \sum_{i=\#(1x')}^{\#(1x')}\left(  {f\left(n(1x',i),|n(1x',i)|,0\right)}\cdot d'_1(k(1x',i),y)\right)=$$
$$=\sum_{k=0}^{|x'|}\left(\beta^k  
\sum_{\begin{smallmatrix}(l,m)\in\mathbb{N}_0\times\mathbb{N}_0: \\l+|k(x',m)|=k              \end{smallmatrix}}\left( \frac{f\left(n(x',m),l,0\right)}{|1x'|-k}\cdot d'_1(k(x',m),y)\right)\right)+$$
$$+\beta^{|1x'|}\sum_{i=0}^{\#(1x')}\left(   {f\left(n(1x',i),|n(1x',i)|,0\right)}\cdot d'_1(k(1x',i),y)\right).$$

Тут мы посчитали, чему равна правая сторона нашего равенства. Таким образом, мы поняли, что (вычтем правую сторону равенства из левого)
$$d'_{\beta}(x,y)- \sum_{i=0}^{\#x}\left( \beta^{|k(x,i)|}d_{\beta}(n(x,i))\cdot d'_1(k(x,i),y)\right)= $$
$$=\sum_{i=0}^{|x'|}\left(\beta^i \frac{f\left(x',i,\#x'\right)}{|1x'|-i}\prod_{j=1}^{d(y)}\frac{\left(g\left(y,j\right)-i\right)}{g\left(y,j\right)}\right)+$$
$$+\beta^{|1x'|} f\left(1x',|1x'|,\#(1x')\right)\prod_{j=1}^{d(y)}\frac{\left(g\left(y,j\right)-|1x'|\right)}{g\left(y,j\right)}-$$
$$-\sum_{k=0}^{|x'|}\left(\beta^k  
\sum_{\begin{smallmatrix}(l,m)\in\mathbb{N}_0\times\mathbb{N}_0: \\l+|k(x',m)|=k              \end{smallmatrix}}\left( \frac{f\left(n(x',m),l,0\right)}{|1x'|-k}\cdot d'_1(k(x',m),y)\right)\right)-$$
$$-\beta^{|1x'|}\sum_{i=0}^{\#(1x')}  \left( {f\left(n(1x',i),|n(1x',i)|,0\right)}\cdot d'_1(k(1x',i),y)\right).$$

Запомним это равенство.

Теперь воспользуемся предположением индукции при $x'\in\mathbb{YF}$:
$$d'_{\beta}(x',y)= \sum_{i=0}^{\#x'} \left(\beta^{|k(x',i)|}d_{\beta}(n(x',i))\cdot d'_1(k(x',i),y) \right)\Longleftrightarrow$$
$$\Longleftrightarrow \sum_{i=0}^{|x'|}\left(\beta^i {f\left(x',i,h(x',y)\right)}\prod_{j=1}^{d(y)}\frac{\left(g\left(y,j\right)-i\right)}{g\left(y,j\right)}\right)=\sum_{i=0}^{\#x'} \left(\beta^{|k(x',i)|}d_{\beta}(n(x',i))\cdot d'_1(k(x',i),y)\right)\Longleftrightarrow$$
$$\Longleftrightarrow(\text{По определению функции $h$, так как в данном случае $h(1x',y)=\#(1x')$})\Longleftrightarrow$$
$$\Longleftrightarrow \sum_{i=0}^{|x'|}\left(\beta^i {f\left(x',i,\#x'\right)}\prod_{j=1}^{d(y)}\frac{\left(g\left(y,j\right)-i\right)}{g\left(y,j\right)}\right)=\sum_{i=0}^{\#x'}\left( \beta^{|k(x',i)|}d_{\beta}(n(x',i))\cdot d'_1(k(x',i),y)\right).$$

Заметим, что по обозначению
$$\sum_{i=0}^{\#x'} \left(\beta^{|k(x',i)|}d_{\beta}(n(x',i))\cdot d'_1(k(x',i),y)\right)=$$
$$=\sum_{i=0}^{\#x'}\left( \beta^{|k(x',i)|}\left(\sum_{j=0}^{|n(x',i)|}\left(\beta^j \left({f\left(n(x',i),j,0\right)}\right)\right)\right)d'_1(k(x',i),y)\right) =$$
$$=(\text{Так как по Замечанию \ref{prunk} при $x'\in\mathbb{YF},$ $i\in\mathbb{N}_0$, если $i\in\overline{\#x'}$, то $|k(x',i)|+|n(x',i)|=|x'|$})=$$
$$=\sum_{k=0}^{|x'|}\left( \beta^{k}\sum_{\begin{smallmatrix}(l,m)\in\mathbb{N}_0\times\mathbb{N}_0: \\l+|k(x',m)|=k              \end{smallmatrix}} \left( {f\left(n(x',m),l,0\right)}\cdot d'_1(k(x',m),y)\right)\right).$$

А значит
$$ \sum_{i=0}^{|x'|}\left(\beta^i {f\left(x',i,\#x'\right)}\prod_{j=1}^{d(y)}\frac{\left(g\left(y,j\right)-i\right)}{g\left(y,j\right)}\right)=$$
$$=\sum_{k=0}^{|x'|}\left( \beta^{k}\sum_{\begin{smallmatrix}(l,m)\in\mathbb{N}_0\times\mathbb{N}_0: \\l+|k(x',m)|=k              \end{smallmatrix}} \left( {f\left(n(x',m),l,0\right)}\cdot d'_1(k(x',m),y)\right)\right)\Longleftrightarrow$$
$$\Longleftrightarrow \sum_{i=0}^{|x'|}\left(\beta^i\left( {f\left(x',i,\#x'\right)}\prod_{j=1}^{d(y)}\frac{\left(g\left(y,j\right)-i\right)}{g\left(y,j\right)}-\right.\right.-$$
$$-\left.\left. \sum_{\begin{smallmatrix}(l,m)\in\mathbb{N}_0\times\mathbb{N}_0: \\l+|k(x',m)|=i              \end{smallmatrix}} \left( {f\left(n(x',m),l,0\right)}\cdot d'_1(k(x',m),y)\right)\right)\right)=0.$$

И это равенство верно для любого $\beta\in(0,1]$. А значит слева находится многочлен от $\beta$, тождественно равный нулю. А значит любой его коэффициент при $\beta^i$ при $i\in\overline{|x'|}$ равен нулю. То есть
$\forall i\in\overline{|x'|}$ 
$$ {f\left(x',i,\#x'\right)}\prod_{j=1}^{d(y)}\frac{\left(g\left(y,j\right)-i\right)}{g\left(y,j\right)}-\sum_{\begin{smallmatrix}(l,m)\in\mathbb{N}_0\times\mathbb{N}_0: \\l+|k(x',m)|=i            \end{smallmatrix}} \left( {f\left(n(x',m),l,0\right)}\cdot d'_1(k(x',m),y)\right)=0\Longleftrightarrow$$
$$\Longleftrightarrow{f\left(x',i,\#x'\right)}\prod_{j=1}^{d(y)}\frac{\left(g\left(y,j\right)-i\right)}{g\left(y,j\right)}=\sum_{\begin{smallmatrix}(l,m)\in\mathbb{N}_0\times\mathbb{N}_0: \\l+|k(x',m)|=i            \end{smallmatrix}} \left( {f\left(n(x',m),l,0\right)}\cdot d'_1(k(x',m),y)\right).$$

Теперь заметим, что если $i\in\overline{|x'|}$, то $|1x'|-i\ge |1x'|-|x'|=1+|x'|-|x'|=1>0$. А значит $\forall i\in\overline{|x'|}$
$$\beta^i \frac{f\left(x',i,\#x'\right)}{|1x'|-i}\prod_{j=1}^{d(y)}\frac{\left(g\left(y,j\right)-i\right)}{g\left(y,j\right)}=$$
$$=\beta^i\frac{\displaystyle\sum_{\begin{smallmatrix}(l,m)\in\mathbb{N}_0\times\mathbb{N}_0: \\l+|k(x',m)|=i            \end{smallmatrix}} \left( {f\left(n(x',m),l,0\right)}\cdot d'_1(k(x',m),y)\right)}{\displaystyle|1x'|-i}=$$
$$=\beta^i  
\sum_{\begin{smallmatrix}(l,m)\in\mathbb{N}_0\times\mathbb{N}_0: \\l+|k(x',m)|=i              \end{smallmatrix}}\left( \frac{f\left(n(x',m),l,0\right)}{|1x'|-i}\cdot d'_1(k(x',m),y)\right).$$

Просуммируем данное равенство по $i\in\overline{|x'|}$:
$$\sum_{i=0}^{|x'|}\left(\beta^i \frac{f\left(x',i,\#x'\right)}{|1x'|-i}\prod_{j=1}^{d(y)}\frac{\left(g\left(y,j\right)-i\right)}{g\left(y,j\right)}\right)=$$
$$=\sum_{i=0}^{|x'|}\left(\beta^i  
\sum_{\begin{smallmatrix}(l,m)\in\mathbb{N}_0\times\mathbb{N}_0: \\l+|k(x',m)|=i              \end{smallmatrix}}\left( \frac{f\left(n(x',m),l,0\right)}{|1x'|-i}\cdot d'_1(k(x',m),y)\right)\right)=$$
$$=\sum_{k=0}^{|x'|}\left(\beta^k  
\sum_{\begin{smallmatrix}(l,m)\in\mathbb{N}_0\times\mathbb{N}_0: \\l+|k(x',m)|=k              \end{smallmatrix}}\left( \frac{f\left(n(x',m),l,0\right)}{|1x'|-k}\cdot d'_1(k(x',m),y)\right)\right).$$

А это значит, что (вернёмся к запомненному равенству)
$$d'_{\beta}(x,y)- \sum_{i=0}^{\#x}\left( \beta^{|k(x,i)|}d_{\beta}(n(x,i))\cdot d'_1(k(x,i),y)\right)= $$
$$=\sum_{i=0}^{|x'|}\left(\beta^i \frac{f\left(x',i,\#x'\right)}{|1x'|-i}\prod_{j=1}^{d(y)}\frac{\left(g\left(y,j\right)-i\right)}{g\left(y,j\right)}\right)+$$
$$+\beta^{|1x'|} f\left(1x',|1x'|,\#(1x')\right)\prod_{j=1}^{d(y)}\frac{\left(g\left(y,j\right)-|1x'|\right)}{g\left(y,j\right)}-$$
$$-\sum_{k=0}^{|x'|}\left(\beta^k  
\sum_{\begin{smallmatrix}(l,m)\in\mathbb{N}_0\times\mathbb{N}_0: \\l+|k(x',m)|=k              \end{smallmatrix}}\left( \frac{f\left(n(x',m),l,0\right)}{|1x'|-k}\cdot d'_1(k(x',m),y)\right)\right)-$$
$$-\beta^{|1x'|}\sum_{i=0}^{\#(1x')}  \left( {f\left(n(1x',i),|n(1x',i)|,0\right)}\cdot d'_1(k(1x',i),y)\right)=$$
$$=\beta^{|1x'|} f\left(1x',|1x'|,\#(1x')\right)\prod_{j=1}^{d(y)}\frac{\left(g\left(y,j\right)-|1x'|\right)}{g\left(y,j\right)}-$$
$$-\beta^{|1x'|}\sum_{i=0}^{\#(1x')}  \left( {f\left(n(1x',i),|n(1x',i)|,0\right)}\cdot d'_1(k(1x',i),y)\right)=$$
$$=\beta^{|1x'|}\left(f\left(1x',|1x'|,\#(1x')\right)\prod_{j=1}^{d(y)}\frac{\left(g\left(y,j\right)-|1x'|\right)}{g\left(y,j\right)}-\right.$$
$$\left.-\sum_{i=0}^{\#(1x')}  \left( {f\left(n(1x',i),|n(1x',i)|,0\right)}\cdot d'_1(k(1x',i),y)\right)\right).$$

По Утверждению \ref{beta1} при наших $x\in\mathbb{YF}$, $y\in\mathbb{YF}_\infty$
$$d'_{1}(x,y)= \sum_{i=0}^{\#x} \left(1^{|k(x,i)|}d_{1}(n(x,i))\cdot d'_1(k(x,i),y)\right), $$
а это значит, что (подставим $\beta = 1$ в равенство, к которому мы пришли)
$$0=d'_{1}(x,y)- \sum_{i=0}^{\#x}\left( 1^{|k(x,i)|}d_{1}(n(x,i))\cdot d'_1(k(x,i),y)\right)= $$
$$=1^{|1x'|}\left(f\left(1x',|1x'|,\#(1x')\right)\prod_{j=1}^{d(y)}\frac{\left(g\left(y,j\right)-|1x'|\right)}{g\left(y,j\right)}-\right.$$
$$\left.-\sum_{i=0}^{\#(1x')}  \left( {f\left(n(1x',i),|n(1x',i)|,0\right)}\cdot d'_1(k(1x',i),y)\right)\right)\Longrightarrow$$
$$\Longrightarrow f\left(1x',|1x'|,\#(1x')\right)\prod_{j=1}^{d(y)}\frac{\left(g\left(y,j\right)-|1x'|\right)}{g\left(y,j\right)}-$$
$$-\sum_{i=0}^{\#(1x')}  \left( {f\left(n(1x',i),|n(1x',i)|,0\right)}\cdot d'_1(k(1x',i),y)\right)=0.$$

Таким образом, ясно, что
$$d'_{\beta}(x,y)- \sum_{i=0}^{\#x}\left( \beta^{|k(x,i)|}d_{\beta}(n(x,i))\cdot d'_1(k(x,i),y)\right)= $$
$$=\beta^{|1x'|}\left(f\left(1x',|1x'|,\#(1x')\right)\prod_{j=1}^{d(y)}\frac{\left(g\left(y,j\right)-|1x'|\right)}{g\left(y,j\right)}-\right.$$
$$\left.-\sum_{i=0}^{\#(1x')}  \left( {f\left(n(1x',i),|n(1x',i)|,0\right)}\cdot d'_1(k(1x',i),y)\right)\right)=\beta^{|1x'|}\cdot 0=0\Longrightarrow$$
$$\Longrightarrow d'_{\beta}(x,y)= \sum_{i=0}^{\#x}\left( \beta^{|k(x,i)|}d_{\beta}(n(x,i))\cdot d'_1(k(x,i),y)\right),$$
что и требовалось.

В данном случае \underline{\textbf{Переход}} доказан.

    \item $\alpha_0=2$:

Вначале посчитаем, чему равна левая часть нашего равенства:
$$d'_{\beta}(x,y)=\sum_{i=0}^{|x|}\left(\beta^i {f\left(x,i,h(x,y)\right)}\prod_{j=1}^{d(y)}\frac{\left(g\left(y,j\right)-i\right)}{g\left(y,j\right)}\right)=$$
$$=\sum_{i=0}^{|x|}\left(\beta^i {f\left(x,i,\#x\right)}\prod_{j=1}^{d(y)}\frac{\left(g\left(y,j\right)-i\right)}{g\left(y,j\right)}\right)=$$
$$=\sum_{i=0}^{|2x'|}\left(\beta^i {f\left(2x',i,\#(2x')\right)}\prod_{j=1}^{d(y)}\frac{\left(g\left(y,j\right)-i\right)}{g\left(y,j\right)}\right)=$$
$$=\sum_{i=0}^{|x'|}\left(\beta^i {f\left(2x',i,\#(2x')\right)}\prod_{j=1}^{d(y)}\frac{\left(g\left(y,j\right)-i\right)}{g\left(y,j\right)}\right)+$$
$$+\sum_{i=|x'|+1}^{|x'|+1}\left(\beta^i {f\left(2x',i,\#(2x')\right)}\prod_{j=1}^{d(y)}\frac{\left(g\left(y,j\right)-i\right)}{g\left(y,j\right)}\right)+$$
$$+\sum_{i=|2x'|}^{|2x'|}\left(\beta^i {f\left(2x',i,\#(2x')\right)}\prod_{j=1}^{d(y)}\frac{\left(g\left(y,j\right)-i\right)}{g\left(y,j\right)}\right)=$$
$$=\sum_{i=0}^{|x'|}\left(\beta^i {f\left(2x',i,\#(2x')\right)}\prod_{j=1}^{d(y)}\frac{\left(g\left(y,j\right)-i\right)}{g\left(y,j\right)}\right)+$$
$$+\beta^{|x'|+1} {f\left(2x',|x'|+1,\#(2x')\right)}\prod_{j=1}^{d(y)}\frac{\left(g\left(y,j\right)-|x'|-1\right)}{g\left(y,j\right)}+$$
$$+\beta^{|2x'|} {f\left(2x',|2x'|,\#(2x')\right)}\prod_{j=1}^{d(y)}\frac{\left(g\left(y,j\right)-|2x'|\right)}{g\left(y,j\right)}.$$

Применим Утверждение \ref{evtuh93} для $x'\in\mathbb{YF}$, $\#(2x')\in\mathbb{N}_0$ и поймём, что наше выражение равно следующему:
$$\sum_{i=0}^{|x'|}\left(\beta^i {f\left(2x',i,\#(2x')\right)}\prod_{j=1}^{d(y)}\frac{\left(g\left(y,j\right)-i\right)}{g\left(y,j\right)}\right)+$$
$$+\beta^{|x'|+1}\cdot {0}\cdot\prod_{j=1}^{d(y)}\frac{\left(g\left(y,j\right)-|x'|-1\right)}{g\left(y,j\right)}+$$
$$+\beta^{|2x'|} {f\left(2x',|2x'|,\#(2x')\right)}\prod_{j=1}^{d(y)}\frac{\left(g\left(y,j\right)-|2x'|\right)}{g\left(y,j\right)}=$$
$$=\sum_{i=0}^{|x'|}\left(\beta^i {f\left(2x',i,\#(2x')\right)}\prod_{j=1}^{d(y)}\frac{\left(g\left(y,j\right)-i\right)}{g\left(y,j\right)}\right)+$$
$$+\beta^{|2x'|} {f\left(2x',|2x'|,\#(2x')\right)}\prod_{j=1}^{d(y)}\frac{\left(g\left(y,j\right)-|2x'|\right)}{g\left(y,j\right)}.$$

\begin{Prop}[Утверждение 9.1\cite{Evtuh1}, Утверждение 1.9\cite{Evtuh2}] \label{evtuh91}
Пусть $x\in\mathbb{YF}$, $y\in\mathbb{N}_0:$ $y\in\overline{2x}$. Тогда
$$f(2x,y,\#x)=f(2x,y,\#(2x)).$$
\end{Prop}

Применим Утверждение \ref{evtuh91} при $x'\in\mathbb{YF}$, $i\in\mathbb{N}_0$ к каждому слагаемого первой строчки и получим, что наше выражение равняется следующему:
$$\sum_{i=0}^{|x'|}\left(\beta^i {f\left(2x',i,\#x'\right)}\prod_{j=1}^{d(y)}\frac{\left(g\left(y,j\right)-i\right)}{g\left(y,j\right)}\right)+$$
$$+\beta^{|2x'|} {f\left(2x',|2x'|,\#(2x')\right)}\prod_{j=1}^{d(y)}\frac{\left(g\left(y,j\right)-|2x'|\right)}{g\left(y,j\right)}.$$

Применим Утверждение \ref{evtuh7} при $x'\in\mathbb{YF},$ $i\in \overline{|x'|},$ $\#x'\in\overline{\#x'} $ и $2\in\{1,2\}$ к каждому слагаемому суммы и получим, что наше выражение равняется следующему:
$$\sum_{i=0}^{|x'|}\left(\beta^i \frac{f\left(x',i,\#x'\right)}{|2x'|-i}\prod_{j=1}^{d(y)}\frac{\left(g\left(y,j\right)-i\right)}{g\left(y,j\right)}\right)+$$
$$+\beta^{|2x'|} f\left(2x',|2x'|,\#(2x')\right)\prod_{j=1}^{d(y)}\frac{\left(g\left(y,j\right)-|2x'|\right)}{g\left(y,j\right)}.$$

Тут мы посчитали, чему равна левая сторона нашего равенства. Теперь будем считать, чему равна правая:
$$\sum_{i=0}^{\#x} \left(\beta^{|k(x,i)|}d_{\beta}(n(x,i))\cdot  d'_1(k(x,i),y)\right)=$$
$$=\sum_{i=0}^{\#x}\left(  \beta^{|k(x,i)|}\left(\sum_{j=0}^{|n(x,i)|}\left(\beta^j {f\left(n(x,i),j,0\right)}\right)\right)d'_1(k(x,i),y)\right)=$$
$$=\sum_{i=0}^{\#(2x')}\left(  \beta^{|k(2x',i)|}\left(\sum_{j=0}^{|n(2x',i)|}\left(\beta^j {f\left(n(2x',i),j,0\right)}\right)\right)d'_1(k(2x',i),y)\right)=$$
$$=\sum_{i=0}^{\#x'}\left(  \beta^{|k(2x',i)|}\left(\sum_{j=0}^{|n(2x',i)|}\left(\beta^j {f\left(n(2x',i),j,0\right)}\right)\right)d'_1(k(2x',i),y)\right)+$$
$$+\sum_{i=\#(2x')}^{\#(2x')}\left(  \beta^{|k(2x',i)|}\left(\sum_{j=0}^{|n(2x',i)|}\left(\beta^j {f\left(n(2x',i),j,0\right)}\right)\right)d'_1(k(2x',i),y)\right)=$$
$$=\sum_{i=0}^{\#x'}\left(  \beta^{|k(2x',i)|}\left(\sum_{j=0}^{|n(2x',i)|}\left(\beta^j {f\left(n(2x',i),j,0\right)}\right)\right)d'_1(k(2x',i),y)\right)+$$
$$ + \beta^{|k(2x',\#(2x'))|}\left(\sum_{j=0}^{|n(2x',\#(2x'))|}\left(\beta^j {f\left(n(2x',\#(2x')),j,0\right)}\right)\right)d'_1(k(2x',\#(2x')),y)=$$
$$=\text{(По Замечанию \ref{mega} при $(2x')\in\mathbb{YF}$)}=$$
$$=\sum_{i=0}^{\#x'}\left(  \beta^{|k(2x',i)|}\left(\sum_{j=0}^{|n(2x',i)|-2}\left(\beta^j {f\left(n(2x',i),j,0\right)}\right)\right)d'_1(k(2x',i),y)\right)+$$
$$+\sum_{i=0}^{\#x'}\left(  \beta^{|k(2x',i)|}\left(\sum_{j=|n(2x',i)|-1}^{|n(2x',i)|-1}\left(\beta^j {f\left(n(2x',i),j,0\right)}\right)\right)d'_1(k(2x',i),y)\right)+$$
$$+\sum_{i=0}^{\#x'}\left(  \beta^{|k(2x',i)|}\left(\sum_{j=|n(2x',i)|}^{|n(2x',i)|}\left(\beta^j {f\left(n(2x',i),j,0\right)}\right)\right)d'_1(k(2x',i),y)\right)+$$
$$ + \beta^{|2x'|}\left(\sum_{j=0}^{|\varepsilon|}\left(\beta^j {f\left(n(2x',\#(2x')),j,0\right)}\right)\right)d'_1(k(2x',\#(2x')),y)=$$
$$=\sum_{i=0}^{\#x'}\left(  \beta^{|k(2x',i)|}\left(\sum_{j=0}^{|n(2x',i)|-2}\left(\beta^j {f\left(n(2x',i),j,0\right)}\right)\right)d'_1(k(2x',i),y)\right)+$$
$$+\sum_{i=0}^{\#x'}\left(  \beta^{|k(2x',i)|}\left(\beta^{|n(2x',i)|-1} {f\left(n(2x',i),|n(2x',i)|-1,0\right)}\right)d'_1(k(2x',i),y)\right)+$$
$$+\sum_{i=0}^{\#x'}\left(  \beta^{|k(2x',i)|}\left(\beta^{|n(2x',i)|} {f\left(n(2x',i),|n(2x',i)|,0\right)}\right)d'_1(k(2x',i),y)\right)+$$
$$ + \beta^{|2x'|}\left(\sum_{j=0}^{0}\left(\beta^j {f\left(n(2x',\#(2x')),j,0\right)}\right)\right)d'_1(k(2x',\#(2x')),y)=$$
$$=\text{(По Утверждению \ref{rofl} при $x'\in\mathbb{YF},$ $i\in\mathbb{N}_0,$ $ 2\in\{1,2\}$ ко всем слагаемым первых строчек)}=$$
$$=\sum_{i=0}^{\#x'}\left(  \beta^{|k(2x',i)|}\left(\sum_{j=0}^{|2n(x',i)|-2}\left(\beta^j {f\left(2n(x',i),j,0\right)}\right)\right)d'_1(k(2x',i),y)\right)+$$
$$+\sum_{i=0}^{\#x'}\left(  \beta^{|k(2x',i)|}\left(\beta^{|n(2x',i)|-1} {f\left(2n(x',i),|2n(x',i)|-1,0\right)}\right)d'_1(k(2x',i),y)\right)+$$
$$+\sum_{i=0}^{\#x'}\left(  \beta^{|k(2x',i)|+|n(2x',i)|} {f\left(n(2x',i),|n(2x',i)|,0\right)}\cdot d'_1(k(2x',i),y)\right)+$$
$$ + \beta^{|2x'|}\beta^0 {f\left(n(2x',\#(2x')),0,0\right)}\cdot d'_1(k(2x',\#(2x')),y).$$

Применим Утверждение \ref{evtuh7} при $(n(x',i))\in\mathbb{YF},$ $j\in \overline{|2n(x',i)|-2}=\overline{|n(x',i)|},$ $ 0\in\overline{\#(n(x',i))} $ и $2\in\{1,2\}$ к каждому слагаемому первой строчки.

Кроме того, применим Замечание \ref{prunk} при $(2x')\in\mathbb{YF},$ $i\in\mathbb{N}_0$ к каждому слагаемому третьей строчки и получим, что наше выражение равняется следующему:
$$\sum_{i=0}^{\#x'}\left(  \beta^{|k(2x',i)|}\left(\sum_{j=0}^{|2n(x',i)|-2}\left(\beta^j \frac{f\left(n(x',i),j,0\right)}{|2n(x',i)|-j}\right)\right)d'_1(k(2x',i),y)\right)+$$
$$+\sum_{i=0}^{\#x'}\left(  \beta^{|k(2x',i)|}\left(\beta^{|n(2x',i)|-1} {f\left(2n(x',i),|n(x',i)|+1,0\right)}\right)d'_1(k(2x',i),y)\right)+$$
$$+\sum_{i=0}^{\#x'}\left(  \beta^{|2x'|} {f\left(n(2x',i),|n(2x',i)|,0\right)}\cdot d'_1(k(2x',i),y)\right)+$$
$$ + \beta^{|2x'|} {f\left(n(2x',\#(2x')),0,0\right)}\cdot d'_1(k(2x',\#(2x')),y)=$$
$$=\text{(По Утверждению \ref{rofl} при $x'\in\mathbb{YF},$ $ i\in\mathbb{N}_0,$ $ 2\in\{1,2\}$ ко всем слагаемым первой строчки)}=$$
$$=\sum_{i=0}^{\#x'}\left(  \beta^{|k(2x',i)|}\left(\sum_{j=0}^{|n(2x',i)|-2}\left(\beta^j \frac{f\left(n(x',i),j,0\right)}{|n(2x',i)|-j}\right)\right)d'_1(k(2x',i),y)\right)+$$
$$+\sum_{i=0}^{\#x'}\left(  \beta^{|k(2x',i)|}\left(\beta^{|n(2x',i)|-1} {f\left(2n(x',i),|n(x',i)|+1,0\right)}\right)d'_1(k(2x',i),y)\right)+$$
$$+\beta^{|2x'|}\sum_{i=0}^{\#x'}\left(   {f\left(n(2x',i),|n(2x',i)|,0\right)}\cdot d'_1(k(2x',i),y)\right)+$$
$$ + \beta^{|2x'|} {f\left(n(2x',\#(2x')),0,0\right)}\cdot d'_1(k(2x',\#(2x')),y).$$

Применим Утверждение \ref{evtuh93} при $n(x',i)\in\mathbb{YF}$ и $0\in\mathbb{N}_0$  к каждому слагаемому второй строчки и поймём, что наше выражение равняется следующему:
$$\sum_{i=0}^{\#x'}\left(  \beta^{|k(2x',i)|}\left(\sum_{j=0}^{|n(2x',i)|-2}\left(\beta^j \frac{f\left(n(x',i),j,0\right)}{|n(2x',i)|-j}\right)\right)d'_1(k(2x',i),y)\right)+$$
$$+\sum_{i=0}^{\#x'}\left(  \beta^{|k(2x',i)|}\left(\beta^{|n(2x',i)|-1} \cdot 0\right)d'_1(k(2x',i),y)\right)+$$
$$+ \beta^{|2x'|}\sum_{i=0}^{\#x'}\left(   {f\left(n(2x',i),|n(2x',i)|,0\right)}\cdot d'_1(k(2x',i),y)\right)+$$
$$ + \beta^{|2x'|} {f\left(n(2x',\#(2x')),0,0\right)}\cdot d'_1(k(2x',\#(2x')),y)=$$
$$=\sum_{i=0}^{\#x'}\left(  \beta^{|k(2x',i)|}\left(\sum_{j=0}^{|n(2x',i)|-2}\left(\beta^j \frac{f\left(n(x',i),j,0\right)}{|n(2x',i)|-j}\right)\right)d'_1(k(2x',i),y)\right)+$$
$$+\beta^{|2x'|}\sum_{i=0}^{\#x'}\left(   {f\left(n(2x',i),|n(2x',i)|,0\right)}\cdot d'_1(k(2x',i),y)\right)+$$
$$ + \beta^{|2x'|} {f\left(n(2x',\#(2x')),0,0\right)}\cdot d'_1(k(2x',\#(2x')),y).$$

Очевидно, что $\forall i\in\overline{\#x'}$
$$|k(2x',i)|+(|n(2x',i)|-2)=(|k(2x',i)|+|n(2x',i)|)-2=$$
$$=(\text{По Замечанию \ref{prunk} при $(2x')\in\mathbb{YF},$ $i\in\mathbb{N}_0$})=$$
$$=|2x'|-2=2+|x'|-2=|x'|.$$

А это значит, что наше выражение равняется следующему:
$$\sum_{k=0}^{|x'|}\left(  \beta^{k}\sum_{\begin{smallmatrix}(l,m)\in\mathbb{N}_0\times\mathbb{N}_0: \\l+|k(2x',m)|=k              \end{smallmatrix}}\left( \frac{f\left(n(x',m),l,0\right)}{|n(2x',m)|-l}\cdot d'_1(k(2x',m),y)\right)\right)+$$
$$+\beta^{|2x'|}\sum_{i=0}^{\#x'}\left(   {f\left(n(2x',i),|n(2x',i)|,0\right)}\cdot d'_1(k(2x',i),y)\right)+$$
$$ + \beta^{|2x'|} {f\left(n(2x',\#(2x')),0,0\right)}\cdot d'_1(k(2x',\#(2x')),y).$$

Очевидно, что если у нас есть пара $(l,m)\in\mathbb{N}_0\times\mathbb{N}_0:$ $l+|k(2x',m)|=k$, то по Замечанию \ref{prunk} при $(2x')\in\mathbb{YF},$ $m\in\mathbb{N}_0$ ясно, что $l+|2x'|-|n(2x',m)|=k,$ а это значит, что $|n(2x',m)|-l=|2x'|-k.$ Таким образом, наше выражение равно следующему:
$$\sum_{k=0}^{|x'|}\left(  \beta^{k}\sum_{\begin{smallmatrix}(l,m)\in\mathbb{N}_0\times\mathbb{N}_0: \\l+|k(2x',m)|=k              \end{smallmatrix}}\left( \frac{f\left(n(x',m),l,0\right)}{|2x'|-k}\cdot d'_1(k(2x',m),y)\right)\right)+$$
$$+\beta^{|2x'|}\sum_{i=0}^{\#x'}\left(   {f\left(n(2x',i),|n(2x',i)|,0\right)}\cdot d'_1(k(2x',i),y)\right)+$$
$$ + \beta^{|2x'|} {f\left(n(2x',\#(2x')),0,0\right)}\cdot d'_1(k(2x',\#(2x')),y).$$

Тут у нас есть пары $(l,m)\in\mathbb{N}_0\times\mathbb{N}_0:$ $ l+|k(2x',m)|=k$ при $k\le |x'|$. Если $m=\#(2x')$, то 
$$k=l+|k(2x',m)|=l+|k(2x',\#(2x'))|=$$
$$=\text{(По Замечанию \ref{mega} при $(2x')\in\mathbb{YF}$)}=$$
$$=l+|2x'|=l+2+|x'|\ge 0+2+k>k.$$

Противоречие. А это значит, что $m<\#(2x')$, то есть $m\in\overline{\#x'}$. Таким образом, 
$$2x'=n(2x',m)k(2x',m),\; \#(k(2x',m))=m,\; m\in\overline{\#x'}\Longleftrightarrow$$
$$\Longleftrightarrow\text{(По Утверждению \ref{rofl} при $x'\in\mathbb{YF},$ $ m\in\mathbb{N}_0,$ $ 2\in\{1,2\}$)}\Longleftrightarrow$$
$$\Longleftrightarrow 2x'=2n(x',m)k(2x',m),\; \#(k(2x',m))=m,\; m\in\overline{\#x'}\Longleftrightarrow$$
$$\Longleftrightarrow x'=n(x',m)k(2x',m),\; \#(k(2x',m))=m,\; m\in\overline{\#x'}\Longleftrightarrow$$
$$\Longleftrightarrow k(x',m)=k(2x',m).$$

Таким образом, наше выражение равно следующему:
$$\sum_{k=0}^{|x'|}\left(\beta^k  
\sum_{\begin{smallmatrix}(l,m)\in\mathbb{N}_0\times\mathbb{N}_0: \\l+|k(x',m)|=k              \end{smallmatrix}}\left( \frac{f\left(n(x',m),l,0\right)}{|2x'|-k}\cdot d'_1(k(x',m),y)\right)\right)+$$
$$+\beta^{|2x'|}\sum_{i=0}^{\#x'}\left(   {f\left(n(2x',i),|n(2x',i)|,0\right)}\cdot d'_1(k(2x',i),y)\right)+$$
$$ + \beta^{|2x'|} {f\left(n(2x',\#(2x')),0,0\right)}\cdot d'_1(k(2x',\#(2x')),y).$$
Заметим, что 
$$0=|\varepsilon|=\text{(По Замечанию \ref{mega} при $(2x')\in\mathbb{YF}$)}=|n(2x',\#(2x'))|.$$
Таким образом, наше выражение равно следующему:
$$\sum_{k=0}^{|x'|}\left(\beta^k  
\sum_{\begin{smallmatrix}(l,m)\in\mathbb{N}_0\times\mathbb{N}_0: \\l+|k(x',m)|=k              \end{smallmatrix}}\left( \frac{f\left(n(x',m),l,0\right)}{|2x'|-k}\cdot d'_1(k(x',m),y)\right)\right)+$$
$$+\beta^{|2x'|}\sum_{i=0}^{\#x'}\left(   {f\left(n(2x',i),|n(2x',i)|,0\right)}\cdot d'_1(k(2x',i),y)\right)+$$
$$ + \beta^{|2x'|} {f\left(n(2x',\#(2x')),|n(2x',\#(2x'))|,0\right)}\cdot d'_1(k(2x',\#(2x')),y)=$$
$$=\sum_{k=0}^{|x'|}\left(\beta^k  
\sum_{\begin{smallmatrix}(l,m)\in\mathbb{N}_0\times\mathbb{N}_0: \\l+|k(x',m)|=k              \end{smallmatrix}}\left( \frac{f\left(n(x',m),l,0\right)}{|2x'|-k}\cdot d'_1(k(x',m),y)\right)\right)+$$
$$+\beta^{|2x'|}\sum_{i=0}^{\#x'}\left(   {f\left(n(2x',i),|n(2x',i)|,0\right)}\cdot d'_1(k(2x',i),y)\right)+$$
$$+\beta^{|2x'|}\sum_{i=\#(2x')}^{\#(2x')}\left(   {f\left(n(2x',i),|n(2x',i)|,0\right)}\cdot d'_1(k(2x',i),y)\right)=$$
$$=\sum_{k=0}^{|x'|}\left(\beta^k  
\sum_{\begin{smallmatrix}(l,m)\in\mathbb{N}_0\times\mathbb{N}_0: \\l+|k(x',m)|=k              \end{smallmatrix}}\left( \frac{f\left(n(x',m),l,0\right)}{|2x'|-k}\cdot d'_1(k(x',m),y)\right)\right)+$$
$$+\beta^{|2x'|}\sum_{i=0}^{\#(2x')}\left(   {f\left(n(2x',i),|n(2x',i)|,0\right)}d'_1(k(2x',i),y)\right).$$

Тут мы посчитали, чему равна правая сторона нашего равенства. Таким образом, мы поняли, что (вычтем правую сторону равенства из левого)
$$d'_{\beta}(x,y)- \sum_{i=0}^{\#x}\left( \beta^{|k(x,i)|}d_{\beta}(n(x,i))\cdot d'_1(k(x,i),y)\right)= $$
$$=\sum_{i=0}^{|x'|}\left(\beta^i \frac{f\left(x',i,\#x'\right)}{|2x'|-i}\prod_{j=1}^{d(y)}\frac{\left(g\left(y,j\right)-i\right)}{g\left(y,j\right)}\right)+$$
$$+\beta^{|2x'|} f\left(2x',|2x'|,\#(2x')\right)\prod_{j=1}^{d(y)}\frac{\left(g\left(y,j\right)-|2x'|\right)}{g\left(y,j\right)}-$$
$$-\sum_{k=0}^{|x'|}\left(\beta^k  
\sum_{\begin{smallmatrix}(l,m)\in\mathbb{N}_0\times\mathbb{N}_0: \\l+|k(x',m)|=k              \end{smallmatrix}}\left( \frac{f\left(n(x',m),l,0\right)}{|2x'|-k}\cdot d'_1(k(x',m),y)\right)\right)-$$
$$-\beta^{|2x'|}\sum_{i=0}^{\#(2x')}  \left( {f\left(n(2x',i),|n(2x',i)|,0\right)}\cdot d'_1(k(2x',i),y)\right).$$

Запомним это равенство.

Теперь воспользуемся предположением индукции при $x'\in\mathbb{YF}$:
$$d'_{\beta}(x',y)= \sum_{i=0}^{\#x'} \left(\beta^{|k(x',i)|}d_{\beta}(n(x',i))\cdot d'_1(k(x',i),y) \right)\Longleftrightarrow$$
$$\Longleftrightarrow \sum_{i=0}^{|x'|}\left(\beta^i {f\left(x',i,h(x',y)\right)}\prod_{j=1}^{d(y)}\frac{\left(g\left(y,j\right)-i\right)}{g\left(y,j\right)}\right)=\sum_{i=0}^{\#x'} \left(\beta^{|k(x',i)|}d_{\beta}(n(x',i))\cdot d'_1(k(x',i),y)\right)\Longleftrightarrow$$
$$\Longleftrightarrow(\text{По определению функции $h$, так как в данном случае $h(2x',y)=\#(2x')$})\Longleftrightarrow$$
$$\Longleftrightarrow \sum_{i=0}^{|x'|}\left(\beta^i {f\left(x',i,\#x'\right)}\prod_{j=1}^{d(y)}\frac{\left(g\left(y,j\right)-i\right)}{g\left(y,j\right)}\right)=\sum_{i=0}^{\#x'}\left( \beta^{|k(x',i)|}d_{\beta}(n(x',i))\cdot d'_1(k(x',i),y)\right).$$

Заметим, что по обозначению
$$\sum_{i=0}^{\#x'} \left(\beta^{|k(x',i)|}d_{\beta}(n(x',i))\cdot d'_1(k(x',i),y)\right)=$$
$$=\sum_{i=0}^{\#x'}\left( \beta^{|k(x',i)|}\left(\sum_{j=0}^{|n(x',i)|}\left(\beta^j \left({f\left(n(x',i),j,0\right)}\right)\right)\right)d'_1(k(x',i),y)\right) =$$
$$=(\text{По Замечанию \ref{prunk} при $x'\in\mathbb{YF},$ $i\in\mathbb{N}_0$ если $i\in\overline{\#x'}$, то $|k(x',i)|+|n(x',i)|=|x'|$})=$$
$$=\sum_{k=0}^{|x'|}\left( \beta^{k}\sum_{\begin{smallmatrix}(l,m)\in\mathbb{N}_0\times\mathbb{N}_0: \\l+|k(x',m)|=k              \end{smallmatrix}} \left( {f\left(n(x',m),l,0\right)}\cdot d'_1(k(x',m),y)\right)\right).$$

А значит
$$ \sum_{i=0}^{|x'|}\left(\beta^i {f\left(x',i,\#x'\right)}\prod_{j=1}^{d(y)}\frac{\left(g\left(y,j\right)-i\right)}{g\left(y,j\right)}\right)=$$
$$=\sum_{k=0}^{|x'|}\left( \beta^{k}\sum_{\begin{smallmatrix}(l,m)\in\mathbb{N}_0\times\mathbb{N}_0: \\l+|k(x',m)|=k              \end{smallmatrix}} \left( {f\left(n(x',m),l,0\right)}\cdot d'_1(k(x',m),y)\right)\right)\Longleftrightarrow$$
$$\Longleftrightarrow \sum_{i=0}^{|x'|}\left(\beta^i\left( {f\left(x',i,\#x'\right)}\prod_{j=1}^{d(y)}\frac{\left(g\left(y,j\right)-i\right)}{g\left(y,j\right)}-\right.\right.$$
$$-\left.\left.\sum_{\begin{smallmatrix}(l,m)\in\mathbb{N}_0\times\mathbb{N}_0: \\l+|k(x',m)|=i              \end{smallmatrix}} \left( {f\left(n(x',m),l,0\right)}\cdot d'_1(k(x',m),y)\right)\right)\right)=0.$$

И это равенство верно для любого $\beta\in(0,1]$. А значит слева находится многочлен от $\beta$, тождественно равный нулю. А значит любой его коэффициент при $\beta^i$ при $i\in\overline{|x'|}$ равен нулю. То есть
$\forall i\in\overline{|x'|}$ 
$$ {f\left(x',i,\#x'\right)}\prod_{j=1}^{d(y)}\frac{\left(g\left(y,j\right)-i\right)}{g\left(y,j\right)}-\sum_{\begin{smallmatrix}(l,m)\in\mathbb{N}_0\times\mathbb{N}_0: \\l+|k(x',m)|=i            \end{smallmatrix}} \left( {f\left(n(x',m),l,0\right)}\cdot d'_1(k(x',m),y)\right)=0\Longleftrightarrow$$
$$\Longleftrightarrow{f\left(x',i,\#x'\right)}\prod_{j=1}^{d(y)}\frac{\left(g\left(y,j\right)-i\right)}{g\left(y,j\right)}=\sum_{\begin{smallmatrix}(l,m)\in\mathbb{N}_0\times\mathbb{N}_0: \\l+|k(x',m)|=i            \end{smallmatrix}} \left( {f\left(n(x',m),l,0\right)}\cdot d'_1(k(x',m),y)\right).$$

Теперь заметим, что если $i\in\overline{|x'|}$, то $|2x'|-i\ge |2x'|-|x'|=2+|x'|-|x'|=2>0$. А значит $\forall i\in\overline{|x'|}$
$$\beta^i \frac{f\left(x',i,\#x'\right)}{|2x'|-i}\prod_{j=1}^{d(y)}\frac{\left(g\left(y,j\right)-i\right)}{g\left(y,j\right)}=$$
$$=\beta^i\frac{\displaystyle\sum_{\begin{smallmatrix}(l,m)\in\mathbb{N}_0\times\mathbb{N}_0: \\l+|k(x',m)|=i            \end{smallmatrix}} \left( {f\left(n(x',m),l,0\right)}\cdot d'_1(k(x',m),y)\right)}{\displaystyle|2x'|-i}=$$
$$=\beta^i  
\sum_{\begin{smallmatrix}(l,m)\in\mathbb{N}_0\times\mathbb{N}_0: \\l+|k(x',m)|=i              \end{smallmatrix}}\left( \frac{f\left(n(x',m),l,0\right)}{|2x'|-i}\cdot d'_1(k(x',m),y)\right).$$

Просуммируем данное выражение по $i\in\overline{|x'|}$:
$$\sum_{i=0}^{|x'|}\left(\beta^i \frac{f\left(x',i,\#x'\right)}{|2x'|-i}\prod_{j=1}^{d(y)}\frac{\left(g\left(y,j\right)-i\right)}{g\left(y,j\right)}\right)=$$
$$=\sum_{i=0}^{|x'|}\left(\beta^i  
\sum_{\begin{smallmatrix}(l,m)\in\mathbb{N}_0\times\mathbb{N}_0: \\l+|k(x',m)|=i              \end{smallmatrix}}\left( \frac{f\left(n(x',m),l,0\right)}{|2x'|-i}\cdot d'_1(k(x',m),y)\right)\right)=$$
$$=\sum_{k=0}^{|x'|}\left(\beta^k  
\sum_{\begin{smallmatrix}(l,m)\in\mathbb{N}_0\times\mathbb{N}_0: \\l+|k(x',m)|=k              \end{smallmatrix}}\left( \frac{f\left(n(x',m),l,0\right)}{|2x'|-k}\cdot d'_1(k(x',m),y)\right)\right).$$

А это значит, что (вернёмся к запомненному равенству)
$$d'_{\beta}(x,y)- \sum_{i=0}^{\#x}\left( \beta^{|k(x,i)|}d_{\beta}(n(x,i))\cdot d'_1(k(x,i),y)\right)= $$
$$=\sum_{i=0}^{|x'|}\left(\beta^i \frac{f\left(x',i,\#x'\right)}{|2x'|-i}\prod_{j=1}^{d(y)}\frac{\left(g\left(y,j\right)-i\right)}{g\left(y,j\right)}\right)+$$
$$+\beta^{|2x'|} f\left(2x',|2x'|,\#(2x')\right)\prod_{j=1}^{d(y)}\frac{\left(g\left(y,j\right)-|2x'|\right)}{g\left(y,j\right)}-$$
$$-\sum_{k=0}^{|x'|}\left(\beta^k  
\sum_{\begin{smallmatrix}(l,m)\in\mathbb{N}_0\times\mathbb{N}_0: \\l+|k(x',m)|=k              \end{smallmatrix}}\left( \frac{f\left(n(x',m),l,0\right)}{|2x'|-k}\cdot d'_1(k(x',m),y)\right)\right)-$$
$$-\beta^{|2x'|}\sum_{i=0}^{\#(2x')}  \left( {f\left(n(2x',i),|n(2x',i)|,0\right)}\cdot d'_1(k(2x',i),y)\right)=$$
$$=\beta^{|2x'|} f\left(2x',|2x'|,\#(2x')\right)\prod_{j=1}^{d(y)}\frac{\left(g\left(y,j\right)-|2x'|\right)}{g\left(y,j\right)}-$$
$$-\beta^{|2x'|}\sum_{i=0}^{\#(2x')}  \left( {f\left(n(2x',i),|n(2x',i)|,0\right)}\cdot d'_1(k(2x',i),y)\right)=$$
$$=\beta^{|2x'|}\left(f\left(2x',|2x'|,\#(2x')\right)\prod_{j=1}^{d(y)}\frac{\left(g\left(y,j\right)-|2x'|\right)}{g\left(y,j\right)}-\right.$$
$$\left.-\sum_{i=0}^{\#(2x')}  \left( {f\left(n(2x',i),|n(2x',i)|,0\right)}\cdot d'_1(k(2x',i),y)\right)\right).$$

По Утверждению \ref{beta1} при наших $x\in\mathbb{YF}$, $y\in\mathbb{YF}_\infty$
$$d'_{1}(x,y)= \sum_{i=0}^{\#x} \left(1^{|k(x,i)|}d_{1}(n(x,i))\cdot d'_1(k(x,i),y)\right), $$
а это значит, что (подставим $\beta = 1$ в равенство, к которому мы пришли)
$$0=d'_{1}(x,y)- \sum_{i=0}^{\#x}\left( 1^{|k(x,i)|}d_{1}(n(x,i))\cdot d'_1(k(x,i),y)\right)= $$
$$=1^{|2x'|}\left(f\left(2x',|2x'|,\#(2x')\right)\prod_{j=1}^{d(y)}\frac{\left(g\left(y,j\right)-|2x'|\right)}{g\left(y,j\right)}-\right.$$
$$\left.-\sum_{i=0}^{\#(2x')}  \left( {f\left(n(2x',i),|n(2x',i)|,0\right)}\cdot d'_1(k(2x',i),y)\right)\right)\Longrightarrow$$
$$\Longrightarrow f\left(2x',|2x'|,\#(2x')\right)\prod_{j=1}^{d(y)}\frac{\left(g\left(y,j\right)-|2x'|\right)}{g\left(y,j\right)}-$$
$$-\sum_{i=0}^{\#(2x')}  \left( {f\left(n(2x',i),|n(2x',i)|,0\right)}\cdot d'_1(k(2x',i),y)\right)=0.$$

Таким образом, ясно, что
$$d'_{\beta}(x,y)- \sum_{i=0}^{\#x}\left( \beta^{|k(x,i)|}d_{\beta}(n(x,i))\cdot d'_1(k(x,i),y)\right)= $$
$$=\beta^{|2x'|}\left(f\left(2x',|2x'|,\#(2x')\right)\prod_{j=1}^{d(y)}\frac{\left(g\left(y,j\right)-|2x'|\right)}{g\left(y,j\right)}-\right.$$
$$\left.-\sum_{i=0}^{\#(2x')}  \left( {f\left(n(2x',i),|n(2x',i)|,0\right)}\cdot d'_1(k(2x',i),y)\right)\right)=\beta^{|2x'|}\cdot 0=0\Longrightarrow$$
$$\Longrightarrow d'_{\beta}(x,y)= \sum_{i=0}^{\#x}\left( \beta^{|k(x,i)|}d_{\beta}(n(x,i))\cdot d'_1(k(x,i),y)\right),$$
что и требовалось.

В данном случае \underline{\textbf{Переход}} доказан.

\end{enumerate}
\end{enumerate}
В каждом случае \underline{\textbf{Переход}} доказан.

Ясно, что все случаи разобраны, а значит \underline{\textbf{Переход}} доказан.

Утверждение доказано.

\end{proof}

\begin{Oboz}
Пусть $x\in \mathbb{YF}$. Тогда
$$q(x):=\frac{1}{\displaystyle\prod_{i=1}^{\#x} |k(x,i)|}.$$
\end{Oboz}

\begin{Zam}\label{odnoitozhe}
Из определения функции $f$ ясно, что $\forall x\in\mathbb{YF}$
$$q(x)=\frac{1}{\displaystyle\prod_{i=1}^{\#x} |k(x,i)|}=f(x,0,0).$$
\end{Zam}

    \begin{Prop}\label{q}
    Пусть $x,x'\in\mathbb{YF}$, $\alpha_0\in\{1,2\}:$ $x=\alpha_0x'$. Тогда
    $$q(x')=|x|q(x).$$
    \end{Prop}
    \begin{proof}
    По определению функции $q$
    $$q(x')=\frac{1}{\displaystyle\prod_{i=1}^{\#x'}|k(x',i)|}=\frac{|\alpha_0x'|}{\displaystyle\left(\prod_{i=1}^{\#x'}|k(x',i)|\right)|\alpha_0x'|}.$$
    
Ясно, что $\forall i\in\overline{1,\#x'}$
$$\alpha_0x'=n(\alpha_0x',i)k(\alpha_0x',i),\; \#(k(\alpha_0x',i))=i,\; i\in\overline{\#x'}\Longleftrightarrow$$
$$\Longleftrightarrow(\text{По Утверждению \ref{rofl} при $x$ при $x'\in\mathbb{YF}$, $i\in\mathbb{N}_0$, $\alpha_0\in\{1,2\}$})\Longleftrightarrow$$
$$\Longleftrightarrow \alpha_0x'=\alpha_0n(x',i)k(\alpha_0x',i),\; \#(k(\alpha_0x',i))=i,\; i\in\overline{\#x'}\Longleftrightarrow$$
$$\Longleftrightarrow x'=n(x',i)k(\alpha_0x',i),\; \#(k(\alpha_0x',i))=i,\; i\in\overline{\#x'}\Longleftrightarrow$$
$$\Longleftrightarrow k(x',i)=k(\alpha_0x',i).$$

А значит данное выражение равняется следующему:
$$\frac{|\alpha_0x'|}{\displaystyle\left(\prod_{i=1}^{\#x'}|k(\alpha_0x',i)|\right)|\alpha_0x'|}=(\text{По Замечанию \ref{mega} при $x'\in\mathbb{YF}$})=$$
$$=\frac{|\alpha_0x'|}{\displaystyle\left(\prod_{i=1}^{\#x'}|k(\alpha_0x',i)|\right)|k(\alpha_0x',\#(\alpha_0x'))|}=\frac{|\alpha_0x'|}{\displaystyle\left(\prod_{i=1}^{\#x'}|k(\alpha_0x',i)|\right)|k(\alpha_0x',\#x'+1)|}=$$
$$=\frac{|\alpha_0x'|}{\displaystyle\left(\prod_{i=1}^{\#x'}|k(\alpha_0x',i)|\right)\left(\prod_{i=\#x'+1}^{\#x'+1}|k(\alpha_0x',i)|\right)}=$$
$$=\frac{|\alpha_0x'|}{\displaystyle\prod_{i=1}^{\#x'+1}|k(\alpha_0x',i)|}=\frac{|\alpha_0x'|}{\displaystyle\prod_{i=1}^{\#(\alpha_0x')}|k(\alpha_0x',i)|}=\frac{|x|}{\displaystyle\prod_{i=1}^{\#x}|k(x,i)|}=|x|q(x),$$
что и требовалось.

Утверждение доказано.
    \end{proof}

\begin{Prop} \label{delitsa}

Пусть $x\in \mathbb{YF}$. Тогда $d_{\beta}(x)$ делится на $(1-\beta)\quad\# x$ раз и не делится на $(1-\beta)\quad(\# x+1)$ раз.
\end{Prop}
\begin{proof} 

Рассмотрим два случая:
\begin{enumerate}
\item $x\in\mathbb{YF}:$ $\#x=0\Longleftrightarrow x=\varepsilon$.

В данном случае
$$d_{\beta}(x)=d_{\beta}(\varepsilon)=\sum_{i=0}^{|\varepsilon|}\left(\beta^i {f\left(\varepsilon,i,0\right)}\right)=\sum_{i=0}^{0}\left(\beta^i {f\left(\varepsilon,i,0\right)}\right)=\beta^0 {f\left(\varepsilon,0,0\right)}={f\left(\varepsilon,0,0\right)}=1.$$
Ясно, что $1$ как многочлен от $\beta$ делится на $(1-\beta)\quad 0=\#x$ раз и не делится на $(1-\beta)\quad 1=(\#x+1)$ раз, что и требовалось. 

В данном случае Утверждение доказано.

    \item $x\in\mathbb{YF}:\;\#x\ge 1$.

    \renewcommand{\labelenumi}{\arabic{enumi}$^\circ$}
    \renewcommand{\labelenumii}{\alph{enumii}$)$}
    \renewcommand{\labelenumiii}{\arabic{enumi}.\arabic{enumii}.\arabic{enumiii}$^\circ$}

    В данном случаю есть два варианта:
\begin{enumerate}
    \item $\exists x'\in\mathbb{YF}: x=x'1$. 

В данном случае заметим, что по обозначению
$$(d_{\beta}(x))'=(d_{\beta}(x'1))'=\left(\sum_{i=0}^{|x'1|}\left(\beta^i {f\left(x'1,i,0\right)}\right)\right)'=\sum_{i=1}^{|x'1|}\left(i\beta^{i-1} {f\left(x'1,i,0\right)}\right).$$

\begin{Prop}[Утверждение 1.1\cite{Evtuh1}, Утверждение 1.1\cite{Evtuh2}] \label{evtuh11}
Пусть $x\in\mathbb{YF}$, $y\in\mathbb{N}_0:$ $y \in \overline{1,|x1|}$. Тогда 
$$-yf(x1,y,0)=f(x,y-1,0).$$
\end{Prop}
Из Утверждения \ref{evtuh11}, применённого к каждому слагаемому нашей суммы при $x'\in\mathbb{YF},$ $i\in\mathbb{N}_0$, следует, что
$$(d_{\beta}(x))'=(d_{\beta}(x'1))'=\sum_{i=1}^{|x'1|}\left(i\beta^{i-1} {f\left(x'1,i,0\right)}\right)=\sum_{i=1}^{|x'1|}\left(-\beta^{i-1} {f\left(x',i-1,0\right)}\right)=$$
$$=-\sum_{i=1}^{|x'1|}\left(\beta^{i-1} {f\left(x',i-1,0\right)}\right)=-\sum_{i=0}^{|x'|}\left(\beta^{i} {f\left(x',i,0\right)}\right)=-(d_{\beta}(x')).$$

    \item $\exists x'\in\mathbb{YF}: x=x'2$. 

В данном случае
$$(d_{\beta}(x))'=(d_{\beta}(x'2))'=\left(\sum_{i=0}^{|x'2|}\left(\beta^i {f\left(x'2,i,0\right)}\right)\right)'=\sum_{i=1}^{|x'2|}\left(i\beta^{i-1} {f\left(x'2,i,0\right)}\right)=$$
$$=\sum_{i=1}^{1}\left(i\beta^{i-1} {f\left(x'2,i,0\right)}\right)+\sum_{i=2}^{|x'2|}\left(i\beta^{i-1} {f\left(x'2,i,0\right)}\right)=$$
$$=(1\beta^{1-1}f(x'2,1,0))+\sum_{i=2}^{|x'2|}\left(i\beta^{i-1} {f\left(x'2,i,0\right)}\right)=f(x'2,1,0)+\sum_{i=2}^{|x'2|}\left(i\beta^{i-1} {f\left(x'2,i,0\right)}\right).$$

По определению функции $f$ ясно, что $f(x'2,1,0)=0$, а значит наше выражение равняется следующему:
$$\sum_{i=2}^{|x'2|}\left(i\beta^{i-1} {f\left(x'2,i,0\right)}\right).$$

\begin{Prop}[Утверждение 1.2\cite{Evtuh1}, Утверждение 1.2\cite{Evtuh2}] \label{evtuh12}
Пусть $x\in\mathbb{YF}$, $y\in\mathbb{N}_0:$ $y \in \overline{|x2|}$. Тогда 
$$(1-y)f(x11,y,0)=f(x2,y,0).$$ 
\end{Prop}

Посчитаем, воспользовавшись Утверждениями \ref{evtuh11} и \ref{evtuh12}: 
$$\sum_{i=2}^{|x'2|}\left(i\beta^{i-1} {f\left(x'2,i,0\right)}\right)=$$
$$=\left(\text{По Утверждению \ref{evtuh12} ко всем слагаемым при $x'\in\mathbb{YF}$, $i\in\overline{|x'2|}$}\right)=$$
$$=\sum_{i=2}^{|x'2|}\left(i\beta^{i-1} (1-i){f\left(x'11,i,0\right)}\right)=$$
$$=\left(\text{По Утверждению \ref{evtuh11} ко всем слагаемым при $(x'1)\in\mathbb{YF}$, $i\in\overline{1,|x'11|}$}\right)=$$
$$=\sum_{i=2}^{|x'2|}\left({-\beta^{i-1} (1-i){f\left(x'1,i-1,0\right)}}\right)=\sum_{i=1}^{|x'2|-1}\left({-\beta^{i} (-i){f\left(x'1,i,0\right)}}\right)=$$
$$=\left(\text{По Утверждению \ref{evtuh11} ко всем слагаемым при $x'\in\mathbb{YF}$, $i\in\overline{1,|x'1|}$}\right)=$$
$$=\sum_{i=1}^{|x'2|-1}\left({-\beta^{i} {f\left(x',i-1,0\right)}}\right)=\sum_{i=0}^{|x'2|-2}\left({-\beta^{i+1} {f\left(x',i,0\right)}}\right)=$$
$$=\sum_{i=0}^{|x'|}\left(-{\beta^{i+1} {f\left(x',i,0\right)}}\right)=-\beta\sum_{i=0}^{|x'|}\left({\beta^{i} {f\left(x',i,0\right)}}\right)=-\beta(d_{\beta}(x')).$$
\end{enumerate}

Утверждается, что если мы продифференцируем $d_{\beta}(x)$ ровно $b$ раз, где $b\in\overline{\#x}$, то мы получим выражение вида $$\sum_{i=0}^{b}\left(c_i{\beta^{|k(x,i)|-b}\cdot d_\beta(n(x,i))}\right),$$
где $c_i\in\mathbb{R}$, причём $c_b=\pm 1$ и если $ i\in\overline{b}:$ $|k(x,i)|-b<0$, то $c_i=0$.

Докажем это утверждение по индукции по $b$:

\underline{\textbf{База}}: $b=0$:

В данном случае пусть $c_0=1$. Тогда
$$\sum_{i=0}^{b}\left(c_i{\beta^{|k(x,i)|-b}\cdot d_\beta(n(x,i))}\right)=\sum_{i=0}^{0}\left(1{\beta^{|k(x,i)|-b}\cdot d_\beta(n(x,i))}\right)=1{\beta^{|k(x,0)|-0}\cdot d_\beta(n(x,0))}=$$
$$=\text{(По Замечанию \ref{mega} при $x\in\mathbb{YF}$)}=1{\beta^{|\varepsilon|-0}\cdot  d_\beta(x)}={\beta^{0}\cdot d_\beta(x)}=d_\beta(x),$$
что и требовалось.

\underline{\textbf{База}} доказана.

\underline{\textbf{Переход}} к $(b+1)\in\overline{1,\#x} $:

Зафиксируем $i\in\overline{b}\subseteq \overline{\#x-1}$. Ясно, что если $i\in\overline{\#x-1}$, то 
$n(x,i)\ne \varepsilon$. Рассмотрим два случая:
\begin{enumerate}
    \item $\exists n'\in\mathbb{YF}:$ $n(x,i)=n'1.$
    
    В данном случае $$x=n(x,i)k(x,i),\; \#(k(x,i))=i\Longleftrightarrow x=n'1k(x,i), \;\#(k(x,i))=i\Longleftrightarrow$$
    $$\Longleftrightarrow x=n'1k(x,i),\; \#(1k(x,i))=i+1\Longleftrightarrow n(x,i+1)=n',\;k(x,i+1)=1k(x,i).$$
    
    А это, как мы поняли раньше, значит, что 
    $$\left(c_i{\beta^{|k(x,i)|-b}\cdot d_\beta(n(x,i))}\right)'=\left(c_i{\beta^{|k(x,i)|-b}\cdot d_\beta(n'1)}\right)'=$$
    $$=c_i{\beta^{|k(x,i)|-b}\cdot(-d_\beta(n'))}+c_i\left(|k(x,i)|-b\right){\beta^{|k(x,i)|-b-1}\cdot d_\beta(n'1)}=$$
    $$=-c_i{\beta^{|1k(x,i)|-1-b}\cdot d_\beta(n')}+c_i\left(|k(x,i)|-b\right){\beta^{|k(x,i)|-b-1}\cdot d_\beta(n'1)}=$$
    $$=-c_i{\beta^{|k(x,i+1)|-(b+1)}\cdot d_\beta(n(x,i+1))}+c_i\left(|k(x,i)|-b\right){\beta^{|k(x,i)|-(b+1)}\cdot d_\beta(n(x,i))},$$
    кроме того, очевидно, что
    \begin{itemize} 
        \item Если $|k(x,i)|-b>0$, то 
        $$ |k(x,i)|-b\ge 1 \Longrightarrow |k(x,i+1)|-(b+1)=|1k(x,i)|-(b+1)\ge 1, \;|k(x,i)|-(b+1)\ge 0;$$
        \item Если $|k(x,i)|-b=0$, то 
        $$ |k(x,i+1)|-(b+1)=|1k(x,i)|-(b+1)=0,$$
        то есть наше выражение равно следующему:
        $$-c_i{\beta^{0}(d_\beta(n(x,i+1)))}+c_i\left(0\right){\beta^{|k(x,i)|-(b+1)}(d_\beta(n(x,i)))}=-c_i(d_\beta(n(x,i+1)));$$
        \item Если $|k(x,i)|-b<0$, то по предположению индукции $c_i=0$, а это значит, что наше выражение равняется тождественному нулю.
    \end{itemize}
    \item $\exists n'\in\mathbb{YF}:$ $n(x,i)=n'2.$
    
    В данном случае $$x=n(x,i)k(x,i),\; \#(k(x,i))=i\Longleftrightarrow x=n'2k(x,i), \;\#(k(x,i))=i\Longleftrightarrow$$
    $$\Longleftrightarrow x=n'2k(x,i),\; \#(2k(x,i))=i+1\Longleftrightarrow n(x,i+1)=n',\;k(x,i+1)=2k(x,i).$$

    
    А это, как мы поняли, значит, что 
    $$\left(c_i{\beta^{|k(x,i)|-b} \cdot d_\beta(n(x,i))}\right)'=\left(c_i{\beta^{|k(x,i)|-b}\cdot d_\beta(n'2)}\right)'=$$
    $$=c_i{\beta^{|k(x,i)|-b}\cdot(-\beta d_\beta(n'))}+c_i\left(|k(x,i)|-b\right){\beta^{|k(x,i)|-b-1}\cdot d_\beta(n'2)}=$$
    $$=-c_i{\beta^{|2k(x,i)|-2-b+1}\cdot d_\beta(n')}+c_i\left(|k(x,i)|-b\right){\beta^{|k(x,i)|-b-1}\cdot d_\beta(n'2)}=$$
    $$=-c_i{\beta^{|k(x,i+1)|-(b+1)}\cdot d_\beta(n(x,i+1))}+c_i\left(|k(x,i)|-b\right){\beta^{|k(x,i)|-(b+1)}\cdot d_\beta(n(x,i))},$$
     кроме того, очевидно, что
    \begin{itemize} 
        \item Если $|k(x,i)|-b>0$, то 
        $$ |k(x,i)|-b\ge 1 \Longrightarrow  |k(x,i+1)|-(b+1)=|2k(x,i)|-(b+1)\ge 2,\; |k(x,i)|-(b+1)\ge 0;$$
        \item Если $|k(x,i)|-b=0$, то 
        $$|k(x,i+1)|-(b+1)=|2k(x,i)|-(b+1)=1,$$
        то есть наше выражение равно следующему:
        $$-c_i{\beta^{1}\cdot d_\beta(n(x,i+1))}+c_i\left(0\right){\beta^{|k(x,i)|-(b+1)}\cdot d_\beta(n(x,i))}=-c_i\beta(d_\beta(n(x,i+1)));$$
        \item Если $|k(x,i)|-b<0$, то по предположению индукции $c_i=0$, а это значит, что наше выражение равняется тождественному нулю.
 \end{itemize}
\end{enumerate}

Таким образом, мы получаем, что
$$\left(\sum_{i=0}^{b}\left(c_i{\beta^{|k(x,i)|-b}\cdot d_\beta(n(x,i))}\right)\right)'=$$
$$=\sum_{i=0}^{b}\left(-c_i{\beta^{|k(x,i+1)|-(b+1)}\cdot d_\beta(n(x,i+1))}+c_i\left(|k(x,i)|-b\right){\beta^{|k(x,i)|-(b+1)}\cdot d_\beta(n(x,i))}\right)=$$
$$=c_0\left(|k(x,0)|-b\right){\beta^{|k(x,0)|-(b+1)}\cdot d_\beta(n(x,0))}+$$
$$+\sum_{i=1}^{b}\left(-c_{i-1}{\beta^{|k(x,i)|-(b+1)}\cdot d_\beta(n(x,i))}+c_i\left(|k(x,i)|-b\right){\beta^{|k(x,i)|-(b+1)}\cdot d_\beta(n(x,i))}\right)-$$
$$-c_b{\beta^{|k(x,b+1)|-(b+1)}\cdot d_\beta(n(x,b+1))}=$$
$$=c_0\left(|k(x,0)|-b\right){\beta^{|k(x,0)|-(b+1)}\cdot d_\beta(n(x,0))}+$$
$$+\sum_{i=1}^{b}\left((-c_{i-1}+c_i\left(|k(x,i)|-b\right)){\beta^{|k(x,i)|-(b+1)}\cdot d_\beta(n(x,i))}\right)-$$
$$-c_b{\beta^{|k(x,b+1)|-(b+1)}\cdot d_\beta(n(x,b+1))},$$
то есть если
\begin{itemize}
    \item $$c_0'=c_0\left(|k(x,0)|-b\right);$$
    \item $$\text{Если } i \in \overline{1,b}, \text{ то } c_i'=(-c_{i-1}+c_i\left(|k(x,i)|-b\right));$$
    \item  $$c_{b+1}'=-c_b,$$
\end{itemize}
то наше выражение равно следующему:
$$\sum_{i=0}^{b+1}\left(c_i'{\beta^{|k(x,i)|-(b+1)}\cdot d_\beta(n(x,i))}\right),$$
причём, как мы поняли, если $\forall i\in\overline{b}$ выполнялось условие $|k(x,i)|-b<0 \Longrightarrow c_i=0$, то и $\forall i\in\overline{b+1}$ выполняется условие $|k(x,i)|-(b+1)<0 \Longrightarrow c_i'=0$, а также если $c_b=\pm 1$, то $c'_{b+1}=-c_b=\pm 1$.

Таким образом, \underline{\textbf{Переход}} доказан.

Как мы поняли, если мы продифференцируем $d_{\beta}(x)$ ровно $b$ раз, где $b\in\overline{\#x-1}$, и подставим $\beta=1$, то мы получим выражение вида
$$\sum_{i=0}^{b}\left(c_i{1^{|k(x,i)|-b}\cdot d_1(n(x,i))}\right)=\sum_{i=0}^{b}\left(c_i{1^{|k(x,i)|-b}\cdot\sum_{j=0}^{|n(x,i)|} \left({1^jf\left(n(x,j),j,0\right)}\right)}\right)=0,$$
так как в каждом слагаемом нашего выражения $i\in\overline{b}\subseteq\overline{\#x-1}$, что значит, что $n(x,i)\ne\varepsilon$, из чего следует по Утверждению \ref{evtuh5} при $n(x,i)\in\mathbb{YF}:$ $n(x,i)\ne\varepsilon$, что
$$\displaystyle\sum_{j=0}^{|n(x,i)|} \left({1^jf\left(n(x,j),j,0\right)}\right)=\displaystyle\sum_{j=0}^{|n(x,i)|} {f\left(n(x,j),j,0\right)}=0.$$

Кроме того, если мы продифференцируем $d_{\beta}(x)$ ровно $b$ раз, где $b=\#x$, и подставим $\beta=1$, то мы получим выражение вида
$$\sum_{i=0}^{b}\left(c_i{1^{|k(x,i)|-b}\cdot d_1(n(x,i))}\right)=$$
$$=\sum_{i=0}^{\#x}\left(c_i{1^{|k(x,i)|-\#x}\sum_{j=0}^{|n(x,i)|} \left({1^jf\left(n(x,i),j,0\right)}\right)}\right)=$$
$$=\sum_{i=0}^{\#x}\left(c_i{1^{|k(x,i)|-\#x}\sum_{j=0}^{|n(x,i)|}f\left(n(x,i),j,0\right)}\right)=$$
$$=\sum_{i=0}^{\#x-1}\left(c_i{1^{|k(x,i)|-\#x}\sum_{j=0}^{|n(x,i)|} {f\left(n(x,i),j,0\right)}}\right)+$$
$$+\sum_{i=\#x}^{\#x}\left(c_i{1^{|k(x,i)|-\#x}\sum_{j=0}^{|n(x,i)|} {f\left(n(x,i),j,0\right)}}\right).$$

Ясно, что в каждом слагаемом первой строчки $i\in\overline{\#x-1}$, что значит, что $n(x,i)\ne\varepsilon$, из чего следует по Утверждению \ref{evtuh5} при $n(x,i)\in\mathbb{YF}:$ $n(x,i)\ne\varepsilon$, что
$$\displaystyle\sum_{j=0}^{|n(x,i)|} {f\left(n(x,i),j,0\right)}=0.$$

Таким образом, наше выражение равняется следующему:
$$\sum_{i=0}^{\#x-1}\left(c_i{1^{|k(x,i)|-\#x}\cdot0}\right)+\sum_{i=\#x}^{\#x}\left(c_i{1^{|k(x,i)|-\#x}\sum_{j=0}^{|n(x,i)|} {f\left(n(x,i),j,0\right)}}\right)=$$
$$=\sum_{i=\#x}^{\#x}\left(c_i{1^{|k(x,i)|-\#x}\sum_{j=0}^{|n(x,i)|} {f\left(n(x,i),j,0\right)}}\right)=$$
$$=c_{\#x}{1^{|k(x,\#x)|-\#x}\sum_{j=0}^{|n(x,\#x)|} {f\left(n(x,\#x),j,0\right)}}=c_{\#x}{\sum_{j=0}^{|n(x,\#x)|} {f\left(n(x,\#x),j,0\right)}}=$$
$$=(\text{По Замечанию \ref{mega} при $x\in\mathbb{YF}$})=c_{\#x}{\sum_{j=0}^{|\varepsilon|} {f\left(\varepsilon,j,0\right)}}=c_{\#x}{\sum_{j=0}^{0} {f\left(\varepsilon,j,0\right)}}=$$
$$=c_{\#x}{ {f\left(\varepsilon,0,0\right)}}=c_{\#x}=(\text{Так как мы знаем, что $c_{\#x}=\pm 1$})=\pm 1 \ne 0.$$

Мы поняли, что если мы продифференцируем $d_{\beta}(x)$ как многочлен от $\beta$ ровно $b\in\overline{\#x-1}$ раз и подставим $\beta=1$, то мы получим ноль, а если мы продифференцируем $d_{\beta}(x)$ как многочлен от $\beta$ ровно $\#x$ раз и подставим $\beta=1$, то мы получим не ноль, а значит $d_{\beta}(x)$ как многочлен от $\beta$ делится на $(1-\beta) \quad\# x$ раз и не делится на $(1-\beta) \quad\# x+1$ раз, что и требовалось.

Утверждение доказано.
\end{enumerate}

\end{proof}

\begin{Prop} \label{binomische}
Пусть $x\in\mathbb{YF}$, $\beta\in(0,1):$ $\#x\ge1$. Тогда
$$\frac{d_{\beta}(x)}{(1-\beta)^{\#x}}=\sum_{i=0}^{\infty}\left(\beta^i\sum_{j=0}^{\min(i,|x|)}\left(f(x,j,0)\binom{\#x-1+i-j}{\#x-1}\right)\right).$$
\end{Prop}
\begin{proof}
Знаем, что по биному Ньютона
$$\frac{d_{\beta}(x)}{(1-\beta)^{\#x}}=\frac{\displaystyle\sum_{j=0}^{|x|}\left(\beta^j {f\left(x,j,0\right)}\right)}{(1-\beta)^{\#x}}=$$
$$=\left({\displaystyle\sum_{j=0}^{|x|}\left(\beta^j {f\left(x,j,0\right)}\right)}\right)\sum_{i=0}^{\infty}\left(\beta^i\binom{\#x-1+i}{i}\right)=$$
$$=\left({\displaystyle\sum_{j=0}^{|x|}\left(\beta^j {f\left(x,j,0\right)}\right)}\right)\sum_{i=0}^{\infty}\left(\beta^i\binom{\#x-1+i}{\#x-1}\right)=$$
$$=\sum_{i=0}^{\infty}\left(\sum_{j=0}^{|x|}\left(\beta^{i+j} f\left(x,j,0\right)\binom{\#x-1+i}{\#x-1}\right)\right)=$$
$$=\sum_{k=0}^{\infty}\left(\beta^{k}\sum_{j=0}^{\min(k,|x|)}\left( f\left(x,j,0\right)\binom{\#x-1+k-j}{\#x-1}\right)\right)=$$
$$=\sum_{i=0}^{\infty}\left(\beta^i\sum_{j=0}^{\min(i,|x|)}\left(f(x,j,0)\binom{\#x-1+i-j}{\#x-1}\right)\right),$$
что и требовалось.

Утверждение доказано.
\end{proof}
\begin{Col} \label{binomische1}
Пусть $x\in\mathbb{YF},$ $i\in\mathbb{N}_0:$  $\#x\ge 1$, $i\in\left(\overline{|x|}\textbackslash\overline{|x|-\#x}\right)$. Тогда
$$\sum_{j=0}^{i}\left(f(x,j,0)\binom{\#x-1+i-j}{\#x-1}\right)=0.$$
\end{Col}
\begin{proof}
По Утверждению \ref{binomische} если $x\in\mathbb{YF}$, $\beta\in(0,1):$ $\#x\ge 1$, то
$$\frac{d_{\beta}(x)}{(1-\beta)^{\#x}}=\sum_{i=0}^{\infty}\left(\beta^i\sum_{j=0}^{\min(i,|x|)}\left(f(x,j,0)\binom{\#x-1+i-j}{\#x-1}\right)\right).$$
Несложно заметить, что 
$$d_{\beta}(x)=\sum_{i=0}^{|x|}\left(\beta^i {f\left(x,i,0\right)}\right)$$
-- это многочлен от $\beta$ степени не более, чем $|x|$, а значит,
из Утверждения \ref{delitsa} при $x\in\mathbb{YF}$ ясно, что $\frac{d_{\beta}(x)}{(1-\beta)^{\#x}}$ -- многочлен от $\beta$ степени не более $(|x|-\#x)$, а значит и
$$\sum_{i=0}^{\infty}\left(\beta^i\sum_{j=0}^{\min(i,|x|)}\left(f(x,j,0)\binom{\#x-1+i-j}{\#x-1}\right)\right)$$
-- это многочлен от $\beta$ степени не более $(|x|-\#x)$, а значит $\forall i\in\mathbb{N}_0:$ $i>(|x|-\#x)$
$$\sum_{j=0}^{\min(i,|x|)}\left(f(x,j,0)\binom{\#x-1+i-j}{\#x-1}\right)=0,$$
а значит и $\forall i\in\left(\overline{|x|}\textbackslash\overline{|x|-\#x}\right)$
$$\sum_{j=0}^{i}\left(f(x,j,0)\binom{\#x-1+i-j}{\#x-1}\right)=0,$$
что и требовалось.

Следствие доказано.
\end{proof}
\begin{Col} \label{binomische2}
Пусть $x\in\mathbb{YF}$, $\beta\in(0,1):$ $\#x\ge1$. Тогда
$$\frac{d_{\beta}(x)}{(1-\beta)^{\#x}}=\sum_{i=0}^{|x|-\#x}\left(\beta^i\sum_{j=0}^{i}\left(f(x,j,0)\binom{\#x-1+i-j}{\#x-1}\right)\right).$$
\end{Col}
\begin{proof}
По Утверждению \ref{binomische} если $x\in\mathbb{YF},$ $\beta\in(0,1):$ $\#x\ge 1$, то
$$\frac{d_{\beta}(x)}{(1-\beta)^{\#x}}=\sum_{i=0}^{\infty}\left(\beta^i\sum_{j=0}^{\min(i,|x|)}\left(f(x,j,0)\binom{\#x-1+i-j}{\#x-1}\right)\right).$$
Несложно заметить, что 
$$d_{\beta}(x)=\sum_{i=0}^{|x|}\left(\beta^i {f\left(x,i,0\right)}\right)$$
-- это многочлен от $\beta$ степени не более, чем $|x|$, а значит,
из Утверждения \ref{delitsa} при $x\in\mathbb{YF}$ ясно, что $\frac{d_{\beta}(x)}{(1-\beta)^{\#x}}$ -- многочлен от $\beta$ степени не более $(|x|-\#x)$, а значит и
$$\sum_{i=0}^{\infty}\left(\beta^i\sum_{j=0}^{\min(i,|x|)}\left(f(x,j,0)\binom{\#x-1+i-j}{\#x-1}\right)\right)$$
-- это многочлен от $\beta$ степени не более $(|x|-\#x)$, а значит 
$$\frac{d_{\beta}(x)}{(1-\beta)^{\#x}}=\sum_{i=0}^{\infty}\left(\beta^i\sum_{j=0}^{\min(i,|x|)}\left(f(x,j,0)\binom{\#x-1+i-j}{\#x-1}\right)\right)=$$
$$=\sum_{i=0}^{|x|-\#x}\left(\beta^i\sum_{j=0}^{\min(i,|x|)}\left(f(x,j,0)\binom{\#x-1+i-j}{\#x-1}\right)\right)=$$
$$=(\text{так как в каждом слагаемом $i\le |x|-\#x\le  |x|-1$ })=$$
$$=\sum_{i=0}^{|x|-\#x}\left(\beta^i\sum_{j=0}^{i}\left(f(x,j,0)\binom{\#x-1+i-j}{\#x-1}\right)\right),$$
что и требовалось.

Следствие доказано.
\end{proof}

\renewcommand{\labelenumi}{\arabic{enumi}$^\circ$}
\renewcommand{\labelenumii}{\arabic{enumii}$^\circ$}
\renewcommand{\labelenumiii}{\arabic{enumii}.\arabic{enumiii}$^\circ$}

\begin{Prop} \label{schyot}
    Пусть $x\in\mathbb{YF}$, $i\in\mathbb{N}_0:$ $\#x\ge 2,$ $i\in\overline{1,|x|}$. Тогда
    $$\left(\sum_{j=0}^{i}\left(f(x,j,0)\binom{\#x-1+i-j}{\#x-1}\right)\right)(|x|-i)=$$
    $$=\left(\sum_{j=0}^{i-1}\left(f(x,j,0)\binom{\#x-2+i-j}{\#x-1}\right)\right)(|x|-i-\#x+1)+$$
    $$+\sum_{j=0}^{i}\left((|x|-j)f(x,j,0)\binom{\#x-2+i-j}{\#x-2}\right).$$
    \end{Prop}
    \begin{proof}
    Ясно, что нам достаточно доказать, что $\forall j\in\overline{i-1}$
    $$f(x,j,0)\binom{\#x-1+i-j}{\#x-1}(|x|-i)=$$
    $$=f(x,j,0)\binom{\#x-2+i-j}{\#x-1}(|x|-i-\#x+1)+(|x|-j)f(x,j,0)\binom{\#x-2+i-j}{\#x-2},$$
    а также то, что при $j=i$
    $$f(x,j,0)\binom{\#x-1+i-j}{\#x-1}(|x|-i)=(|x|-j)f(x,j,0)\binom{\#x-2+i-j}{\#x-2}$$
    (так как если мы это докажем, то останется просто просуммировать эти равенства по $j\in\overline{i}$).
    
    Давайте доказывать. Начнём со второго равенства. Подставим в него $j=i$ и получим следующее:
    $$f(x,i,0)\binom{\#x-1+i-i}{\#x-1}(|x|-i)=(|x|-i)f(x,i,0)\binom{\#x-2+i-i}{\#x-2}\Longleftarrow$$
    $$\Longleftarrow\binom{\#x-1+i-i}{\#x-1}=\binom{\#x-2+i-i}{\#x-2}\Longleftrightarrow\binom{\#x-1}{\#x-1}=\binom{\#x-2}{\#x-2} \Longleftrightarrow 1=1,$$
    второе равенство доказано.
    
    Теперь перейдём к первому равенству (при $j\in\overline{i-1}$):
    $$f(x,j,0)\binom{\#x-1+i-j}{\#x-1}(|x|-i)=$$
    $$=f(x,j,0)\binom{\#x-2+i-j}{\#x-1}(|x|-i-\#x+1)+(|x|-j)f(x,j,0)\binom{\#x-2+i-j}{\#x-2}\Longleftarrow$$
    $$\Longleftarrow\binom{\#x-1+i-j}{\#x-1}(|x|-i)=$$
    $$=\binom{\#x-2+i-j}{\#x-1}(|x|-i-\#x+1)+(|x|-j)\binom{\#x-2+i-j}{\#x-2}\Longleftrightarrow$$
    $$\Longleftrightarrow\frac{(\#x-1+i-j)!}{(\#x-1)!(i-j)!}(|x|-i)=$$
    $$=\frac{(\#x-2+i-j)!}{(\#x-1)!(i-j-1)!}(|x|-i-\#x+1)+\frac{(\#x-2+i-j)!}{(\#x-2)!(i-j)!}(|x|-j)\Longleftrightarrow$$
    $$\Longleftrightarrow\frac{(\#x-1+i-j)}{(\#x-1)(i-j)}(|x|-i)=\frac{1}{(\#x-1)}(|x|-i-\#x+1)+\frac{1}{(i-j)}(|x|-j)\Longleftrightarrow$$
    $$\Longleftrightarrow(\#x-1+i-j)(|x|-i)=(|x|-i-\#x+1)(i-j)+(|x|-j)(\#x-1)\Longleftrightarrow$$
    $$\Longleftrightarrow \#x|x|-\#xi-|x|+i+i|x|-i^2-j|x|+ji=$$
    $$=|x|i-|x|j-i^2+ij-\#xi+\#xj+i-j+|x|\#x-|x|-j\#x+j\Longleftrightarrow 0=0, $$
    первое равенство также доказано.
    
    А значит и Утверждение доказано.
    
    \end{proof}

\begin{Prop} \label{binom1}
Пусть $ x\in\mathbb{YF},$ $i\in \mathbb{N}_0:$ $\#x\ge 1,$ $i\in\overline{\#x}$. Тогда
$$\sum_{j=0}^{i}\left(f(x,j,0)\binom{\#x-1+i-j}{\#x-1}\right)\le q(x)\binom{\#x}{i}.$$
 \end{Prop}
\begin{proof}
Давайте доказывать Утверждение по индукции по $\#x$, а при равных $\#x$ --- по $i$.

\underline{\textbf{База}}: $x\in\mathbb{YF},$ $i\in\overline{\#x}:$ $\#x=1$:

Ясно, что в данном случае $x\in\{1,2\}$, а $i\in\{0,1\}$. Итак, рассмотрим четыре случая:
\begin{enumerate}
    \item $x=1,$ $i=0$.
    
    В данном случае неравенство имеет следующий вид:
$$\sum_{j=0}^{0}\left(f(1,j,0)\binom{1-1+0-j}{1-1}\right)\le q(1)\binom{1}{0}\Longleftrightarrow f(1,0,0)\binom{1-1+0-0}{1-1}\le q(1)\Longleftrightarrow$$
$$\Longleftrightarrow f(1,0,0)\binom{0}{0}\le q(1)\Longleftrightarrow f(1,0,0)\le q(1)\Longleftrightarrow$$
$$\Longleftrightarrow\text{(По Замечанию \ref{odnoitozhe} при $1\in\mathbb{YF}$)}\Longleftrightarrow f(1,0,0)\le f(1,0,0) \Longleftrightarrow 0\le 0.$$

Мы поняли, что в данном случае неравенство верно, то есть \underline{\textbf{База}} доказана. 
    \item $x=1,$ $i=1$.
    
    В данном случае неравенство имеет следующий вид:
$$\sum_{j=0}^{1}\left(f(1,j,0)\binom{1-1+1-j}{1-1}\right)\le q(1)\binom{1}{1}\Longleftrightarrow$$
$$\Longleftrightarrow f(1,0,0)\binom{1-1+1-0}{1-1}+f(1,1,0)\binom{1-1+1-1}{1-1}\le q(1)\Longleftrightarrow$$
$$\Longleftrightarrow f(1,0,0)\binom{1}{0}+f(1,1,0)\binom{0}{0}\le q(1)\Longleftrightarrow f(1,0,0)+f(1,1,0)\le q(1)\Longleftrightarrow$$
$$\Longleftrightarrow\text{(По Замечанию \ref{odnoitozhe} при $1\in\mathbb{YF}$)}\Longleftrightarrow f(1,0,0)+f(1,1,0)\le$$
$$\le f(1,0,0) \Longleftrightarrow f(1,1,0)\le 0\Longleftrightarrow -1\le 0.$$

Мы поняли, что в данном случае неравенство верно, то есть \underline{\textbf{База}} доказана. 
    \item $x=2,$ $i=0$.
    
    В данном случае неравенство имеет следующий вид:
$$\sum_{j=0}^{0}\left(f(2,j,0)\binom{1-1+0-j}{1-1}\right)\le q(2)\binom{1}{0}\Longleftrightarrow  f(2,0,0)\binom{1-1+0-0}{1-1}\le q(2)\Longleftrightarrow$$
$$\Longleftrightarrow f(2,0,0)\binom{0}{0}\le q(2)\Longleftrightarrow f(2,0,0)\le q(2)\Longleftrightarrow$$
$$\Longleftrightarrow\text{(По Замечанию \ref{odnoitozhe} при $2\in\mathbb{YF}$)}\Longleftrightarrow f(2,0,0)\le f(2,0,0) \Longleftrightarrow  0\le 0.$$

Мы поняли, что в данном случае неравенство верно, то есть \underline{\textbf{База}} доказана. 
    \item $x=2,$ $i=1$.
    
    В данном случае неравенство имеет следующий вид:
$$\sum_{j=0}^{1}\left(f(2,j,0)\binom{1-1+1-j}{1-1}\right)\le q(2)\binom{1}{1}\Longleftrightarrow$$
$$\Longleftrightarrow f(2,0,0)\binom{1-1+1-0}{1-1}+f(2,1,0)\binom{1-1+1-1}{1-1}\le q(2)\Longleftrightarrow$$
$$\Longleftrightarrow f(2,0,0)\binom{1}{0}+f(2,1,0)\binom{0}{0}\le q(2)\Longleftrightarrow f(2,0,0)+f(2,1,0)\le q(2)\Longleftrightarrow$$
$$\Longleftrightarrow\text{(По Замечанию \ref{odnoitozhe} при $2\in\mathbb{YF}$)}\Longleftrightarrow f(2,0,0)+f(2,1,0)\le$$
$$\le f(2,0,0) \Longleftrightarrow f(2,1,0)\le 0\Longleftrightarrow 0\le 0.$$

Мы поняли, что в данном случае неравенство верно, то есть \underline{\textbf{База}} доказана. 

\end{enumerate}

Ясно, что все случаи разобраны.

\underline{\textbf{База}} доказана.

\underline{\textbf{Переход}} к $x\in\mathbb{YF},$ $i\in\overline{\#x}:$ $\#x\ge 2$:

Рассмотрим четыре случая:

\begin{enumerate}
    \item $x\in\mathbb{YF},$ $i\in\{0\}$: $\#x\ge 2$. 
    
    В данном случае неравенство имеет  следующий вид:
    $$\sum_{j=0}^{0}\left(f(x,j,0)\binom{\#x-1+0-j}{\#x-1}\right)\le q(x)\binom{\#x}{0}\Longleftrightarrow$$
    $$\Longleftrightarrow f(x,0,0)\binom{\#x-1+0-0}{\#x-1}\le q(x)\Longleftrightarrow$$
    $$\Longleftrightarrow  f(x,0,0)\binom{\#x-1}{\#x-1}\le q(x)\Longleftrightarrow f(x,0,0)\le q(x).$$
    
    По Замечанию \ref{odnoitozhe} при $x\in\mathbb{YF}$
    $$f(x,0,0)=q(x),$$
    а значит в данном случае \underline{\textbf{Переход}} доказан.

        \item $x\in\mathbb{YF}$, $i\in\mathbb{N}_0:$ $\#x\ge 2,$ $i\in\left(\overline{\#x}\textbackslash\overline{|x|-\#x}\right)$.
        
        Ясно, что если $x\in\mathbb{YF}$, то $\#x\le|x|$, а значит $\left(\overline{\#x}\textbackslash\overline{|x|-\#x}\right)\subseteq\left(\overline{|x|}\textbackslash\overline{|x|-\#x}\right)$, а поэтому в данном случае по Следствию \ref{binomische1} при $x\in\mathbb{YF},$ $i\in\mathbb{N}_0$    $$\sum_{j=0}^{i}\left(f(x,j,0)\binom{\#x-1+i-j}{\#x-1}\right)=0\le \frac{1}{\displaystyle\prod_{i=1}^{\#x}|k(x,i)|}\binom{\#x}{i}=q(x)\binom{\#x}{i},$$
        
        что и требовалось.
  
    В данном случае \underline{\textbf{Переход}} доказан.
    \item

 $x\in\mathbb{YF},$ $i\in\mathbb{N}_0:$ $\#x\ge2,$ $i\in\left(\overline{|x|-\#x}\textbackslash \{0,\#x\}\right).$
    
    Заметим, что $i\ne 0$, что значит, что по предположению индукции наше равенство верно при $x$ и $i-1$, то есть
$$\sum_{j=0}^{i-1}\left(f(x,j,0)\binom{\#x-2+i-j}{\#x-1}\right)\le q(x)\binom{\#x}{i-1}.$$
Давайте заметим, что $i\in\left(\overline{|x|-\#x}\textbackslash \{0,\#x\}\right)
     \Longrightarrow i\le |x|-\#x \Longleftrightarrow |x|-\#x-i\ge 0 \Longleftrightarrow |x|-i-\#x+1\ge 1$, а значит данное неравенство равносильно следующему:
$$\left(\sum_{j=0}^{i-1}\left(f(x,j,0)\binom{\#x-2+i-j}{\#x-1}\right)\right)\left(|x|-i-\#x+1\right)\le q(x)\binom{\#x}{i-1}\left(|x|-i-\#x+1\right).$$

Назовём это первым \underline{запомненным} неравенством.

    Также в данном случае воспользуемся неравенством для $x':$ $x=\alpha_0x'$ (при некотором $\alpha_0\in\{1,2\}$), а также $i\in\mathbb{N}_0:$ $i\in\left(\overline{|x|-\#x}\textbackslash \{0,\#x\}\right)$. Если $i\in\overline{\#x'}$, то оно верно по предположению индукции. Давайте обоснуем, что $i\in\overline{\#x'}$:
    $$i\in\left(\overline{|x|-\#x}\textbackslash \{0,\#x\}\right)\subseteq(\text{Ясно, что }|x|\le 2\#x\Longleftrightarrow |x|-\#x\le \#x)\subseteq$$
    $$\subseteq \overline{\#x}\textbackslash \{0,\#x\}\subseteq \overline{\#x-1}=\overline{\#(\alpha_0x')-1}=\overline{1+\#(x')-1}=\overline{\#(x')}.$$
    
    Обосновали, а значит мы можем воспользоваться неравенством для $x':$ $x=\alpha_0x'$ (при некотором $\alpha_0\in\{1,2\}$), а также $i\in\mathbb{N}_0:$ $i\in\left(\overline{|x|-\#x}\textbackslash \{0,\#x\}\right)$. Давайте воспользуемся:
    $$\sum_{j=0}^{i}\left(f(x',j,0)\binom{\#x'-1+i-j}{\#x'-1}\right)\le q(x')\binom{\#x'}{i}.$$
    
    К каждому слагаемому применим Утверждение \ref{evtuh7} при $x'\in\mathbb{YF}$, $j\in\overline{|x'|}$, $0\in\overline{\#x'}$ и $\alpha_0\in\{1,2\}$ (в каждом слагаемом действительно $j\in\overline{|x'|}$, так как в данном случае $j\in\overline{i}\subseteq\overline{\#x'}\subseteq\overline{|x'|}$) и получим, что наше неравенство равносильно следующему:
   $$\sum_{j=0}^{i}\left((|\alpha_0x'|-j)f(\alpha_0x',j,0)\binom{\#x'-1+i-j}{\#x'-1}\right)\le q(x')\binom{\#x'}{i}\Longleftrightarrow$$
    $$\Longleftrightarrow(\text{Так как ясно, что $\#x'=\#(\alpha_0x')-1=\#x-1$})\Longleftrightarrow$$
    $$\Longleftrightarrow\sum_{j=0}^{i}\left((|x|-j)f(x,j,0)\binom{\#x-2+i-j}{\#x-2}\right)\le q(x')\binom{\#x-1}{i}=$$
    $$=(\text{По Утверждению \ref{q} при $x,x'\in\mathbb{YF}$, $\alpha_0\in\{1,2\}$})=$$
    $$=\sum_{j=0}^{i}\left((|x|-j)f(x,j,0)\binom{\#x-2+i-j}{\#x-2}\right)\le |x|q(x)\binom{\#x-1}{i}.$$
    
    Назовём это вторым \underline{запомненным} неравенством.

    Теперь напишем Утверждение \ref{schyot} при наших $x\in\mathbb{YF},$ $i\in\mathbb{N}_0:$ $\#x\ge 2,$ $i\in\left(\overline{|x|-\#x}\textbackslash \{0,\#x\}\right)\subseteq\overline{1,|x|}:$
    $$\left(\sum_{j=0}^{i}\left(f(x,j,0)\binom{\#x-1+i-j}{\#x-1}\right)\right)(|x|-i)=$$
    $$=\left(\sum_{j=0}^{i-1}\left(f(x,j,0)\binom{\#x-2+i-j}{\#x-1}\right)\right)(|x|-i-\#x+1)+$$
    $$+\sum_{j=0}^{i}\left((|x|-j)f(x,j,0)\binom{\#x-2+i-j}{\#x-2}\right)\Longrightarrow$$
    $$\Longrightarrow(\text{так как $i\le |x|-\#x \Longrightarrow |x|-i\ge\#x\ge2>0$})\Longrightarrow$$
    $$\Longrightarrow\sum_{j=0}^{i}\left(f(x,j,0)\binom{\#x-1+i-j}{\#x-1}\right)=$$
    $$=\frac{\displaystyle\left(\sum_{j=0}^{i-1}\left(f(x,j,0)\binom{\#x-2+i-j}{\#x-1}\right)\right)(|x|-i-\#x+1)}{\displaystyle|x|-i}+$$
    $$+\frac{\displaystyle\sum_{j=0}^{i}\left((|x|-j)f(x,j,0)\binom{\#x-2+i-j}{\#x-2}\right)}{\displaystyle|x|-i}\le$$
    $$\le( \text{По первому \underline{запомненному} неравенству и второму \underline{запомненному} неравенству})\le $$
    $$\le\frac{q(x)\binom{\#x}{i-1}(|x|-i-\#x+1)+|x|q(x)\binom{\#x-1}{i}}{|x|-i}.$$

    Теперь давайте доказывать, что
    $$\frac{q(x)\binom{\#x}{i-1}(|x|-i-\#x+1)+|x|q(x)\binom{\#x-1}{i}}{|x|-i}\le q(x)\binom{\#x}{i}\Longleftrightarrow$$
    $$\Longleftrightarrow\text{(По положительности функции $q$)}\Longleftrightarrow$$
    $$\Longleftrightarrow\frac{\displaystyle\frac{(\#x)!}{(\#x-i+1)!(i-1)!}(|x|-i-\#x+1)+|x|\frac{(\#x-1)!}{(\#x-i-1)!(i)!}}{|x|-i}\le \frac{(\#x)!}{(\#x-i)!(i)!}\Longleftrightarrow$$
    $$\Longleftrightarrow(\text{так как $i\le |x|-\#x \Longrightarrow |x|-i\ge\#x\ge2>0$})\Longleftrightarrow$$
    $$\Longleftrightarrow{\frac{(\#x)!}{(\#x-i+1)!(i-1)!}(|x|-i-\#x+1)+|x|\frac{(\#x-1)!}{(\#x-i-1)!(i)!}}\le \frac{(\#x)!}{(\#x-i)!(i)!}({|x|-i})\Longleftrightarrow$$
    $$\Longleftrightarrow{\frac{(\#x)}{(\#x-i+1)(\#x-i)}(|x|-i-\#x+1)+|x|\frac{1}{(i)}}\le \frac{(\#x)}{(\#x-i)(i)}({|x|-i})\Longleftrightarrow$$
    $$\Longleftrightarrow{{i\#x}(|x|-i-\#x+1)+|x|(\#x-i+1)(\#x-i)}\le {\#x(\#x-i+1)}({|x|-i})\Longleftrightarrow$$
    $$\Longleftrightarrow{i\#x|x|-i\#xi-i(\#x)^2+i\#x+|x|(\#x)^2+|x|i^2-2|x|\#xi+|x|\#x-i|x|}\le$$
    $$\le (\#x)^2|x|-(\#x)^2i-\#xi|x|+\#xi^2+|x|\#x-\#xi\Longleftrightarrow$$
    $$\Longleftrightarrow{-i\#xi+i\#x+|x|i^2-i|x|}\le \#xi^2-\#xi\Longleftrightarrow 0 \le (-2\#x+|x|)(1-i)i.$$
    
    В нашем случае $i\ge 1$, а значит $i>0,$ $(1-i)\le 0$.
    
    Также ясно, что $2\#x\ge|x|$, а значит $(-2\#x+|x|)\le 0$.
    
    Из этого можно понять, что $(-2\#x+|x|)(1-i)i\ge 0,$ что и требовалось.
    
    Можно заметить, что мы доказали, что в данном случае     $$\sum_{j=0}^{i}\left(f(x,j,0)\binom{\#x-1+i-j}{\#x-1}\right)\le$$
    $$\le\frac{q(x)\binom{\#x}{i-1}(|x|-i-\#x+1)+|x|q(x)\binom{\#x-1}{i}}{|x|-i}\le q(x)\binom{\#x}{i}\Longrightarrow$$
    $$\Longrightarrow\sum_{j=0}^{i}\left(f(x,j,0)\binom{\#x-1+i-j}{\#x-1}\right)\le q(x)\binom{\#x}{i},$$
    что и требовалось.
    
    А значит в данном случае \underline{\textbf{Переход}} доказан.

    \item $x\in\mathbb{YF},$ $i\in\mathbb{N}_0:$ $\#x\ge 2$, $i\in\left(\left(\overline{|x|-\#x}\textbackslash \{0\}\right)\cap\{\#x\}\right).$
    
    Заметим, что $i\ne 0$, что значит, что по предположению индукции наше равенство верно при $x$ и $i-1$, то есть
$$\sum_{j=0}^{i-1}\left(f(x,j,0)\binom{\#x-2+i-j}{\#x-1}\right)\le q(x)\binom{\#x}{i-1}.$$
Давайте заметим, что $i\in\left(\overline{|x|-\#x}\textbackslash \{0\}\right)
     \Longrightarrow i\le |x|-\#x \Longleftrightarrow |x|-\#x-i\ge 0 \Longrightarrow |x|-i-\#x+1\ge 1$, а значит данное неравенство равносильно следующему:
    $$\left(\sum_{j=0}^{i-1}\left(f(x,j,0)\binom{\#x-2+i-j}{\#x-1}\right)\right)\left(|x|-i-\#x+1\right)\le q(x)\binom{\#x}{i-1}\left(|x|-i-\#x+1\right).$$

Назовём это \underline{запомненным} неравенством.

Также заметим, что в данном случае $i\in\left(\left(\overline{|x|-\#x}\textbackslash \{0\}\right)\cap\{\#x\}\right),$
     значит $\#x\in\overline{|x|-\#x}\Longrightarrow \#x\le |x|-\#x\Longleftrightarrow 2\#x\le |x| \Longleftrightarrow( \text{так как ясно, что }2\#x\ge |x|)\Longleftrightarrow 2\#x= |x|\Longleftrightarrow$ номер $x$ состоит только из двоек $\Longleftrightarrow x=2^{\#x}$.

Рассмотрим $x':=2^{\#x-1}$ а также наше $i=\#x$. Заметим, что 
\begin{itemize}
    \item $$i=\#x\le\text{(так как в данном случае $\#x\ge 2$)}\le 2\#x-2=2(\#x-1)=\left|2^{\#x-1}\right|=\left|x'\right| ;$$    
    \item $$\left|x'\right|-\#x'= \left|2^{\#x-1}\right|-\#\left(2^{\#x-1}\right)=2(\#x-1)-(\#x-1)=(\#x-1)<\#x=i;$$
    \item $$\#x'=\#\left(2^{\#x-1}\right)=(\#x-1)\ge 2-1\ge 1.$$
\end{itemize}

То есть мы поняли, что $x'\in\mathbb{YF},$ $i\in\mathbb{N}_0$: $\#x'\ge 1,$ $i\in\left(\overline{|x'|}\textbackslash\overline{|x'|-\#x'}\right)$, а это значит, что в данном случае по Следствию \ref{binomische1}
    $$\sum_{j=0}^{i}\left(f(x',j,0)\binom{\#x'-1+i-j}{\#x'-1}\right)=0.$$

    К каждому слагаемому применим Утверждение \ref{evtuh7} при $x'\in\mathbb{YF}$, $j\in\overline{|x'|}$, $0\in\overline{\#x'}$ и $\alpha_0\in\{1,2\}$ (в каждом слагаемом действительно $j\in\overline{|x'|}$, так как в данном случае $j\in\overline{i}\subseteq\overline{|x'|}$) и получим, что наше равенство равносильно следующему:
    $$\sum_{j=0}^{i}\left((|\alpha_0x'|-j)f(\alpha_0x',j,0)\binom{\#x'-1+i-j}{\#x'-1}\right)=0\Longleftrightarrow$$
    $$\Longleftrightarrow(\text{Так как ясно, что $\#x'=\#(\alpha_0x')-1=\#x-1$})\Longleftrightarrow$$
    $$\Longleftrightarrow\sum_{j=0}^{i}\left((|x|-j)f(x,j,0)\binom{\#x-2+i-j}{\#x-2}\right)=0.$$
    Назовём это \underline{запомненным} равенством.

    Итак, давайте считать. Напишем Утверждение \ref{schyot} при наших  $x\in\mathbb{YF},$ $i\in\mathbb{N}_0:$ $\#x\ge 2$, $i\in\left(\left(\overline{|x|-\#x}\textbackslash \{0\}\right)\cap\{\#x\}\right)\subseteq\overline{1,|x|}:$
    $$\left(\sum_{j=0}^{i}\left(f(x,j,0)\binom{\#x-1+i-j}{\#x-1}\right)\right)(|x|-i)=$$
    $$=\left(\sum_{j=0}^{i-1}\left(f(x,j,0)\binom{\#x-2+i-j}{\#x-1}\right)\right)(|x|-i-\#x+1)+$$
    $$+\sum_{j=0}^{i}\left((|x|-j)f(x,j,0)\binom{\#x-2+i-j}{\#x-2}\right)\Longleftrightarrow$$
    $$\Longleftrightarrow(\text{так как $i\le |x|-\#x \Longrightarrow |x|-i\ge\#x\ge2>0$})\Longleftrightarrow$$
    $$\Longleftrightarrow\sum_{j=0}^{i}\left(f(x,j,0)\binom{\#x-1+i-j}{\#x-1}\right)=$$
    $$=\frac{\displaystyle\left(\sum_{j=0}^{i-1}\left(f(x,j,0)\binom{\#x-2+i-j}{\#x-1}\right)\right)(|x|-i-\#x+1)}{\displaystyle|x|-i}+$$
    $$+\frac{\displaystyle\sum_{j=0}^{i}\left((|x|-j)f(x,j,0)\binom{\#x-2+i-j}{\#x-2}\right)}{\displaystyle|x|-i}\le$$
    $$\le\left( \text{По \underline{запомненному} неравенству и \underline{запомненному} равенству}\right)\le $$
    $$\le\frac{\displaystyle q(x)\binom{\#x}{i-1}(|x|-i-\#x+1)+0}{\displaystyle|x|-i}=$$
    $$=\left(\text{так как мы уже поняли, что в данном случае $x=2^{\#x}$, $|x|=2\#x,$ $i=\#x$}\right)=$$
    $$=\frac{\displaystyle q(x)\binom{\#x}{\#x-1}(2\#x-\#x-\#x+1)}{\displaystyle 2\#x-\#x}=\frac{\displaystyle q(x)\#x}{\displaystyle\#x}=q(x)=q(x)\binom{\#x}{\#x}=q(x)\binom{\#x}{i},$$
    что и требовалось, а значит в данном случае \underline{\textbf{Переход}} доказан.
    
\end{enumerate}

Ясно, что все случаи разобраны, то есть \underline{\textbf{Переход}} доказан.
    
Утверждение доказано.

\end{proof}
\begin{Prop} \label{mamka2}
Пусть $x\in \mathbb{YF}$, $\beta\in(0,1)$. Тогда
$$d_{\beta}(x)\le q(x)\left(1-\beta^2\right)^{\#x}.$$
\end{Prop}
\begin{proof}

Рассмотрим два случая:
\begin{enumerate}
    \item $\#x=0\Longleftrightarrow x=\varepsilon$.
    
    В данном случае $$d_{\beta}(\varepsilon)\le q(\varepsilon)\left(1-\beta^2\right)^{\#\varepsilon}\Longleftrightarrow \sum_{i=0}^{|\varepsilon|}\left(\beta^i {f\left(\varepsilon,i,0\right)}\right)\le\frac{1}{\displaystyle\prod_{i=1}^{\#\varepsilon}|k(\varepsilon,i)|}\left(1-\beta^2\right)^{0}\Longleftrightarrow$$
$$\Longleftrightarrow \sum_{i=0}^{0}\left(\beta^i {f\left(\varepsilon,i,0\right)}\right)\le\frac{1}{\displaystyle\prod_{i=1}^{0}|k(\varepsilon,i)|}\left(1-\beta^2\right)^{0}\Longleftrightarrow\beta^0 {f\left(\varepsilon,0,0\right)}\le 1\cdot1\Longleftrightarrow 1\le 1 ,$$
что и требовалось.

В данном случае Утверждение доказано.

\item $\#x\ge 1$.

Ясно, что (так как $\beta\in(0,1)$)
$${d_{\beta}(x)}\le q(x)\left(1-\beta^2\right)^{\#x} \Longleftrightarrow \frac{d_{\beta}(x)}{(1-\beta)^{\#x}}\le q(x)\left(1+\beta\right)^{\#x}.$$

По Следствию \ref{binomische2} при $x\in\mathbb{YF}$, $\beta\in(0,1)$
$$\frac{d_{\beta}(x)}{(1-\beta)^{\#x}}=\sum_{i=0}^{|x|-\#x}\left(\beta^i\sum_{j=0}^{i}\left(f(x,j,0)\binom{\#x-1+i-j}{\#x-1}\right)\right).$$

По Утверждению \ref{binom1} при $ x\in\mathbb{YF},$ $i\in \mathbb{N}_0$ применённому к каждому слагаемому (это законно, так как если $ x\in\mathbb{YF}$, то $2\#x\ge |x|\Longleftrightarrow \#x\ge |x|-\#x$, а также так как $\beta\in(0,1)$) получаем, что 
$$\sum_{i=0}^{|x|-\#x}\left(\beta^i\sum_{j=0}^{i}\left(f(x,j,0)\binom{\#x-1+i-j}{\#x-1}\right)\right)\le \sum_{i=0}^{|x|-\#x}\left(\beta^iq(x)\binom{\#x}{i}\right)\le$$
$$\le(\text{так как если $x\in\mathbb{YF}$, то $2\#x\ge |x|\Longleftrightarrow\#x\ge |x|-\#x,$ а также так как $q(x)\ge 0$ и $\beta\in(0,1)$})\le$$
$$\le \sum_{i=0}^{\#x}\left(\beta^iq(x)\binom{\#x}{i}\right)= q(x)\sum_{i=0}^{\#x}\left(\beta^i\binom{\#x}{i}\right)=q(x)(1+\beta)^{\#x},$$
так как тут написан бином Ньютона.

То есть мы доказали, что при $x\in \mathbb{YF}$, $\beta\in(0,1)$: $\#x\ge 1$
$$\frac{d_{\beta}(x)}{(1-\beta)^{\#x}}\le q(x)(1+\beta)^{\#x}.$$
А это (так как $\beta\in(0,1)$) равносильно тому, что 
$${d_{\beta}(x)}\le q(x)\left(1-\beta^2\right)^{\#x},$$
что и требовалось.

В данном случае Утверждение доказано.
\end{enumerate}
Ясно, что все случаи разобраны.

Утверждение доказано.

\end{proof}

\newpage
\section{Волшебные таблицы}

\begin{Def}
Пусть $w\in \mathbb{YF}_\infty$, $\beta\in(0,1]$, $n\in\mathbb{N}_0$.
Тогда волшебной таблицей $T_{w,\beta,n}(x,y)$ с параметрами $w,\beta$ и $n$ назовём функцию 
$$T_{w,\beta,n}:\mathbb{YF}_n\times \overline{n} \to \mathbb{R},$$
определённую следующим образом:

\begin{equation*}
T_{w,\beta,n}(x,y)=
 \begin{cases}
   $$\displaystyle d(\varepsilon,x)\cdot  q(x(y))\cdot d_1'(x'(y),w)\cdot \beta^y\cdot \left(1-\beta^2\right)^{\#(x(y))}$$ &\text {если $x\in K(n,y)$}\\
   0 &\text{если $x\in \overline{K}(n,y)$.
}
 \end{cases}
\end{equation*}

или, что то же самое,

\begin{equation*}
T_{w,\beta,n}(x,y)=
 \begin{cases}
   $$\displaystyle d(\varepsilon,x)\cdot q(n(x,i))\cdot d_1'(k(x,i),w)\cdot \beta^y\cdot \left(1-\beta^2\right)^{\#(n(x,i))}$$ &\text {если $\exists i\in\overline{\#x}:\; |k(x,i)|=y$}\\
   0 &\text{если $\nexists i\in\overline{\#x}:\; |k(x,i)|=y$.}
 \end{cases}
\end{equation*}
\end{Def}

\begin{Zam}
Из всех обозначений очевидно, что эти определения равносильны.
\end{Zam}

Визуализируем данную функцию мы следующим образом (отсюда и название):

\begin{Ex}
$w\in\mathbb{YF}_\infty$, $\beta\in(0,1]$, $n=5:$ 
\tiny
\begin{center}
\begin{tabular}{ | m{0.5cm} || m{2.2cm} | m{2.2cm} | m{2.3cm} | m{2.4cm} | m{2.4cm} | m{1.7cm} | } 
  \hline
 & $$0$$ & $$1$$ & $$2$$ & $$3$$ & $$4$$ & $$5$$\\
  \hline \hline
$$122$$ & $$\frac{3}{40}{d_1'(\varepsilon,w)}(1-\beta^2)^3$$ & $$0$$ & $$\frac{3}{6}{d_1'(2,w)}\beta^2(1-\beta^2)^2$$ &$$0$$ &$$\frac{3}{1}{d_1'(22,w)}\beta^4(1-\beta^2)$$ &$$\frac{3}{1}{d_1'(122,w)}\beta^5$$\\
  \hline
$$212$$ & $$\frac{4}{30}{d_1'(\varepsilon,w)}(1-\beta^2)^3$$ & $$0$$ & $$\frac{4}{3}{d_1'(2,w)}\beta^2(1-\beta^2)^2$$ &$$\frac{4}{2}{d_1'(12,w)}\beta^3(1-\beta^2)$$ &$$0$$ &$$\frac{4}{1}{d_1'(212,w)}\beta^5$$\\
  \hline
$$1112$$ & $$\frac{1}{120}{d_1'(\varepsilon,w)}(1-\beta^2)^4$$ & $$0$$ & $$\frac{1}{6}{d_1'(2,w)}\beta^2(1-\beta^2)^3$$ &$$\frac{1}{2}{d_1'(12,w)}\beta^3(1-\beta^2)^2$$ &$$\frac{1}{1}{d_1'(112,w)}\beta^4(1-\beta^2)$$ &$$\frac{1}{1}{d_1'(1112,w)}\beta^5$$\\
  \hline
$$221$$ & $$\frac{8}{15}{d_1'(\varepsilon,w)}(1-\beta^2)^3$$ & $$\frac{8}{8}{d_1'(1,w)}\beta(1-\beta^2)^2$$ & $$0$$ &$$\frac{8}{2}{d_1'(21,w)}\beta^3(1-\beta^2)$$  &$$0$$ &$$\frac{8}{1}{d_1'(221,w)}\beta^5$$\\
  \hline
$$1121$$ & $$\frac{2}{60}{d_1'(\varepsilon,w)}(1-x^2)^4$$ & $$\frac{2}{24}{d_1'(1,w)}\beta(1-\beta^2)^3$$ & $$0$$ &$$\frac{2}{2}{d_1'(21,w)}\beta^3(1-\beta^2)^2$$  &$$\frac{2}{1}{d_1'(121,w)}\beta^4(1-\beta^2)$$ &$$\frac{2}{1}{d_1'(1121,w)}\beta^5$$\\
  \hline
$$1211$$ & $$\frac{3}{40}{d_1'(\varepsilon,w)}(1-\beta^2)^4$$ & $$\frac{3}{12}{d_1'(1,w)}\beta(1-\beta^2)^3$$ & $$\frac{3}{6}{d_1'(11,w)}\beta^2(1-\beta^2)^2$$ &$$0$$ &$$\frac{3}{1}{d_1'(211,w)}\beta^4(1-\beta^2)$$ &$$\frac{3}{1}{d_1'(1211,w)}\beta^5$$\\
  \hline
$$2111$$ & $$\frac{4}{30}{d_1'(\varepsilon,w)}(1-\beta^2)^4$$ & $$\frac{4}{8}{d_1'(1,w)}\beta(1-\beta^2)^3$$ & $$\frac{4}{3}{d_1'(11,w)}\beta^2(1-\beta^2)^2$$ &$$\frac{4}{2}{d_1'(111,w)}\beta^3(1-\beta^2)$$  &$$0$$ &$$\frac{4}{1}{d_1'(2111,w)}\beta^5$$\\
  \hline
$$11111$$ & $$\frac{1}{120}{d_1'(\varepsilon,w)}(1-\beta^2)^5$$ & $$\frac{1}{24}{d_1'(1,w)}\beta(1-\beta^2)^4$$ & $$\frac{1}{6}{d_1'(11,w)}\beta^2(1-\beta^2)^3$$ &$$\frac{1}{2}{d_1'(111,w)}\beta^3(1-\beta^2)^2$$  &$$\frac{1}{1}{d_1'(1111,w)}\beta^4(1-\beta^2)$$ &$$\frac{1}{1}{d_1'(11111,w)}\beta^5$$\\
  \hline
\end{tabular}
\end{center}
\normalsize
\end{Ex}

\begin{Prop}\label{limitstrih}
Пусть $x\in \mathbb{YF}$, $y\in\mathbb{YF}_{\infty}$. Тогда
$$d'_1(x,y)=\lim_{m \to \infty} {\frac{d(x,y_m)}{d(\varepsilon,y_m)}}.$$

\end{Prop}
\begin{proof}
Посчитаем (всегда считаем, что $m\ge |x|,$ что значит, что $|y_m|\ge \#y_m=m\ge |x|$):
$$\lim_{m \to \infty} {\frac{d(x,y_m)}{d(\varepsilon,y_m)}}=(\text{По Теореме \ref{evtuh} при $x,y_m\in\mathbb{YF}$})=$$
$$=\lim_{m \to \infty} {\frac{\displaystyle\sum_{i=0}^{|x|}\left({ f\left(x,i,h(x,y_m)\right)}\prod_{j=1}^{d(y_m)}\left(g\left(y_m,j\right)-i\right)\right)}{\displaystyle \prod_{j=1}^{d(y_m)}g\left(y_m,j\right)}}=$$
$$=\lim_{m \to \infty}\left( {\sum_{i=0}^{|x|}\left( { f\left(x,i,h(x,y_m)\right)}\prod_{j=1}^{d(y_m)}\frac{\left(g\left(y_m,j\right)-i\right)}{g\left(y_m,j\right)}\right)}\right)=$$
$$=\text{(Ясно, что $\#y_m=m\ge |x|\ge\#x\Longrightarrow h(x,y_m)=h(x,y)$)}=$$
$$=\lim_{m \to \infty}\left( {\sum_{i=0}^{|x|}\left( { f\left(x,i,h(x,y)\right)}\prod_{j=1}^{d(y_m)}\frac{\left(g\left(y_m,j\right)-i\right)}{g\left(y_m,j\right)}\right)}\right)=$$
$$={\sum_{i=0}^{|x|}\left( { f\left(x,i,h(x,y)\right)}\cdot\lim_{m \to \infty} \prod_{j=1}^{d(y_m)}\frac{\left(g\left(y_m,j\right)-i\right)}{g\left(y_m,j\right)}\right)}=$$
$$=(\text{По определению функции $g$})=$$
$$= {\sum_{i=0}^{|x|}\left( { f\left(x,i,h(x,y)\right)}\prod_{j=1}^{d(y)}\frac{\left(g\left(y,j\right)-i\right)}{g\left(y,j\right)}\right)}=$$
$$= {\sum_{i=0}^{|x|}\left( {1^i f\left(x,i,h(x,y)\right)}\prod_{j=1}^{d(y)}\frac{\left(g\left(y,j\right)-i\right)}{g\left(y,j\right)}\right)}=d'_1(x,y),$$
что и требовалось.

Утверждение доказано.
\end{proof}

\begin{Col}\label{neo}
Пусть $x\in \mathbb{YF}$, $y\in\mathbb{YF}_{\infty}$. Тогда
$$d'_1(x,y)=\lim_{m \to \infty} {\frac{d(x,y_m)}{d(\varepsilon,y_m)}}\ge 0.$$

\end{Col}

\begin{Col}\label{granatakerambita}
Пусть $w\in\mathbb{YF}_\infty,$ $v\in\mathbb{YF}$. Тогда
$$\mu_{w,1}(v)=\mu_w(v).$$
\end{Col}
\begin{proof}
Воспользуется Утверждением \ref{limitstrih} и обозначениями:
$$\mu_{w,1}(v)=d(\varepsilon,v)\cdot d_1'(v,w)=d(\varepsilon,v)\cdot\lim_{m\to\infty}\frac{d(v,w_m)}{d(\varepsilon,w_m)}=\lim_{m\to\infty}\frac{d(\varepsilon,v)d(v,w_m)}{d(\varepsilon,w_m)}=\mu_w(v),$$
что и требовалось.

Следствие доказано.
\end{proof}

\begin{Zam}
Пусть $w\in \mathbb{YF}_\infty$, $\beta\in(0,1]$, $n\in\mathbb{N}_0$.
Тогда функция $T_{w,\beta,n}(x,y)$ неотрицательна.
\end{Zam}

\begin{Prop}[Утверждение  6\cite{Evtuh3}]  \label{razbivaem}
Пусть $x,x',x''\in\mathbb{YF}:$ $x=x'x''$. Тогда
$$d(\varepsilon,x)=d(\varepsilon,x'')d\left(\varepsilon,x'1^{\left|x''\right|}\right).$$
\end{Prop}

\begin{Prop}[Утверждение  2\cite{Evtuh3}] \label{mera}
Пусть $w\in\mathbb{YF}_\infty$, $m,n\in\mathbb{N}_0:$ $|w_m|\ge n$. Тогда
$$\sum_{v\in\mathbb{YF}_n}\frac{d(\varepsilon,v)d(v,w_m)}{d(\varepsilon,w_m)}=1. $$
\end{Prop}

\begin{Prop} \label{sum}
Пусть $w\in\mathbb{YF}_\infty$, $\beta\in(0,1]$, $n,y\in\mathbb{N}_0:$ $y\le n$. Тогда
$$\sum_{x\in\mathbb{YF}_n} T_{w,\beta,n}(x,y)=\sum_{x'\in\mathbb{YF}_{n-y} }\left(q(x')\cdot d(\varepsilon,x'1^y)\cdot \beta^y\cdot \left(1-\beta^2\right)^{\#x'}\right).$$
\end{Prop}
\begin{proof} По Замечанию \ref{pruzhinka} при $n,y\in\mathbb{N}_0$

$$\sum_{x\in\mathbb{YF}_n} T_{w,\beta,n}(x,y)=\left(\sum_{x\in K(n,y)} T_{w,\beta,n}(x,y)\right)+\left(\sum_{x\in \overline{K}(n,y)} T_{w,\beta,n}(x,y)\right)=$$
$$= \text{(По определению функции $T$)}=$$
$$=\left(\sum_{x\in K(n,y)} T_{w,\beta,n}(x,y)\right)+0=\sum_{x\in K(n,y)} T_{w,\beta,n}(x,y)=$$
$$=\sum_{x\in K(n,y)}\left(d(\varepsilon,x)\cdot q(x(y))\cdot {d_1'(x'(y),w)}\cdot \beta^y\cdot \left(1-\beta^2\right)^{\#(x(y))}\right).$$

Ясно, что в каждом слагаемом по Замечанию \ref{kerambus} при  $x\in\mathbb{YF}$, $n,y\in\mathbb{N}_0$
$$x=x(y)x'(y).$$

А значит к каждому слагаемому можно применить Утверждение \ref{razbivaem} при $x,x(y),x'(y)\in\mathbb{YF}$ и получить, что наше выражение равняется следующему:
$$\sum_{x\in K(n,y)}\left(d(\varepsilon,x'(y))\cdot  d\left(\varepsilon,x(y)1^{|x'(y)|}\right)\cdot q(x(y))\cdot {d_1'(x'(y),w)}\cdot \beta^y\cdot \left(1-\beta^2\right)^{\#(x(y))}\right)=$$
$$=(\text{По обозначению $x'(y)$})=$$
$$=\sum_{x\in K(n,y)}\left(d(\varepsilon,x'(y))\cdot  d\left(\varepsilon,x(y)1^y\right)\cdot q(x(y))\cdot {d_1'(x'(y),w)}\cdot \beta^y\cdot \left(1-\beta^2\right)^{\#(x(y))}\right).$$

Заметим, что по обозначениям при $n,y\in\mathbb{N}_0:$ $y\le n$ 
\begin{itemize}
    \item если $x\in K(n,y),$ то $x=x(y)x'(y)$, причём $x(y)\in\mathbb{YF}_{n-y},$ $ x'(y)\in\mathbb{YF}_{y}$;
    \item если $x_1,x_2\in K(n,y)$: $x_1\ne x_2$, то $x_1(y)\ne x_2(y)$ или
    $x_1'(y)\ne x'_2(y)$;
    \item если $x''\in \mathbb{YF}_{n-y},$ $ x'''\in\mathbb{YF}_{y}$, то $\left(x''x'''\right)\in K(n,y)$, $\left(x''x'''\right)(y)=x'',$ $\left(x''x'''\right)'(y)=x'''$.
\end{itemize}

А это значит, что при всех $x\in K(n,y)$, пара $(x(y),x'(y))$ принимает все возможные значения в $\mathbb{YF}_{n-y}\times \mathbb{YF}_{y}$, причём ровно по одному разу.

Таким образом, наше выражение равняется следующему:
$$\sum_{x''\in \mathbb{YF}_{n-y}}\left(\sum_{x'''\in \mathbb{YF}_{y}}\left(d(\varepsilon,x''')\cdot  d\left(\varepsilon,x''1^{y}\right)\cdot q(x'')\cdot d_1'(x''',w)\cdot \beta^y\cdot \left(1-\beta^2\right)^{\#x''}\right)\right)=$$
$$=\left(\sum_{x''\in \mathbb{YF}_{n-y}}\left(q(x'')\cdot d\left(\varepsilon,x''1^{y}\right)\cdot \beta^y\cdot \left(1-\beta^2\right)^{\#x''}\right)\right)\left(\sum_{x'''\in \mathbb{YF}_{y}}\left(d(\varepsilon,x''')\cdot d_1'(x''',w) \right)\right)=$$
$$=(\text{По Утверждению \ref{limitstrih} при $x'''\in\mathbb{YF}$, $w\in\mathbb{YF}_{\infty}$})=$$
$$=\left(\sum_{x''\in \mathbb{YF}_{n-y}}\left(q(x'')\cdot d\left(\varepsilon,x''1^{y}\right)\cdot \beta^y\cdot \left(1-\beta^2\right)^{\#x''}\right)\right)\left(\sum_{x'''\in \mathbb{YF}_{y}}\left(d(\varepsilon,x''')\cdot\lim_{m \to \infty}{\frac{d(x''',w_m)}{d(\varepsilon,w_m)}}\right)\right)=$$
$$=\left(\sum_{x''\in \mathbb{YF}_{n-y}}\left(q(x'')\cdot d\left(\varepsilon,x''1^{y}\right)\cdot\beta^y\cdot\left(1-\beta^2\right)^{\#x''}\right)\right)\lim_{m \to \infty}\left(\sum_{x'''\in \mathbb{YF}_{y}}\left(d(\varepsilon,x'''){\frac{d(x''',w_m)}{d(\varepsilon,w_m)}}\right)\right).$$

Заметим, что по Утверждению \ref{mera} при $w\in\mathbb{YF}_\infty,$ $m,y\in\mathbb{N}_0$ если $|w_m|\ge y$, то 
$$\sum_{x'''\in\mathbb{YF}_y } \left(d(\varepsilon,x''')\frac{d(x''',w_m)}{d(\varepsilon,w_m)}\right)=1.$$
А значит если $m\ge y$, то $|w_m|\ge m\ge y $, то есть
$$\sum_{x'''\in\mathbb{YF}_y } \left(d(\varepsilon,x''')\frac{d(x''',w_m)}{d(\varepsilon,w_m)}\right)=1,$$
а значит
$$\lim_{m\to\infty}\left(\sum_{x'''\in\mathbb{YF}_y } \left(d(\varepsilon,x''')\frac{d(x''',w_m)}{d(\varepsilon,w_m)}\right)\right)=1.$$
Таким образом, наше выражение равняется следующему:
$$\sum_{x''\in \mathbb{YF}_{n-y}}\left(q(x'')\cdot  d\left(\varepsilon,x''1^{y}\right)\cdot \beta^y\cdot \left(1-\beta^2\right)^{\#x''}\right)\cdot 1=$$
$$=\sum_{x''\in \mathbb{YF}_{n-y}}\left(q(x'')\cdot d\left(\varepsilon,x''1^{y}\right)\cdot \beta^y\cdot \left(1-\beta^2\right)^{\#x''}\right)=$$
$$=\sum_{x'\in \mathbb{YF}_{n-y}}\left(q(x')\cdot d\left(\varepsilon,x'1^{y}\right)\cdot \beta^y\cdot \left(1-\beta^2\right)^{\#x'}\right),$$
что и требовалось.

Утверждение доказано.
\end{proof}

\begin{Prop} \label{dostalo}
Пусть $n,y \in \mathbb{N}_0:$ $ y\le n$. Тогда
$$\sum_{x'\in\mathbb{YF}_{n-y}}\left(q(x')\cdot d(\varepsilon,x'1^y)\right)= \prod_{i=1}^{\left\lfloor \frac{n-y}{2} \right\rfloor} \frac{2i+y}{2i}.$$
\end{Prop}
\begin{proof}
Будем доказывать это Утверждение по индукции по $n$. 

\underline{\textbf{База}}: $n=0$:

В данном случае ясно, что $y=0$. А это значит, что равенство принимает следующий вид:
$$\sum_{x'\in\mathbb{YF}_{0-0}}\left(q(x')\cdot d\left(\varepsilon,x'1^0\right)\right)= \prod_{i=1}^{\left\lfloor \frac{0-0}{2} \right\rfloor} \frac{2i+0}{2i}\Longleftrightarrow\sum_{x'\in\mathbb{YF}_{0}}\left(q(x')\cdot d(\varepsilon,x')\right)= \prod_{i=1}^{0} \frac{2i}{2i}\Longleftrightarrow$$
$$\Longleftrightarrow\sum_{x'\in\{\varepsilon\}}\left(q(x')\cdot d(\varepsilon,x')\right)=1\Longleftrightarrow q(\varepsilon)\cdot d(\varepsilon,\varepsilon)=1\Longleftrightarrow$$
$$\Longleftrightarrow\frac{1}{\displaystyle\prod_{i=1}^{\#\varepsilon}|k(\varepsilon,i)|}\cdot 1
=1\Longleftrightarrow\frac{1}{\displaystyle\prod_{i=1}^{0}|k(\varepsilon,i)|}
=1\Longleftrightarrow1=1.$$

\underline{\textbf{База}} доказана.

\underline{\textbf{Переход}} к $n\in\mathbb{N}_0:$ $n\ge 1$:

Рассмотрим три случая:
\begin{enumerate}
    \item $n,y \in \mathbb{N}_0:$ $ (n-y)\ge 2$.
    
    Давайте считать:
    $$\sum_{x'\in\mathbb{YF}_{n-y}}\left(q(x')\cdot d(\varepsilon,x'1^y)\right)=$$
    $$= \left(\sum_{(1x'')\in\mathbb{YF}_{n-y} }\left(q(1x'')\cdot d(\varepsilon,1x''1^y)\right)\right)+\left(\sum_{(2x'')\in\mathbb{YF}_{n-y}}\left(q(2x'')\cdot d(\varepsilon,2x''1^y)\right)\right)=$$
    $$=\left( \sum_{x''\in\mathbb{YF}_{n-y-1} }\left(q(1x'')\cdot d(\varepsilon,1x''1^y)\right)\right)+\left(\sum_{x''\in\mathbb{YF}_{n-y-2}}\left(q(2x'')\cdot d(\varepsilon,2x''1^y)\right)\right)=$$
    \begin{center}
    $$=(\text{По Утверждению \ref{q} при $(1x''),x''\in\mathbb{YF}$, $1\in\{1,2\}$ к каждому слагаемому первой суммы}$$
    $$\text{и при  $(2x''),x''\in\mathbb{YF}$, $2\in\{1,2\}$ к каждому слагаемому второй суммы})=$$
    \end{center}
    $$= \left(\sum_{x''\in\mathbb{YF}_{n-y-1}}\left(q(x'')\frac{1}{|1x''|}d(\varepsilon,1x''1^y)\right)\right)+\left(\sum_{x''\in\mathbb{YF}_{n-y-2}}\left(q(x'')\frac{1}{|2x''|}d(\varepsilon,2x''1^y)\right)\right)=$$    
    $$= \left(\sum_{x''\in\mathbb{YF}_{n-y-1}}\left(q(x'')\frac{1}{n-y}d(\varepsilon,1x''1^y)\right)\right)+\left(\sum_{x''\in\mathbb{YF}_{n-y-2}}\left(q(x'')\frac{1}{n-y}d(\varepsilon,2x''1^y)\right)\right).$$    
    Применим Утверждение \ref{razbivaem} при $(1x''1^y),1,(x''1^y)\in\mathbb{YF}$ к каждому слагаемому первой суммы и при $(2x''1^y),2,(x''1^y)\in\mathbb{YF}$ к каждому слагаемому второй суммы  и получим, что наше выражение равняется следующему:
    $$\sum_{x''\in\mathbb{YF}_{n-y-1}}\left(q(x'')\frac{1}{n-y} d(\varepsilon,x''1^y) d\left(\varepsilon,11^{|x''1^y|}\right)\right)+$$
    $$+ \sum_{x''\in\mathbb{YF}_{n-y-2}}\left(q(x'')\frac{1}{n-y} d(\varepsilon,x''1^y) d\left(\varepsilon,21^{|x''1^y|}\right)\right)=$$
    $$= \sum_{x''\in\mathbb{YF}_{n-y-1}}\left(q(x'')\frac{1}{n-y}d(\varepsilon,x''1^y) d\left(\varepsilon,1^{1+|x''|+y}\right)\right)+$$
    $$+\sum_{x''\in\mathbb{YF}_{n-y-2}}\left(q(x'')\frac{1}{n-y}d(\varepsilon,x''1^y) d\left(\varepsilon,21^{|x''|+y}\right)\right)=$$
    $$= \sum_{x''\in\mathbb{YF}_{n-y-1}}\left(q(x'')\frac{1}{n-y}d(\varepsilon,x''1^y)\prod_{i=1}^{d\left(1^{1+|x''|+y}\right)}g\left(1^{1+|x''|+y},i\right)\right)+$$
    $$+\sum_{x''\in\mathbb{YF}_{n-y-2}}\left(q(x'')\frac{1}{n-y}d(\varepsilon,x''1^y)\prod_{i=1}^{d\left(21^{|x''|+y}\right)}g\left(21^{|x''|+y},i\right)\right)=$$
    $$= \sum_{x''\in\mathbb{YF}_{n-y-1}}\left(q(x'')\frac{1}{n-y}d(\varepsilon,x''1^y)\prod_{i=1}^{0}g\left(1^{1+|x''|+y},i\right)\right)+$$
    $$+\sum_{x''\in\mathbb{YF}_{n-y-2}}\left(q(x'')\frac{1}{n-y}d(\varepsilon,x''1^y)\prod_{i=1}^{1}g\left(21^{(n-y-2)+y},i\right)\right)=$$
    $$= \sum_{x''\in\mathbb{YF}_{n-y-1}}\left(q(x'')\frac{1}{n-y}d(\varepsilon,x''1^y)\prod_{i=1}^{0}g\left(1^{1+|x''|+y},i\right)\right)+$$
    $$+\sum_{x''\in\mathbb{YF}_{n-y-2}}\left(q(x'')\frac{1}{n-y}d(\varepsilon,x''1^y)\prod_{i=1}^{1}g\left(21^{n-2},i\right)\right)=$$
    $$=(\text{По определению функции $g$})=$$
    $$= \left(\sum_{x''\in\mathbb{YF}_{n-y-1}}\left(q(x'')\frac{1}{n-y}d(\varepsilon,x''1^y)\right)\right)+\left(\sum_{x''\in\mathbb{YF}_{n-y-2}}\left(q(x'')\frac{1}{n-y}d(\varepsilon,x''1^y)\cdot(n-1)\right)\right)=$$
    $$=\left(\frac{1}{n-y} \sum_{x''\in\mathbb{YF}_{n-y-1}}\left(q(x'')\cdot d(\varepsilon,x''1^y)\right)\right)+\left(\frac{n-1}{n-y}\sum_{x''\in\mathbb{YF}_{n-y-2}}\left(q(x'')\cdot d(\varepsilon,x''1^y)\right)\right)=$$
    $$=\text{(По предположению индукции при $n-1$ и $n-2$ (ясно, что $n-1\ge y$  и $n-2\ge y$))}=$$
    $$=\left(\frac{1}{n-y}\prod_{i=1}^{\left\lfloor \frac{n-y-1}{2} \right\rfloor} \frac{2i+y}{2i}\right)+\left(\frac{n-1}{n-y}\prod_{i=1}^{\left\lfloor \frac{n-y-2}{2} \right\rfloor} \frac{2i+y}{2i}\right).$$

Рассмотрим два подслучая:
\begin{enumerate}
    \item $(n-y) \;mod\; 2 = 0$.
    
    В данном подслучае ясно, что
    $$\left(\frac{1}{n-y}\prod_{i=1}^{\left\lfloor \frac{n-y-1}{2} \right\rfloor} \frac{2i+y}{2i}\right)+\left(\frac{n-1}{n-y}\prod_{i=1}^{\left\lfloor \frac{n-y-2}{2} \right\rfloor} \frac{2i+y}{2i}\right)=$$
    $$=\left(\frac{1}{n-y}\prod_{i=1}^{\frac{n-y}{2} -1} \frac{2i+y}{2i}\right)+\left(\frac{n-1}{n-y}\prod_{i=1}^{\frac{n-y}{2} -1} \frac{2i+y}{2i}\right)=$$
    $$=\left(\prod_{i=1}^{\frac{n-y}{2} -1} \frac{2i+y}{2i}\right)\frac{n}{n-y}=\left(\prod_{i=1}^{\frac{n-y}{2} -1} \frac{2i+y}{2i}\right)\left(\prod_{i=\frac{n-y}{2}}^{\frac{n-y}{2}} \frac{2i+y}{2i}\right)=$$
    $$=\prod_{i=1}^{ \frac{n-y}{2} } \frac{2i+y}{2i}=\prod_{i=1}^{\left\lfloor \frac{n-y}{2} \right\rfloor} \frac{2i+y}{2i}.$$
    
    Таким образом, мы поняли, что в данном случае
    $$\sum_{x'\in\mathbb{YF}_{n-y}}\left(q(x')\cdot d(\varepsilon,x'1^y)\right)=\prod_{i=1}^{\left\lfloor \frac{n-y}{2} \right\rfloor} \frac{2i+y}{2i},$$
    что и требовалось.
    
    В данном случае \underline{\textbf{Переход}} доказан.    
    
    \item $(n-y) \;mod\; 2 = 1$.
    
    В данном подслучае ясно, что
    $$\left(\frac{1}{n-y}\prod_{i=1}^{\left\lfloor \frac{n-y-1}{2} \right\rfloor} \frac{2i+y}{2i}\right)+\left(\frac{n-1}{n-y}\prod_{i=1}^{\left\lfloor \frac{n-y-2}{2} \right\rfloor} \frac{2i+y}{2i}\right)=$$
    $$=\left(\frac{1}{n-y}\prod_{i=1}^{ \frac{n-y-1}{2} } \frac{2i+y}{2i}\right)+\left(\frac{n-1}{n-y}\prod_{i=1}^{\frac{n-y-1}{2}-1 } \frac{2i+y}{2i}\right)=$$
    $$=\frac{1}{n-y} \left(\prod_{i=1}^{ \frac{n-y-1}{2} -1} \frac{2i+y}{2i}\right)\left(\prod_{i=\frac{n-y-1}{2} }^{ \frac{n-y-1}{2} } \frac{2i+y}{2i}\right)+\frac{n-1}{n-y}\left(\prod_{i=1}^{ \frac{n-y-1}{2} -1} \frac{2i+y}{2i}\right)=$$
    $$=\frac{1}{n-y} \left(\prod_{i=1}^{ \frac{n-y-1}{2} -1} \frac{2i+y}{2i}\right) \frac{n-1}{n-y-1} +\frac{n-1}{n-y}\left(\prod_{i=1}^{ \frac{n-y-1}{2} -1} \frac{2i+y}{2i}\right)=$$
    $$=\frac{n-1}{n-y}\left(\frac{1}{n-y-1}+1\right)\prod_{i=1}^{\frac{n-y-1}{2}-1} \frac{2i+y}{2i}=\frac{n-1}{n-y}\cdot\frac{n-y}{n-y-1}\prod_{i=1}^{ \frac{n-y-1}{2} -1} \frac{2i+y}{2i}=$$
    $$=\left(\prod_{i=1}^{ \frac{n-y-1}{2} -1} \frac{2i+y}{2i}\right)\frac{n-1}{n-y-1}=\left(\prod_{i=1}^{ \frac{n-y-1}{2} -1} \frac{2i+y}{2i}\right)\left(\prod_{i= \frac{n-y-1}{2}}^{ \frac{n-y-1}{2}} \frac{2i+y}{2i}\right)=$$
    $$=\prod_{i=1}^{ \frac{n-y-1}{2}} \frac{2i+y}{2i}=\prod_{i=1}^{\lfloor \frac{n-y}{2} \rfloor} \frac{2i+y}{2i}.$$
    Таким образом, мы поняли, что в данном случае
    $$\sum_{x'\in\mathbb{YF}_{n-y}}\left(q(x')\cdot d(\varepsilon,x'1^y)\right)=\prod_{i=1}^{\left\lfloor \frac{n-y}{2} \right\rfloor} \frac{2i+y}{2i},$$
    что и требовалось.
    
    В данном случае \underline{\textbf{Переход}} доказан.    
\end{enumerate}
    
    \item $(n-y)=1$.
    
    В данном случае
    $$\sum_{x'\in\mathbb{YF}_{n-y}}\left(q(x')\cdot d(\varepsilon,x'1^y)\right)= \prod_{i=1}^{\left\lfloor \frac{n-y}{2} \right\rfloor} \frac{2i+y}{2i}\Longleftrightarrow \sum_{x'\in\mathbb{YF}_{1}}\left(q(x')\cdot d(\varepsilon,x'1^y)\right)= \prod_{i=1}^{\left\lfloor \frac{1}{2} \right\rfloor} \frac{2i+y}{2i}\Longleftrightarrow$$
    $$\Longleftrightarrow \sum_{x'\in\{1\}}\left(q(x')\cdot d(\varepsilon,x'1^y)\right)= \prod_{i=1}^{0} \frac{2i+y}{2i}\Longleftrightarrow q(1)\cdot d(\varepsilon,11^{y})=1\Longleftrightarrow q(1)\cdot d(\varepsilon,1^{y+1})=1\Longleftrightarrow$$
    $$\Longleftrightarrow \frac{1}{\displaystyle\prod_{i=1}^{\#1}|k(1,i)|}\prod_{i=1}^{d(1^{y+1})}g(1^{y+1},i)=1\Longleftrightarrow \frac{1}{\displaystyle\prod_{i=1}^{1}|k(1,i)|}\prod_{i=1}^{0}g(1^{y+1},i)=1\Longleftrightarrow$$
    $$\Longleftrightarrow \frac{1}{\displaystyle|k(1,1)|}\cdot 1=1\Longleftrightarrow \frac{1}{\displaystyle|1|}\cdot 1=1\Longleftrightarrow 1 = 1.$$
    То есть в данном случае \underline{\textbf{Переход}} снова доказана. 
    
    \item $(n-y)=0$.
    
    В данном случае 
    $$\sum_{x'\in\mathbb{YF}_{n-y}}\left(q(x')\cdot d(\varepsilon,x'1^y)\right)= \prod_{i=1}^{\left\lfloor \frac{n-y}{2} \right\rfloor} \frac{2i+y}{2i}\Longleftrightarrow \sum_{x'\in\mathbb{YF}_{0}}\left(q(x')\cdot d(\varepsilon,x'1^y)\right)= \prod_{i=1}^{\left\lfloor \frac{0}{2} \right\rfloor} \frac{2i+y}{2i}\Longleftrightarrow$$
    $$\Longleftrightarrow \sum_{x'\in\{\varepsilon\}}\left(q(x')\cdot d(\varepsilon,x'1^y)\right)= \prod_{i=1}^{0} \frac{2i+y}{2i}\Longleftrightarrow q(\varepsilon)\cdot d(\varepsilon,1^{y})=1\Longleftrightarrow \frac{1}{\displaystyle\prod_{i=1}^{\#\varepsilon}|k(\varepsilon,i)|}\prod_{i=1}^{d(1^{y})}g(1^{y},i)=1\Longleftrightarrow$$
    $$\Longleftrightarrow \frac{1}{\displaystyle\prod_{i=1}^{0}|k(\varepsilon,i)|}\prod_{i=1}^{0}g(1^{y},i)=1\Longleftrightarrow 1\cdot 1=1\Longleftrightarrow 1 = 1.$$
    
   То есть в данном случае \underline{\textbf{Переход}} опять же доказан. 

\end{enumerate}
    Ясно, что все случаи разобраны, во всех \underline{\textbf{Переход}} доказан. 
    
    Утверждение доказано.

\end{proof}

\begin{Prop} \label{stolb}
Пусть $w\in\mathbb{YF}_\infty$, $\beta\in(0,1]$, $n,y\in\mathbb{N}_0:$ $y\le n$. Тогда
$$\sum_{x\in\mathbb{YF}_n} T_{w,\beta,n}(x,y)\le \left(\prod_{i=1}^{\left\lfloor \frac{n-y}{2} \right\rfloor} \frac{2i+y}{2i}\right)\beta^{y}\left(1-\beta^2\right)^{\left\lfloor \frac{n-y}{2} \right\rfloor}.$$
\end{Prop}
\begin{proof}
По Утверждению \ref{sum} при $w\in\mathbb{YF}_\infty$, $\beta\in(0,1]$, $n,y\in\mathbb{N}_0:$ $y\le n$
$$\sum_{x\in\mathbb{YF}_n} T_{w,\beta,n}(x,y)=\sum_{x'\in\mathbb{YF}_{n-y}}\left(q(x')\cdot  d(\varepsilon,x'1^y)\cdot \beta^y\cdot \left(1-\beta^2\right)^{\#x'}\right)\le$$
$$\le(\text{Так как $\beta\in(0,1]$ и если $x'\in\mathbb{YF}$, то $2\#x'\ge |x'|$})\le$$
$$\le\sum_{x'\in\mathbb{YF}_{n-y}}\left(q(x')\cdot d(\varepsilon,x'1^y)\cdot \beta^y\cdot \left(1-\beta^2\right)^{ \frac{|x'|}{2} }\right)\le$$
$$\le\sum_{x'\in\mathbb{YF}_{n-y}}\left(q(x')\cdot d(\varepsilon,x'1^y)\cdot \beta^y\cdot \left(1-\beta^2\right)^{\left\lfloor \frac{|x'|}{2} \right\rfloor}\right)=$$
$$=\sum_{x'\in\mathbb{YF}_{n-y}}\left(q(x')\cdot d(\varepsilon,x'1^y)\cdot \beta^y\cdot \left(1-\beta^2\right)^{\left\lfloor \frac{n-y}{2} \right\rfloor}\right)=$$
$$=\left(\sum_{x'\in\mathbb{YF}_{n-y}}\left(q(x')\cdot d(\varepsilon,x'1^y)\right)\right)\beta^y\left(1-\beta^2\right)^{\left\lfloor \frac{n-y}{2} \right\rfloor}.$$

Таким образом, нам достаточно доказать, что 
$$\left(\sum_{x'\in\mathbb{YF}_{n-y}}\left(q(x')\cdot d(\varepsilon,x'1^y)\right)\right)\beta^y\left(1-\beta^2\right)^{\left\lfloor \frac{n-y}{2} \right\rfloor}\le \left(\prod_{i=1}^{\left\lfloor \frac{n-y}{2} \right\rfloor} \frac{2i+y}{2i}\right)\beta^{y}\left(1-\beta^2\right)^{\left\lfloor \frac{n-y}{2} \right\rfloor} \Longleftarrow$$
$$\Longleftarrow(\text{Так как $\beta\in(0,1]$})\Longleftarrow \sum_{x'\in\mathbb{YF}_{n-y}}\left(q(x')\cdot d(\varepsilon,x'1^y)\right)\le \prod_{i=1}^{\left\lfloor \frac{n-y}{2} \right\rfloor} \frac{2i+y}{2i}.$$

А это верно по Утверждению \ref{dostalo} при наших $n,y\in\mathbb{YF}:$ $ y\le n$.

Таким образом, мы доказали, что по Утверждению \ref{dostalo} при наших $n,y\in\mathbb{YF}:$ $ y\le n$
$$ \sum_{x'\in\mathbb{YF}_{n-y}}\left(q(x')\cdot d(\varepsilon,x'1^y)\right)= \prod_{i=1}^{\left\lfloor \frac{n-y}{2} \right\rfloor} \frac{2i+y}{2i}\Longrightarrow (\text{Так как $\beta\in(0,1]$})\Longrightarrow$$
$$\Longrightarrow  \left(\sum_{x'\in\mathbb{YF}_{n-y}}\left(q(x')\cdot d(\varepsilon,x'1^y)\right)\right)\beta^y\left(1-\beta^2\right)^{\left\lfloor \frac{n-y}{2} \right\rfloor}= \left(\prod_{i=1}^{\left\lfloor \frac{n-y}{2} \right\rfloor} \frac{2i+y}{2i}\right)\beta^{y}\left(1-\beta^2\right)^{\left\lfloor \frac{n-y}{2} \right\rfloor} \Longrightarrow$$
$$\Longrightarrow(\text{Как мы уже доказали})\Longrightarrow$$
$$\Longrightarrow \sum_{x\in\mathbb{YF}_n} T_{w,\beta,n}(x,y)\le \left(\sum_{x'\in\mathbb{YF}_{n-y}}\left(q(x')\cdot d(\varepsilon,x'1^y)\right)\right)\beta^y\left(1-\beta^2\right)^{\left\lfloor \frac{n-y}{2} \right\rfloor}= \left(\prod_{i=1}^{\left\lfloor \frac{n-y}{2} \right\rfloor} \frac{2i+y}{2i}\right)\beta^{y}\left(1-\beta^2\right)^{\left\lfloor \frac{n-y}{2} \right\rfloor},$$
что и требовалось.

Утверждение доказано.
\end{proof}

\begin{Oboz}
Пусть $n,a,b\in\mathbb{N}_0:$ $a\ge 2,$ $b\in \overline{a-1}$. Тогда
$$\overline{n}(a,b):=\{c\in\overline{n}: c\;mod\;{a} = b\}.$$
\end{Oboz}

\begin{Prop} \label{lehamed}
Пусть $\beta\in(0,1)$, $n\in\mathbb{N}_0$. Тогда
$$\sum_{y=0}^{n} \left(\left(\prod_{i=1}^{\left\lfloor \frac{n-y}{2} \right\rfloor} \frac{2i+y}{2i}\right)\beta^{y}\left(1-\beta^2\right)^{\left\lfloor \frac{n-y}{2} \right\rfloor}\right)\le 1+\frac{1}{\beta}.$$
\end{Prop}
\begin{proof}
Сначала рассмотрим только чётные игреки:
$$\sum_{y\in \overline{n}(2,0)} \left(\left(\prod_{i=1}^{\left\lfloor \frac{n-y}{2} \right\rfloor} \frac{2i+y}{2i}\right)\beta^{y}\left(1-\beta^2\right)^{\left\lfloor \frac{n-y}{2} \right\rfloor}\right)=\sum_{y'=0}^{\left\lfloor\frac{n}{2}\right\rfloor} \left(\left(\prod_{i=1}^{\left\lfloor \frac{n-2y'}{2} \right\rfloor} \frac{2i+2y'}{2i}\right)\beta^{2y'}\left(1-\beta^2\right)^{\left\lfloor \frac{n-2y'}{2} \right\rfloor}\right)=$$
$$=\sum_{y'=0}^{\left\lfloor\frac{n}{2}\right\rfloor} \left(\left(\prod_{i=1}^{\left\lfloor \frac{n}{2} \right\rfloor-y'} \frac{i+y'}{i}\right)\beta^{2y'}\left(1-\beta^2\right)^{\left\lfloor \frac{n}{2} \right\rfloor-y'}\right)=\sum_{y'=0}^{\left\lfloor\frac{n}{2}\right\rfloor} \left(\left(\frac{\displaystyle\prod_{i=y'+1}^{\left\lfloor\frac{n}{2}\right\rfloor}i}{\displaystyle\prod_{i=1}^{\left\lfloor\frac{n}{2}\right\rfloor-y'}i}\right)\left(\beta^2\right)^{y'}\left(1-\beta^2\right)^{\left\lfloor \frac{n}{2} \right\rfloor-y'}\right)=$$
$$=\sum_{y'=0}^{\left\lfloor\frac{n}{2}\right\rfloor} \left(\binom{\left\lfloor\frac{n}{2}\right\rfloor}{y'}\left(\beta^2\right)^{y'}\left(1-\beta^2\right)^{\left\lfloor \frac{n}{2} \right\rfloor-y'}\right)=(\text{так как это Бином Ньютона})=\left(\beta^2+\left(1-\beta^2\right)\right)^{{\left\lfloor \frac{n}{2} \right\rfloor}}=1.$$

Теперь рассмотрим нечётные игреки:

Рассмотрим два случая:
\begin{enumerate}
    \item $n\in\mathbb{N}_0:$ $n\;mod\;{2}=0$.
    
    Посчитаем, помня, что $\beta\in(0,1)$:    
    $$\sum_{y\in\overline{n}(2,1)} \left(\left(\prod_{i=1}^{\left\lfloor \frac{n-y}{2} \right\rfloor} \frac{2i+y}{2i}\right)\beta^{y}\left(1-\beta^2\right)^{\left\lfloor \frac{n-y}{2} \right\rfloor}\right)\le $$
    $$\le\sum_{y\in\overline{n}(2,1)} \left(\left(\prod_{i=1}^{\left\lfloor \frac{n-y}{2} \right\rfloor} \frac{2i+y+1}{2i}\right)\beta^{y}\left(1-\beta^2\right)^{\left\lfloor \frac{n-y}{2} \right\rfloor}\right)=$$
    $$=\left(\text{Так как ясно, что если $n\; mod\;2=0$ и $y\;mod\;2=1$, то $\left\lfloor \frac{n-y}{2} \right\rfloor=\frac{n-y-1}{2}$}\right)=$$
    $$=\frac{1}{\beta}\sum_{y\in\overline{n}(2,1)} \left(\left(\prod_{i=1}^{ \frac{n-y-1}{2}} \frac{2i+y+1}{2i}\right)\beta^{y+1}\left(1-\beta^2\right)^{ \frac{n-y-1}{2}}\right)=$$
    $$=\frac{1}{\beta}\sum_{y\in\overline{n}(2,1)} \left(\left(\prod_{i=1}^{ \frac{n-2\frac{y+1}{2}}{2}} \frac{2i+2\frac{y+1}{2}}{2i}\right)\beta^{2\frac{y+1}{2}}\left(1-\beta^2\right)^{ \frac{n-2\frac{y+1}{2}}{2}}\right).$$
    Ясно, что если $y$ пробегает все значения в множестве $\overline{n}(2,1),$ при $n\in\mathbb{N}_0:$ $n\;mod\;{2}=0$, то $\frac{y+1}{2}$ пробегает все значения в множестве $\left\{1,\ldots,\frac{n}{2}\right\}$, то есть наше выражение равняется следующему: 
    $$\frac{1}{\beta}\sum_{y'=1}^{\frac{n}{2}} \left(\left(\prod_{i=1}^{ \frac{n-2y'}{2}} \frac{2i+2y'}{2i}\right)\beta^{2y'}\left(1-\beta^2\right)^{ \frac{n-2y'}{2}}\right)=$$
    $$=\frac{1}{\beta}\sum_{y'=1}^{\frac{n}{2}} \left(\left(\prod_{i=1}^{\frac{n}{2}-y' } \frac{i+y'}{i}\right)\beta^{2y'}\left(1-\beta^2\right)^{ \frac{n}{2}-y' }\right)\le \text{(Так как $\beta\in(0,1)$)} \le$$
    $$\le\frac{1}{\beta}\sum_{y'=1}^{\frac{n}{2}} \left(\left(\prod_{i=1}^{\frac{n}{2}-y' } \frac{i+y'}{i}\right)\beta^{2y'}\left(1-\beta^2\right)^{ \frac{n}{2}-y' }\right)+\frac{1}{\beta} \left(\prod_{i=1}^{\frac{n}{2} } \frac{i}{i}\right)\beta^{0}\left(1-\beta^2\right)^{ \frac{n}{2} }=$$
    $$=\frac{1}{\beta}\sum_{y'=1}^{\frac{n}{2}} \left(\left(\prod_{i=1}^{\frac{n}{2}-y' } \frac{i+y'}{i}\right)\beta^{2y'}\left(1-\beta^2\right)^{ \frac{n}{2}-y' }\right)+\frac{1}{\beta}\sum_{y'=0}^{0} \left(\left(\prod_{i=1}^{\frac{n}{2}-y' } \frac{i+y'}{i}\right)\beta^{2y'}\left(1-\beta^2\right)^{ \frac{n}{2}-y' }\right)=$$
    $$=\frac{1}{\beta}\sum_{y'=0}^{\frac{n}{2}} \left(\left(\prod_{i=1}^{\frac{n}{2}-y' } \frac{i+y'}{i}\right)\beta^{2y'}\left(1-\beta^2\right)^{ \frac{n}{2}-y' }\right)=$$
    $$=\frac{1}{\beta}\sum_{y'=0}^{\frac{n}{2}} \left(\left(\frac{\displaystyle\prod_{i=y'+1}^{\frac{n}{2}}i}{\displaystyle\prod_{i=1}^{\frac{n}{2}-y'}i}\right)\left(\beta^2\right)^{y'}\left(1-\beta^2\right)^{ \frac{n}{2}-y'}\right)=$$
    $$=\frac{1}{\beta}\sum_{y'=0}^{\frac{n}{2}} \left(\binom{\frac{n}{2}}{y'}\left(\beta^2\right)^{y'}\left(1-\beta^2\right)^{ \frac{n}{2} -y'}\right)=$$
    $$=(\text{так как это Бином Ньютона})=$$
    $$=\frac{1}{\beta}\left(\beta^2+\left(1-\beta^2\right)\right)^{ \frac{n}{2} }=\frac{1}{\beta}.$$

    В данном случае сумма по чётным игрекам равна единице, а сумма по нечётным игрекам оценивается сверху как $\frac{1}{\beta}$, а значит вся сумма не больше, чем $1+\frac{1}{\beta}$, то есть
    $$\sum_{y=0}^{n} \left(\left(\prod_{i=1}^{\left\lfloor \frac{n-y}{2} \right\rfloor} \frac{2i+y}{2i}\right)\beta^{y}\left(1-\beta^2\right)^{\left\lfloor \frac{n-y}{2} \right\rfloor}\right)=$$
    $$=\sum_{y\in\overline{n}(2,0)} \left(\left(\prod_{i=1}^{\left\lfloor \frac{n-y}{2} \right\rfloor} \frac{2i+y}{2i}\right)\beta^{y}\left(1-\beta^2\right)^{\left\lfloor \frac{n-y}{2} \right\rfloor}\right)+\sum_{y\in\overline{n}(2,1)} \left(\left(\prod_{i=1}^{\left\lfloor \frac{n-y}{2} \right\rfloor} \frac{2i+y}{2i}\right)\beta^{y}\left(1-\beta^2\right)^{\left\lfloor \frac{n-y}{2} \right\rfloor}\right)\le1+\frac{1}{\beta},$$
    что и требовалось.
    \item $n \;mod\;{2}=1$:
    
    Посчитаем, помня, что $\beta\in(0,1)$:    
    $$\sum_{y\in\overline{n}(2,1)} \left(\left(\prod_{i=1}^{\left\lfloor \frac{n-y}{2} \right\rfloor} \frac{2i+y}{2i}\right)\beta^{y}\left(1-\beta^2\right)^{\left\lfloor \frac{n-y}{2} \right\rfloor}\right)\le \sum_{y\in\overline{n}(2,1)} \left(\left(\prod_{i=1}^{\left\lfloor \frac{n-y}{2} \right\rfloor} \frac{2i+y+1}{2i}\right)\beta^{y}\left(1-\beta^2\right)^{\left\lfloor \frac{n-y}{2} \right\rfloor}\right)=$$
    $$=\left(\text{Так как ясно, что если $n\; mod\;{2}=1$ и $y\;mod\;{2}=1$, то $\left\lfloor \frac{n-y}{2} \right\rfloor=\frac{n-y}{2}$}\right)=$$
    $$=\frac{1}{\beta}\sum_{y\in\overline{n}(2,1)} \left(\left(\prod_{i=1}^{ \frac{n-y}{2}} \frac{2i+y+1}{2i}\right)\beta^{y+1}\left(1-\beta^2\right)^{ \frac{n-y}{2}}\right)=$$
    $$=\frac{1}{\beta}\sum_{y\in\overline{n}(2,1)} \left(\left(\prod_{i=1}^{ \frac{n+1-2\frac{y+1}{2}}{2}} \frac{2i+2\frac{y+1}{2}}{2i}\right)\beta^{2\frac{y+1}{2}}\left(1-\beta^2\right)^{ \frac{n+1-2\frac{y+1}{2}}{2}}\right).$$
    Ясно, что если $y$ пробегает все значения в множестве $\overline{n}(2,1)$ при $n\in\mathbb{N}_0:$ $n\;mod\;{2}=1$, то $\frac{y+1}{2}$ пробегает все значения в множестве $\left\{1,\ldots,\frac{n+1}{2}\right\}$, то есть наше выражение равняется следующему: 
    $$\frac{1}{\beta}\sum_{y'=1}^{\frac{n+1}{2}} \left(\left(\prod_{i=1}^{ \frac{n+1-2y'}{2}} \frac{2i+2y'}{2i}\right)\beta^{2y'}\left(1-\beta^2\right)^{ \frac{n+1-2y'}{2}}\right)=$$
    $$=\frac{1}{\beta}\sum_{y'=1}^{\frac{n+1}{2}} \left(\left(\prod_{i=1}^{\frac{n+1}{2}-y' } \frac{i+y'}{i}\right)\beta^{2y'}\left(1-\beta^2\right)^{ \frac{n+1}{2}-y' }\right)\le \text{(Так как $\beta\in(0,1)$)} \le$$
    $$\le\frac{1}{\beta}\sum_{y'=1}^{\frac{n+1}{2}} \left(\left(\prod_{i=1}^{\frac{n+1}{2}-y' } \frac{i+y'}{i}\right)\beta^{2y'}\left(1-\beta^2\right)^{ \frac{n+1}{2}-y' }\right)+\frac{1}{\beta} \left(\prod_{i=1}^{\frac{n+1}{2} } \frac{i}{i}\right)\beta^{0}\left(1-\beta^2\right)^{ \frac{n+1}{2} }=$$
    $$=\frac{1}{\beta}\sum_{y'=1}^{\frac{n+1}{2}} \left(\left(\prod_{i=1}^{\frac{n+1}{2}-y' } \frac{i+y'}{i}\right)\beta^{2y'}\left(1-\beta^2\right)^{ \frac{n+1}{2}-y' }\right)+\frac{1}{\beta}\sum_{y'=0}^{0} \left(\left(\prod_{i=1}^{\frac{n+1}{2}-y' } \frac{i+y'}{i}\right)\beta^{2y'}\left(1-\beta^2\right)^{ \frac{n+1}{2}-y' }\right)=$$
    $$=\frac{1}{\beta}\sum_{y'=0}^{\frac{n+1}{2}} \left(\left(\prod_{i=1}^{\frac{n+1}{2}-y' } \frac{i+y'}{i}\right)\beta^{2y'}\left(1-\beta^2\right)^{ \frac{n+1}{2}-y' }\right)=$$
    $$=\frac{1}{\beta}\sum_{y'=0}^{\frac{n+1}{2}} \left(\left(\frac{\displaystyle\prod_{i=y'+1}^{\frac{n+1}{2}}i}{\displaystyle\prod_{i=1}^{\frac{n+1}{2}-y'}i}\right)\left(\beta^2\right)^{y'}\left(1-\beta^2\right)^{ \frac{n+1}{2}-y'}\right)=$$
    $$=\frac{1}{\beta}\sum_{y'=0}^{\frac{n+1}{2}} \left(\binom{\frac{n+1}{2}}{y'}\left(\beta^2\right)^{y'}\left(1-\beta^2\right)^{ \frac{n+1}{2} -y'}\right)=$$
    $$=(\text{так как это Бином Ньютона})=$$
    $$=\frac{1}{\beta}\left(\beta^2+\left(1-\beta^2\right)\right)^{ \frac{n+1}{2} }=\frac{1}{\beta}.$$

    В данном случае сумма по чётным игрекам равна единице, а сумма по нечётным игрекам оценивается сверху как $\frac{1}{\beta}$, а значит вся сумма не больше, чем $1+\frac{1}{\beta}$, то есть
    $$\sum_{y=0}^{n} \left(\left(\prod_{i=1}^{\left\lfloor \frac{n-y}{2} \right\rfloor} \frac{2i+y}{2i}\right)\beta^{y}\left(1-\beta^2\right)^{\left\lfloor \frac{n-y}{2} \right\rfloor}\right)=$$
    $$=\sum_{y\in\overline{n}(2,0)} \left(\left(\prod_{i=1}^{\left\lfloor \frac{n-y}{2} \right\rfloor} \frac{2i+y}{2i}\right)\beta^{y}\left(1-\beta^2\right)^{\left\lfloor \frac{n-y}{2} \right\rfloor}\right)+\sum_{y\in\overline{n}(2,1)} \left(\left(\prod_{i=1}^{\left\lfloor \frac{n-y}{2} \right\rfloor} \frac{2i+y}{2i}\right)\beta^{y}\left(1-\beta^2\right)^{\left\lfloor \frac{n-y}{2} \right\rfloor}\right)\le 1+\frac{1}{\beta},$$
    что и требовалось.
   \end{enumerate}

В обоих случаях Утверждение доказано.
    \end{proof}

\begin{Col} \label{lehamed1}
Пусть $w\in\mathbb{YF}_\infty$, $\beta\in(0,1)$, $n\in\mathbb{N}_0$. Тогда
$$\sum_{y=0}^n\left(\sum_{x\in\mathbb{YF}_n} T_{w,\beta,n}(x,y)\right)\le 1+\frac{1}{\beta}.$$
\end{Col}
\begin{proof}
Возьмём Утверждение \ref{stolb} при наших $w\in\mathbb{YF}_\infty$, $\beta\in(0,1]$, $n\in\mathbb{N}_0$ и просуммируем его по $y\in\overline{n}$:
$$\sum_{y=0}^n\left(\sum_{x\in\mathbb{YF}_n} T_{w,\beta,n}(x,y)\right)\le\sum_{y=0}^{n} \left(\left(\prod_{i=1}^{\left\lfloor \frac{n-y}{2} \right\rfloor} \frac{2i+y}{2i}\right)\beta^{y}\left(1-\beta^2\right)^{\left\lfloor \frac{n-y}{2} \right\rfloor}\right)\le$$
$$\le\text{(По Утверждению \ref{lehamed} при наших $\beta\in(0,1)$ и  $n\in\mathbb{N}_0$)}\le 1+\frac{1}{\beta},$$
что и требовалось. 

Следствие доказано.
\end{proof}

\begin{Prop} \label{zabe}
Пусть $ w\in\mathbb{YF}_\infty$, $\beta\in(0,1)$, $n\in\mathbb{N}_0$, $v\in\mathbb{YF}$. Тогда
$$\mu_{w,\beta}(v) \le \sum_{y=0}^{|v|} T_{w,\beta,n}(v,y).$$

\end{Prop}
\begin{proof}
По обозначению
$$\mu_{w,\beta}(v)=d(\varepsilon,v)\cdot d'_\beta(v,w)=\text{ (По Утверждению \ref{kusok} при $w\in\mathbb{YF}_\infty,$ $v\in\mathbb{YF}$ и $\beta\in(0,1]$) }=$$
$$=d(\varepsilon,v)\sum_{i=0}^{\#v} \left(\beta^{|k(v,i)|}d_{\beta}(n(v,i))\cdot d'_1(k(v,i),w)\right).$$

Таким образом, наше неравенство равносильно следующему:
$$d(\varepsilon,v)\sum_{i=0}^{\#v} \left(\beta^{|k(v,i)|}d_{\beta}(n(v,i))\cdot d'_1(k(v,i),w)\right)\le\sum_{y=0}^{|v|} T_{w,\beta,n}(v,y).$$

По определению волшебных таблиц:
\begin{equation*}
T_{w,\beta,n}(x,y)=
 \begin{cases}
   $$\displaystyle d(\varepsilon,x)\cdot q(n(x,i))\cdot {{d_1'(k(x,i),w)}}\cdot \beta^y\cdot \left(1-\beta^2\right)^{\#(n(x,i))}$$ &\text {если $\exists i\in\overline{\#x}:\; |k(x,i)|=y$}\\
   0 &\text{иначе}
 \end{cases}.
\end{equation*}
Ясно, что для $v$ определение можно написать следующим образом:
\begin{equation*}
T_{w,\beta,n}(v,y)=
 \begin{cases}
   $$\displaystyle d(\varepsilon,v)\cdot q(n(v,i))\cdot {{d_1'(k(v,i),w)}}\cdot \beta^{|k(v,i)|}\cdot \left(1-\beta^2\right)^{\#(n(v,i))}$$ &\text {если $\exists i\in\overline{\#v}:\; |k(v,i)|=y$}\\
   0 &\text{иначе}
 \end{cases}.
\end{equation*}
По определению функции $k(v,i)$ ясно, что
\begin{itemize}
    \item если  $i\in \overline{\#v}$, то $|k(v,i)|\in\overline{|v|}$;
    \item если  $i,j\in \overline{\#v}:$ $i\ne j$, то $|k(v,i)|\ne |k(v,j)|$.
\end{itemize} 
А из этого ясно, что
$$\sum_{y=0}^{|v|}T_{w,\beta,n}(v,y)=\sum_{i=0}^{\#v}\left(  d(\varepsilon,v)\cdot q(n(v,i))\cdot {d_1'(k(v,i),w)}\cdot \beta^{|k(v,i)|}\cdot \left(1-\beta^2\right)^{\#(n(v,i))}\right).$$

Таким образом, наше неравенство равносильно следующему:
$$ d(\varepsilon,v)\sum_{i=0}^{\#v}\left( \beta^{|k(v,i)|}d_{\beta}(n(v,i))\cdot d'_1(k(v,i),w)\right)\le$$
$$\le\sum_{i=0}^{\#v}\left(  d(\varepsilon,v)\cdot  q(n(v,i))\cdot {d_1'(k(v,i),w)}\cdot \beta^{|k(v,i)|}\cdot \left(1-\beta^2\right)^{\#(n(v,i))}\right)\Longleftrightarrow$$
$$\Longleftrightarrow \sum_{i=0}^{\#v}\left( \beta^{|k(v,i)|}d_{\beta}(n(v,i))\cdot d'_1(k(v,i),w)\right)\le\sum_{i=0}^{\#v}\left( q(n(v,i))\cdot {d_1'(k(v,i),w)}\cdot \beta^{|k(v,i)|}\cdot \left(1-\beta^2\right)^{\#(n(v,i))}\right).$$

Ясно, что чтобы доказать это неравенство, достаточно доказать, что $\forall i\in\overline{\#v}$
$$ \beta^{|k(v,i)|}d_{\beta}(n(v,i))\cdot d'_1(k(v,i),w)\le q(n(v,i))\cdot {d_1'(k(v,i),w)}\cdot \beta^{|k(v,i)|}\cdot \left(1-\beta^2\right)^{\#(n(v,i))}.$$

Давайте докажем. Пусть $i\in\overline{\#v}$. Тогда
$$ \beta^{|k(v,i)|}d_{\beta}(n(v,i))\cdot d'_1(k(v,i),w)\le q(n(v,i))\cdot {d_1'(k(v,i),w)}\cdot \beta^{|k(v,i)|}\cdot \left(1-\beta^2\right)^{\#(n(v,i))}\Longleftrightarrow$$
$$\Longleftrightarrow\text{(Так как $\beta\in(0,1)$)}\Longleftrightarrow$$
$$\Longleftrightarrow d_{\beta}(n(v,i))\cdot d'_1(k(v,i),w)\le q(n(v,i))\cdot {d_1'(k(v,i),w)}\cdot \left(1-\beta^2\right)^{\#(n(v,i))}\Longleftarrow$$
$$\Longleftarrow(\text{По Следствию \ref{neo} при $k(v,i)\in\mathbb{YF},$ $w\in\mathbb{YF}_\infty$})\Longleftarrow$$
$$\Longleftarrow d_{\beta}(n(v,i))\le q(n(v,i))\cdot \left(1-\beta^2\right)^{\#(n(v,i))},$$
а это в точности Утверждение \ref{mamka2} при $n(v,i)\in\mathbb{YF},$ $\beta\in(0,1)$.

То есть мы доказали, что $\forall i\in\overline{\#v}$
$$ \beta^{|k(v,i)|}d_{\beta}(n(v,i))\cdot d'_1(k(v,i),w)\le q(n(v,i))\cdot {d_1'(k(v,i),w)}\cdot \beta^{|k(v,i)|}\cdot \left(1-\beta^2\right)^{\#(n(v,i))},$$
а значит
$$\sum_{i=0}^{\#v}\left( \beta^{|k(v,i)|}d_{\beta}(n(v,i))\cdot d'_1(k(v,i),w)\right)\le\sum_{i=0}^{\#v}\left( q(n(v,i))\cdot {d_1'(k(v,i),w)}\cdot \beta^{|k(v,i)|}\cdot \left(1-\beta^2\right)^{\#(n(v,i))}\right)\Longleftrightarrow$$
$$\Longleftrightarrow d(\varepsilon,v)\sum_{i=0}^{\#v}\left( \beta^{|k(v,i)|}d_{\beta}(n(v,i))\cdot d'_1(k(v,i),w)\right)\le$$
$$\le\sum_{i=0}^{\#v}\left(  d(\varepsilon,v)\cdot  q(n(v,i))\cdot {d_1'(k(v,i),w)}\cdot \beta^{|k(v,i)|}\cdot \left(1-\beta^2\right)^{\#(n(v,i))}\right)\Longleftrightarrow$$
$$\Longleftrightarrow \mu_{w,\beta}(v) \le \sum_{y=0}^{|v|} T_{w,\beta,n}(v,y),$$
что и требовалось.

Утверждение доказано.
\end{proof}

\newpage
\section{Доказательство первой ключевой теоремы}

\begin{Oboz}
Пусть $w\in\mathbb{YF}_\infty$, $n,l\in\mathbb{N}_0$. Тогда
\begin{itemize}
    \item $$Q(w,n,l):=\left\{v\in\mathbb{YF}_n:\; {h'(v,w)}\ge l\right\};$$
    \item $$ \overline{Q}(w,n,l):=\left\{v\in\mathbb{YF}_n:\; {h'(v,w)}< l\right\}.$$
\end{itemize}
\end{Oboz}

\renewcommand{\labelenumi}{\arabic{enumi}$)$}
\renewcommand{\labelenumii}{\arabic{enumi}.\arabic{enumii}$^\circ$}
\renewcommand{\labelenumiii}{\arabic{enumi}.\arabic{enumii}.\arabic{enumiii}$^\circ$}

\begin{theorem} [Следствие 3\cite{Evtuh3}] \label{main1}
Пусть $w\in\mathbb{YF}_\infty^+$, $l\in\mathbb{N}_0$. Тогда
\begin{enumerate}
    \item $$ \lim_{n \to \infty}{\sum_{v\in \overline{Q}(w,n,l)}\mu_w(v)=0};$$
    \item $$\lim_{n \to \infty}{\sum_{v\in Q(w,n,l)}\mu_w(v)=1}.$$
\end{enumerate}

\renewcommand{\labelenumi}{\arabic{enumi}$^\circ$}
\renewcommand{\labelenumii}{\arabic{enumi}.\arabic{enumii}$^\circ$}
\renewcommand{\labelenumiii}{\arabic{enumi}.\arabic{enumii}.\arabic{enumiii}$^\circ$}

\end{theorem}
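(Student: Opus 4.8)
The plan is to prove part 1 and obtain part 2 for free. Since $Q(w,n,l)$ and $\overline{Q}(w,n,l)$ partition $\mathbb{YF}_n$, and since $\mu_w=\mu_{w,1}$ by Corollary \ref{granatakerambita}, Corollary \ref{mera1} (applied with the suffix sequence $w_i'=w_i$ and $\beta=1$) gives $\sum_{v\in\mathbb{YF}_n}\mu_w(v)=1$ for every $n$. Hence $\sum_{Q}\mu_w=1-\sum_{\overline Q}\mu_w$, so assertion 1 instantly implies assertion 2, and everything reduces to showing $\lim_n\sum_{v\in\overline Q(w,n,l)}\mu_w(v)=0$.

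For this I would route the estimate through the magic tables. Fix $\beta\in(0,1)$. By Proposition \ref{zabe}, $\mu_{w,\beta}(v)\le\sum_{y=0}^{|v|}T_{w,\beta,n}(v,y)$, so $\sum_{v\in\overline Q(w,n,l)}\mu_{w,\beta}(v)\le\sum_{y=0}^{n}\bigl(\sum_{v\in\overline Q(w,n,l)\cap K(n,y)}T_{w,\beta,n}(v,y)\bigr)$. The crux is to evaluate each column restricted to $\overline Q$. Writing $v=ps$ with $|s|=y$ and splitting $d(\varepsilon,ps)=d(\varepsilon,s)\,d(\varepsilon,p1^y)$ via Proposition \ref{razbivaem}, exactly as in the proof of Proposition \ref{sum}, the prefix part factors off and is $w$-independent; set $P_{n,y}:=\beta^y\sum_{p\in\mathbb{YF}_{n-y}}q(p)\,d(\varepsilon,p1^y)(1-\beta^2)^{\#p}$, so the full column equals $\sum_{v\in K(n,y)}T_{w,\beta,n}(v,y)=P_{n,y}$ (Proposition \ref{sum}, using $\sum_{s\in\mathbb{YF}_y}d(\varepsilon,s)\,d_1'(s,w)=1$, which comes from Propositions \ref{mera} and \ref{limitstrih}). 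The key observation is that for $v=ps\in\overline Q$ with $y\ge l$ the divergence from $w$ already occurs inside $s$, so the membership constraint depends on $s$ alone and the restricted suffix sum is $\sum_{s\in\overline Q(w,y,l)}d(\varepsilon,s)\,d_1'(s,w)=\sum_{s\in\overline Q(w,y,l)}\mu_w(s)$. Writing $g(n):=\sum_{v\in\overline Q(w,n,l)}\mu_w(v)$, this yields the self-referential bound $\sum_{v\in\overline Q(w,n,l)}\mu_{w,\beta}(v)\le\sum_{y=0}^{l-1}P_{n,y}+\sum_{y=l}^{n}g(y)\,P_{n,y}$.

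Two ingredients should then close the argument: decay and contraction. The low columns are harmless, since there are only $l$ of them and, by Proposition \ref{stolb}, each satisfies $P_{n,y}\le\bigl(\prod_{i=1}^{\lfloor(n-y)/2\rfloor}\tfrac{2i+y}{2i}\bigr)\beta^y(1-\beta^2)^{\lfloor(n-y)/2\rfloor}\to0$ as $n\to\infty$ for fixed $y$ and $\beta<1$; hence $\sum_{y=0}^{l-1}P_{n,y}\to0$. For the high columns I would put $G:=\limsup_n g(n)$ and use that, by Proposition \ref{lehamed} and Corollary \ref{lehamed1}, the weights $P_{n,y}$ form an essentially binomial family of bounded total mass that escapes to $y\approx\beta^2 n\to\infty$; together with the hypothesis $w\in\mathbb{YF}_\infty^+$, i.e. $\pi(w)>0$ (which forces the $2$'s of $w$ to be sparse and keeps the matching probability bounded away from $0$ at large $y$), this should produce an inequality of the form $\limsup_n g(n)\le c\,G$ with a genuine contraction factor, forcing $G=0$.

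I expect this final step to be the main obstacle: the column identity is purely formal and, by itself, self-referential ($g$ on both sides) with weights summing to slightly more than $1$, so no contraction is visible until one inputs $\pi(w)>0$ through a renewal-type estimate on how fast the $\mu_w$-mass of matching suffixes grows. A secondary difficulty is the transfer from $\mu_{w,\beta}$ (which the tables bound, for $\beta<1$) back to $\mu_w=\mu_{w,1}$, since the geometric weight $\beta$ and the inner measure sit at different parameters and the naive limit $\beta\to1$ destroys the decay; this too must be handled through the quantitative renewal bound rather than by interchanging limits.
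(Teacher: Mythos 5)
This statement is not proved in the present paper at all: it is quoted as Следствие 3 of the companion work \cite{Evtuh3} and used as an external input (it is precisely the ingredient fed into Лемма \ref{deepexsense} to prove Теорема \ref{t2}). So any comparison is with the role the statement plays here, and on that score your proposal runs the machinery backwards in a way that cannot close. The magic tables control $\mu_{w,\beta}$ only for $\beta\in(0,1)$: Утверждение \ref{zabe} gives $\mu_{w,\beta}(v)\le\sum_y T_{w,\beta,n}(v,y)$, and the column factorization you describe (which is exactly Утверждение \ref{kerambit}, valid for $y\ge 2l$, not $y\ge l$) yields
$$\sum_{v\in\overline Q(w,n,l)}\mu_{w,\beta}(v)\ \le\ \sum_{y<2l}P_{n,y}+\sum_{y\ge 2l}g(y)\,P_{n,y},\qquad g(y):=\sum_{v\in\overline Q(w,y,l)}\mu_{w}(v).$$
The left-hand side is the $\beta<1$ quantity, the right-hand side involves the $\beta=1$ quantity $g$; the inequality is therefore not self-referential in $g$, and no choice of $\beta$ repairs this. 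Taking $\beta\to1$ kills the decay $(1-\beta^2)^{\#}$ on which every column estimate rests (at $\beta=1$ the table degenerates to the single column $y=n$ and the bound becomes a tautology), while for fixed $\beta<1$ the weights $P_{n,y}$ only satisfy $\sum_y P_{n,y}\le 1+1/\beta>1$ (Следствие \ref{lehamed1}), so even a genuinely self-referential version would give $G\le(1+1/\beta)G$, which is vacuous. The ``contraction factor $c<1$ from $\pi(w)>0$ via a renewal-type estimate'' is exactly the missing mathematical content, and you state it as a hope rather than prove it; as written there is no mechanism forcing $G=0$.

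The reduction of part 2 to part 1 via Следствие \ref{mera1} and Следствие \ref{granatakerambita} is fine and matches how the paper handles the analogous deduction in Теорема \ref{t2}. But the core of part 1 is a statement about the Martin-kernel measures $\mu_w$ themselves and is established in \cite{Evtuh3} by direct analysis of those measures (suffix decompositions of $d(v,w_m)/d(\varepsilon,w_m)$ and the Goodman--Kerov description of the boundary), not by the table apparatus, which this paper introduces precisely because the $\beta=1$ case was already available and only the extension to $\beta\in(0,1)$ remained. If you want a self-contained proof, you must work at $\beta=1$ directly; the route through $T_{w,\beta,n}$ with $\beta<1$ cannot reach it.
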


\renewcommand{\labelenumi}{\arabic{enumi}$)$}
\renewcommand{\labelenumii}{\arabic{enumi}.\arabic{enumii}$^\circ$}
\renewcommand{\labelenumiii}{\arabic{enumi}.\arabic{enumii}.\arabic{enumiii}$^\circ$}

\begin{theorem}\label{t2}
Пусть $w\in \mathbb{YF}_\infty^+$, $\beta\in(0,1)$, $l \in\mathbb{N}_0$. Тогда
\begin{enumerate}
    \item $$ \lim_{n \to \infty}{\sum_{v\in \overline{Q}(w,n,l)}\mu_{w,\beta}(v)=0};$$
    \item $$\lim_{n \to \infty}{\sum_{v\in Q(w,n,l)}\mu_{w,\beta}(v)=1}.$$
\end{enumerate}
\end{theorem}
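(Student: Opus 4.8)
The plan is to dominate $\mu_{w,\beta}$ from above by the magic table $T_{w,\beta,n}$ and reduce the statement to the already-established case $\beta=1$, which is Theorem \ref{main1}. First I would note that the two assertions are equivalent: since $\mathbb{YF}_n=Q(w,n,l)\sqcup\overline{Q}(w,n,l)$ and $\sum_{v\in\mathbb{YF}_n}\mu_{w,\beta}(v)=1$ by Corollary \ref{mera1}, while $\mu_{w,\beta}\ge 0$ by Corollary \ref{neotr}, it suffices to prove assertion (1), i.e. $\sum_{v\in\overline{Q}(w,n,l)}\mu_{w,\beta}(v)\to 0$, after which (2) follows by subtraction from $1$. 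For (1), apply Proposition \ref{zabe} to each $v\in\overline{Q}(w,n,l)\subseteq\mathbb{YF}_n$ and interchange the order of summation (the entries with $v\in\overline{K}(n,y)$ vanish):
$$\sum_{v\in\overline{Q}(w,n,l)}\mu_{w,\beta}(v)\le\sum_{y=0}^{n}\left(\sum_{v\in K(n,y)\cap\overline{Q}(w,n,l)}T_{w,\beta,n}(v,y)\right).$$
Abbreviate $S_y:=\sum_{x\in\mathbb{YF}_n}T_{w,\beta,n}(x,y)$ and $M_y:=\sum_{v''\in\overline{Q}(w,y,l)}\mu_w(v'')$.

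The decisive combinatorial observation is that, for $y\ge l$ and $v=v(y)v'(y)$ with $v'(y)\in\mathbb{YF}_y$, one has $v\in\overline{Q}(w,n,l)\iff v'(y)\in\overline{Q}(w,y,l)$, with no constraint on the prefix $v(y)$. Indeed $h'(v'(y),w)\le h'(v,w)$ always, since a common suffix of $v'(y)$ and $w$ is a common suffix of $v$ and $w$; conversely, if $h'(v,w)\ge l$, then the longest common suffix of $v$ and $w$ (of digit-sum $\ge l$) either lies inside $v'(y)$, forcing $h'(v'(y),w)\ge l$, or contains all of $v'(y)$, forcing $h'(v'(y),w)=y\ge l$. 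Using the resulting bijection $K(n,y)\cap\overline{Q}(w,n,l)\leftrightarrow\mathbb{YF}_{n-y}\times\overline{Q}(w,y,l)$ together with Proposition \ref{razbivaem} to split $d(\varepsilon,v(y)v'(y))=d(\varepsilon,v'(y))\,d(\varepsilon,v(y)1^{y})$, the inner sum factorizes: for $y\ge l$,
$$\sum_{v\in K(n,y)\cap\overline{Q}(w,n,l)}T_{w,\beta,n}(v,y)=\left(\sum_{x'\in\mathbb{YF}_{n-y}}q(x')\,d(\varepsilon,x'1^y)\,\beta^y\,(1-\beta^2)^{\#x'}\right)M_y=S_y\,M_y,$$
the last equality being Proposition \ref{sum}. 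For $y<l$ I simply bound the inner sum by $S_y$, so altogether
$$\sum_{v\in\overline{Q}(w,n,l)}\mu_{w,\beta}(v)\le\sum_{y=0}^{l-1}S_y+\sum_{y=l}^{n}S_y\,M_y.$$

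I then control the two pieces separately. For fixed $y$, each $S_y=S_y^{(n)}$ tends to $0$ as $n\to\infty$: by Proposition \ref{stolb}, $S_y\le\left(\prod_{i=1}^{\lfloor(n-y)/2\rfloor}\tfrac{2i+y}{2i}\right)\beta^y(1-\beta^2)^{\lfloor(n-y)/2\rfloor}$, and since $\beta\in(0,1)$ the factor $(1-\beta^2)^{\lfloor(n-y)/2\rfloor}$ decays geometrically in $\lfloor(n-y)/2\rfloor$ while the product grows only polynomially, so the bound $\to 0$. This kills the finite first sum $\sum_{y=0}^{l-1}S_y$. For the second sum I invoke that $M_y\to 0$ as $y\to\infty$ — which is exactly Theorem \ref{main1}(1) with level variable $y$ — together with $0\le M_y\le 1$ and the uniform bound $\sum_{y=0}^{n}S_y\le 1+\tfrac1\beta$ from Corollary \ref{lehamed1}. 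Given $\varepsilon>0$, choose $Y$ with $M_y<\varepsilon$ for all $y\ge Y$; then $\sum_{y=l}^{n}S_yM_y\le\sum_{y=l}^{Y-1}S_y+\varepsilon(1+\tfrac1\beta)$, and the finite head $\to 0$ as $n\to\infty$, leaving $\limsup_{n}\le\varepsilon(1+\tfrac1\beta)$. Letting $\varepsilon\to 0$ yields assertion (1), and (2) follows as noted.

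The main obstacle is twofold. First, one must establish the suffix-restriction bijection cleanly, taking care of the boundary cases where the longest common suffix of $v$ and $w$ could straddle the cut $v=v(y)v'(y)$, particularly when $2$'s occur near the splitting point; the monotonicity of digit-sum along nested suffixes is what makes the dichotomy above rigorous. Second, the final $\varepsilon$-argument genuinely mixes two limits: the $n$-dependent vanishing of each $S_y^{(n)}$ must be combined with the $n$-independent decay of $M_y$, and it is essential that $\sum_y S_y^{(n)}$ is bounded \emph{uniformly} in $n$ (Corollary \ref{lehamed1}) so that the tail in $y$ can be made uniformly small before letting $n\to\infty$.
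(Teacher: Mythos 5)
Your proposal is correct and follows essentially the same route as the paper: domination of $\mu_{w,\beta}(v)$ by the row sums of the magic table (Proposition \ref{zabe}), the factorization $\sum_{v\in K(n,y)\cap\overline{Q}(w,n,l)}T_{w,\beta,n}(v,y)=S_y M_y$ via the suffix dichotomy (the paper's Proposition \ref{kerambit}, stated there with the safer threshold $y\ge 2l$ where your $y\ge l$ also works), tail control by Theorem \ref{main1} combined with the uniform bound of Corollary \ref{lehamed1}, and head control by the vanishing of each fixed column sum $S_y^{(n)}$. Your direct ``polynomial times geometric'' argument for the head is a mild simplification of the paper's Lemma \ref{piem}, which reaches the same conclusion through a binomial-distribution computation split over parities of $y$ and $n$.
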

\renewcommand{\labelenumi}{\arabic{enumi}$^\circ$}
\renewcommand{\labelenumii}{\arabic{enumi}.\arabic{enumii}$^\circ$}
\renewcommand{\labelenumiii}{\arabic{enumi}.\arabic{enumii}.\arabic{enumiii}$^\circ$}

\begin{proof}

\begin{Prop} \label{kerambit}
Пусть $w\in\mathbb{YF}_\infty,$ $\beta\in(0,1]$, $n,y,l\in\mathbb{N}_0:$ $n\ge y \ge 2l$. Тогда
$${\displaystyle\sum_{v\in \overline{Q}(w,n,l)} T_{w,\beta,n}(v,y)}=\left({\displaystyle\sum_{v\in\mathbb{YF}_n} T_{w,\beta,n}(v,y)}\right)\left({\sum_{v \in \overline{Q}(w,y,l)}\mu_{w}(v)}\right).$$
\end{Prop}
\begin{proof}По Замечанию \ref{pruzhinka} при $n,y\in\mathbb{N}_0 $
$${\displaystyle\sum_{v\in \overline{Q}(w,n,l)} T_{w,\beta,n}(v,y)}={\displaystyle\left(\sum_{v\in \overline{Q}(w,n,l)\cap K(n,y)} T_{w,\beta,n}(v,y)\right)+\left(\sum_{v\in \overline{Q}(w,n,l)\cap \overline{K}(n,y)} T_{w,\beta,n}(v,y)\right)}=$$
$$= \text{(По определению функции $T$)}=$$
$$={\displaystyle\left(\sum_{v\in \overline{Q}(w,n,l)\cap K(n,y)} T_{w,\beta,n}(v,y)\right)+0}={\displaystyle\sum_{v\in \overline{Q}(w,n,l)\cap K(n,y)} T_{w,\beta,n}(v,y)}=$$
$$={\displaystyle\sum_{v\in \overline{Q}(w,n,l)\cap K(n,y)}\left(d(\varepsilon,v)\cdot q(v(y))\cdot {{d_1'(v'(y),w)}}\cdot \beta^y\cdot \left(1-\beta^2\right)^{\#(v(y))}\right)}.$$

Ясно, что в каждом слагаемом по Замечанию \ref{kerambus} при  $v\in\mathbb{YF}$, $n,y\in\mathbb{N}_0$
$$v=v(y)v'(y).$$

А значит можно воспользоваться Утверждением \ref{razbivaem} при $v,v(y),v'(y)\in\mathbb{YF}$ и получить, что наше выражение равняется следующему:
$${\displaystyle\sum_{v\in \overline{Q}(w,n,l)\cap K(n,y)}\left(d(\varepsilon,v'(y))\cdot d\left(\varepsilon,v(y)1^{|v'(y)|}\right)\cdot q(v(y))\cdot {{d_1'(v'(y),w)}\cdot }\beta^y\cdot \left(1-\beta^2\right)^{\#(v(y))}\right)}=$$
$$=(\text{По обозначению $v'(y)$})=$$
$$={\displaystyle\sum_{v\in \overline{Q}(w,n,l)\cap K(n,y)}\left(d(\varepsilon,v'(y))\cdot d\left(\varepsilon,v(y)1^{y}\right)\cdot q(v(y))\cdot {{d_1'(v'(y),w)}\cdot }\beta^y\cdot \left(1-\beta^2\right)^{\#(v(y))}\right)}.$$

Заметим, что при $w\in\mathbb{YF}_\infty$, $n,y,l\in\mathbb{N}_0:$ $n\ge y \ge 2l$:
\begin{itemize}
    \item если $v\in \overline{Q}(w,n,l)\cap K(n,y),$ то $v=v(y)v'(y)$, причём $v(y)\in\mathbb{YF}_{n-y},$ $v'(y)\in\overline{Q}(w,y,l)$ (так как $y\ge 2l$, а значит $h(w,v)=h(w,v'(y))$);
    \item если $v_1,v_2\in \overline{Q}(w,n,l)\cap K(n,y)$: $v_1\ne v_2$, то $v_1(y)\ne v_2(y)$ или
    $v'_1(y)\ne v'_2(y)$;
    \item если $v''\in \mathbb{YF}_{n-y},$ $ v'''\in \overline{Q}(w,y,l)$, то $\left(v''v'''\right)\in \overline{Q}(w,n,l)\cap K(n,y)$, $\left(v''v'''\right)(y)=v'',$ $\left(v''v'''\right)'(y)=v'''$ (так как $y\ge 2l$, а значит $h(w,v''')=h(w,v''v''')$).
\end{itemize}

А это значит, что при всех $v\in \overline{Q}(w,n,l)\cap K(n,y)$, пара $(v(y),v'(y))$ принимает все значения в $\mathbb{YF}_{n-y}\times \overline{Q}(w,y,l)$, причём ровно по одному разу.

А это, в свою очередь, значит, что наше выражение равняется следующему:
$${\displaystyle\sum_{v''\in \mathbb{YF}_{n-y}}\left(\sum_{v'''\in \overline{Q}(w,y,l)}\left(d(\varepsilon,v''')\cdot  d\left(\varepsilon,v''1^{y}\right)\cdot q(v'')\cdot {{d_1'(v''',w)}}\cdot \beta^y\cdot \left(1-\beta^2\right)^{\#v''}\right)\right)}=$$
$$={\displaystyle \left(\sum_{v''\in \mathbb{YF}_{n-y}}\left(q(v'')\cdot d\left(\varepsilon,v''1^{y}\right)\cdot \beta^y\cdot \left(1-\beta^2\right)^{\#v''}\right)\right)\sum_{v'''\in \overline{Q}(w,y,l)}\left(d(\varepsilon,v''')\cdot {{d_1'(v''',w)}}\right)}=$$
$$=(\text{По Утверждению \ref{limitstrih} при $v'''\in\mathbb{YF}$, $w\in\mathbb{YF}_{\infty}$})=$$
$$={\displaystyle \left(\sum_{v''\in \mathbb{YF}_{n-y}}\left(q(v'')\cdot d\left(\varepsilon,v''1^{y}\right)\cdot \beta^y\cdot \left(1-\beta^2\right)^{\#v''}\right)\right)\sum_{v'''\in \overline{Q}(w,y,l)}\left(d(\varepsilon,v''')\lim_{m \to \infty}{\frac{d(v''',w_m)}{d(\varepsilon,w_m)}}\right)}=$$
$$=\left(\text{По Утверждению \ref{sum} при $w\in\mathbb{YF}_\infty$, $\beta\in(0,1]$, $n,y\in\mathbb{N}_0$}\right)=$$
$$={\displaystyle \left({\displaystyle\sum_{x\in\mathbb{YF}_n} T_{w,\beta,n}(x,y)}\right)\sum_{v'''\in \overline{Q}(w,y,l)}\left(\lim_{m \to \infty}{\frac{d(\varepsilon,v''')d(v''',w_m)}{d(\varepsilon,w_m)}}\right)}=\left(\text{По обозначению}\right)=$$
$$=\left({\displaystyle\sum_{x\in\mathbb{YF}_n} T_{w,\beta,n}(x,y)}\right)\left({\displaystyle \sum_{v'''\in \overline{Q}(w,y,l)}\mu_w(v''')}\right)=\left({\displaystyle\sum_{v\in\mathbb{YF}_n} T_{w,\beta,n}(v,y)}\right)\left({\displaystyle \sum_{v\in \overline{Q}(w,y,l)}\mu_w(v)}\right),$$
что и требовалось.

Утверждение доказано.

\end{proof}

\begin{Lemma} \label{deepexsense}
Пусть $w\in\mathbb{YF}_\infty^+$, $\beta\in(0,1)$, $l\in\mathbb{N}_0$, $\varepsilon\in\mathbb{R}_{>0}$. Тогда $\exists Y\in\mathbb{N}_0:$ $ Y\ge 1,$ $\forall n\in\mathbb{N}_0:$ $n\ge Y$ 
$${\sum_{y= Y}^{n}\left({\sum_{v\in \overline{Q}(w,y,l)} T_{w,\beta,n}(v,y)}\right) }<\varepsilon.$$

\end{Lemma}
\begin{proof}
По Утверждению \ref{kerambit} при $w\in\mathbb{YF}_\infty,$ $\beta\in(0,1]$, $n,y,l\in\mathbb{N}_0:$ $n\ge y\ge 2l$
$${\displaystyle\sum_{v\in \overline{Q}(w,n,l)} T_{w,\beta,n}(v,y)}=\left({\displaystyle\sum_{v\in\mathbb{YF}_n} T_{w,\beta,n}(v,y)}\right)\left({\sum_{v \in \overline{Q}(w,y,l)}\mu_{w}(v)}\right),$$
а значит если $n,Y\in\mathbb{N}_0$: $n\ge Y\ge 2l$, то мы можем просуммировать данное выражение по $y\in\overline{Y,n}$. Просуммируем:
$$\sum_{y=Y}^n\left({\sum_{v\in \overline{Q}(w,n,l)} T_{w,\beta,n}(v,y)}\right) =\sum_{y=Y}^n\left(\left({\displaystyle\sum_{v\in\mathbb{YF}_n} T_{w,\beta,n}(v,y)}\right)\left({\sum_{v \in \overline{Q}(w,y,l)}\mu_{w}(v)}\right)\right).$$

По Теореме \ref{main1} при $w\in\mathbb{YF}_\infty^+$ и $l\in\mathbb{N}_0$
$$ \lim_{y \to \infty}{\sum_{v\in \overline{Q}(w,y,l)}\mu_w(v)=0},$$
то есть по определению предела для $\varepsilon'=\frac{\varepsilon}{\left(1+\frac{1}{\beta}\right)}$ $\exists Y\in\mathbb{N}_0:$ при $y\in\mathbb{N}_0:$ $ y\ge Y$
$$ {\sum_{v\in \overline{Q}(w,y,l)}\mu_w(v)}<\varepsilon',$$
а из этого ясно, что для $\varepsilon'=\frac{\varepsilon}{\left(1+\frac{1}{\beta}\right)}$ $\exists Y\in\mathbb{N}_0:$ $Y\ge \max(1,2l)$ и  при $y\in\mathbb{N}_0:$ $ y\ge Y$ 
$$ {\sum_{v\in \overline{Q}(w,y,l)}\mu_w(v)}<\varepsilon'.$$

Зафиксируем данный $Y$. Ясно, что $Y\ge 1$. Кроме того, как мы уже поняли, $\forall n\in\mathbb{N}_0:$ $ n\ge Y$ 
$$\sum_{y=Y}^n\left({\sum_{v\in \overline{Q}(w,n,l)} T_{w,\beta,n}(v,y)}\right) =\sum_{y=Y}^n\left(\left({\displaystyle\sum_{v\in\mathbb{YF}_n} T_{w,\beta,n}(v,y)}\right)\left({\sum_{v \in \overline{Q}(w,y,l)}\mu_{w}(v)}\right)\right)< $$
$$<\text{(так как в каждом большом слагаемом $y\ge Y$)}<$$
$$<\sum_{y=Y}^n\left(\left({\sum_{v\in\mathbb{YF}_n} T_{w,\beta,n}(v,y)}\right)    \varepsilon'\right)=\varepsilon'\sum_{y=Y}^n\left({\sum_{v\in\mathbb{YF}_n} T_{w,\beta,n}(v,y)}    \right)\le$$
$$\le\text{(так как функция $T$ неотрицательна)}\le$$
$$\le\varepsilon'\sum_{y=0}^n\left({\sum_{v\in\mathbb{YF}_n} T_{w,\beta,n}(v,y)}    \right)\le \text{ (По Следствию \ref{lehamed1} при $w\in\mathbb{YF}_\infty$, $\beta\in(0,1)$, $n\in\mathbb{N}_0$) } \le$$
$$\le\varepsilon'\left(1+\frac{1}{\beta}\right)=\frac{\varepsilon}{\left(1+\frac{1}{\beta}\right)}\left(1+\frac{1}{\beta}\right)=\varepsilon,$$
что и требовалось.

Лемма доказана.
\end{proof}

\begin{Oboz}
Пусть $a,b\in\mathbb{N}_0:$ $a\ge 2,$ $b\in \overline{a-1}$, $f(n):\mathbb{N}\to \mathbb{R}$ -- последовательность вещественных чисел.
Тогда 
$$\lim_{n\to\infty}^{(a,b)}f(n):=\lim_{n\to\infty}f(an+b)$$
\end{Oboz}

\begin{Lemma} \label{piem}
Пусть $w\in\mathbb{YF}_\infty$, $\beta\in(0,1)$, $l,Y\in\mathbb{N}_0:$ $Y\ge 1$. Тогда
$${\sum_{y= 0}^{Y-1}\left({\sum_{v\in \overline{Q}(w,n,l)} T_{w,\beta,n}(v,y)}\right) \xrightarrow{n\to \infty}0}.$$
\end{Lemma}
\begin{proof}

Давайте во всём доказательстве рассматривать только $n\in\mathbb{N}_0:$ $n\ge Y$ (ясно, что так можно).

Заметим, что функция $T$ неотрицательна, а также то, что по обозначению $\forall w\in\mathbb{YF}_\infty$, $n,l\in\mathbb{N}_0$ 
$$\overline{Q}(w,n,l)\subseteq \mathbb{YF}_n.$$

Это значит, что
$$0\le {\sum_{y= 0}^{Y-1}\left({\sum_{v\in \overline{Q}(w,n,l)} T_{w,\beta,n}(v,y)}\right)}\le{\sum_{y= 0}^{Y-1}\left({\sum_{x\in\mathbb{YF}_n} T_{w,\beta,n}(x,y)}\right)}. $$

Давайте оценивать сверху правую часть данного неравенства. Для начала оценим чётные игреки. Если $n,y\in\mathbb{N}_0:$ $n \ge Y > y,$ то по Утверждению \ref{stolb} при $w\in\mathbb{YF}_\infty$, $\beta\in(0,1]$, $n,y\in\mathbb{N}_0$
$$\sum_{x\in\mathbb{YF}_n} T_{w,\beta,n}(x,y)\le \left(\prod_{i=1}^{\left\lfloor \frac{n-y}{2} \right\rfloor} \frac{2i+y}{2i}\right)\beta^{y}\left(1-\beta^2\right)^{\left\lfloor \frac{n-y}{2} \right\rfloor}.$$
Ясно, что мы можем просуммировать это выражение по $y\in\overline{Y-1}(2,0)$. Просуммируем:
$$\sum_{y\in\overline{Y-1}(2,0)}\left(\sum_{x\in\mathbb{YF}_n} T_{w,\beta,n}(x,y)\right)\le\sum_{y\in\overline{Y-1}(2,0)}\left(\left( \prod_{i=1}^{\left\lfloor \frac{n-y}{2} \right\rfloor} \frac{2i+y}{2i}\right)\beta^{y}\left(1-\beta^2\right)^{\left\lfloor \frac{n-y}{2} \right\rfloor}\right)=$$
$$=\sum_{y'= 0}^{\left\lfloor\frac{Y-1}{2} \right\rfloor}\left(\left( \prod_{i=1}^{\left\lfloor \frac{n-2y'}{2} \right\rfloor} \frac{2i+2y'}{2i}\right)\beta^{2y'}\left(1-\beta^2\right)^{\left\lfloor \frac{n-2y'}{2} \right\rfloor}\right)=\sum_{y'= 0}^{\left\lfloor\frac{Y-1}{2} \right\rfloor}\left(\left( \prod_{i=1}^{\left\lfloor \frac{n}{2} \right\rfloor-y'} \frac{i+y'}{i}\right)\beta^{2y'}\left(1-\beta^2\right)^{\left\lfloor \frac{n}{2} \right\rfloor-y'}\right)=$$
$$=\sum_{y'=0}^{\left\lfloor\frac{Y-1}{2}\right\rfloor} \left(\left(\frac{\displaystyle\prod_{i=y'+1}^{\left\lfloor\frac{n}{2}\right\rfloor}i}{\displaystyle\prod_{i=1}^{\left\lfloor\frac{n}{2}\right\rfloor-y'}i}\right)\left(\beta^2\right)^{y'}\left(1-\beta^2\right)^{\left\lfloor \frac{n}{2} \right\rfloor-y'}\right)=\sum_{y'=0}^{\left\lfloor\frac{Y-1}{2}\right\rfloor} \left(\binom{\left\lfloor\frac{n}{2}\right\rfloor}{y'}\left(\beta^2\right)^{y'}\left(1-\beta^2\right)^{\left\lfloor \frac{n}{2} \right\rfloor-y'}\right).$$

Ясно, что 
$$\left\lfloor\frac{n}{2}\right\rfloor\xrightarrow{n\to\infty}\infty,$$
а значит
$$\lim_{n\to\infty}\left(\sum_{y'=0}^{\left\lfloor\frac{Y-1}{2}\right\rfloor} \left(\binom{\left\lfloor\frac{n}{2}\right\rfloor}{y'}\left(\beta^2\right)^{y'}\left(1-\beta^2\right)^{\left\lfloor \frac{n}{2} \right\rfloor-y'}\right)\right)=\lim_{n'\to\infty}\left(\sum_{y'=0}^{\left\lfloor\frac{Y-1}{2}\right\rfloor} \left(\binom{n'}{y'}\left(\beta^2\right)^{y'}\left(1-\beta^2\right)^{n'-y'}\right)\right)=0.$$
(Этот предел действительно равен нулю при $\beta\in(0,1)$ по закону распределения биномиальных коэффициентов).

В силу доказанного выше, а также неотрицительности функции $T$, ясно, что при $n\in\mathbb{N}_0$
$$0\le \sum_{{y\in\overline{Y-1}(2,0)}}\left(\sum_{x\in\mathbb{YF}_n} T_{w,\beta,n}(x,y)\right)\le \sum_{y'=0}^{\left\lfloor\frac{Y-1}{2}\right\rfloor} \left(\binom{\left\lfloor\frac{n}{2}\right\rfloor}{y'}\left(\beta^2\right)^{y'}\left(1-\beta^2\right)^{\left\lfloor \frac{n}{2} \right\rfloor-y'}\right),$$
а значит, по Лемме о двух полицейских,
$$\lim_{n\to\infty}\left(\sum_{{y\in\overline{Y-1}(2,0)}}\left(\sum_{x\in\mathbb{YF}_n} T_{w,\beta,n}(x,y)\right)\right)= 0.$$

Теперь будем оценивать нечётные игреки. Рассмотрим две подпоследовательности:

\renewcommand{\labelenumi}{\alph{enumi}$)$}
\renewcommand{\labelenumii}{\arabic{enumi}.\arabic{enumii}$^\circ$}
\renewcommand{\labelenumiii}{\arabic{enumi}.\arabic{enumii}.\arabic{enumiii}$^\circ$}

\begin{enumerate}
    \item Подпоследовательность $n\in\mathbb{N}_0:n\;mod\;{2}=0$.
    
    Зафиксируем какое-то $n\in\mathbb{N}_0:n\;mod\;{2}=0$.
    
    Если $n,y\in\mathbb{N}_0:$ $n \ge  Y > y,$ то по Утверждению \ref{stolb} при $w\in\mathbb{YF}_\infty$, $\beta\in(0,1]$, $n,y\in\mathbb{N}_0$
    $$\sum_{x\in\mathbb{YF}_n} T_{w,\beta,n}(x,y)\le \left(\prod_{i=1}^{\left\lfloor \frac{n-y}{2} \right\rfloor} \frac{2i+y}{2i}\right)\beta^{y}\left(1-\beta^2\right)^{\left\lfloor \frac{n-y}{2} 
    \right\rfloor}.$$
    
    Ясно, что мы можем просуммировать это выражение по $y\in\overline{Y-1}(2,1)$. Просуммируем и посчитаем, помня, что $\beta\in(0,1)$:
    $$\sum_{y\in\overline{Y-1}(2,1)}\left(\sum_{x\in\mathbb{YF}_n} T_{w,\beta,n}(x,y)\right)\le$$
    $$\le\sum_{y\in\overline{Y-1}(2,1)}\left(\left( \prod_{i=1}^{\left\lfloor \frac{n-y}{2} \right\rfloor} \frac{2i+y}{2i}\right)\beta^{y}\left(1-\beta^2\right)^{\left\lfloor \frac{n-y}{2} \right\rfloor}\right)\le$$
    $$\le \sum_{y\in\overline{Y-1}(2,1)} \left(\left(\prod_{i=1}^{\left\lfloor \frac{n-y}{2} \right\rfloor} \frac{2i+y+1}{2i}\right)\beta^{y}\left(1-\beta^2\right)^{\left\lfloor \frac{n-y}{2} \right\rfloor}\right)=$$
    $$=\left(\text{Так как ясно, что если $n \;mod\;{2}=0$ и $y\;mod\;{2}=1$, то $\left\lfloor \frac{n-y}{2} \right\rfloor=\frac{n-y-1}{2}$}\right)=$$
    $$=\frac{1}{\beta}\sum_{y\in\overline{Y-1}(2,1)} \left(\left(\prod_{i=1}^{ \frac{n-y-1}{2}} \frac{2i+y+1}{2i}\right)\beta^{y+1}\left(1-\beta^2\right)^{ \frac{n-y-1}{2}}\right)=$$
    $$=\frac{1}{\beta}\sum_{y\in\overline{Y-1}(2,1)} \left(\left(\prod_{i=1}^{ \frac{n-2\frac{y+1}{2}}{2}} \frac{2i+2\frac{y+1}{2}}{2i}\right)\beta^{2\frac{y+1}{2}}\left(1-\beta^2\right)^{ \frac{n-2\frac{y+1}{2}}{2}}\right).$$
    
    Ясно, что если $y$ пробегает все значения в множестве $\overline{Y-1}(2,1)$ при $Y\in\mathbb{N}_0:$ $Y\ge 1$, то $\frac{y+1}{2}$ пробегает все значения в множестве $\left\{1,\ldots,\left\lfloor\frac{Y}{2}\right\rfloor\right\}$, то есть наше выражение равняется следующему: 
    $$\frac{1}{\beta}\sum_{y'=1}^{\left\lfloor\frac{Y}{2}\right\rfloor} \left(\left(\prod_{i=1}^{ \frac{n-2y'}{2}} \frac{2i+2y'}{2i}\right)\beta^{2y'}\left(1-\beta^2\right)^{ \frac{n-2y'}{2}}\right)=$$
    $$=\frac{1}{\beta}\sum_{y'=1}^{\left\lfloor\frac{Y}{2}\right\rfloor} \left(\left(\prod_{i=1}^{\frac{n}{2}-y' } \frac{i+y'}{i}\right)\beta^{2y'}\left(1-\beta^2\right)^{ \frac{n}{2}-y' }\right)\le \text{(Так как $\beta\in(0,1)$)} \le$$
    $$\le\frac{1}{\beta}\sum_{y'=1}^{\left\lfloor\frac{Y}{2}\right\rfloor} \left(\left(\prod_{i=1}^{\frac{n}{2}-y' } \frac{i+y'}{i}\right)\beta^{2y'}\left(1-\beta^2\right)^{ \frac{n}{2}-y' }\right)+\frac{1}{\beta} \left(\prod_{i=1}^{\frac{n}{2} } \frac{i}{i}\right)\beta^{0}\left(1-\beta^2\right)^{ \frac{n}{2} }=$$
    $$=\frac{1}{\beta}\sum_{y'=1}^{\left\lfloor\frac{Y}{2}\right\rfloor} \left(\left(\prod_{i=1}^{\frac{n}{2}-y' } \frac{i+y'}{i}\right)\beta^{2y'}\left(1-\beta^2\right)^{ \frac{n}{2}-y' }\right)+\frac{1}{\beta}\sum_{y'=0}^{0} \left(\left(\prod_{i=1}^{\frac{n}{2}-y' } \frac{i+y'}{i}\right)\beta^{2y'}\left(1-\beta^2\right)^{ \frac{n}{2}-y' }\right)=$$
    $$=\frac{1}{\beta}\sum_{y'=0}^{\left\lfloor\frac{Y}{2}\right\rfloor} \left(\left(\prod_{i=1}^{\frac{n}{2}-y' } \frac{i+y'}{i}\right)\beta^{2y'}\left(1-\beta^2\right)^{ \frac{n}{2}-y' }\right)=$$
    $$=\frac{1}{\beta}\sum_{y'=0}^{\left\lfloor\frac{Y}{2}\right\rfloor} \left(\left(\frac{\displaystyle\prod_{i=y'+1}^{\frac{n}{2}}i}{\displaystyle\prod_{i=1}^{\frac{n}{2}-y'}i}\right)\left(\beta^2\right)^{y'}\left(1-\beta^2\right)^{ \frac{n}{2}-y'}\right)=$$
    $$=\frac{1}{\beta}\sum_{y'=0}^{\left\lfloor\frac{Y}{2}\right\rfloor} \left(\binom{\frac{n}{2}}{y'}\left(\beta^2\right)^{y'}\left(1-\beta^2\right)^{ \frac{n}{2} -y'}\right).$$
    
    Ясно, что
    
    $$\lim_{n\to\infty}^{(2,0)}\left(\frac{1}{\beta}\sum_{y'=0}^{\left\lfloor\frac{Y}{2}\right\rfloor} \left(\binom{\frac{n}{2}}{y'}\left(\beta^2\right)^{y'}\left(1-\beta^2\right)^{ \frac{n}{2} -y'}\right)\right)=$$
    $$=\lim_{n'\to\infty}\left(\frac{1}{\beta}\sum_{y'=0}^{\left\lfloor\frac{Y}{2}\right\rfloor} \left(\binom{n'}{y'}\left(\beta^2\right)^{y'}\left(1-\beta^2\right)^{n' -y'}\right)\right)=0.$$
    (Этот предел действительно равен нулю при $\beta\in(0,1)$ по закону распределения биномиальных коэффициентов).
    
    В силу доказанного выше, а также неотрицительности функции $T$, ясно, что при $n\in\mathbb{N}_0: n \; mod \; 2=0$
    $$0\le \sum_{{y\in\overline{Y-1}(2,1)}}\left(\sum_{x\in\mathbb{YF}_n} T_{w,\beta,n}(x,y)\right)\le\frac{1}{\beta}\sum_{y'=0}^{\left\lfloor\frac{Y}{2}\right\rfloor} \left(\binom{\frac{n}{2}}{y'}\left(\beta^2\right)^{y'}\left(1-\beta^2\right)^{ \frac{n}{2} -y'}\right),$$
    а значит, по Лемме о двух полицейских,
    $$\lim_{n\to\infty}^{(2,0)}\left(\sum_{{y\in\overline{Y-1}(2,1)}}\left(\sum_{x\in\mathbb{YF}_n} T_{w,\beta,n}(x,y)\right)\right)= 0.$$

   \item Подпоследовательность $n\in\mathbb{N}_0:n\;mod\;{2}=1$.
    
    Зафиксируем какое-то $n\in\mathbb{N}_0:n\;mod\;{2}=1$.

    Если $n,y\in\mathbb{N}_0:$ $n \ge Y > y,$ то по Утверждению \ref{stolb} при $w\in\mathbb{YF}_\infty$, $\beta\in(0,1]$, $n,y\in\mathbb{N}_0$
    $$\sum_{x\in\mathbb{YF}_n} T_{w,\beta,n}(x,y)\le \left(\prod_{i=1}^{\left\lfloor \frac{n-y}{2} \right\rfloor} \frac{2i+y}{2i}\right)\beta^{y}\left(1-\beta^2\right)^{\left\lfloor \frac{n-y}{2} \right\rfloor}.$$
    
    Ясно, что мы можем просуммировать это выражение по $y\in\overline{Y-1}(2,1)$. Просуммируем и посчитаем, помня, что $\beta\in(0,1)$:
    $$\sum_{y\in\overline{Y-1}(2,1)}\left(\sum_{x\in\mathbb{YF}_n} T_{w,\beta,n}(x,y)\right)\le$$
    $$\le\sum_{y\in\overline{Y-1}(2,1)}\left(\left( \prod_{i=1}^{\left\lfloor \frac{n-y}{2} \right\rfloor} \frac{2i+y}{2i}\right)\beta^{y}\left(1-\beta^2\right)^{\left\lfloor \frac{n-y}{2} \right\rfloor}\right)\le$$
    $$\le \sum_{y\in\overline{Y-1}(2,1)} \left(\left(\prod_{i=1}^{\left\lfloor \frac{n-y}{2} \right\rfloor} \frac{2i+y+1}{2i}\right)\beta^{y}\left(1-\beta^2\right)^{\left\lfloor \frac{n-y}{2} \right\rfloor}\right)=$$
    $$=\left(\text{Так как ясно, что если $n \;mod\;{2}=1$ и $y\;mod\;{2}=1$, то $\left\lfloor \frac{n-y}{2} \right\rfloor=\frac{n-y}{2}$}\right)=$$
    $$=\frac{1}{\beta}\sum_{y\in\overline{Y-1}(2,1)} \left(\left(\prod_{i=1}^{ \frac{n-y}{2}} \frac{2i+y+1}{2i}\right)\beta^{y+1}\left(1-\beta^2\right)^{ \frac{n-y}{2}}\right)=$$
    $$=\frac{1}{\beta}\sum_{y\in\overline{Y-1}(2,1)} \left(\left(\prod_{i=1}^{ \frac{n+1-2\frac{y+1}{2}}{2}} \frac{2i+2\frac{y+1}{2}}{2i}\right)\beta^{2\frac{y+1}{2}}\left(1-\beta^2\right)^{ \frac{n+1-2\frac{y+1}{2}}{2}}\right).$$
    
    Ясно, что если $y$ пробегает все значения в множестве $\overline{Y-1}(2,1),$ при $Y\in\mathbb{N}_0:$ $Y\ge 1$, то $\frac{y+1}{2}$ пробегает все значения в множестве $\left\{1,\ldots,\left\lfloor\frac{Y}{2}\right\rfloor\right\}$, то есть наше выражение равняется следующему: 
    $$\frac{1}{\beta}\sum_{y'=1}^{\left\lfloor\frac{Y}{2}\right\rfloor} \left(\left(\prod_{i=1}^{ \frac{n+1-2y'}{2}} \frac{2i+2y'}{2i}\right)\beta^{2y'}\left(1-\beta^2\right)^{ \frac{n+1-2y'}{2}}\right)=$$
    $$=\frac{1}{\beta}\sum_{y'=1}^{\left\lfloor\frac{Y}{2}\right\rfloor} \left(\left(\prod_{i=1}^{\frac{n+1}{2}-y' } \frac{i+y'}{i}\right)\beta^{2y'}\left(1-\beta^2\right)^{ \frac{n+1}{2}-y' }\right)\le \text{(Так как $\beta\in(0,1)$)} \le$$
    $$\le\frac{1}{\beta}\sum_{y'=1}^{\left\lfloor\frac{Y}{2}\right\rfloor} \left(\left(\prod_{i=1}^{\frac{n+1}{2}-y' } \frac{i+y'}{i}\right)\beta^{2y'}\left(1-\beta^2\right)^{ \frac{n+1}{2}-y' }\right)+\frac{1}{\beta} \left(\prod_{i=1}^{\frac{n+1}{2} } \frac{i}{i}\right)\beta^{0}\left(1-\beta^2\right)^{ \frac{n+1}{2} }=$$
    $$=\frac{1}{\beta}\sum_{y'=1}^{\left\lfloor\frac{Y}{2}\right\rfloor} \left(\left(\prod_{i=1}^{\frac{n+1}{2}-y' } \frac{i+y'}{i}\right)\beta^{2y'}\left(1-\beta^2\right)^{ \frac{n+1}{2}-y' }\right)+\frac{1}{\beta}\sum_{y'=0}^{0} \left(\left(\prod_{i=1}^{\frac{n+1}{2}-y' } \frac{i+y'}{i}\right)\beta^{2y'}\left(1-\beta^2\right)^{ \frac{n+1}{2}-y' }\right)=$$
    $$=\frac{1}{\beta}\sum_{y'=0}^{\left\lfloor\frac{Y}{2}\right\rfloor} \left(\left(\prod_{i=1}^{\frac{n+1}{2}-y' } \frac{i+y'}{i}\right)\beta^{2y'}\left(1-\beta^2\right)^{ \frac{n+1}{2}-y' }\right)=$$
    $$=\frac{1}{\beta}\sum_{y'=0}^{\left\lfloor\frac{Y}{2}\right\rfloor} \left(\left(\frac{\displaystyle\prod_{i=y'+1}^{\frac{n+1}{2}}i}{\displaystyle\prod_{i=1}^{\frac{n+1}{2}-y'}i}\right)\left(\beta^2\right)^{y'}\left(1-\beta^2\right)^{ \frac{n+1}{2}-y'}\right)=$$
    $$=\frac{1}{\beta}\sum_{y'=0}^{\left\lfloor\frac{Y}{2}\right\rfloor} \left(\binom{\frac{n+1}{2}}{y'}\left(\beta^2\right)^{y'}\left(1-\beta^2\right)^{ \frac{n+1}{2} -y'}\right).$$
    
    Ясно, что 
    $$\lim_{n\to\infty}^{(2,1)}\left(\frac{1}{\beta}\sum_{y'=0}^{\left\lfloor\frac{Y}{2}\right\rfloor} \left(\binom{\frac{n+1}{2}}{y'}\left(\beta^2\right)^{y'}\left(1-\beta^2\right)^{ \frac{n+1}{2} -y'}\right)\right)=$$
    $$=\lim_{n'\to\infty}\left(\frac{1}{\beta}\sum_{y'=0}^{\left\lfloor\frac{Y}{2}\right\rfloor} \left(\binom{n'+1}{y'}\left(\beta^2\right)^{y'}\left(1-\beta^2\right)^{n'+1 -y'}\right)\right)=$$
    $$=\lim_{n'\to\infty}\left(\frac{1}{\beta}\sum_{y'=0}^{\left\lfloor\frac{Y}{2}\right\rfloor} \left(\binom{n'}{y'}\left(\beta^2\right)^{y'}\left(1-\beta^2\right)^{n' -y'}\right)\right)=0.$$
    (Этот предел действительно равен нулю при $\beta\in(0,1)$ по закону распределения биномиальных коэффициентов).

    В силу доказанного выше, а также неотрицительности функции $T$, ясно, что при $n\in\mathbb{N}_0: n \; mod \; 2=1$
    $$0\le \sum_{{y\in\overline{Y-1}(2,1)}}\left(\sum_{x\in\mathbb{YF}_n} T_{w,\beta,n}(x,y)\right)\le\frac{1}{\beta}\sum_{y'=0}^{\left\lfloor\frac{Y}{2}\right\rfloor} \left(\binom{\frac{n+1}{2}}{y'}\left(\beta^2\right)^{y'}\left(1-\beta^2\right)^{ \frac{n+1}{2} -y'}\right),$$
    а значит, по Лемме о двух полицейских,
    $$\lim_{n\to\infty}^{(2,1)}\left(\sum_{{y\in\overline{Y-1}(2,1)}}\left(\sum_{x\in\mathbb{YF}_n} T_{w,\beta,n}(x,y)\right)\right)= 0.$$

    \end{enumerate}

Итак, подытожив написанное выше, получаем, что мы разбиваем последовательность $n\in\mathbb{N}_0$ на две подпоследовательности, а также, что при наших $w\in\mathbb{YF}_\infty$, $\beta\in(0,1)$, $l,Y\in\mathbb{N}_{0}:$ $Y\ge 1$
\begin{itemize}
\item $\forall n\in\mathbb{N}_0$
$$0\le {\sum_{y= 0}^{Y-1}\left({\sum_{v\in \overline{Q}(w,n,l)} T_{w,\beta,n}(v,y)}\right)}\le{\sum_{y= 0}^{Y-1}\left({\sum_{x\in\mathbb{YF}_n} T_{w,\beta,n}(x,y)}\right)}= $$
$$=\left(\sum_{y\in\overline{Y-1}(2,0)}\left(\sum_{x\in\mathbb{YF}_n} T_{w,\beta,n}(x,y)\right)\right)+\left(\sum_{y\in\overline{Y-1}(2,1)}\left(\sum_{x\in\mathbb{YF}_n} T_{w,\beta,n}(x,y)\right)\right);$$
\item
$$\lim_{n\to\infty}\left(\sum_{{y\in\overline{Y-1}(2,0)}}\left(\sum_{x\in\mathbb{YF}_n} T_{w,\beta,n}(x,y)\right)\right)= 0.$$
\item
$$\lim_{n\to\infty}^{(2,0)}\left(\sum_{{y\in\overline{Y-1}(2,1)}}\left(\sum_{x\in\mathbb{YF}_n} T_{w,\beta,n}(x,y)\right)\right)= 0.$$
\item
$$\lim_{n\to\infty}^{(2,1)}\left(\sum_{{y\in\overline{Y-1}(2,1)}}\left(\sum_{x\in\mathbb{YF}_n} T_{w,\beta,n}(x,y)\right)\right)= 0.$$
    
\end{itemize}
А значит, по Лемме о двух полицейских
\begin{itemize}
    \item $$\lim_{n\to\infty}^{(2,0)}\left({\sum_{y= 0}^{Y-1}\left({\sum_{v\in \overline{Q}(w,n,l)} T_{w,\beta,n}(v,y)}\right)}\right)= 0;$$
    \item $$\lim_{n\to\infty}^{(2,1)}\left({\sum_{y= 0}^{Y-1}\left({\sum_{v\in \overline{Q}(w,n,l)} T_{w,\beta,n}(v,y)}\right)}\right)= 0.$$
\end{itemize}
а из этого ясно, что если $w\in\mathbb{YF}_\infty$, $\beta\in(0,1)$, $l,Y\in\mathbb{N}_0:$ $Y\ge 1$, то
$${\sum_{y= 0}^{Y-1}\left({\sum_{v\in \overline{Q}(w,n,l)} T_{w,\beta,n}(v,y)}\right) \xrightarrow{n\to \infty}0},$$
что и требовалось.

Лемма доказана.
\end{proof}

Итак, для завершения доказательства теоремы осталось собрать всё доказанное ранее. Сейчас самое время вспомнить формулировку: 

\renewcommand{\labelenumi}{\arabic{enumi}$)$}
\renewcommand{\labelenumii}{\arabic{enumi}.\arabic{enumii}$^\circ$}
\renewcommand{\labelenumiii}{\arabic{enumi}.\arabic{enumii}.\arabic{enumiii}$^\circ$}

Пусть $w\in \mathbb{YF}_\infty^+$, $\beta\in(0,1)$, $l \in\mathbb{N}_0$. Тогда
\begin{enumerate}
    \item $$ \lim_{n \to \infty}{\sum_{v\in \overline{Q}(w,n,l)}\mu_{w,\beta}(v)=0};$$
    \item $$\lim_{n \to \infty}{\sum_{v\in Q(w,n,l)}\mu_{w,\beta}(v)=1}.$$
\end{enumerate}

Мы знаем, что при $w\in \mathbb{YF}_\infty^+$, $\beta\in(0,1)$, $l \in\mathbb{N}_0$ и $n\in\mathbb{N}_0$
$$0\le {\sum_{v\in \overline{Q}(w,n,l)}\mu_{w,\beta}(v)}\le$$
$$\le \text{ (По Утверждению \ref{zabe} при $ w\in\mathbb{YF}_\infty$, $\beta\in(0,1)$, $n\in\mathbb{N}_0$, $v\in\mathbb{YF}$ для всех $v\in \overline{Q}(w,n,l)$) } \le$$
$$\le \sum_{v\in \overline{Q}(w,n,l)}\left(\sum_{y=0}^{|v|} T_{w,\beta,n}(v,y)\right)=\left(\text{так как $\overline{Q}(w,n,l)\subseteq \mathbb{YF}_n$}\right)=$$
$$=\sum_{v\in \overline{Q}(w,n,l)}\left(\sum_{y=0}^{n} T_{w,\beta,n}(v,y)\right)=\sum_{y=0}^{n}\left(\sum_{v\in \overline{Q}(w,n,l)} T_{w,\beta,n}(v,y)\right).$$

Зафиксируем произвольный $\varepsilon\in\mathbb{R}_{>0}$.

По Лемме \ref{deepexsense} при $w\in\mathbb{YF}_\infty^+$, $\beta\in(0,1)$, $l\in\mathbb{N}_0$ и $\displaystyle\frac{\varepsilon}{2}\in\mathbb{R}_{>0}$
$\exists Y\in\mathbb{N}_0:$ $Y\ge 1$, $\forall n\ge Y$
$${\sum_{y= Y}^{n}\left({\sum_{v\in \overline{Q}(w,y,l)} T_{w,\beta,n}(v,y)}\right) }<\frac{\varepsilon}{2}.$$

Зафиксируем данный $Y\in\mathbb{N}_0:$ $Y\ge 1$.

Также из Леммы \ref{piem} при $w\in\mathbb{YF}_\infty$, $\beta\in(0,1)$, $l,Y\in\mathbb{N}_0$ следует то, что при нашем $\varepsilon\in\mathbb{R}_{>0}$ $\exists N\in\mathbb{N}_0:$ при $n\ge N$
$${\sum_{y= 0}^{Y-1}\left({\sum_{v\in \overline{Q}(w,n,l)} T_{w,\beta,n}(v,y)}\right)} <\frac{\varepsilon}{2}.$$

Теперь зафиксируем данное $N\in\mathbb{N}_0$.

Мы поняли, что при наших $w\in\mathbb{YF}_\infty^+$, $\beta\in(0,1)$, $l\in\mathbb{N}_0$ и $\varepsilon\in\mathbb{R}_{>0}$ для любого $n\ge\max\left(Y,N\right)$
$$\sum_{y=0}^{n}\left(\sum_{v\in \overline{Q}(w,n,l)} T_{w,\beta,n}(v,y)\right)=\left(\sum_{y=0}^{Y-1}\left(\sum_{v\in \overline{Q}(w,n,l)} T_{w,\beta,n}(v,y)\right)\right)+\left(\sum_{y=Y}^{n}\left(\sum_{v\in \overline{Q}(w,n,l)} T_{w,\beta,n}(v,y)\right)\right)<\varepsilon,$$
то есть в силу неотрицательности функции $T$
$$ \lim_{n \to \infty}\left(\sum_{y=0}^{n}\left(\sum_{v\in \overline{Q}(w,n,l)} T_{w,\beta,n}(v,y)\right)\right)=0.$$

Мы уже поняли, что при $w\in \mathbb{YF}_\infty^+$, $\beta\in(0,1)$, $l \in\mathbb{N}_0$ и $n\in\mathbb{N}_0$
$$0\le {\sum_{v\in \overline{Q}(w,n,l)}\mu_{w,\beta}(v)}\le \sum_{y=0}^{n}\left(\sum_{v\in \overline{Q}(w,n,l)} T_{w,\beta,n}(v,y)\right),$$
а значит, по Лемме о двух полицейских ясно, что при наших $w\in \mathbb{YF}_\infty^+$, $\beta\in(0,1)$ и $l \in\mathbb{N}_0$
$$ \lim_{n \to \infty}\sum_{v\in \overline{Q}(w,n,l)}\mu_{w,\beta}(v)=0,$$
что доказывает первый пункт.

Кроме того, ясно, что
\begin{itemize}
    \item $$\overline{Q}\left(w,n,l\right)\cup Q\left(w,n,l\right)=\left\{v\in\mathbb{YF}_n:\; {h'(v,w)}< l\right\}\cup \left\{v\in\mathbb{YF}_n:\; {h'(v,w)}\ge l\right\}=\mathbb{YF}_n;$$
    \item $$\overline{Q}\left(w,n,l\right)\cap Q\left(w,n,l\right)=\left\{v\in\mathbb{YF}_n:\; {h'(v,w)}< l\right\}\cap \left\{v\in\mathbb{YF}_n: \;{h'(v,w)}\ge l\right\}=\varnothing;$$
    \item (Следствие \ref{mera1}) $\forall w\in\mathbb{YF}_\infty^+,$ $\beta\in(0,1]$, $n\in\mathbb{N}_0$
    $${\sum_{v\in \mathbb{YF}_n}\mu_{w,\beta}(v)}=1.$$
\end{itemize}
А из этого очевидно, что
$$\lim_{n \to \infty}{\sum_{v\in Q(w,n,l)}\mu_{w,\beta}(v)}=1,$$
что доказывает второй пункт.

Таким образом, оба пункта доказаны.

Теорема доказана.
\end{proof}

\newpage
\section{Доказательство второй ключевой теоремы}

\renewcommand{\labelenumi}{\arabic{enumi}$)$}
\renewcommand{\labelenumii}{\arabic{enumi}.\arabic{enumii}$^\circ$}
\renewcommand{\labelenumiii}{\arabic{enumi}.\arabic{enumii}.\arabic{enumiii}$^\circ$}

\begin{Oboz}
Пусть $w\in\mathbb{YF}_\infty$, $n\in\mathbb{N}_0$, $\varepsilon\in\mathbb{R}_{>0}$. Тогда
\begin{itemize}
    \item $$R(w,n,\varepsilon):=\left\{v\in\mathbb{YF}_n:\; \pi(v)\in(\pi(w)(1-\varepsilon),\pi(w)(1+\varepsilon)) \right\};$$
    \item $$\overline{R}(w,n,\varepsilon):=\left\{v\in\mathbb{YF}_n:\; \pi(v)\notin(\pi(w)(1-\varepsilon),\pi(w)(1+\varepsilon)) \right\}.$$
\end{itemize}
\end{Oboz}

\begin{theorem} [Следствие 4\cite{Evtuh3}] \label{main2}
Пусть $w\in\mathbb{YF}_\infty^+$, $\varepsilon\in\mathbb{R}_{>0}$. Тогда
\begin{enumerate}
    \item $$ \lim_{n \to \infty}{\sum_{v\in \overline{R}(w,n,\varepsilon)}\mu_w(v)=0};$$
    \item $$\lim_{n \to \infty}{\sum_{v\in R(w,n,\varepsilon)}\mu_w(v)=1}.$$
\end{enumerate}
\end{theorem}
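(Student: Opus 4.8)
The plan is to establish only the first limit; the second then follows verbatim as at the close of the proof of Theorem~\ref{t2}, since $R(w,n,\varepsilon)$ and $\overline{R}(w,n,\varepsilon)$ partition $\mathbb{YF}_n$ and $\sum_{v\in\mathbb{YF}_n}\mu_w(v)=1$ (Proposition~\ref{mera}, after letting $m\to\infty$). I would split the bad set as $\overline{R}(w,n,\varepsilon)=A_n\cup B_n$ with $A_n=\{v\in\mathbb{YF}_n:\pi(v)\ge\pi(w)(1+\varepsilon)\}$ and $B_n=\{v\in\mathbb{YF}_n:\pi(v)\le\pi(w)(1-\varepsilon)\}$, and bound $\mu_w(A_n)$ and $\mu_w(B_n)$ separately.

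For the upper tail I would reduce to Theorem~\ref{main1}. The key structural fact is that $g(x,j)$ depends only on the suffix of $x$ up to and including its $j$-th two from the right; hence if $h'(v,w)\ge l$ then every factor of $\pi$ coming from a two inside the common suffix is the same for $v$ and for $w$, so $\pi(v)\le P$, where $P$ is the shared partial product, while $\pi(w)=P\cdot R$ with $R$ the product over the remaining twos of $w$. Because $w\in\mathbb{YF}_\infty^+$ means $\pi(w)>0$, equivalently $\sum_j g(w,j)^{-1}<\infty$, and each remaining factor has $g(w,j)\ge l+1$, the tail product $R$ exceeds $1-\delta(l)$ with $\delta(l)\to0$ as $l\to\infty$. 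Thus for a suitable $l=l(\varepsilon)$ one gets $h'(v,w)\ge l\Rightarrow\pi(v)\le P=\pi(w)/R\le\pi(w)(1+\varepsilon)$, i.e. $A_n\subseteq\overline{Q}(w,n,l)$, whence $\mu_w(A_n)\le\sum_{v\in\overline{Q}(w,n,l)}\mu_w(v)\to0$ by Theorem~\ref{main1}.

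The lower tail $B_n$ is the main obstacle, and here a long common suffix is of no help: if the portion of $v$ above the common suffix consists of many twos (say $v=2^{k}s$), each contributes a factor $\tfrac{g(v,j)-1}{g(v,j)}<1$ with $g$-values ranging up to $|v|-1=n-1$, so $\pi(v)$ may tend to $0$ as $n\to\infty$ while $h'(v,w)$ stays arbitrarily large. Consequently $B_n$ is not contained in any $\overline{Q}(w,n,l)$, and one must control the excess twos directly under $\mu_w$. The route I would take is a concentration argument for $\pi$. First note that $\pi$ is non-increasing along every upward path: inserting a $1$ on the left changes none of the $g(v,j)$ and adds no two, while turning the leftmost $1$ into a $2$ merely appends one new top two, of $g$-value equal to the new rank minus one, i.e. multiplies $\pi$ by a factor $\tfrac{n-1}{n}$ close to $1$. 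Hence $n\mapsto\mathbb{E}_{\mu_w}\pi(v_n)$ is non-increasing, and it remains to prove $\mathbb{E}_{\mu_w}\pi(v_n)\to\pi(w)$ together with $\mathbb{E}_{\mu_w}\pi(v_n)^2\to\pi(w)^2$; the first moment alone only yields $\mu_w(B_n)\le\tfrac12+o(1)$, but the resulting vanishing of the variance gives $\mu_w(B_n)\to0$ by Chebyshev.

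These two moment asymptotics are where the real work lies. Writing $\mu_w(v)=d(\varepsilon,v)\,d_1'(v,w)$ (Corollary~\ref{granatakerambita} with the definition \ref{bitvaextasensov} at $\beta=1$) and using $d(\varepsilon,v)=\prod_j g(v,j)$, one rewrites $\mathbb{E}_{\mu_w}\pi(v_n)$ and $\mathbb{E}_{\mu_w}\pi(v_n)^2$ as sums over $\mathbb{YF}_n$ of explicit products in the $g(v,j)$ weighted by $d_1'(v,w)$; a telescoping or recursion of the same flavour as Propositions~\ref{kusok} and~\ref{razbivaem}, combined with $\pi(w_m)\to\pi(w)$ along $w_m\uparrow w$, should collapse them to $\pi(w)$ and $\pi(w)^2$. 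I expect the hardest single point to be matching the second moment to $\pi(w)^2$ — an asymptotic decoupling of the top and bottom contributions to $\pi$ — as this is precisely what excludes the many-top-twos configurations responsible for the lower tail.
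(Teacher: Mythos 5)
You should first note that this paper contains no proof of Theorem~\ref{main2} to compare against: it is imported wholesale as Corollary~4 of \cite{Evtuh3} (just as Theorem~\ref{main1} is Corollary~3 there), so your attempt can only be judged on its own terms. On those terms, the upper tail and the deduction of part~2 from part~1 are fine: since $g$ of a given two depends only on the suffix of the word ending at that two, on $Q(w,n,l)$ one indeed has $\pi(v)\le \pi(w)/R_l$ with $R_l$ the tail product of $w$ over twos of $g$-value exceeding $l$, and $R_l\to 1$ precisely because $w\in\mathbb{YF}_\infty^+$; hence $A_n\subseteq\overline{Q}(w,n,l(\varepsilon))$ and Theorem~\ref{main1} applies. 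Part~2 follows from part~1 via Proposition~\ref{mera} in the limit $m\to\infty$.

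The genuine gap is the lower tail, which you correctly identify as the hard case and then do not close. The entire weight of the argument is placed on the two asymptotics $\mathbb{E}_{\mu_w}\pi(v_n)\to\pi(w)$ and $\mathbb{E}_{\mu_w}\pi(v_n)^2\to\pi(w)^2$, for which you offer only the expectation that a recursion ``of the same flavour as Propositions~\ref{kusok} and~\ref{razbivaem}'' will collapse the sums. No such identity is exhibited, and the second moment is not a routine telescoping: showing $\mathbb{E}\pi^2\to\pi(w)^2$ amounts to showing that the $g$-factors contributed by the portion of $v$ above the common suffix are $\mu_w$-negligibly different from $1$, which is the concentration statement being proved, merely restated in moment form. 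So the lower tail remains a plan rather than a proof. There is also a concrete false claim in your sketch: $\pi$ is \emph{not} non-increasing along upward edges. The edge ``insert a $1$ to the left of the leftmost $1$'' permits insertion between two leading $2$'s; every $2$ further to the left then has its $g$-value raised by one, so its factor changes from $\tfrac{g-1}{g}$ to $\tfrac{g}{g+1}>\tfrac{g-1}{g}$ and $\pi$ strictly increases (e.g.\ $\pi(221)=\tfrac38$ while $\pi(2121)=\tfrac25$). This claim is not load-bearing for the Chebyshev step, but it shows the behaviour of $\pi$ under growth of the word is subtler than the sketch assumes.
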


\begin{Oboz}
Пусть $w\in\mathbb{YF}_\infty$, $\beta\in(0,1]$, $n\in\mathbb{N}_0$, $\varepsilon\in\mathbb{R}_{>0}$. Тогда
\begin{itemize}
    \item $$R(w,\beta,n,\varepsilon):=\left\{v\in\mathbb{YF}_n:\; \pi(v)\in(\pi(w)(\beta-\varepsilon),\pi(w)(\beta+\varepsilon)) \right\};$$
    \item $$\overline{R}(w,\beta,n,\varepsilon):=\left\{v\in\mathbb{YF}_n:\; \pi(v)\notin(\pi(w)(\beta-\varepsilon),\pi(w)(\beta+\varepsilon)) \right\}.$$
\end{itemize}
\end{Oboz}

\begin{Zam} \label{final}
Пусть $w\in\mathbb{YF}_\infty$, $n\in\mathbb{N}_0$, $\varepsilon\in\mathbb{R}_{>0}$. Тогда
\begin{itemize}
    \item $$R(w,1,n,\varepsilon)=\left\{v\in\mathbb{YF}_n:\; \pi(v)\in(\pi(w)(1-\varepsilon),\pi(w)(1+\varepsilon)) \right\}=R(w,n,\varepsilon);$$
    \item $$\overline{R}(w,1,n,\varepsilon):=\left\{v\in\mathbb{YF}_n:\; \pi(v)\notin(\pi(w)(1-\varepsilon),\pi(w)(1+\varepsilon)) \right\}=\overline{R}(w,n,\varepsilon).$$
\end{itemize}

\end{Zam}

\begin{theorem} \label{t3}
Пусть  $w\in \mathbb{YF}_\infty^+$, $\beta \in(0,1)$, $ \varepsilon\in\mathbb{R}_{>0}$. Тогда
\begin{enumerate}
    \item $$\lim_{n \to \infty}{\sum_{v\in \overline{R}(w,\beta,n,\varepsilon)}\mu_{w,\beta}(v)}=0;$$
    \item $$\lim_{n \to \infty}{\sum_{v\in R(w,\beta,n,\varepsilon)}\mu_{w,\beta}(v)}=1.$$
\end{enumerate}
\end{theorem}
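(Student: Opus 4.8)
The plan is to prove the statement by mirroring the argument for Теорема~\ref{t2}, since the two theorems have the same logical shape: part~2 is a formal consequence of part~1 together with the normalization $\sum_{v\in\mathbb{YF}_n}\mu_{w,\beta}(v)=1$ (Следствие~\ref{mera1}) and the disjoint decomposition $\mathbb{YF}_n=R(w,\beta,n,\varepsilon)\cup\overline{R}(w,\beta,n,\varepsilon)$. So the whole task reduces to showing $\lim_{n\to\infty}\sum_{v\in\overline{R}(w,\beta,n,\varepsilon)}\mu_{w,\beta}(v)=0$. First I would dominate the measure by the magic tables: by Утверждение~\ref{zabe}, $\mu_{w,\beta}(v)\le\sum_{y=0}^{|v|}T_{w,\beta,n}(v,y)$, whence $\sum_{v\in\overline{R}}\mu_{w,\beta}(v)\le\sum_{y=0}^{n}\bigl(\sum_{v\in\overline{R}(w,\beta,n,\varepsilon)}T_{w,\beta,n}(v,y)\bigr)$, and it suffices to send the right-hand side to $0$.

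The geometric heart of the argument is a multiplicativity property of $\pi$ across the split point $y$. For $v\in K(n,y)$ write $v=v(y)v'(y)$. Padding $v(y)$ on the right by $1^{y}$ shifts the $g$-values of its twos up by exactly $y=|v'(y)|$, while the twos of the suffix keep the $g$-values they carry inside $v'(y)$; hence $g(v,\cdot)$ splits cleanly and $\pi(v)=\pi\bigl(v(y)1^{y}\bigr)\cdot\pi\bigl(v'(y)\bigr)$. I would then split the $y$-sum at a threshold $Y$ exactly as in Леммы~\ref{deepexsense} and~\ref{piem}: the contribution of the columns $y<Y$ vanishes as $n\to\infty$ by the binomial estimates already established (Утверждения~\ref{stolb} and~\ref{lehamed}), so only the columns $y\ge Y$ remain.

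For the columns $y\ge Y$ the difficulty is that, unlike $\overline{Q}$ in Теорема~\ref{t2}, the event $\{v\in\overline{R}\}$ does not depend on the suffix alone. Using the multiplicativity identity I would bound it by a union: if $\pi(v(y)1^{y})\in(\beta-\delta,\beta+\delta)$ and $\pi(v'(y))\in(\pi(w)(1-\varepsilon'),\pi(w)(1+\varepsilon'))$ then, for $\delta,\varepsilon'$ chosen from $\varepsilon$, we have $v\in R(w,\beta,n,\varepsilon)$; so $v\in\overline{R}$ forces one of the two factors to be bad. The suffix-bad contribution depends on $v'(y)$ only, so the clean factorization of Утверждение~\ref{kerambit} applies verbatim with $\overline{Q}(w,y,l)$ replaced by $\overline{R}(w,y,\varepsilon')$, and Теорема~\ref{main2} makes its $\mu_w$-mass tend to $0$; this is packaged as in Леммы~\ref{deepexsense} and~\ref{piem}. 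The prefix-bad contribution depends on $v(y)$ only and factors, by the same Утверждения~\ref{razbivaem},~\ref{limitstrih} and~\ref{sum}, into (column total)$\times$(prefix-bad weight).

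The main obstacle is therefore the prefix statement: under the normalized prefix weights proportional to $q(x')\,d(\varepsilon,x'1^{y})\,(1-\beta^2)^{\#x'}$ on $\mathbb{YF}_{n-y}$, the quantity $\pi(x'1^{y})$ concentrates near $\beta$ as $n\to\infty$. This is genuinely the content of the theorem — the assertion that $\mu_{w,\beta}$ has its density of twos governed by $\beta$ rather than by $w$ — and it does not reduce to Теорема~\ref{main2}. I expect to establish it by refining the generating-function and binomial machinery of Утверждения~\ref{dostalo},~\ref{lehamed} and~\ref{mamka2} to keep track of $\pi$: those identities already show that the suffix rank $y$ is distributed like $2\cdot\mathrm{Binom}(\lfloor n/2\rfloor,\beta^2)$, so the two-counts concentrate, and the $\pi$–$\pi_k$ relation of Утверждение~\ref{KerovGoodman} is the natural tool to convert this into the multiplicative concentration $\pi(x'1^{y})\to\beta$. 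Once that prefix lemma is in hand, assembling the three pieces (small-$y$ decay, suffix concentration via Теорема~\ref{main2}, and prefix concentration at $\beta$) and passing to the limit with the squeeze lemma finishes part~1, and part~2 follows formally.
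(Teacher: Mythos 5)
Your overall architecture matches the paper's: part 2 is formal from part 1 plus Следствие \ref{mera1}; part 1 is dominated by the magic tables via Утверждение \ref{zabe}; the multiplicativity $\pi(v)=\pi(v(y)1^{y})\pi(v'(y))$ (Утверждение \ref{meexy}) splits ``$v$ is bad'' into ``suffix bad'' or ``prefix bad''; and the suffix-bad part factorizes as in Утверждение \ref{kerambit} and is killed by Теорема \ref{main2}. All of that is exactly what the paper does (Утверждение \ref{kerambit1}, Утверждение \ref{halloween}, Лемма \ref{pohoronil}).

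The genuine gap is the prefix statement, which you correctly flag as ``the content of the theorem'' but then only sketch, and with the wrong tool. First, your splitting of the $y$-sum at a fixed threshold $Y$ (as in Леммы \ref{deepexsense} and \ref{piem}) is not adequate here: the prefix concentration $\pi(v(y)1^{y})\approx\beta$ can only hold for $y$ in a window $\bigl(n(\beta^2-\varepsilon_1'),\,n(\beta^2+\varepsilon_1')\bigr)$, not for all $y\ge Y$, so the columns with $y\ge Y$ but $y$ far from $n\beta^2$ would be left uncovered. The paper instead partitions $\overline{n}$ into this window and its complement, and disposes of the complement by binomial concentration of the \emph{column totals} (Лемма \ref{hyde}). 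Second, inside the window the paper's prefix argument is not a distributional concentration converted via Утверждение \ref{KerovGoodman} (that proposition is only used in Лемма \ref{yavsyo} to identify the limit measure); it is essentially deterministic: a Stirling-formula computation (Утверждение \ref{betasvyaz1}) shows $\pi(2^{b}1^{2a-1})\to\beta$ when $a/(a+b)\to\beta^2$, and Утверждения \ref{abbalbisk}--\ref{abbalbisk1} upgrade this to: any prefix $x$ with $|x|\in\{2b,2b+1\}$ and \emph{boundedly many ones} satisfies $\pi(x1^{2a})\in(\beta-\varepsilon_3,\beta+\varepsilon_3)$. Hence a prefix-bad $v$ in the window must have $e(v(y))\ge d$, and then the identity $\#(v(y))=\tfrac{n-y+e(v(y))}{2}$ extracts an extra factor $\left(1-\beta^2\right)^{d/2}$ from the table weight, which is made arbitrarily small (Лемма \ref{planedead}). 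Without this two-step mechanism (window restriction plus the Stirling/few-ones dichotomy) your prefix lemma remains unproved, and it is the hardest part of the theorem.
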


\begin{proof}

Для начала нам надо ввести очень много обозначений:

\begin{Oboz}
Пусть $n\in\mathbb{N}_0,$ $\beta \in(0,1]$, $\varepsilon\in\mathbb{R}_{>0}$, $a,b\in\mathbb{N}_0:$ $a\ge 2$, $b\in \overline{a-1}$. Тогда
\begin{itemize} 
    \item $$\overline{n}[\beta,\varepsilon]:=\overline{n}\cap\left(n\left(\beta^2-\varepsilon\right),n\left(\beta^2+\varepsilon\right)\right);$$
    \item $$\overline{n}\{\beta,\varepsilon\}:= \overline{n}\textbackslash\left(n\left(\beta^2-\varepsilon\right),n\left(\beta^2+\varepsilon\right)\right);$$
    \item $$\overline{n}[\beta,\varepsilon](a,b):=\{c\in\overline{n}[\beta,\varepsilon]:\; c \;mod\;{a} = b\};$$
    \item $$\overline{n}\{\beta,\varepsilon\}(a,b):=\{c\in\overline{n}\{\beta,\varepsilon\}:\; c \;mod\;{a} = b\};$$
    \item $$\overline{n}_{00}\{\beta,\varepsilon\}:= \overline{\frac{n}{2}}\textbackslash\left(\frac{n\left(\beta^2-\varepsilon\right)}{2},\frac{n\left(\beta^2+\varepsilon\right)}{2}\right);$$
    \item $$\overline{n}_{00,2}\{\beta,\varepsilon\}:= \overline{{n}}\textbackslash\left(n\left(\beta^2-\varepsilon\right),n\left(\beta^2+\varepsilon\right)\right);$$
    \item $$\overline{n}_{10}\{\beta,\varepsilon\}:= \overline{\frac{n-1}{2}}\textbackslash\left(\frac{n\left(\beta^2-\varepsilon\right)}{2},\frac{n\left(\beta^2+\varepsilon\right)}{2}\right);$$
    \item $$\overline{n}_{10,2}\{\beta,\varepsilon\}:= \overline{n}\textbackslash\left(\frac{(2n+1)\left(\beta^2-\varepsilon\right)}{2},\frac{(2n+1)\left(\beta^2+\varepsilon\right)}{2}\right);$$
    \item $$\overline{n}_{01}\{\beta,\varepsilon\}:= \overline{\frac{n}{2}}\textbackslash\left(\frac{n\left(\beta^2-\varepsilon\right)+1}{2},\frac{n\left(\beta^2+\varepsilon\right)+1}{2}\right);$$
    \item $$\overline{n}_{01,2}\{\beta,\varepsilon\}:= \overline{{n}}\textbackslash\left(n\left(\beta^2-\varepsilon\right)+\frac{1}{2},n\left(\beta^2+\varepsilon\right)+\frac{1}{2}\right);$$
    \item $$\overline{n}_{11}\{\beta,\varepsilon\}:= \overline{\frac{n+1}{2}}\textbackslash\left(\frac{n\left(\beta^2-\varepsilon\right)+1}{2},\frac{n\left(\beta^2+\varepsilon\right)+1}{2}\right);$$
    \item $$\overline{n}_{11,2}\{\beta,\varepsilon\}:= \overline{n}\textbackslash\left(\frac{(2n-1)\left(\beta^2-\varepsilon\right)+1}{2},\frac{(2n-1)\left(\beta^2+\varepsilon\right)+1}{2}\right).$$
    
\end{itemize}
\end{Oboz}

\begin{Zam} \label{popa}
Пусть $n\in\mathbb{N}_0$, $\beta\in(0,1]$, $\varepsilon\in\mathbb{R}_{>0}$. Тогда
\begin{itemize}
    \item 
    $$\overline{n}[\beta,\varepsilon]\cup \overline{n}\{\beta,\varepsilon\}=\overline{n};$$
    \item
    $$\overline{n}[\beta,\varepsilon]\cap \overline{n}\{\beta,\varepsilon\}=\varnothing.$$
\end{itemize}

\end{Zam}


\begin{Oboz} 
Пусть $v\in\mathbb{YF}$, $y\in\mathbb{N}_0$. Тогда
\begin{itemize}
    \item 
    \begin{equation*}
\pi_y(v):= \begin{cases}
$$\pi(v'1^{y})$$
    &\text {если $\exists v',v''\in\mathbb{YF}:$ $ v=v'v''$ и $|v''|=y$
}\\
   \text{не определено} &\text{иначе}
 \end{cases};
\end{equation*}
    \item 
    \begin{equation*}
\pi'_y(v):= \begin{cases}
$$\pi(v'')$$
    &\text {если $\exists v',v''\in\mathbb{YF}:$ $v=v'v''$ и $|v''|=y$
}\\
   \text{не определено} &\text{иначе}
 \end{cases}.
\end{equation*}
\end{itemize}
\end{Oboz}

\begin{Zam}
Пусть $v\in\mathbb{YF},$ $n,y\in\mathbb{N}_0:$ $v\in\mathbb{YF}_n$, $y\le n$.  Тогда
\begin{itemize}
    \item $$v(y) \text{ определено} \Longleftrightarrow v'(y) \text{ определено} \Longleftrightarrow \pi_y(v) \text{ определено}\Longleftrightarrow$$
    $$\Longleftrightarrow \pi'_y(v) \text{ определено}\Longleftrightarrow v\in K(n,y) ;$$ 
    \item $$v(y) \text{ не определено} \Longleftrightarrow v'(y) \text{ не определено}\Longleftrightarrow$$
    $$\Longleftrightarrow \pi_y(v) \text{ не определено}\Longleftrightarrow \pi'_y(v) \text{ не определено} \Longleftrightarrow v\in \overline{K}(n,y) .$$ 
\end{itemize}
\end{Zam}

\renewcommand{\labelenumi}{\arabic{enumi}$^\circ$}
\renewcommand{\labelenumii}{\arabic{enumi}.\arabic{enumii}$^\circ$}
\renewcommand{\labelenumiii}{\arabic{enumi}.\arabic{enumii}.\arabic{enumiii}$^\circ$}

\begin{Prop} \label{meexy} 
Пусть $v\in\mathbb{YF}$, $y\in\mathbb{N}_0:$ $\exists v',v''\in\mathbb{YF}:$ $v=v'v''$ и $|v''|=y$. Тогда
$$\pi(v)=\pi_y(v)\pi'_y(v).$$
\end{Prop}
\begin{proof}

\begin{Prop}[Утверждение 7\cite{Evtuh3}] \label{meexy1} 
Пусть $v,v',v''\in\mathbb{YF}$: $v=v'v''$. Тогда
$$\pi(v)=\pi(v'')\pi\left(v'1^{|v''|}\right).$$
\end{Prop}

По Утверждению \ref{meexy1} при $v,v',v''\in\mathbb{YF}\in\mathbb{YF}$
$$\pi(v)=\pi(v'')\pi\left(v'1^{|v''|}\right)=\pi(v'')\pi\left(v'1^{y}\right)=(\text{По обозначению})=\pi'_y(v)\pi_y(v)=\pi_y(v)\pi'_y(v).$$

\end{proof}

\begin{Oboz}
Пусть $w\in\mathbb{YF}_\infty$, $\beta\in(0,1]$, $n\in\mathbb{N}_0$, $\varepsilon\in\mathbb{R}_{>0}$, $y\in\mathbb{N}_0$, $\varepsilon_3\in\mathbb{R}_{>0}:$ $y\le n$. Тогда
\begin{itemize}
    \item $$R(w,\beta,n,\varepsilon,y,\varepsilon_3):=\left\{v\in K(n,y):\; \pi(v)\in(\pi(w)(\beta-\varepsilon),\pi(w)(\beta+\varepsilon)),\pi_y(v)\in (\beta-\varepsilon_3,\beta+\varepsilon_3) \right\};$$
    \item $$\overline{R}(w,\beta,n,\varepsilon,y,\varepsilon_3):=\left\{v\in K(n,y):\; \pi(v)\notin(\pi(w)(\beta-\varepsilon),\pi(w)(\beta+\varepsilon)),\pi_y(v)\in (\beta-\varepsilon_3,\beta+\varepsilon_3) \right\};$$
    \item $$R'(w,\beta,n,\varepsilon,y,\varepsilon_3):=\left\{v\in K(n,y):\; \pi(v)\in(\pi(w)(\beta-\varepsilon),\pi(w)(\beta+\varepsilon)),\pi_y(v)\notin (\beta-\varepsilon_3,\beta+\varepsilon_3) \right\};$$
    \item $$\overline{R'}(w,\beta,n,\varepsilon,y,\varepsilon_3):=\left\{v\in K(n,y):\; \pi(v)\notin(\pi(w)(\beta-\varepsilon),\pi(w)(\beta+\varepsilon)),\pi_y(v)\notin (\beta-\varepsilon_3,\beta+\varepsilon_3) \right\};$$
    \item $$R''(w,\beta,n,\varepsilon,y):=\left\{v\in \overline{K}(n,y):\; \pi(v)\in(\pi(w)(\beta-\varepsilon),\pi(w)(\beta+\varepsilon)) \right\};$$
    \item $$\overline{R''}(w,\beta,n,\varepsilon,y):=\left\{v\in \overline{K}(n,y):\; \pi(v)\notin(\pi(w)(\beta-\varepsilon),\pi(w)(\beta+\varepsilon))\right\}.$$
\end{itemize}
\end{Oboz}

\begin{Zam} \label{zhopa}
Пусть $w\in\mathbb{YF}_\infty$, $\beta\in(0,1]$, $n\in\mathbb{N}_0$, $\varepsilon\in\mathbb{R}_{>0}$, $y\in\mathbb{N}_0$, $\varepsilon_3\in\mathbb{R}_{>0}:$ $y\le n$. Тогда
\begin{itemize}
    \item пересечение любых двух множеств среди 
    $$R(w,\beta, n,\varepsilon,y,\varepsilon_3), \overline{R}(w,\beta,n,\varepsilon,y,\varepsilon_3), R'(w,\beta,n,\varepsilon,y,\varepsilon_3),$$
    $$ \overline{R'}(w,\beta,n,\varepsilon,y,\varepsilon_3), R''(w,\beta,n,\varepsilon,y), \overline{R''}(w,\beta,n,\varepsilon,y)$$
    пусто;
    \item 
    $$R(w,\beta, n,\varepsilon,y,\varepsilon_3)\cup \overline{R}(w,\beta,n,\varepsilon,y,\varepsilon_3)\cup R'(w,\beta,n,\varepsilon,y,\varepsilon_3)\cup$$
    $$\cup \overline{R'}(w,\beta,n,\varepsilon,y,\varepsilon_3)\cup R''(w,\beta,n,\varepsilon,y)\cup \overline{R''}(w,\beta,n,\varepsilon,y)=\mathbb{YF}_n;$$
    
    \item
    $$\overline{R}(w,\beta,n,\varepsilon,y,\varepsilon_3)\cup \overline{R'}(w,\beta,n,\varepsilon,y,\varepsilon_3)\cup \overline{R''}(w,\beta,n,\varepsilon,y)=$$
    $$=\left\{v\in K(n,y):\; \pi(v)\notin(\pi(w)(\beta-\varepsilon),\pi(w)(\beta+\varepsilon))\right\}\cup$$
    $$\cup\left\{v\in \overline{K}(n,y):\; \pi(v)\notin(\pi(w)(\beta-\varepsilon),\pi(w)(\beta+\varepsilon))\right\}=$$
    $$=\left\{v\in\mathbb{YF}_n:\; \pi(v)\notin(\pi(w)(\beta-\varepsilon),\pi(w)(\beta+\varepsilon))\right\}=\overline{R}(w,\beta,n,\varepsilon);$$
    \item
    $${R}(w,\beta,n,\varepsilon,y,\varepsilon_3)\cup {R'}(w,\beta,n,\varepsilon,y,\varepsilon_3)\cup {R''}(w,\beta,n,\varepsilon,y)=$$
    $$=\left\{v\in K(n,y):\; \pi(v)\in(\pi(w)(\beta-\varepsilon),\pi(w)(\beta+\varepsilon))\right\}\cup$$
    $$\cup\left\{v\in \overline{K}(n,y):\; \pi(v)\in(\pi(w)(\beta-\varepsilon),\pi(w)(\beta+\varepsilon))\right\}=$$
    $$=\left\{v\in\mathbb{YF}_n:\; \pi(v)\in(\pi(w)(\beta-\varepsilon),\pi(w)(\beta+\varepsilon))\right\}={R}(w,\beta,n,\varepsilon).$$
     
\end{itemize}
\end{Zam}

\begin{Oboz}
Пусть $w\in\mathbb{YF}_\infty$, $n,y\in\mathbb{N}_0$, $\varepsilon_5\in\mathbb{R}_{>0}:$ $y\le n$. Тогда
\begin{itemize}
    \item $$\widetilde{R}(w,n,y,\varepsilon_5):=\left\{v\in K(n,y):\; \pi_y'(v)\notin (\pi(w)(1-\varepsilon_5),\pi(w)(1+\varepsilon_5)) \right\}.$$
\end{itemize}
\end{Oboz}

Теперь давайте что-то докажем:

\begin{Prop} \label{kerambit1}
Пусть $w\in\mathbb{YF}_\infty$, $\beta\in(0,1]$, $\varepsilon_5\in\mathbb{R}_{>0}$, $n,y\in\mathbb{N}_0:$ $y\le n$. Тогда
$${\displaystyle\sum_{v\in \widetilde{R}(w,n,y,\varepsilon_5)} T_{w,\beta,n}(v,y)}=\left({\displaystyle\sum_{v\in\mathbb{YF}_n} T_{w,\beta,n}(v,y)}\right)\left({\sum_{v \in \overline{R}(w,y,\varepsilon_5)}\mu_{w}(v)}\right).$$
\end{Prop}
\begin{proof}

По определению функции $T$ ясно, что (так как если $w\in\mathbb{YF}_\infty$, $n,y\in\mathbb{N}_0$, $\varepsilon_5\in\mathbb{R}_{>0}:$ $y\le n$, то $\widetilde{R}(w,n,y,\varepsilon_5)\subseteq K(n,y)$)
$${\displaystyle\sum_{v\in \widetilde{R}(w,n,y,\varepsilon_5)} T_{w,\beta,n}(v,y)}={\displaystyle\sum_{v\in \widetilde{R}(w,n,y,\varepsilon_5)}\left(d(\varepsilon,v)\cdot q(v(y))\cdot {{d_1'(v'(y),w)}}\cdot \beta^y\cdot \left(1-\beta^2\right)^{\#(v(y))}\right)}.$$

Ясно, что в каждом слагаемом по Замечанию \ref{kerambus} при  $v\in\mathbb{YF}$, $n,y\in\mathbb{N}_0$
$$v=v(y)v'(y).$$

А значит к каждому слагаемому можно применить Утверждение \ref{razbivaem} при $v,v(y),v'(y)\in\mathbb{YF}$ и получить, что наше выражение равняется следующему:
$${\displaystyle\sum_{v\in \widetilde{R}(w,n,y,\varepsilon_5)}\left(d(\varepsilon,v'(y))\cdot d\left(\varepsilon,v(y)1^{|v'(y)|}\right)\cdot q(v(y))\cdot {{d_1'(v'(y),w)}}\cdot \beta^y\cdot \left(1-\beta^2\right)^{\#(v(y))}\right)}=$$
$$=(\text{По обозначению $v'(y)$})=$$
$$={\displaystyle\sum_{v\in \widetilde{R}(w,n,y,\varepsilon_5)}\left(d(\varepsilon,v'(y))\cdot d\left(\varepsilon,v(y)1^{y}\right)\cdot q(v(y))\cdot {{d_1'(v'(y),w)}}\cdot \beta^y\cdot \left(1-\beta^2\right)^{\#(v(y))}\right)}.$$

Заметим, что при $w\in\mathbb{YF}_\infty$, $n,y\in\mathbb{N}_0,$ $\varepsilon_5\in\mathbb{R}_{>0}:$ $n\ge y$:
\begin{itemize}
    \item если $v\in \widetilde{R}(w,n,y,\varepsilon_5),$ то $v=v(y)v'(y)$, причём $v(y)\in\mathbb{YF}_{n-y},$ $ v'(y)\in\overline{R}(w,y,\varepsilon_5)$ (очевидно из определений $\widetilde{R}(w,n,y,\varepsilon_5)$ и $\overline{R}(w,y,\varepsilon_5)$);
    \item если $v_1,v_2\in \widetilde{R}(w,n,y,\varepsilon_5)$: $v_1\ne v_2$, то либо $v_1(y)\ne v_2(y)$ или
    $v_1'(y)\ne v'_2(y)$;
    \item если $v''\in \mathbb{YF}_{n-y},$ $v'''\in\overline{R}(w,y,\varepsilon_5)$, то $\left(v''v'''\right)\in \widetilde{R}(w,n,y,\varepsilon_5)$, $\left(v''v'''\right)(y)=v'',$ $\left(v''v'''\right)'(y)=v'''$ (очевидно из определений $\widetilde{R}(w,n,y,\varepsilon_5)$ и $\overline{R}(w,y,\varepsilon_5)$).
\end{itemize}

А это значит, что при всех $v\in \widetilde{R}(w,n,y,\varepsilon_5)$, пара $(v(y),v'(y))$ принимает все значения в $\mathbb{YF}_{n-y}\times \overline{R}(w,y,\varepsilon_5)$, причём ровно по одному разу.

А это значит, что наше выражение равняется следующему:
$${\displaystyle\sum_{v''\in \mathbb{YF}_{n-y}}\left(\sum_{v'''\in \overline{R}(w,y,\varepsilon_5)}\left(d(\varepsilon,v''')\cdot  d\left(\varepsilon,v''1^{y}\right)\cdot q(v'')\cdot {{d_1'(v''',w)}}\cdot \beta^y\cdot \left(1-\beta^2\right)^{\#v''}\right)\right)}=$$
$$={\displaystyle \left(\sum_{v''\in \mathbb{YF}_{n-y}}\left(q(v'')\cdot d\left(\varepsilon,v''1^{y}\right)\cdot \beta^y\cdot \left(1-\beta^2\right)^{\#v''}\right)\right)\left(\sum_{v'''\in \overline{R}(w,y,\varepsilon_5)}\left(d(\varepsilon,v''')\cdot{{d_1'(v''',w)}}\right)\right)}=$$
$$=(\text{По Утверждению \ref{limitstrih} при $v'''\in\mathbb{YF}$, $w\in\mathbb{YF}_{\infty}$})=$$
$$={\displaystyle \left(\sum_{v''\in \mathbb{YF}_{n-y}}\left(q(v'')\cdot d\left(\varepsilon,v''1^{y}\right)\cdot \beta^y\cdot \left(1-\beta^2\right)^{\#v''}\right)\right)\left(\sum_{v'''\in \overline{R}(w,y,\varepsilon_5)}\left(d(\varepsilon,v''')\cdot \lim_{m \to \infty}{\frac{d(v''',w_m)}{d(\varepsilon,w_m)}}\right)\right)}=$$
$$=\left(\text{По Утверждению \ref{sum} при $w\in\mathbb{YF}_\infty$, $\beta\in(0,1]$, $n,y\in\mathbb{N}_0$}\right)=$$
$$={\displaystyle\left(\sum_{x\in\mathbb{YF}_n} T_{w,\beta,n}(x,y)\right)}\left(\sum_{v'''\in \overline{R}(w,y,\varepsilon_5)}\left(\lim_{m \to \infty}{\frac{d(\varepsilon,v''')d(v''',w_m)}{d(\varepsilon,w_m)}}\right)\right)=(\text{По обозначению})=$$
$$=\left({\displaystyle\sum_{x\in\mathbb{YF}_n} T_{w,\beta,n}(x,y)}\right)\left(\sum_{v'''\in \overline{R}(w,y,\varepsilon_5)}\mu_w(v''')\right)=\left({\displaystyle\sum_{v\in\mathbb{YF}_n} T_{w,\beta,n}(v,y)}\right)\left(\sum_{v\in \overline{R}(w,y,\varepsilon_5)}\mu_w(v)\right),$$

что и требовалось.

Утверждение доказано.

\end{proof}

\begin{Prop}\label{halloween}
Пусть $w\in\mathbb{YF}_\infty^+$, $\beta\in(0,1)$, $\varepsilon_1',\varepsilon_5,\overline{\varepsilon'}\in\mathbb{R}_{>0}:$ $\varepsilon_1'\in\left(0,\beta^2\right)$. Тогда $\exists N'\in\mathbb{N}_0:$ $\forall n \in\mathbb{N}_0:$ $n\ge N':$
$${\sum_{y\in\overline{n}[\beta,\varepsilon_1']}\left({\sum_{v\in \widetilde{R}(w,n,y,\varepsilon_5)} T_{w,\beta,n}(v,y)}\right) <\overline{\varepsilon'}}.$$
\end{Prop}
\begin{proof}

По Утверждению \ref{kerambit1} при $w\in\mathbb{YF}_\infty,$ $\beta\in(0,1],$ $\varepsilon_5\in\mathbb{R}_{>0}$, $n,y\in\mathbb{N}_0:$ $y\le n$
$${\displaystyle\sum_{v\in \widetilde{R}(w,n,y,\varepsilon_5)} T_{w,\beta,n}(v,y)}=\left({\displaystyle\sum_{v\in\mathbb{YF}_n} T_{w,\beta,n}(v,y)}\right)\left({\sum_{v \in \overline{R}(w,y,\varepsilon_5)}\mu_{w}(v)}\right),$$
а значит если $n\in\mathbb{N}_0$, то мы можем просуммировать данное выражение по $y\in\overline{n}[\beta,\varepsilon_1']$ (ясно, что $\overline{n}[\beta,\varepsilon_1']\subseteq\overline{n}$). Просуммируем и получим, при наших $w\in\mathbb{YF}_\infty^+$, $\beta\in(0,1]$, $\varepsilon_1',\varepsilon_5,\overline{\varepsilon'}\in\mathbb{R}_{>0}:$ $\varepsilon_1'\in\left(0,\beta^2\right)$
$${\sum_{y\in\overline{n}[\beta,\varepsilon_1']}\left({\sum_{v\in \widetilde{R}(w,n,y,\varepsilon_5)} T_{w,\beta,n}(v,y)}\right)} =\sum_{y\in\overline{n}[\beta,\varepsilon_1']}\left(\left({\displaystyle\sum_{v\in\mathbb{YF}_n} T_{w,\beta,n}(v,y)}\right)\left({\sum_{v \in \overline{R}(w,y,\varepsilon_5)}\mu_{w}(v)}\right)\right).$$

Зафиксируем $\overline{\varepsilon'}\in\mathbb{R}_{>0}$. 

По Теореме \ref{main2} при нашем $w\in\mathbb{YF}_\infty^+$ и $\varepsilon=\varepsilon_5\in\mathbb{R}_{>0}$
$$ \lim_{y \to \infty}{\sum_{v\in \overline{R}(w,y,\varepsilon_5)}\mu_w(v)=0},$$
то есть по определению предела для $\varepsilon'=\frac{\overline{\varepsilon'}}{\left(1+\frac{1}{\beta}\right)}$ $\exists Y\in\mathbb{N}_0:$ при $y\in\mathbb{N}_0:$ $y\ge Y$
$$ {\sum_{v\in \overline{R}(w,y,\varepsilon_5)}\mu_w(v)}<\varepsilon'.$$

Зафиксируем данный $Y\in\mathbb{N}_0$. Пусть $N'=\left\lceil\frac{\displaystyle Y}{\displaystyle\beta^2-\varepsilon_1'}\right\rceil$ (помним, что $\varepsilon_1'\in\left(0,\beta^2\right)$). Тогда если  $y\in\overline{n}[\beta,\varepsilon_1']\Longleftrightarrow y\in\left(\overline{n}\cap\left(n\left(\beta^2-\varepsilon_1'\right),n\left(\beta^2+\varepsilon_1'\right)\right)\right)$ при $n\in\mathbb{N}_0:$ $n\ge N'$, то 
$$y\ge n\left(\beta^2-\varepsilon_1'\right)\ge N'\left(\beta^2-\varepsilon_1'\right)=\left\lceil\frac{\displaystyle Y}{\displaystyle\beta^2-\varepsilon_1'}\right\rceil\left(\beta^2-\varepsilon_1'\right)\ge\left(\frac{\displaystyle Y}{\displaystyle\beta^2-\varepsilon_1'}\right)\left(\beta^2-\varepsilon_1'\right)=Y.$$

Таким образом, мы получаем, что при наших $w\in\mathbb{YF}_\infty^+$, $\beta\in(0,1)$, $\varepsilon_1',\varepsilon_5,\overline{\varepsilon'}\in\mathbb{R}_{>0}:$ $\varepsilon_1'\in\left(0,\beta^2\right)$ и выбранном $N'\in\mathbb{N}_0$ $\forall n \in\mathbb{N}_0:$ $n\ge N'$
$${\sum_{y\in\overline{n}[\beta,\varepsilon_1']}\left({\sum_{v\in \widetilde{R}(w,n,y,\varepsilon_5)} T_{w,\beta,n}(v,y)}\right)} =\sum_{y\in\overline{n}[\beta,\varepsilon_1']}\left(\left({\displaystyle\sum_{v\in\mathbb{YF}_n} T_{w,\beta,n}(v,y)}\right)\left({\sum_{v \in \overline{R}(w,y,\varepsilon_5)}\mu_{w}(v)}\right)\right)<$$
 $$<\text{(Так как в каждом слагаемом $y\ge Y$)}<$$
$$<\sum_{y\in \overline{n}[\beta,\varepsilon_1']}\left(\left({\sum_{v\in\mathbb{YF}_n} T_{w,\beta,n}(v,y)}\right)\cdot    \varepsilon'\right)=\varepsilon'\sum_{y\in \overline{n}[\beta,\varepsilon_1']}\left({\sum_{v\in\mathbb{YF}_n} T_{w,\beta,n}(v,y)}    \right)\le$$
$$\le\text{(так как функция $T$ неотрицательна и $\overline{n}[\beta,\varepsilon_1']\subseteq\overline{n}$)}\le$$
$$\le\varepsilon'\sum_{y=0}^n\left({\sum_{v\in\mathbb{YF}_n} T_{w,\beta,n}(v,y)}    \right)\le \text{(По Следствию \ref{lehamed1} при $w\in\mathbb{YF}_\infty$, $\beta\in(0,1)$, $n\in\mathbb{N}_0$)} \le$$
$$\le \varepsilon'\left(1+\frac{1}{\beta}\right)=\frac{\overline{\varepsilon'}}{\left(1+\frac{1}{\beta}\right)}\left(1+\frac{1}{\beta}\right)=\overline{\varepsilon'},$$
что и требовалось.

Утверждение доказано.

\end{proof}

\begin{Lemma} \label{pohoronil}
Пусть $w\in\mathbb{YF}_\infty^+$, $\beta\in(0,1)$, $\varepsilon\in\mathbb{R}_{>0}$. Тогда $\exists\varepsilon_3\in\mathbb{R}_{>0}:$ $\forall \varepsilon_1'\in\left(0,\beta^2\right),$ $\overline{\varepsilon'}\in\mathbb{R}_{>0}$ $\exists N'\in\mathbb{N}_0:$ $\forall n\in\mathbb{N}_0:$ $n\ge N'$
$${\sum_{y\in \overline{n}[\beta,\varepsilon_1']}\left({\sum_{v\in \overline{R}(w,\beta,n,\varepsilon,y,\varepsilon_3)} T_{w,\beta,n}(v,y)}\right) <\overline{\varepsilon'}}.$$
\end{Lemma}
\begin{proof}

Ясно, что при наших $w\in\mathbb{YF}_\infty^+$, $\beta\in(0,1)$, $\varepsilon\in\mathbb{R}_{>0}$ существуют такие $\varepsilon_3,$ $\varepsilon_5\in\mathbb{R}_{>0}$, что $\forall n,y\in\mathbb{N}_0,$ $v\in\mathbb{YF}:$ $y\le n,$ $v\in K(n,y)$ если $\pi(v)\notin(\pi(w)(\beta-\varepsilon),\pi(w)(\beta+\varepsilon))$ и $\pi_y(v)\in(\beta-\varepsilon_3,\beta+\varepsilon_3)$, то $\pi'_y(v)\notin(\pi(w)(1-\varepsilon_5),\pi(w)(1+\varepsilon_5))$ (так как по Утверждению \ref{meexy} при $v\in\mathbb{YF}$ и $y\in\mathbb{N}_0$ если $v\in K(n,y)$, то $\pi(v)=\pi_y(v)\pi'_y(v)$).

Таким образом, из определений $\overline{R}(w,\beta,n,\varepsilon,y,\varepsilon_3)$ и $\widetilde{R}(w,n,y,\varepsilon_5)$ ясно, что при наших $w\in\mathbb{YF}_\infty$, $\beta\in(0,1)$, $\varepsilon\in\mathbb{R}_{>0}$ существуют такие $\varepsilon_3,\varepsilon_5\in\mathbb{R}_{>0}$, что $\forall n,y\in\mathbb{N}_0,$ $v\in\mathbb{YF}:$ $y\le n$ если $v\in \overline{R}(w,\beta,n,\varepsilon,y,\varepsilon_3)$, то $v\in \widetilde{R}(w,n,y,\varepsilon_5)$.

А это значит, что при наших $w\in\mathbb{YF}_\infty^+$, $\beta\in(0,1)$, $\varepsilon\in\mathbb{R}_{>0}$ существуют такие $\varepsilon_3,\varepsilon_5\in\mathbb{R}_{>0}$, что $\forall n,y\in\mathbb{N}_0:$ $y\le n$  $$\overline{R}(w,\beta,n,\varepsilon,y,\varepsilon_3)\subseteq\widetilde{R}(w,n,y,\varepsilon_5).$$ 

Из этого делаем следующий вывод (из-за неотрицательности функции $T$): при наших $w\in\mathbb{YF}_\infty^+$, $\beta\in(0,1)$, $\varepsilon\in\mathbb{R}_{>0}$ существуют такие $\varepsilon_3,\varepsilon_5\in\mathbb{R}_{>0}$, что $\forall n,y\in\mathbb{N}_0:$ $y\le n$
$${\sum_{v\in \overline{R}(w,\beta,n,\varepsilon,y,\varepsilon_3)} T_{w,\beta,n}(v,y)}\le {\sum_{v\in \widetilde{R}(w,n,y,\varepsilon_5)} T_{w,\beta,n}(v,y)}.$$

Зафиксируем эти $\varepsilon_3,\varepsilon_5\in\mathbb{R}_{>0}$

Теперь зафиксируем произвольные $\varepsilon_1'\in\left(0,\beta^2\right),$ $\overline{\varepsilon'}\in\mathbb{R}_{>0}$.

Ясно, что если $n\in\mathbb{N}_0$, то мы можем просуммировать наше выражение по $y\in\overline{n}[\beta,\varepsilon_1']$ (ясно, что $\overline{n}[\beta,\varepsilon_1']\subseteq\overline{n}$). Просуммируем и получим, что при наших $w\in\mathbb{YF}_\infty^+$, $\beta\in(0,1)$, $\varepsilon\in\mathbb{R}_{>0},$ $\varepsilon_1'\in\left(0,\beta^2\right)$
$$\sum_{y\in \overline{n}[\beta,\varepsilon_1']}\left({\sum_{v\in \overline{R}(w,\beta,n,\varepsilon,y,\varepsilon_3)} T_{w,\beta,n}(v,y)}\right)\le \sum_{y\in \overline{n}[\beta,\varepsilon_1']}\left({\sum_{v\in \widetilde{R}(w,n,y,\varepsilon_5)} T_{w,\beta,n}(v,y)}\right).$$

По Утверждению \ref{halloween} при наших $w\in\mathbb{YF}_\infty^+$, $\beta\in(0,1)$, $\varepsilon_1',\varepsilon_5,\overline{\varepsilon'}\in\mathbb{R}_{>0}:$ $\varepsilon_1'\in\left(0,\beta^2\right)$ $\exists N'\in\mathbb{N}_0:$ $\forall n \in\mathbb{N}_0:$ $n\ge N'$
$${\sum_{y\in\overline{n}[\beta,\varepsilon_1']}\left({\sum_{v\in \widetilde{R}(w,n,y,\varepsilon_5)} T_{w,\beta,n}(v,y)}\right) <\overline{\varepsilon'}}.$$

Зафиксируем этот $N'\in\mathbb{N}_0$.

Таким образом, мы поняли, что при наших $w\in\mathbb{YF}_\infty^+$, $\beta\in(0,1)$, $\varepsilon\in\mathbb{R}_{>0}$ мы выбрали $\varepsilon_3,\varepsilon_5\in\mathbb{R}_{>0}$ так, что $\forall \varepsilon_1'\in\left(0,\beta^2\right),$ $\overline{\varepsilon'}\in\mathbb{R}_{>0}$ $\exists N'\in\mathbb{N}_0$: $\forall n\in\mathbb{N}_0:$ $n\ge N'$
$$\sum_{y\in \overline{n}[\beta,\varepsilon_1']}\left({\sum_{v\in \overline{R}(w,\beta,n,\varepsilon,y,\varepsilon_3)} T_{w,\beta,n}(v,y)}\right)\le \sum_{y\in \overline{n}[\beta,\varepsilon_1']}\left({\sum_{v\in \widetilde{R}(w,n,y,\varepsilon_5)} T_{w,\beta,n}(v,y)}\right)<\overline{\varepsilon'},$$
что и требовалось.

Лемма доказана.
\end{proof}

\begin{Oboz}
Пусть $\beta\in(0,1)$, $\varepsilon_1\in\mathbb{R}_{>0}$, $N\in\mathbb{N}_0:$ $N\ge 1$. Тогда 
$$U(\beta,\varepsilon_1,N):=\left\{(a,b)\in\mathbb{N}_0\times\mathbb{N}_0:\; a+b\ge N,\; \beta^2-\varepsilon_1<\frac{a}{a+b}<\beta^2+\varepsilon_1\right\}.$$
\end{Oboz}

\begin{Oboz}

$\;$

\begin{itemize}
    \item
    $$N_1: \left\{(\beta,\varepsilon_1,A):\;\beta\in(0,1),\;\varepsilon_1\in\left(0,\beta^2\right),\;A\in\mathbb{N}_0\right\}\to \mathbb{N}_0$$
    -- это функция, определённая следующим образом:
    $$N_1(\beta,\varepsilon_1,A):=\left\lceil\frac{A}{\beta^2-\varepsilon_1}\right\rceil;$$
    
    \item $$N_2: \;\left\{(\beta,\varepsilon_1,B):\;\beta\in(0,1),\;\varepsilon_1\in\left(0,1-\beta^2\right),\;B\in\mathbb{N}_0\right\}\to \mathbb{N}_0$$
    -- это функция, определённая следующим образом:
    $$N_2(\beta,\varepsilon_1,B):=\left\lceil\frac{B}{1-\beta^2-\varepsilon_1}\right\rceil.$$
    
\end{itemize}
\end{Oboz}

\renewcommand{\labelenumi}{\arabic{enumi}$)$}
\renewcommand{\labelenumii}{\arabic{enumi}.\arabic{enumii}$^\circ$}
\renewcommand{\labelenumiii}{\arabic{enumi}.\arabic{enumii}.\arabic{enumiii}$^\circ$}

\begin{Prop} \label{kunibes}

$\;$

\begin{enumerate} 
    \item Пусть $\beta\in(0,1),$ $\varepsilon_1\in\mathbb{R}_{>0},$ $A,a,b,N\in\mathbb{N}_0:$ $\varepsilon_1\in \left(0,\beta^2\right)$, $(a,b)\in U(\beta,\varepsilon_1,N),$ $N\ge \max(1,N_1(\beta,\varepsilon_1,A))$. Тогда 
    $$a\ge A.$$
    \item Пусть $\beta\in(0,1),$ $\varepsilon_1\in\mathbb{R}_{>0},$ $B,a,b,N\in\mathbb{N}_0:$ $\varepsilon_1\in \left(0,{1-\beta^2}\right)$, $(a,b)\in U(\beta,\varepsilon_1,N),$ $N\ge \max(1,N_2(\beta,\varepsilon_1,B))$. Тогда
    $$b\ge B.$$
    
\end{enumerate}

\end{Prop}
\begin{proof}

$\;$

\begin{enumerate}
    \item 
     Из обозначения множества $U$ в данном случае ясно, что в если $a,b,N\in\mathbb{N}_0:$ $(a,b)\in U(\beta,\varepsilon_1,N),$ $N\ge \max(1,N_1(\beta,\varepsilon_1,A))$, то $$\displaystyle\beta^2-\varepsilon_1<\frac{a}{a+b},$$
     а значит, так как $a+b\ge N\ge 1$ $\left(\text{при счёте помним, что $\varepsilon_1\in\left(0,\beta^2\right)$}\right)$,
     $$a>(a+b)(\beta^2-\varepsilon_1)\ge N(\beta^2-\varepsilon_1)\ge N_1(\beta,\varepsilon_1,A)(\beta^2-\varepsilon_1)=$$
     $$= \left\lceil\frac{A}{\beta^2-\varepsilon_1}\right\rceil(\beta^2-\varepsilon_1)\ge\left(\frac{A}{\beta^2-\varepsilon_1}\right)(\beta^2-\varepsilon_1)=A,$$
     что и требовалось.
     
     Первый пункт Утверждения доказан.

    \item 
     Из обозначения множества $U$ в данном случае ясно, что в если $a,b,N\in\mathbb{N}_0:$ $(a,b)\in U(\beta,\varepsilon_1,N),$ $N\ge \max(1,N_2(\beta,\varepsilon_1,B))$, то $$\displaystyle\frac{a}{a+b}<\beta^2+\varepsilon_1\Longleftrightarrow \displaystyle 1-\frac{b}{a+b}<\beta^2+\varepsilon_1\Longleftrightarrow 1-\beta^2-\varepsilon_1<\frac{b}{a+b},$$
     а значит, так как $a+b\ge N\ge 1$ $\left(\text{при счёте помним, что $\varepsilon_1\in\left(0,1-\beta^2\right)$}\right)$,
    $${b}>(a+b)(1-\beta^2-\varepsilon_1)\ge N(1-\beta^2-\varepsilon_1) \ge N_2(\beta,\varepsilon_1,B)(1-\beta^2-\varepsilon_1)=$$
    $$=\left\lceil\frac{B}{1-\beta^2-\varepsilon_1}\right\rceil(1-\beta^2-\varepsilon_1)\ge\frac{B}{1-\beta^2-\varepsilon_1}(1-\beta^2-\varepsilon_1)=B,$$
    что и требовалось.
    
    Второй пункт Утверждения доказан.

\end{enumerate}

Оба пункта Утверждения доказаны.

\end{proof}

\begin{Prop} \label{betasvyaz1}
Пусть $\beta\in(0,1)$, $\varepsilon_2\in\mathbb{R}_{>0}$. Тогда $\exists \varepsilon_1\in\mathbb{R}_{>0},$ $N\in\mathbb{N}_0:$ $N\ge 1$, $\forall a,b\in\mathbb{N}_0:$ $a,b\ge 1,$ $(a,b)\in U(\beta,\varepsilon_1,N)$
$$\beta-\varepsilon_2<\prod_{i=a}^{a+b-1}\frac{2i-1}{2i}< \beta+\varepsilon_2.$$
\end{Prop}
\begin{proof}

Пусть $\varepsilon_1\in \left(0,\min\left({\beta^2},{1-\beta^2}\right)\right)$, $N\in\mathbb{N}_0:$ $N\ge\max(1,N_1(\beta,\varepsilon_1,2))$. Тогда по Утверждению \ref{kunibes} (пункт 1) при  $\beta\in(0,1),$ $\varepsilon_1\in\mathbb{R}_{>0},$ $2,a,b,N\in\mathbb{N}_0$ ясно, что если $a,b\in\mathbb{N}_0:$ $a,b\ge 1,$ $(a,b)\in U(\beta,\varepsilon_1,N)$, то $a\ge 2$.

А значит если $\varepsilon_1\in \left(0,\min\left({\beta^2},{1-\beta^2}\right)\right)$, $N\in\mathbb{N}_0:$ $N\ge\max(1,N_1(\beta,\varepsilon_1,2))$ и $a,b\in\mathbb{N}_0:$ $a,b\ge 1,$ $(a,b)\in U(\beta,\varepsilon_1,N)$, то
$$\prod_{i=a}^{a+b-1}\frac{2i-1}{2i}=\prod_{i=a}^{a+b-1}\frac{(2i-1)2i}{(2i)^2}=\frac{\displaystyle\prod_{i=2a-1}^{2a+2b-2}i}{\displaystyle\left(2^b\prod_{i=a}^{a+b-1}i\right)^2}=$$
$$=\frac{\displaystyle \frac{(2a+2b-2)!}{(2a-2)!}}{\displaystyle 2^{2b}\left(\frac{(a+b-1)!}{(a-1)!}\right)^2}=\frac{1}{2^{2b}}\left(\frac{(a-1)!}{(a+b-1)!}\right)^2\frac{(2a+2b-2)!}{(2a-2)!}.$$

А значит если $\varepsilon_1\in \left(0,\min\left({\beta^2},{1-\beta^2}\right)\right)$, $N\in\mathbb{N}_0:$ $N\ge\max(1,N_1(\beta,\varepsilon_1,2))$ и $(a,b)\in\mathbb{N}_0:$ $a,b\ge 1,$ $(a,b)\in U(\beta,\varepsilon_1,N)$, то по формуле Стирлинга данное выражение равняется следующему выражению при некоторых $\theta_{a-1},\theta_{a+b-1},\theta_{2a+2b-2},\theta_{2a-2}\in(0,1)$ (тут важно, что в данном случае $a\ge 2$ и $b\ge 1$, что, в свою очередь, значит, что $a-1\ge 1,$ $a+b-1\ge 1,$ $2a+2b-2\ge 1,$ $2a-2\ge 1$):
$$\frac{1}{2^{2b}}\frac{\displaystyle{2\pi(a-1)}\left(\frac{(a-1)}{e}\right)^{2a-2}\left(\exp{\frac{\theta_{a-1}}{12(a-1)}}\right)^2}{\displaystyle{2\pi(a+b-1)}\left(\frac{(a+b-1)}{e}\right)^{2a+2b-2}\left(\exp{\frac{\theta_{a+b-1}}{12(a+b-1)}}\right)^2}\cdot$$
$$\cdot\frac{\displaystyle\sqrt{2\pi(2a+2b-2)}\left(\frac{(2a+2b-2)}{e}\right)^{2a+2b-2}\exp{\frac{\theta_{2a+2b-2}}{12(2a+2b-2)}}}{\displaystyle\sqrt{2\pi(2a-2)}\left(\frac{(2a-2)}{e}\right)^{2a-2}\exp{\frac{\theta_{2a-2}}{12(2a-2)}}}=$$
$$=\frac{1}{2^{2b}}\frac{\displaystyle{(a-1)}\left({a-1}\right)^{2a-2}\left(\exp{\frac{\theta_{a-1}}{12(a-1)}}\right)^2}{\displaystyle{(a+b-1)}\left({a+b-1}\right)^{2a+2b-2}\left(\exp{\frac{\theta_{a+b-1}}{12(a+b-1)}}\right)^2}\cdot$$
$$\cdot\frac{\displaystyle\sqrt{(2a+2b-2)}\left({2a+2b-2}\right)^{2a+2b-2}\exp{\frac{\theta_{2a+2b-2}}{12(2a+2b-2)}}}{\displaystyle\sqrt{(2a-2)}\left({2a-2}\right)^{2a-2}\exp{\frac{\theta_{2a-2}}{12(2a-2)}}}.$$

Для начала рассмотрим
$$\frac{\left(\exp{\frac{\theta_{a-1}}{12(a-1)}}\right)^2}{\left(\exp{\frac{\theta_{a+b-1}}{12(a+b-1)}}\right)^2}\cdot\frac{\exp{\frac{\theta_{2a+2b-2}}{12(2a+2b-2)}}}{\exp{\frac{\theta_{2a-2}}{12(2a-2)}}}.$$

Ясно, что $\forall \varepsilon_1'\in\mathbb{R}_{>0}$ $\exists A_1'\in\mathbb{N}_0:$ $\forall a,b\in\mathbb{N}_0:$ $a\ge A_1'$:
\begin{itemize}
    \item $0<{\frac{\theta_{a-1}}{12(a-1)}}<\varepsilon_1'$;
    \item $0<{\frac{\theta_{a+b-1}}{12(a+b-1)}}<\varepsilon_1'$;
    \item $0<{\frac{\theta_{2a+2b-2}}{12(2a+2b-2)}}<\varepsilon_1'$;
    \item $0<{\frac{\theta_{2a-2}}{12(2a-2)}}<\varepsilon_1'$.
\end{itemize}

А это значит, что $\forall \varepsilon_2'\in\mathbb{R}_{>0}$ $\exists A_2'\in\mathbb{N}_0:$ $\forall a,b\in\mathbb{N}_0:$ $a\ge A_2'$:
\begin{itemize}
    \item $1<\exp{\frac{\theta_{a-1}}{12(a-1)}}<1+\varepsilon_2'$ ;
    \item $1<\exp{\frac{\theta_{a+b-1}}{12(a+b-1)}}<1+\varepsilon_2'$;
    \item $1<\exp{\frac{\theta_{2a+2b-2}}{12(2a+2b-2)}}<1+\varepsilon_2'$;
    \item $1<\exp{\frac{\theta_{2a-2}}{12(2a-2)}}<1+\varepsilon_2'$.
\end{itemize}

А это, в свою очередь, значит, что $\forall \varepsilon'\in\mathbb{R}_{>0}$ $\exists A'(\varepsilon')\in\mathbb{N}_0:$ $\forall a,b\in\mathbb{N}_0:$ $a\ge A'(\varepsilon')$
$$1-\varepsilon'<\frac{\left(\exp{\frac{\theta_{a-1}}{12(a-1)}}\right)^2}{\left(\exp{\frac{\theta_{a+b-1}}{12(a+b-1)}}\right)^2}\cdot\frac{\exp{\frac{\theta_{2a+2b-2}}{12(2a+2b-2)}}}{\exp{\frac{\theta_{2a-2}}{12(2a-2)}}}< 1+\varepsilon'.$$

Таким образом, $\forall \varepsilon'\in\mathbb{R}_{>0}$ $\exists A'(\varepsilon')\in\mathbb{N}_0:$ если $\varepsilon_1\in \left(0,\min\left({\beta^2},{1-\beta^2}\right)\right)$,
$N\in\mathbb{N}_0:$ $N\ge\max(1,N_1(\beta,\varepsilon_1,\max(2,A'(\varepsilon'))))$ и $a,b\in\mathbb{N}_0:$ $a,b\ge 1,$ $(a,b)\in U(\beta,\varepsilon_1,N)$, то по Утверждению \ref{kunibes} (пункт 1) при $\beta\in(0,1),$ $\varepsilon_1\in\mathbb{R}_{>0},$ $A'(\varepsilon'),a,b,N\in\mathbb{N}_0$ наше выражение равняется следующему выражению при некотором $c'\in(1-\varepsilon',1+\varepsilon'):$
$$c'\frac{1}{2^{2b}}\frac{\displaystyle{(a-1)}\left({a-1}\right)^{2a-2}}{\displaystyle{(a+b-1)}\left({a+b-1}\right)^{2a+2b-2}}\cdot\frac{\displaystyle\sqrt{(2a+2b-2)}\left({2a+2b-2}\right)^{2a+2b-2}}{\displaystyle\sqrt{(2a-2)}\left({2a-2}\right)^{2a-2}}=$$
$$=c'\frac{\displaystyle{(a-1)}\left({a-1}\right)^{2a-2}}{\displaystyle{(a+b-1)}\left({a+b-1}\right)^{2a+2b-2}}\cdot\frac{\displaystyle\sqrt{(a+b-1)}\left({a+b-1}\right)^{2a+2b-2}}{\displaystyle\sqrt{(a-1)}\left({a-1}\right)^{2a-2}}=c'\frac{a-1}{a+b-1}\cdot \frac{\sqrt{a+b-1}}{\sqrt{a-1}}=$$
$$=c'\frac{\sqrt{a-1}}{\sqrt{a+b-1}}=c'\frac{\sqrt{a-1}}{\sqrt{a}}\cdot \frac{\sqrt{a+b}}{\sqrt{a+b-1}}\cdot \frac{\sqrt{a}}{\sqrt{a+b}}=c'\sqrt{\frac{{a-1}}{{a}}}\sqrt{\frac{{a+b}}{{a+b-1}}}\sqrt{\frac{{a}}{{a+b}}}.$$

Ясно, что $\forall \varepsilon_1''\in\mathbb{R}_{>0}$ $\exists A_1''\in\mathbb{N}_0:$ $\forall a,b\in\mathbb{N}_0:$ $a\ge A_1''$:
\begin{itemize}
    \item $1-\varepsilon_1''<\sqrt{\frac{{a-1}}{{a}}}<1+\varepsilon_1''$ ;
    \item $1-\varepsilon_1''<\sqrt{\frac{{a+b}}{{a+b-1}}}<1+\varepsilon_1''$.
\end{itemize}

А это, в свою очередь, значит, что $\forall \varepsilon''\in\mathbb{R}_{>0}$ $\exists A''(\varepsilon'')\in\mathbb{N}_0:$ $\forall a,b\in\mathbb{N}_0:$ $a\ge A''(\varepsilon'')$
$$1-\varepsilon''\sqrt{\frac{{a-1}}{{a}}}\sqrt{\frac{{a+b}}{{a+b-1}}}<1+\varepsilon''.$$

Таким образом, $\forall \varepsilon',\varepsilon''\in\mathbb{R}_{>0}$ $\exists A'(\varepsilon'),A''(\varepsilon'')\in\mathbb{N}_0:$ если $\varepsilon_1\in \left(0,\min\left({\beta^2},{1-\beta^2}\right)\right)$,
$N\in\mathbb{N}_0:$ $N\ge\max(1,N_1(\beta,\varepsilon_1,\max(2,A'(\varepsilon'),A''(\varepsilon''))))$ и $a,b\in\mathbb{N}_0:$ $a,b\ge 1,$ $(a,b)\in U(\beta,\varepsilon_1,N)$, то по Утверждению \ref{kunibes} (пункт 1) при $\beta\in(0,1),$ $\varepsilon_1\in\mathbb{R}_{>0},$ $A''(\varepsilon''),a,b,N\in\mathbb{N}_0$ наше выражение равняется следующему выражению при некоторых $c'\in(1-\varepsilon',1+\varepsilon'),$ $c''\in(1-\varepsilon'',1+\varepsilon'')$:
$$c'c''\sqrt{\frac{{a}}{{a+b}}}=c'c''\sqrt{c_1}$$
при некотором $c_1\in(\beta^2-\varepsilon_1,\beta^2+\varepsilon_1)$.

А из этого ясно, что $\forall \varepsilon',\varepsilon''\in(0,1)$ и $\varepsilon_1\in \left(0,\min\left({\beta^2},{1-\beta^2}\right)\right)$ $\exists N'(\varepsilon',\varepsilon'',\varepsilon_1)=\max(1,N_1(\beta,\varepsilon_1,\max(2,A'(\varepsilon'),A''(\varepsilon''))))\in\mathbb{N}_0:$ если $N\in\mathbb{N}_0:$ $N\ge N'(\varepsilon',\varepsilon'',\varepsilon_1)$ и $a,b\in\mathbb{N}_0:$ $a,b\ge 1,$ $(a,b)\in U(\beta,\varepsilon_1,N)$, то наше выражение равняется следующему выражению при некоторых $c'\in(1-\varepsilon',1+\varepsilon'),$ $c''\in(1-\varepsilon'',1+\varepsilon'')$, $c_1\in(\beta^2-\varepsilon_1,\beta^2+\varepsilon_1)$:
$$c'c''\sqrt{c_1}.$$

Ясно, что мы можем выбрать $\varepsilon',\varepsilon''\in(0,1)$ и $\varepsilon_1\in \left(0,\min\left({\beta^2},{1-\beta^2}\right)\right)$ так, что:
\begin{itemize}
    \item $$(1-\varepsilon')(1-\varepsilon'')\sqrt{\beta^2-\varepsilon_1}>{\beta-\varepsilon_2};$$
    \item $$(1+\varepsilon')(1+\varepsilon'')\sqrt{\beta^2+\varepsilon_1}<{\beta+\varepsilon_2}.$$
\end{itemize}

Очевидно, что при выбранных  $\varepsilon',\varepsilon''\in(0,1)$ и $\varepsilon_1\in \left(0,\min\left({\beta^2},{1-\beta^2}\right)\right)$ если $c'\in(1-\varepsilon',1+\varepsilon'),$ $c''\in(1-\varepsilon'',1+\varepsilon'')$, $c_1\in(\beta^2-\varepsilon_1,\beta^2+\varepsilon_1)$, то
$$\beta-\varepsilon_2<c'c''\sqrt{c_1}<\beta+\varepsilon_2.$$

Теперь пусть $N=N'(\varepsilon',\varepsilon'',\varepsilon_1)$ при выбранных $\varepsilon',\varepsilon''\in(0,1)$ и $\varepsilon_1\in \left(0,\min\left({\beta^2},{1-\beta^2}\right)\right)$.

Из того, что мы уже поняли, очевидно следует, что $N\in\mathbb{N}_0,$ $N\ge 1$ и что $\forall a,b\in\mathbb{N}_0:$ $a,b\ge 1,$  $(a,b)\in U(\beta,\varepsilon_1,N)$
$$\beta-\varepsilon_2<\prod_{i=a}^{a+b-1}\frac{2-i}{2i}< \beta+\varepsilon_2,$$
что и требовалось.

Утверждение доказано. 
\end{proof}

\begin{Prop} \label{abbalbisk}
Пусть $\varepsilon_3\in\mathbb{R}_{>0}$, $\beta\in(0,1)$. Тогда $\exists \varepsilon_1\in\mathbb{R}_{>0}:$ $\forall d\in\mathbb{N}_0$ $\exists N\in\mathbb{N}_0:$ $N\ge 1,$ $\forall a,b\in\mathbb{N}_0,$ $x\in\mathbb{YF}:$ $a,b\in\mathbb{N}_0:$ $a,b\ge 1,$ $(a,b)\in U(\beta,\varepsilon_1,N),$ $|x|\in \{2b,2b+1\}$
        $$e(x)\ge d\text{ или } \beta-\varepsilon_3<\pi\left(x1^{2a}\right)<\beta+\varepsilon_3. $$
\end{Prop}
\begin{proof}

По Утверждению \ref{betasvyaz1} при нашем $\beta\in(0,1)$ и $\forall\varepsilon_2\in\mathbb{R}_{>0}$ $\exists \varepsilon_1'\in\mathbb{R}_{>0},$ $N'\in\mathbb{N}_0$: $N'\ge 1$, $\forall a,b\in\mathbb{N}_0:$ $a,b\ge 1,$ $ (a,b)\in U(\beta,\varepsilon_1',N')$
$$\beta-\varepsilon_2<\prod_{i=a}^{a+b-1}\frac{2i-1}{2i}< \beta+\varepsilon_2.$$

А это по определению функций $\pi$ и $g$ равносильно тому, что при нашем $\beta\in(0,1)$ и $\forall\varepsilon_2\in\mathbb{R}_{>0}$ $\exists \varepsilon_1'\in\mathbb{R}_{>0},$ $N'\in\mathbb{N}_0$: $N'\ge 1$, $\forall a,b\in\mathbb{N}_0:$ $a,b\ge 1,$ $(a,b)\in U(\beta,\varepsilon_1',N')$
$$ \beta-\varepsilon_2<\pi\left(2^b1^{2a-1}\right)<\beta+\varepsilon_2.$$

Теперь докажем, что при нашем $\beta\in(0,1)$ и $\forall\varepsilon_4\in\mathbb{R}_{>0}$ $\exists \varepsilon_1''\in\mathbb{R}_{>0}:$ $\forall d\in\mathbb{N}_0$ $\exists N''\in\mathbb{N}_0:$ $N''\ge 1,$ $\forall a,b\in\mathbb{N}_0,$ $x\in\mathbb{YF} :$ $a,b\ge 1,$ $(a,b)\in U(\beta,\varepsilon_1'',N''),$ $|x|\in \{2b,2b+1\}$
$$e(x)\ge d\text{ или } 1-\varepsilon_4<\frac{\pi\left(x1^{2a}\right)}{\pi\left(2^b1^{2a-1}\right)}<1+\varepsilon_4. $$
        
Зафиксируем произвольное $\varepsilon_4\in\mathbb{R}_{>0}$. Пусть $\varepsilon_1''=\min\left(\frac{\beta^2}{2},\frac{1-\beta^2}{2}\right)\in \left(0,\min\left({\beta^2},{1-\beta^2}\right)\right)\subseteq \mathbb{R}_{>0}$. После чего зафиксируем произвольное $d\in\mathbb{N}_0$. Пусть $N_1''=\left\lceil\frac{1}{\varepsilon_4}\right\rceil$. Ясно, что $N_1''\in\mathbb{N}_0$ и $N_1''\ge 1$.

Давайте рассмотрим $a,b\in\mathbb{N}_0$: $a,b\ge 1,$ $\displaystyle (a,b)\in U(\beta,\varepsilon_1'',N_1'')$. Заметим, что

$$\displaystyle\frac{\pi\left(x1^{2a}\right)}{\pi\left(2^{b}1^{2a-1}\right)}\ge\text{(так как $a\ge 1$ и $|x|\le 2b+1$)}\ge \frac{\pi\left(2^{b+1}1^{2a}\right)}{\pi\left(2^{b}1^{2a-1}\right)}=$$
$$=(\text{По определению функций $\pi$ и $g$})=$$
$$=\frac{\displaystyle\prod_{i=a}^{a+b}\frac{2i}{2i+1}}{\displaystyle\prod_{i=a}^{a+b-1}\frac{2i-1}{2i}}\ge\frac{\displaystyle\prod_{i=a}^{a+b}\frac{2i-1}{2i}}{\displaystyle\prod_{i=a}^{a+b-1}\frac{2i-1}{2i}}=\prod_{i=a+b}^{a+b}\frac{2i-1}{2i}=\frac{2a+2b-1}{2a+2b}=1-\frac{1}{2a+2b}\ge$$
$$\ge 1-\frac{1}{2N_1''}> 1-\frac{1}{N_1''}=1-\frac{1}{\left\lceil\frac{1}{\displaystyle\varepsilon_4}\right\rceil}\ge 1-\frac{1}{\frac{1}{\displaystyle\varepsilon_4}}=1-\varepsilon_4.$$

Теперь давайте рассмотрим $a,b\in\mathbb{N}_0$: $a,b\ge 1,$ $\displaystyle(a,b)\in U(\beta,\varepsilon_1'',N_2'')$ ($N_2''$ мы выберем позднее), а также $x\in\mathbb{YF}:$ $|x|\in \{2b,2b+1\}$ и $e(x)\le d$.

Помним, что $\varepsilon_1''\in\left(0,\min\left({\beta^2},{1-\beta^2}\right)\right)$. Пусть $N_2''\in\mathbb{N}_0:$ $N_2''\ge\max\left(1,N_2\left(\beta,\varepsilon_1'',\left\lceil\frac{d}{2}\right\rceil+2\right)\right)$. Тогда по Утверждению \ref{kunibes} (пункт 2) при $\beta\in(0,1),$ $\varepsilon_1''\in\mathbb{R}_{>0},$ $\left(\left\lceil\frac{d}{2}\right\rceil+2\right),a,b,N_2''\in\mathbb{N}_0$ ясно, что если $(a,b)\in U(\beta,\varepsilon_1,N)$, то $b\ge \left\lceil\frac{d}{2}\right\rceil+2$.

Кроме того, ясно, что если $e(x)\le d$, то (так как $|x|\ge 2b$) $d(x)\ge b-\left\lceil\frac{d}{2}\right\rceil$, также ясно, что (так как $a\ge 1$) максимальное значение функции $\pi$ может быть достигнуто, если двойки располагаются слева (из определений функций $\pi$ и $g$). Таким образом (так как $|x|\le 2b+1$), при выбранном $\varepsilon_1''\in \left(0,\min\left({\beta^2},{1-\beta^2}\right)\right)$ если $N_2''\in\mathbb{N}_0:$ $N_2''\ge\max\left(1,N_2\left(\beta,\varepsilon_1'',\left\lceil\frac{d}{2}\right\rceil+2\right)\right)$ и $a,b\in\mathbb{N}_0:$ $a,b\ge 1,$ $(a,b)\in U(\beta,\varepsilon_1'',N_2'')$, то
$$ \frac{\pi\left(x1^{2a}\right)}{\pi\left(2^{b}1^{2a-1}\right)}\le \frac{\pi\left(2^{b-\left\lceil\frac{d}{2}\right\rceil}1^{2\left\lceil\frac{d}{2}\right\rceil+1}1^{2a}\right)}{\pi\left(2^{b}1^{2a-1}\right)}\le$$
$$\le\left(\text{Из определений функций $\pi$ и $g$ и так как $b-\left\lceil\frac{d}{2}\right\rceil\ge 2$}\right)\le$$
$$ \le\frac{\pi\left(2^{b-\left\lceil\frac{d}{2}\right\rceil-1}1^{2\left\lceil\frac{d}{2}\right\rceil+1}1^{2a}\right)}{\pi\left(2^{b}1^{2a-1}\right)}=\frac{\pi\left(2^{b-\left\lceil\frac{d}{2}\right\rceil-1}1^{2\left\lceil\frac{d}{2}\right\rceil+2}1^{2a-1}\right)}{\pi\left(2^{b}1^{2a-1}\right)} =$$
$$=\text{(Из определений функций $\pi$ и $g$)}=$$
$$=
\frac{\displaystyle\prod_{i=a+\left\lceil\frac{d}{2}\right\rceil+1}^{a+b-1}\frac{2i-1}{2i}}{\displaystyle\prod_{i=a}^{a+b-1}\frac{2i-1}{2i}}=\frac{1}{\displaystyle \prod_{i=a}^{a+\left\lceil\frac{d}{2}\right\rceil}\frac{2i-1}{2i}} =\prod_{i=a}^{a+\left\lceil\frac{d}{2}\right\rceil}\frac{2i}{2i-1}\le\left(\frac{2a}{2a-1}\right)^{\left\lceil\frac{d}{2}\right\rceil+1}.$$

Ясно, что $\exists A(\varepsilon_4)\in\mathbb{N}_0:$ при $a\ge A(\varepsilon_4)$
$$\left(\frac{2a}{2a-1}\right)^{\left\lceil\frac{d}{2}\right\rceil+1}<1+\varepsilon_4.$$

Таким образом, $\forall \varepsilon_4\in\mathbb{R}_{>0}$ $\exists A(\varepsilon_4)\in\mathbb{N}_0:$ при выбранных $\varepsilon_1''\in \left(0,\min\left({\beta^2},{1-\beta^2}\right)\right)$,
$N_2''=\max\left(1,N_1(\beta,\varepsilon_1'',A(\varepsilon_4)),N_2\left(\beta,\varepsilon_1'',\left\lceil\frac{d}{2}\right\rceil+2\right)\right)\in\mathbb{N}_0$ и $a,b\in\mathbb{N}_0:$ $a,b\ge 1,$ $(a,b)\in U(\beta,\varepsilon_1'',N_2'')$ по Утверждению \ref{kunibes} (пункт 1) при $\beta\in(0,1),$ $\varepsilon_1''\in\mathbb{R}_{>0},$ $A(\varepsilon_4),a,b,N_2''\in\mathbb{N}_0$ наше выражение меньше, чем $1+\varepsilon_4$.

Таким образом, мы доказали, что при нашем $\beta\in(0,1)$ и $\forall\varepsilon_4\in\mathbb{R}_{>0}$ $\exists \varepsilon_1''\in\mathbb{R}_{>0}:$ $\forall d\in\mathbb{N}_0$ $\exists N''=\max(N_1'',N_2'')\in\mathbb{N}_0:$ $ N''\ge 1,$  $\forall a,b\in\mathbb{N}_0,$ $x\in\mathbb{YF} :$ $a,b\ge 1,$ $(a,b)\in U(\beta,\varepsilon_1'',N''),$ $ |x|\in \{2b,2b+1\}$, $e(x)\le d$
$$1-\varepsilon_4 < \frac{\pi\left(x1^{2a}\right)}{\pi\left(2^{b}1^{2a-1}\right)}<1+\varepsilon_4,$$
что и требовалось.

То есть у нас доказано, что
\begin{itemize}
    \item При нашем $\beta\in(0,1)$ и $\forall\varepsilon_2\in\mathbb{R}_{>0}$ $\exists \varepsilon_1'\in\mathbb{R}_{>0},$ $N'\in\mathbb{N}_0$: $N'\ge 1$, $\forall a,b\in\mathbb{N}_0:$ $a,b\ge 1,$ $ (a,b)\in U(\beta,\varepsilon_1',N')$
$$ \beta-\varepsilon_2<\pi\left(2^b1^{2a-1}\right)<\beta+\varepsilon_2;$$

    \item При нашем $\beta\in(0,1)$ и $\forall\varepsilon_4\in\mathbb{R}_{>0}$ $\exists \varepsilon_1''\in\mathbb{R}_{>0}:$ $\forall d\in\mathbb{N}_0$ $\exists N''\in\mathbb{N}_0:$ $N''\ge 1,$ $\forall a,b\in\mathbb{N}_0,$ $x\in\mathbb{YF} :$ $a,b\ge 1,$ $(a,b)\in U(\beta,\varepsilon_1'',N''),$ $ |x|\in \{2b,2b+1\}$
        $$e(x)\ge d\text{ или } 1-\varepsilon_4<\frac{\pi\left(x1^{2a}\right)}{\pi\left(2^b1^{2a-1}\right)}<1+\varepsilon_4. $$
\end{itemize}

Ясно, что при наших $\varepsilon_3\in\mathbb{R}_{>0}$ и $\beta\in(0,1)$ мы можем выбрать $\varepsilon_2\in(0,\beta),$ $\varepsilon_4\in(0,1)$ так, что 
\begin{itemize}
    \item $$(\beta-\varepsilon_2)(1-\varepsilon_4)>{\beta-\varepsilon_3};$$
    \item $$(\beta+\varepsilon_2)(1+\varepsilon_4)<{\beta+\varepsilon_3}.$$
\end{itemize}

Воспользуемся этими двумя фактами при только что выбранных $\varepsilon_2\in(0,\beta),$ $\varepsilon_4\in(0,1)$ и поймём, что при наших $\beta\in(0,1)$ и $\varepsilon_3\in\mathbb{R}_{>0}$  $\exists  \varepsilon_1=\min(\varepsilon_1',\varepsilon_1'')\in\mathbb{R}_{>0}:$ $\forall d\in\mathbb{N}_0$ $\exists N=\max(N',N'')\in\mathbb{N}_0:$ $ N\ge 1,$ $\forall a,b\in\mathbb{N}_0,$ $x\in\mathbb{YF} :$ $a,b\ge 1,$ $(a,b)\in U(\beta,\varepsilon_1,N),$ $ |x|\in \{2b,2b+1\}$
        $$e(x)\ge d\text{ или } \pi\left(2^b1^{2a-1}\right)\cdot\frac{\displaystyle\pi\left(x1^{2a}\right)}{\displaystyle\pi\left(2^{b}1^{2a-1}\right)}=\pi\left(x1^{2a}\right)\in (\beta-\varepsilon_3,\beta+\varepsilon_3), $$
что и требовалось.

Утверждение доказано.

\end{proof}

\begin{Prop} \label{abbalbisk1}
Пусть $\varepsilon_3\in\mathbb{R}_{>0}$, $\beta\in(0,1)$. Тогда $\exists \varepsilon_1\in\mathbb{R}_{>0}:$ $\forall d\in\mathbb{N}_0$ $\exists N\in\mathbb{N}_0:$ $ N\ge 1,$ $\forall a,b\in\mathbb{N}_0,$ $x\in\mathbb{YF} :$ $a,b\ge 1, $ $(a,b)\in U(\beta,\varepsilon_1,N),$ $ |x|\in \{2b,2b+1\}$
        $$e(x)\ge d\text{ или } \beta-\varepsilon_3<\pi\left(x1^{2a-1}\right)<\beta+\varepsilon_3. $$
\end{Prop}
\begin{proof}

По Утверждению \ref{betasvyaz1} при нашем $\beta\in(0,1)$ и $\forall\varepsilon_2\in\mathbb{R}_{>0}$ $\exists \varepsilon_1'\in\mathbb{R}_{>0} ,$ $N'\in\mathbb{N}_0$: $N'\ge 1$, $\forall a,b\in\mathbb{N}_0:$ $a,b\ge 1,$ $(a,b)\in U(\beta,\varepsilon_1,N')$
$$\beta-\varepsilon_2<\prod_{i=a}^{a+b-1}\frac{2i-1}{2i}< \beta+\varepsilon_2.$$

А это по определению функций $\pi$ и $g$ равносильно тому, что при нашем $\beta\in(0,1)$ и $\forall\varepsilon_2\in\mathbb{R}_{>0}$ $\exists \varepsilon_1'\in\mathbb{R}_{>0} ,$ $N'\in\mathbb{N}_0$: $N'\ge 1$, $\forall a,b\in\mathbb{N}_0:$ $a,b\ge 1,$ $ (a,b)\in U(\beta,\varepsilon_1',N')$
$$ \beta-\varepsilon_2<\pi\left(2^b1^{2a-1}\right)<\beta+\varepsilon_2.$$

Теперь докажем, что при нашем $\beta\in(0,1)$ и $\forall\varepsilon_4\in\mathbb{R}_{>0}$ $\exists \varepsilon_1''\in\mathbb{R}_{>0}:$ $\forall d\in\mathbb{N}_0$ $\exists N''\in\mathbb{N}_0:$ $ N''\ge 1,$ $\forall a,b\in\mathbb{N}_0,$ $x\in\mathbb{YF} :$ $a,b\ge 1,$ $(a,b)\in U(\beta,\varepsilon_1'',N''),$ $ |x|\in \{2b,2b+1\}$
        $$e(x)\ge d\text{ или } 1-\varepsilon_4<\frac{\pi\left(x1^{2a-1}\right)}{\pi\left(2^b1^{2a-1}\right)}<1+\varepsilon_4. $$
        
Зафиксируем произвольное $\varepsilon_4\in\mathbb{R}_{>0}$. Пусть $\varepsilon_1''=\min\left(\frac{\beta^2}{2},\frac{1-\beta^2}{2}\right)\in \left(0,\min\left({\beta^2},{1-\beta^2}\right)\right)\subseteq\mathbb{R}_{>0}$. После чего зафиксируем произвольное $d\in\mathbb{N}_0$. Пусть $N_1''=\left\lceil\frac{1}{\varepsilon_4}\right\rceil$. Ясно, что $N_1''\in\mathbb{N}_0$ и $N_1''\ge 1$.

Давайте рассмотрим $a,b\in\mathbb{N}_0$: $a,b\ge 1,$ $\displaystyle(a,b)\in U(\beta,\varepsilon_1'',N_1'')$. Заметим, что
$$\displaystyle\frac{\pi\left(x1^{2a-1}\right)}{\pi\left(2^{b}1^{2a-1}\right)}\ge\text{(так как $a\ge 1$ и $|x|\le 2b+1$)}\ge \frac{\pi\left(2^{b+1}1^{2a-1}\right)}{\pi\left(2^{b}1^{2a-1}\right)}=$$
$$=(\text{По определению функций $\pi$ и $g$})=$$
$$=\frac{\displaystyle\prod_{i=a}^{a+b}\frac{2i-1}{2i}}{\displaystyle\prod_{i=a}^{a+b-1}\frac{2i-1}{2i}}=\prod_{i=a+b}^{a+b}\frac{2i-1}{2i}=\frac{2a+2b-1}{2a+2b}=1-\frac{1}{2a+2b}\ge$$
$$\ge 1-\frac{1}{2N_1''}> 1-\frac{1}{N_1''}=1-\frac{1}{\left\lceil\frac{1}{\displaystyle\varepsilon_4}\right\rceil}\ge 1-\frac{1}{\frac{1}{\displaystyle\varepsilon_4}}=1-\varepsilon_4.$$

Теперь давайте рассмотрим $a,b\in\mathbb{N}_0$: $a,b\ge 1,$ $\displaystyle(a,b)\in U(\beta,\varepsilon_1'',N_2'')$ ($N_2''$ мы выберем позднее), а также $x\in\mathbb{YF}:$ $|x|\in \{2b,2b+1\}$ и $e(x)\le d$.

Помним, что $\varepsilon_1''\in\left(0,\min\left({\beta^2},{1-\beta^2}\right)\right)$. Пусть $N_2''\in\mathbb{N}_0:$ $N_2''\ge\max\left(1,N_2\left(\beta,\varepsilon_1'',\left\lceil\frac{d}{2}\right\rceil+2\right)\right)$. Тогда по Утверждению \ref{kunibes} (пункт 2) при $\beta\in(0,1),$ $\varepsilon_1''\in\mathbb{R}_{>0},$ $\left(\left\lceil\frac{d}{2}\right\rceil+2\right),a,b,N_2''\in\mathbb{N}_0$ ясно, что если $a,b\in\mathbb{N}_0:$ $a,b\ge 1,$ $(a,b)\in U(\beta,\varepsilon_1,N)$, то $b\ge \left\lceil\frac{d}{2}\right\rceil+2$.

Кроме того, ясно, что если $e(x)\le d$, то (так как $|x|\ge 2b$) $d(x)\ge b-\left\lceil\frac{d}{2}\right\rceil$, также ясно, что (так как $a\ge 1$) максимальное значение функции $\pi$ может быть достигнуто, если двойки располагаются слева (из определений функций $\pi$ и $g$). Таким образом (так как $|x|\le 2b+1$), при выбранном $\varepsilon_1''\in \left(0,\min\left({\beta^2},{1-\beta^2}\right)\right)$ если $N_2''\in\mathbb{N}_0:$ $N_2''\ge\max\left(1,N_2\left(\beta,\varepsilon_1'',\left\lceil\frac{d}{2}\right\rceil+2\right)\right)$ и $a,b\in\mathbb{N}_0:$ $a,b\ge 1,$ $(a,b)\in U(\beta,\varepsilon_1'',N_2'')$, то
$$ \frac{\pi\left(x1^{2a-1}\right)}{\pi\left(2^{b}1^{2a-1}\right)}\le \frac{\pi\left(2^{b-\left\lceil\frac{d}{2}\right\rceil}1^{2\left\lceil\frac{d}{2}\right\rceil+1}1^{2a-1}\right)}{\pi\left(2^{b}1^{2a-1}\right)}\le\frac{\pi\left(2^{b-\left\lceil\frac{d}{2}\right\rceil}1^{2\left\lceil\frac{d}{2}\right\rceil+2}1^{2a-1}\right)}{\pi\left(2^{b}1^{2a-1}\right)} \le$$
$$\le\left(\text{Из определений функций $\pi$ и $g$ и так как $b-\left\lceil\frac{d}{2}\right\rceil\ge 2$}\right)\le$$
$$ \le\frac{\pi\left(2^{b-\left\lceil\frac{d}{2}\right\rceil-1}1^{2\left\lceil\frac{d}{2}\right\rceil+2}1^{2a-1}\right)}{\pi\left(2^{b}1^{2a-1}\right)} =\text{(Из определений функций $\pi$ и $g$)}=$$
$$=
\frac{\displaystyle\prod_{i=a+\left\lceil\frac{d}{2}\right\rceil+1}^{a+b-1}\frac{2i-1}{2i}}{\displaystyle\prod_{i=a}^{a+b-1}\frac{2i-1}{2i}}=\frac{1}{\displaystyle \prod_{i=a}^{a+\left\lceil\frac{d}{2}\right\rceil}\frac{2i-1}{2i}} =\prod_{i=a}^{a+\left\lceil\frac{d}{2}\right\rceil}\frac{2i}{2i-1}\le\left(\frac{2a}{2a-1}\right)^{\left\lceil\frac{d}{2}\right\rceil+1}.$$

Ясно, что $\exists A(\varepsilon_4)\in\mathbb{N}_0:$ при $a\ge A(\varepsilon_4)$
$$\left(\frac{2a}{2a-1}\right)^{\left\lceil\frac{d}{2}\right\rceil+1}<1+\varepsilon_4.$$

Таким образом, $\forall \varepsilon_4\in\mathbb{R}_{>0}$ $\exists A(\varepsilon_4)\in\mathbb{N}_0:$ при выбранных $\varepsilon_1''\in \left(0,\min\left({\beta^2},{1-\beta^2}\right)\right)$,
$N_2''=\max\left(1,N_1(\beta,\varepsilon_1'',A(\varepsilon_4)),N_2\left(\beta,\varepsilon_1'',\left\lceil\frac{d}{2}\right\rceil+2\right)\right)\in\mathbb{N}_0$ и $(a,b)\in U(\beta,\varepsilon_1'',N_2'')$ по Утверждению \ref{kunibes} (пункт 1) при $\beta\in(0,1),$ $\varepsilon_1''\in\mathbb{R}_{>0},$ $A(\varepsilon_4),a,b,N_2''\in\mathbb{N}_0$ наше выражение меньше, чем $1+\varepsilon_4$.

Таким образом, мы доказали, что при нашем $\beta\in(0,1)$ и $\forall\varepsilon_4\in\mathbb{R}_{>0}$ $\exists \varepsilon_1''\in\mathbb{R}_{>0}:$ $\forall d\in\mathbb{N}_0$ $\exists N''=\max(N_1'',N_2'')\in\mathbb{N}_0:$ $ N''\ge 1,$ $\forall a,b\in\mathbb{N}_0,$ $x\in\mathbb{YF} :$ $a,b\ge 1,$ $(a,b)\in U(\beta,\varepsilon_1'',N''),$ $ |x|\in \{2b,2b+1\}$, $e(x)\le d$
$$1-\varepsilon_4 < \frac{\pi\left(x1^{2a-1}\right)}{\pi\left(2^{b}1^{2a-1}\right)}<1+\varepsilon_4,$$
что и требовалось.

То есть у нас доказано, что:
\begin{itemize}
    \item При нашем $\beta\in(0,1)$ и $\forall\varepsilon_2\in\mathbb{R}_{>0}$ $\exists \varepsilon_1'\in\mathbb{R}_{>0},$ $N'\in\mathbb{N}_0$: $N'\ge 1$, $\forall a,b\in\mathbb{N}_0:$ $a,b\ge 1,$ $(a,b)\in U(\beta,\varepsilon_1',N')$
    $$ \beta-\varepsilon_2<\pi\left(2^b1^{2a-1}\right)<\beta+\varepsilon_2;$$

    \item При нашем $\beta\in(0,1)$ и $\forall\varepsilon_4\in\mathbb{R}_{>0}$ $\exists \varepsilon_1''\in\mathbb{R}_{>0}:$ $\forall d\in\mathbb{N}_0$ $\exists N''\in\mathbb{N}_0:$ $ N''\ge 1,$ $\forall a,b\in\mathbb{N}_0,$ $x\in\mathbb{YF}:$ $a,b\ge 1,$ $(a,b)\in U(\beta,\varepsilon_1'',N''),$ $|x|\in \{2b,2b+1\}$
    $$e(x)\ge d\text{ или } 1-\varepsilon_4<\frac{\pi\left(x1^{2a-1}\right)}{\pi\left(2^b1^{2a-1}\right)}<1+\varepsilon_4. $$

\end{itemize}

Ясно, что при наших $\varepsilon_3\in\mathbb{R}_{>0}$ и $\beta\in(0,1)$ мы можем выбрать $\varepsilon_2\in(0,\beta),$ $\varepsilon_4\in(0,1)$ так, что 
\begin{itemize}
    \item $$(\beta-\varepsilon_2)(1-\varepsilon_4)>{\beta-\varepsilon_3};$$
    \item $$(\beta+\varepsilon_2)(1+\varepsilon_4)<{\beta+\varepsilon_3}.$$
\end{itemize}

Воспользуемся этими двумя фактами при только что выбранных $\varepsilon_2\in(0,\beta),$ $\varepsilon_4\in(0,1)$ и поймём, что при наших $\beta\in(0,1)$ и $\varepsilon_3\in\mathbb{R}_{>0}$  $\exists \varepsilon_1=\min(\varepsilon_1',\varepsilon_1'')\in\mathbb{R}_{>0}:$ $\forall d\in\mathbb{N}_0$ $\exists N=\max(N',N'')\in\mathbb{N}_0:$ $ N\ge 1,$ $\forall a,b\in\mathbb{N}_0,$ $x\in\mathbb{YF}:$ $a,b\ge 1,$ $(a,b)\in U(\beta,\varepsilon_1,N),$ $|x|\in \{2b,2b+1\}$
    $$e(x)\ge d\text{ или } \pi\left(2^b1^{2a-1}\right)\cdot\frac{\displaystyle\pi\left(x1^{2a-1}\right)}{\displaystyle\pi\left(2^{b}1^{2a-1}\right)}=\pi\left(x1^{2a-1}\right)\in (\beta-\varepsilon_3,\beta+\varepsilon_3), $$
что и требовалось.

Утверждение доказано.
\end{proof}

\begin{Prop} \label{sum1}
Пусть $ w\in\mathbb{YF}_\infty$, $\beta\in(0,1]$, $n,y\in\mathbb{N}_0:$ $y\le n$. Тогда
$${\sum_{x\in K(n,y)} \left(d(\varepsilon,x)\cdot  q(x(y))\cdot {d_1'(x'(y),w)}\cdot \beta^y\cdot \left(1-\beta^2\right)^{{\frac{n-y}{2}}}\right)}=\sum_{x''\in\mathbb{YF}_{n-y} }\left(q(x'')\cdot d(\varepsilon,x''1^y)\cdot \beta^y  \cdot \left(1-\beta^2\right)^{\frac{n-y}{2}}\right).$$
\end{Prop}
\begin{proof}
Рассмотрим выражение
$${\sum_{x\in K(n,y)} \left(d(\varepsilon,x)\cdot  q(x(y))\cdot {d_1'(x'(y),w)}\cdot \beta^y\cdot \left(1-\beta^2\right)^{{\frac{n-y}{2}}}\right)}.$$

Ясно, что в каждом слагаемом по Замечанию \ref{kerambus} при  $x\in\mathbb{YF}$, $n,y\in\mathbb{N}_0$
$$x=x(y)x'(y).$$

А значит можно воспользоваться Утверждением \ref{razbivaem} при $x,x(y),x'(y)\in\mathbb{YF}$ и получить, что наше выражение равняется следующему:
$$\sum_{x\in K(n,y)}\left(d(\varepsilon,x'(y))\cdot d\left(\varepsilon,x(y)1^{|x'(y)|}\right)\cdot q(x(y))\cdot {{d_1'(x'(y),w)}}\cdot \beta^y\left(1-\beta^2\right)^{\frac{n-y}{2}}\right)=$$
$$=(\text{По обозначению $x'(y)$})=$$
$$=\sum_{x\in K(n,y)}\left(d(\varepsilon,x'(y))\cdot d\left(\varepsilon,x(y)1^{y}\right)\cdot q(x(y))\cdot {{d_1'(x'(y),w)}}\cdot \beta^y\left(1-\beta^2\right)^{\frac{n-y}{2}}\right).$$

Заметим, что при $n,y\in\mathbb{N}_0:$ $ y\le n$ 
\begin{itemize}
    \item если $x\in K(n,y),$ то $x=x(y)x'(y)$, причём $x(y)\in\mathbb{YF}_{n-y},$ $x'(y)\in\mathbb{YF}_{y}$;
    \item если $x_1,x_2\in K(n,y)$: $x_1\ne x_2$, то $x_1(y)\ne x_2(y)$ или
    $x_1'(y)\ne x'_2(y)$;
    \item если $x''\in \mathbb{YF}_{n-y},$ $ x'''\in\mathbb{YF}_{y}$, то $\left(x''x'''\right)\in K(n,y)$, $\left(x''x'''\right)(y)=x'',$ $\left(x''x'''\right)'(y)=x'''$.
\end{itemize}

А это значит, что при всех $x\in K(n,y)$ пара $(x(y),x'(y))$ принимает все значения в $\mathbb{YF}_{n-y}\times \mathbb{YF}_{y}$, причём ровно по одному разу.

А это значит, что наше выражение равняется следующему:
$$\sum_{x''\in \mathbb{YF}_{n-y}}\left(\sum_{x'''\in \mathbb{YF}_{y}}\left(d(\varepsilon,x''')\cdot d\left(\varepsilon,x''1^{y}\right)\cdot q(x'')\cdot {{d_1'(x''',w)}}\cdot \beta^y\cdot \left(1-\beta^2\right)^{\frac{n-y}{2}}\right)\right)=$$
$$=\left(\sum_{x''\in \mathbb{YF}_{n-y}}\left(q(x'')\cdot d\left(\varepsilon,x''1^{y}\right)\cdot \beta^y\left(1-\beta^2\right)^{\frac{n-y}{2}}\right)\right)\sum_{x'''\in \mathbb{YF}_{y}}\left(d(\varepsilon,x''')\cdot {{d_1'(x''',w)}}\right)=$$
$$=(\text{По Утверждению \ref{limitstrih} при $x'''\in\mathbb{YF}$, $w\in\mathbb{YF}_{\infty}$})=$$
$$=\left(\sum_{x''\in \mathbb{YF}_{n-y}}\left(q(x'')\cdot d\left(\varepsilon,x''1^{y}\right)\cdot \beta^y\cdot \left(1-\beta^2\right)^{\frac{n-y}{2}}\right)\right)\sum_{x'''\in \mathbb{YF}_{y}}\left(d(\varepsilon,x''')\cdot \lim_{m \to \infty}{\frac{d(x''',w_m)}{d(\varepsilon,w_m)}}\right)=$$
$$=\left(\sum_{x''\in \mathbb{YF}_{n-y}}\left(q(x'')\cdot d\left(\varepsilon,x''1^{y}\right)\cdot \beta^y\cdot \left(1-\beta^2\right)^{\frac{n-y}{2}}\right)\right)\lim_{m \to \infty}\left(\sum_{x'''\in \mathbb{YF}_{y}}\left(d(\varepsilon,x'''){\frac{d(x''',w_m)}{d(\varepsilon,w_m)}}\right)\right).$$

Заметим, что по Утверждению \ref{mera}, при $w\in\mathbb{YF}_\infty,$ $y,m\in\mathbb{N}_0$ если $|w_m|\ge y$, то 
$$\sum_{x'''\in\mathbb{YF}_y } \left(d(\varepsilon,x''')\frac{d(x''',w_m)}{d(\varepsilon,w_m)}\right)=1.$$

А значит если $m\ge y$, то $|w_m|\ge m\ge y $, то есть
$$\sum_{x'''\in\mathbb{YF}_y } \left(d(\varepsilon,x''')\frac{d(x''',w_m)}{d(\varepsilon,w_m)}\right)=1,$$
а значит
$$\lim_{m\to\infty}\left(\sum_{x'''\in\mathbb{YF}_y } \left(d(\varepsilon,x''')\frac{d(x''',w_m)}{d(\varepsilon,w_m)}\right)\right)=1.$$

Таким образом, наше выражение равняется следующему:
$$\left(\sum_{x''\in \mathbb{YF}_{n-y}}\left(q(x'')\cdot d\left(\varepsilon,x''1^{y}\right)\cdot \beta^y\cdot \left(1-\beta^2\right)^{\frac{n-y}{2}}\right)\right)\cdot1=$$
$$=\sum_{x''\in \mathbb{YF}_{n-y}}\left(q(x'')\cdot d\left(\varepsilon,x''1^{y}\right)\cdot \beta^y\cdot \left(1-\beta^2\right)^{\frac{n-y}{2}}\right),$$
что и требовалось.

Утверждение доказано.
\end{proof}

\begin{Lemma} \label{planedead}
Пусть $w\in\mathbb{YF}_\infty$, $\beta\in(0,1)$, $\varepsilon,\varepsilon_3\in\mathbb{R}_{>0}$. Тогда $\exists\varepsilon_1'\in\left(0,\beta^2\right):$ $\forall \overline{\varepsilon'}\in\mathbb{R}_{>0}$ $\exists N''\in\mathbb{N}_0:$ $\forall n\in\mathbb{N}_0:$ $n\ge N''$
$${\sum_{y\in \overline{n}[\beta,\varepsilon_1']}\left(\left({\sum_{v\in \overline{R'}(w,\beta,n,\varepsilon,y,\varepsilon_3)} T_{w,\beta,n}(v,y)}\right)+\left({\sum_{v\in \overline{R''}(w,\beta,n,\varepsilon,y)} T_{w,\beta,n}(v,y)}\right)\right) <\overline{\varepsilon'}}.$$
\end{Lemma}
\begin{proof}

Зафиксируем $n\in\mathbb{N}_0$ и $\varepsilon_1'\in \left(0,\beta^2\right)$ и воспользуемся определением волшебных таблиц:
$${\sum_{y\in \overline{n}[\beta,\varepsilon_1']}\left(\left({\sum_{v\in \overline{R'}(w,\beta,n,\varepsilon,y,\varepsilon_3)} T_{w,\beta,n}(v,y)}\right)+\left({\sum_{v\in \overline{R''}(w,\beta,n,\varepsilon,y)} T_{w,\beta,n}(v,y)}\right)\right)}=$$
$$=\left(\text{так как $\overline{R''}(w,\beta,n,\varepsilon,y)\subseteq \overline{K}(n,y)$}\right)=$$
$$={\sum_{y\in \overline{n}[\beta,\varepsilon_1']}\left(\left({\sum_{v\in \overline{R'}(w,\beta,n,\varepsilon,y,\varepsilon_3)} T_{w,\beta,n}(v,y)}\right)+0\right)}={\sum_{y\in \overline{n}[\beta,\varepsilon_1']}\left({\sum_{v\in \overline{R'}(w,\beta,n,\varepsilon,y,\varepsilon_3)} T_{w,\beta,n}(v,y)}\right)}=$$
$$=\left(\text{так как $\overline{R'}(w,\beta,n,\varepsilon,y,\varepsilon_3)\subseteq {K}(n,y)$}\right)=$$
$$={\sum_{y\in \overline{n}[\beta,\varepsilon_1']}\left({\sum_{v\in \overline{R'}(w,\beta,n,\varepsilon,y,\varepsilon_3)} \left(d(\varepsilon,v)\cdot q(v(y))\cdot {d_1'(v'(y),w)}\cdot \beta^y\cdot \left(1-\beta^2\right)^{\#v(y)}\right)}\right)}.$$

Ясно, что если $v\in\overline{R'}(w,\beta,n,\varepsilon,y,\varepsilon_3)$, то $|v(y)|=n-y$, а значит $\displaystyle n-y=|v(y)|=2d(v(y))+e(v(y))=2d(v(y))+2e(v(y))-e(v(y))=2(d(v(y))+e(v(y)))-e(v(y))=2\#(v(y))-e(v(y)) \Longrightarrow \#(v(y))=\frac{n-y+e(v(y))}{2}$. Таким образом, наше выражение равняется следующему:
$${\sum_{y\in \overline{n}[\beta,\varepsilon_1']}\left({\sum_{v\in \overline{R'}(w,\beta,n,\varepsilon,y,\varepsilon_3)} \left(d(\varepsilon,v)\cdot q(v(y))\cdot {d_1'(v'(y),w)}\cdot \beta^y\cdot \left(1-\beta^2\right)^{\frac{n-y+e(v(y))}{2}}\right)}\right)}=$$
$$={\sum_{y\in \overline{n}[\beta,\varepsilon_1']}\left({\sum_{v\in \overline{R'}(w,\beta,n,\varepsilon,y,\varepsilon_3)} \left(d(\varepsilon,v)\cdot q(v(y))\cdot {d_1'(v'(y),w)}\cdot \beta^y\cdot \left(1-\beta^2\right)^{{\frac{n-y}{2}}}\cdot \left(1-\beta^2\right)^{{\frac{e(v(y))}{2}}}\right)}\right)}.$$

\begin{Oboz}
Пусть $w\in\mathbb{YF}_\infty$, $\beta\in(0,1]$, $\varepsilon,\varepsilon_3,\varepsilon_1'\in\mathbb{R}_{>0}$, $n\in\mathbb{N}_0$. Тогда:
\begin{itemize}
    \item Если $\exists y\in\mathbb{N}_0,$ $v\in\mathbb{YF}:$ $y\in \overline{n}[\beta,\varepsilon_1']$, $v\in \overline{R'}(w,\beta,n,\varepsilon,y,\varepsilon_3)$, то
    $$\max^0_{\begin{smallmatrix}y\in \overline{n}[\beta,\varepsilon_1'],\\{v\in \overline{R'}(w,\beta,n,\varepsilon,y,\varepsilon_3)}\end{smallmatrix}} \left(1-\beta^2\right)^{{\frac{e(v(y))}{2}}}=\max_{\begin{smallmatrix}y\in \overline{n}[\beta,\varepsilon_1'],\\{v\in \overline{R'}(w,\beta,n,\varepsilon,y,\varepsilon_3)}\end{smallmatrix}} \left(1-\beta^2\right)^{{\frac{e(v(y))}{2}}};$$
    \item Если $\nexists y\in\mathbb{N}_0,$ $v\in\mathbb{YF}:$ $y\in \overline{n}[\beta,\varepsilon_1']$, $v\in \overline{R'}(w,\beta,n,\varepsilon,y,\varepsilon_3)$, то
    $$\max^0_{\begin{smallmatrix}y\in \overline{n}[\beta,\varepsilon_1'],\\{v\in \overline{R'}(w,\beta,n,\varepsilon,y,\varepsilon_3)}\end{smallmatrix}} \left(1-\beta^2\right)^{{\frac{e(v(y))}{2}}}=0.$$
\end{itemize}
\end{Oboz}

\renewcommand{\labelenumi}{\alph{enumi}$)$}
\renewcommand{\labelenumii}{\arabic{enumi}.\arabic{enumii}$^\circ$}
\renewcommand{\labelenumiii}{\arabic{enumi}.\arabic{enumii}.\arabic{enumiii}$^\circ$}

Далее рассмотрим два случая. Ясно, что:
\begin{enumerate}
    \item  Если $\nexists y\in\mathbb{N}_0,$ $v\in\mathbb{YF}$: $y\in \overline{n}[\beta,\varepsilon_1']$, $v\in \overline{R'}(w,\beta,n,\varepsilon,y,\varepsilon_3)$, то
    $${\sum_{y\in \overline{n}[\beta,\varepsilon_1']}\left({\sum_{v\in \overline{R'}(w,\beta,n,\varepsilon,y,\varepsilon_3)} \left(d(\varepsilon,v)\cdot q(v(y))\cdot {d_1'(v'(y),w)}\cdot \beta^y\cdot \left(1-\beta^2\right)^{{\frac{n-y}{2}}}\cdot \left(1-\beta^2\right)^{{\frac{e(v(y))}{2}}}\right)}\right)}=$$
    $$=0=0\cdot0={\sum_{y\in \overline{n}[\beta,\varepsilon_1']}\left({\sum_{v\in \overline{R'}(w,\beta,n,\varepsilon,y,\varepsilon_3)} \left(d(\varepsilon,v)\cdot q(v(y))\cdot {d_1'(v'(y),w)}\cdot \beta^y\cdot \left(1-\beta^2\right)^{{\frac{n-y}{2}}}\right)}\right)}\cdot$$
    $$\cdot\max^0_{\begin{smallmatrix}y\in \overline{n}[\beta,\varepsilon_1'],\\{v\in \overline{R'}(w,\beta,n,\varepsilon,y,\varepsilon_3)}\end{smallmatrix}} \left(1-\beta^2\right)^{{\frac{e(v(y))}{2}}}.$$

    \item  Если $\exists y\in\mathbb{N}_0,$ $v\in\mathbb{YF}:$ $y\in \overline{n}[\beta,\varepsilon_1']$, $v\in \overline{R'}(w,\beta,n,\varepsilon,y,\varepsilon_3)$, то    
    $${\sum_{y\in \overline{n}[\beta,\varepsilon_1']}\left({\sum_{v\in \overline{R'}(w,\beta,n,\varepsilon,y,\varepsilon_3)} \left(d(\varepsilon,v)\cdot q(v(y))\cdot {d_1'(v'(y),w)}\cdot \beta^y\cdot \left(1-\beta^2\right)^{{\frac{n-y}{2}}}\cdot \left(1-\beta^2\right)^{{\frac{e(v(y))}{2}}}\right)}\right)}\le$$
    $$\le{\sum_{y\in \overline{n}[\beta,\varepsilon_1']}\left({\sum_{v\in \overline{R'}(w,\beta,n,\varepsilon,y,\varepsilon_3)} \left(d(\varepsilon,v)\cdot q(v(y))\cdot {d_1'(v'(y),w)}\cdot \beta^y\cdot \left(1-\beta^2\right)^{{\frac{n-y}{2}}} \right)}\right)}\cdot$$
    $$\cdot\max_{\begin{smallmatrix}y\in \overline{n}[\beta,\varepsilon_1'],\\{v\in \overline{R'}(w,\beta,n,\varepsilon,y,\varepsilon_3)}\end{smallmatrix}} \left(1-\beta^2\right)^{{\frac{e(v(y))}{2}}}=$$
    $$={\sum_{y\in \overline{n}[\beta,\varepsilon_1']}\left({\sum_{v\in \overline{R'}(w,\beta,n,\varepsilon,y,\varepsilon_3)} \left(d(\varepsilon,v)\cdot q(v(y))\cdot {d_1'(v'(y),w)}\cdot \beta^y\cdot \left(1-\beta^2\right)^{{\frac{n-y}{2}}} \right)}\right)}\cdot$$
    $$\cdot\max^0_{\begin{smallmatrix}y\in \overline{n}[\beta,\varepsilon_1'],\\{v\in \overline{R'}(w,\beta,n,\varepsilon,y,\varepsilon_3)}\end{smallmatrix}} \left(1-\beta^2\right)^{{\frac{e(v(y))}{2}}}.$$
\end{enumerate}

А значит в любом из двух случаев (ясно, что других нет) наше выражение не превосходит следующее:
    $${\sum_{y\in \overline{n}[\beta,\varepsilon_1']}\left({\sum_{v\in \overline{R'}(w,\beta,n,\varepsilon,y,\varepsilon_3)} \left(d(\varepsilon,v)\cdot q(v(y))\cdot {d_1'(v'(y),w)}\cdot \beta^y\cdot \left(1-\beta^2\right)^{{\frac{n-y}{2}}} \right)}\right)}\cdot$$
    $$\cdot\max^0_{\begin{smallmatrix}y\in \overline{n}[\beta,\varepsilon_1'],\\{v\in \overline{R'}(w,\beta,n,\varepsilon,y,\varepsilon_3)}\end{smallmatrix}} \left(1-\beta^2\right)^{{\frac{e(v(y))}{2}}}\le$$
    $$\le\left(\text{так как если $w\in\mathbb{YF}_\infty$, $\beta\in(0,1)$, $\varepsilon,\varepsilon_3\in\mathbb{R}_{>0}$,  $n,y\in\mathbb{N}_0:$ $y\le n$, то $\overline{R'}(w,\beta,n,\varepsilon,y,\varepsilon_3)\subseteq K(n,y)$}\right)\le$$
    $$\le{\sum_{y\in \overline{n}[\beta,\varepsilon_1']}\left({\sum_{v\in K(n,y)} \left(d(\varepsilon,v)\cdot q(v(y))\cdot {d_1'(v'(y),w)}\cdot \beta^y\cdot \left(1-\beta^2\right)^{{\frac{n-y}{2}}} \right)}\right)}
    \cdot\max^0_{\begin{smallmatrix}y\in \overline{n}[\beta,\varepsilon_1'],\\{v\in \overline{R'}(w,\beta,n,\varepsilon,y,\varepsilon_3)}\end{smallmatrix}} \left(1-\beta^2\right)^{{\frac{e(v(y))}{2}}} \le$$
$$\le\left(\text{так как если $n\in\mathbb{N}_0,$ $\beta\in(0,1)$, $\varepsilon_1'\in\mathbb{R}_{>0}$, то $\overline{n}[\beta,\varepsilon_1']\subseteq \overline{n}$}\right)\le$$
    $$\le{\sum_{y=0}^n\left({\sum_{v\in K(n,y)} \left(d(\varepsilon,v)\cdot q(v(y))\cdot {d_1'(v'(y),w)}\cdot \beta^y\cdot \left(1-\beta^2\right)^{{\frac{n-y}{2}}} \right)}\right)}
    \cdot\max^0_{\begin{smallmatrix}y\in \overline{n}[\beta,\varepsilon_1'],\\{v\in \overline{R'}(w,\beta,n,\varepsilon,y,\varepsilon_3)}\end{smallmatrix}} \left(1-\beta^2\right)^{{\frac{e(v(y))}{2}}} =$$
    $$=\left(\text{По Утверждению \ref{sum1} при наших $w\in\mathbb{YF}_\infty$, $\beta\in(0,1]$, $n\in\mathbb{N}_0$, просуммированному по  $y\in\overline{n}$}\right)=$$
$$=\sum_{y=0}^n\left(\sum_{x''\in\mathbb{YF}_{n-y} }\left(q(x'')\cdot  d(\varepsilon,x''1^y)\cdot \beta^y\cdot   \left(1-\beta^2\right)^{\frac{n-y}{2}}\right)\right)\cdot\max^0_{\begin{smallmatrix}y\in \overline{n}[\beta,\varepsilon_1'],\\{v\in \overline{R'}(w,\beta,n,\varepsilon,y,\varepsilon_3)}\end{smallmatrix}} \left(1-\beta^2\right)^{{\frac{e(v(y))}{2}}} =$$
$$=\sum_{y=0}^n\left(\beta^y\cdot  \left(1-\beta^2\right)^{\frac{n-y}{2}}\cdot\sum_{x''\in\mathbb{YF}_{n-y} }\left(q(x'')d(\varepsilon,x''1^y)\right)\right)\cdot\max^0_{\begin{smallmatrix}y\in \overline{n}[\beta,\varepsilon_1'],\\{v\in \overline{R'}(w,\beta,n,\varepsilon,y,\varepsilon_3)}\end{smallmatrix}} \left(1-\beta^2\right)^{{\frac{e(v(y))}{2}}}.$$

По Утверждению \ref{dostalo} при зафиксированном нами $n\in\mathbb{N}_0$, просуммированному по $y\in\overline{n}$, это выражение равняется следующему:
$$\sum_{y=0}^n\left(\beta^y \cdot \left(1-\beta^2\right)^{\frac{n-y}{2}}\cdot\prod_{i=1}^{\left\lfloor \frac{n-y}{2} \right\rfloor} \frac{2i+y}{2i}\right)\cdot\max^0_{\begin{smallmatrix}y\in \overline{n}[\beta,\varepsilon_1'],\\{v\in \overline{R'}(w,\beta,n,\varepsilon,y,\varepsilon_3)}\end{smallmatrix}} \left(1-\beta^2\right)^{{\frac{e(v(y))}{2}}}=$$
$$=\sum_{y=0}^n\left(\left(\prod_{i=1}^{\left\lfloor \frac{n-y}{2} \right\rfloor} \frac{2i+y}{2i}\right)\beta^y\cdot  \left(1-\beta^2\right)^{\frac{n-y}{2}}\right)\cdot\max^0_{\begin{smallmatrix}y\in \overline{n}[\beta,\varepsilon_1'],\\{v\in \overline{R'}(w,\beta,n,\varepsilon,y,\varepsilon_3)}\end{smallmatrix}} \left(1-\beta^2\right)^{{\frac{e(v(y))}{2}}}\le$$
$$\le\sum_{y=0}^n\left(\left(\prod_{i=1}^{\left\lfloor \frac{n-y}{2} \right\rfloor} \frac{2i+y}{2i}\right)\beta^y\cdot  \left(1-\beta^2\right)^{\left\lfloor\frac{n-y}{2}\right\rfloor}\right)\cdot\max^0_{\begin{smallmatrix}y\in \overline{n}[\beta,\varepsilon_1'],\\{v\in \overline{R'}(w,\beta,n,\varepsilon,y,\varepsilon_3)}\end{smallmatrix}} \left(1-\beta^2\right)^{{\frac{e(v(y))}{2}}}\le$$
$$\le \text{(По Утверждению \ref{lehamed} при наших $\beta\in (0,1)$ и $n\in\mathbb{N}_0$)}\le $$
$$\le\left(1+\frac{1}{\beta}\right)\cdot\max^0_{\begin{smallmatrix}y\in \overline{n}[\beta,\varepsilon_1'],\\{v\in \overline{R'}(w,\beta,n,\varepsilon,y,\varepsilon_3)}\end{smallmatrix}} \left(1-\beta^2\right)^{{\frac{e(v(y))}{2}}}.$$

    Что значит, что $v\in \overline{R'}(w,\beta,n,\varepsilon,y,\varepsilon_3)$ при $y\in \overline{n}[\beta,\varepsilon_1']$?
    
    Это значит, что $v\in K(n,y)$, $\pi(v)\notin(\pi(w)(\beta-\varepsilon),\pi(w)(\beta+\varepsilon))$, $\pi_y(v)\notin(\beta-\varepsilon_3,\beta+\varepsilon_3)$, $y\in\left(\overline{n}\cap \left(n\left(\beta^2-\varepsilon_1'\right),n\left(\beta^2+\varepsilon_1'\right)\right)\right)$.
    
    Сначала рассмотрим чётные игреки. 

        Мы знаем, что по Утверждению \ref{abbalbisk} при наших $\varepsilon_3\in\mathbb{R}_{>0}$ и $\beta\in(0,1)$ $\exists \varepsilon_1\in\mathbb{R}_{>0}:$ $\forall d\in\mathbb{N}_0$ $\exists N\in\mathbb{N}_0:$ $ N\ge 1,$ $\forall a,b\in\mathbb{N}_0,$ $x\in\mathbb{YF} :$ $a,b\ge 1,$ $(a,b)\in U(\beta,\varepsilon_1,N),$ $ |x|\in \{2b,2b+1\}$
        $$e(x)\ge d\text{ или } \beta-\varepsilon_3<\pi\left(x1^{2a}\right)<\beta+\varepsilon_3. $$
        
        Рассмотрим $\varepsilon_1\in\mathbb{R}_{>0}$, удовлетворяющее условию Утверждения, зафиксируем какое-нибудь $d\in\mathbb{N}_0$, для него рассмотрим $N\in\mathbb{N}_0:$ $N\ge 1$, удовлетворяющее условию Утверждения. 
        
        Теперь пусть $\varepsilon_{10}=\min\left(\frac{\beta^2}{2},\frac{1-\beta^2}{2},\frac{\varepsilon_{1}}{2}\right)\in\left(0,\beta^2\right)$, $n\in\mathbb{N}_0:$ $ n\ge N_0:= \max\left(2N+2,\left\lceil\frac{\displaystyle 2\left(\beta^2+\frac{\displaystyle\varepsilon_1}{\displaystyle 2}\right)}{\displaystyle \frac{\displaystyle\varepsilon_1}{\displaystyle 2}}+3\right\rceil,\frac{\displaystyle 4}{\displaystyle 1-\beta^2}\right)$ и $v\in \overline{R'}(w,\beta,n,\varepsilon,y,\varepsilon_3)$ при $y\in \overline{n}[\beta,\varepsilon_{10}](2,0)$.
        
        
        Наконец, пусть $a=\frac{y}{2}$, $b=\left\lfloor\frac{n-y}{2}\right\rfloor$, $x=v(y)$ ($v\in \overline{R'}(w,\beta,n,\varepsilon,y,\varepsilon_3)\subseteq K(n,y)\Longrightarrow v(y)$ существует).
        
        Тогда заметим, что 
        \begin{itemize}
            \item $$a= \frac{y}{2}\ge  \left(\text{так как $y\in \overline{n}[\beta,\varepsilon_{10}]\Longleftrightarrow y\in \left(\overline{n}\cap \left(n\left(\beta^2-\varepsilon_{10}\right),n\left(\beta^2+\varepsilon_{10}\right)\right)\right)$}\right) \ge$$
            $$\ge n\left(\beta^2-\varepsilon_{10}\right)\ge  n\left(\beta^2-\frac{\beta^2}{2}\right)=\frac{n\beta^2}{2}>0\Longrightarrow\left(\text{так как $a\in\mathbb{N}_0$}\right)\Longrightarrow a\ge 1;$$
            \item $$b=\left\lfloor\frac{n-y}{2}\right\rfloor \ge  \left(\text{так как $y\in \overline{n}[\beta,\varepsilon_{10}]\Longleftrightarrow y\in \left(\overline{n}\cap \left(n\left(\beta^2-\varepsilon_{10}\right),n\left(\beta^2+\varepsilon_{10}\right)\right)\right)$}\right) \ge$$
            $$\ge \left\lfloor\frac{n-n\left(\beta^2+\varepsilon_{10}\right)}{2}\right\rfloor\ge \left\lfloor\frac{n\left(1-\beta^2-\varepsilon_{10}\right)}{2}\right\rfloor\ge \left\lfloor\frac{n\left(1-\beta^2-\frac{1-\beta^2}{2}\right)}{2}\right\rfloor=$$
            $$=\left\lfloor n\frac{\left(1-\beta^2\right)}{4}\right\rfloor\ge \left\lfloor\frac{4}{1-\beta^2}\cdot\frac{\left(1-\beta^2\right)}{4}\right\rfloor=\left\lfloor1\right\rfloor=1;$$
            \item $$a+b=\frac{y}{2}+\left\lfloor\frac{n-y}{2}\right\rfloor\ge\frac{y}{2}+\left(\frac{n-y}{2}-1\right)=\frac{n}{2}-1\ge \frac{N_0}{2}-1\ge \frac{2N+2}{2}-1  =N;$$
            \item $$\frac{a}{a+b}=\frac{\frac{y}{2}}{\frac{y}{2}+\left\lfloor\frac{n-y}{2}\right\rfloor}=\frac{y}{y+2\left\lfloor\frac{n-y}{2}\right\rfloor}\ge \frac{y}{y+2\left(\frac{n-y}{2}\right)}=\frac{y}{n}\ge$$
            $$\ge \left(\text{так как $y\in \overline{n}[\beta,\varepsilon_{10}]\Longleftrightarrow y\in \left(\overline{n}\cap \left(n\left(\beta^2-\varepsilon_{10}\right),n\left(\beta^2+\varepsilon_{10}\right)\right)\right)$}\right) \ge $$
            $$\ge \frac{n\left(\beta^2-\varepsilon_{10}\right)}{n}=\beta^2-\varepsilon_{10}\ge\beta^2-\frac{\varepsilon_{1}}{2}>\beta^2-{\varepsilon_{1}};$$
            \item $$\frac{a}{a+b}=\frac{\frac{y}{2}}{\frac{y}{2}+\left\lfloor\frac{n-y}{2}\right\rfloor}=\frac{y}{y+2\left\lfloor\frac{n-y}{2}\right\rfloor}\le \frac{y}{y+2\left(\frac{n-y}{2}-1\right)}=\frac{y}{n-2}\le$$
            $$\le \left(\text{так как $y\in \overline{n}[\beta,\varepsilon_{10}]\Longleftrightarrow y\in \left(\overline{n}\cap \left(n\left(\beta^2-\varepsilon_{10}\right),n\left(\beta^2+\varepsilon_{10}\right)\right)\right)$}\right) \le $$
            $$\le \frac{n\left(\beta^2+\varepsilon_{10}\right)}{n-2}=\frac{(n-2)\left(\beta^2+\varepsilon_{10}\right)}{n-2}+\frac{2\left(\beta^2+\varepsilon_{10}\right)}{n-2}=\beta^2+\varepsilon_{10}+\frac{2\left(\beta^2+\varepsilon_{10}\right)}{n-2}\le$$
            $$\le\beta^2+\frac{ \displaystyle\varepsilon_1}{\displaystyle 2}+\frac{2\left(\beta^2+\frac{ \displaystyle\varepsilon_1}{\displaystyle 2}\right)}{n-2}\le \beta^2+\frac{ \displaystyle\varepsilon_1}{\displaystyle 2}+\frac{2\left(\beta^2+\frac{ \displaystyle\varepsilon_1}{\displaystyle 2}\right)}{N_0-2} \le  \beta^2+\frac{ \displaystyle\varepsilon_1}{\displaystyle 2}+\frac{2\left(\beta^2+\frac{ \displaystyle\varepsilon_1}{\displaystyle 2}\right)}{\frac{\displaystyle 2\left(\beta^2+\frac{\displaystyle\varepsilon_1}{\displaystyle 2}\right)}{\displaystyle \frac{\displaystyle\varepsilon_1}{\displaystyle 2}}+3-2}<$$
            $$<\beta^2+\frac{ \displaystyle\varepsilon_1}{\displaystyle 2}+\frac{2\left(\beta^2+\frac{ \displaystyle\varepsilon_1}{\displaystyle 2}\right)}{\frac{\displaystyle 2\left(\beta^2+\frac{\displaystyle\varepsilon_1}{\displaystyle 2}\right)}{\displaystyle \frac{\displaystyle\varepsilon_1}{\displaystyle 2}}}=\beta^2+\frac{ \displaystyle\varepsilon_1}{\displaystyle 2}+\frac{ \displaystyle\varepsilon_1}{\displaystyle 2}=\beta^2+\varepsilon_1;$$
            \item  Если $(n-y) \;mod\;{2} = 0$, то 
            $$|x|=n-y=2\frac{n-y}{2}=2\left\lfloor\frac{n-y}{2}\right\rfloor=2b;$$
            \item Если $(n-y) \;mod\;{2} = 1$, то 
            $$|x|=n-y=2\frac{n-y-1}{2}+1=2\left\lfloor\frac{n-y}{2}\right\rfloor+1=2b+1.$$
        \end{itemize}
        
        То есть $a,b\in\mathbb{N}_0,$ $x\in\mathbb{YF} :$ $a,b\ge 1,$ $(a,b)\in U(\beta,\varepsilon_1,N),$ $ |x|\in \{2b,2b+1\}$.
                
        А это по Утверждению \ref{abbalbisk} значит, что 
        $$e(x)\ge d\text{ или } \beta-\varepsilon_3<\pi\left(x1^{2a}\right)<\beta+\varepsilon_3, $$
        то есть
        $$e(v(y))\ge d\text{ или } \beta-\varepsilon_3<\pi\left(v(y)1^{2a}\right)<\beta+\varepsilon_3\Longleftrightarrow $$
        $$\Longleftrightarrow e(v(y))\ge d\text{ или } \beta-\varepsilon_3<\pi\left(v(y)1^{2\frac{y}{2}}\right)<\beta+\varepsilon_3\Longleftrightarrow $$
        $$\Longleftrightarrow e(v(y))\ge d\text{ или } \beta-\varepsilon_3<\pi\left(v(y)1^{y}\right)<\beta+\varepsilon_3\Longleftrightarrow $$
        $$\Longleftrightarrow\text{(По определению $\pi_y(v)$)}\Longleftrightarrow$$
        $$\Longleftrightarrow e(v(y))\ge d\text{ или } \beta-\varepsilon_3<\pi_y(v)<\beta+\varepsilon_3\Longleftrightarrow $$
        $$\Longleftrightarrow\left(\text{так как $v\in\overline{R'}(w,\beta,n,\varepsilon,y,\varepsilon_3$)}\right)\Longleftrightarrow$$
        $$\Longleftrightarrow e(v(y))\ge d.$$
        
        Таким образом, мы поняли, что 
при наших $w\in\mathbb{YF}_\infty$, $\beta\in(0,1)$, $\varepsilon,\varepsilon_3\in\mathbb{R}_{>0}$ $\exists\varepsilon_{10}\in\left(0,\beta^2\right)$: $\forall d\in\mathbb{N}_0$ $\exists N_0\in\mathbb{N}_0:$ $\forall v\in \overline{R'}(w,\beta,n,\varepsilon,y,\varepsilon_3)$ при $n\in\mathbb{N}_0:$ $n\ge N'$, $y\in \overline{n}[\beta,\varepsilon_{10}](2,0)$
$$e(v(y))\ge d.$$

    Теперь рассмотрим нечётные игреки: 

        Мы знаем, что по Утверждению \ref{abbalbisk1} при наших $\varepsilon_3\in\mathbb{R}_{>0}$ и $\beta\in(0,1)$ $\exists \varepsilon_1\in\mathbb{R}_{>0}:$ $\forall d\in\mathbb{N}_0$ $\exists N\in\mathbb{N}_0:$ $ N\ge 1,$ $\forall a,b\in\mathbb{N}_0,$ $x\in\mathbb{YF} :$ $(a,b)\in U(\beta,\varepsilon_1,N), $ $|x|\in \{2b,2b+1\}$
        $$e(x)\ge d\text{ или } \beta-\varepsilon_3<\pi\left(x1^{2a-1}\right)<\beta+\varepsilon_3. $$
        
        Рассмотрим $\varepsilon_1\in\mathbb{R}_{>0}$, удовлетворяющее условию Утверждения, зафиксируем какое-нибудь $d\in\mathbb{N}_0$, для него рассмотрим $N\in\mathbb{N}_0:$ $N\ge 1$, удовлетворяющее условию Утверждения. 
        
        Теперь пусть $\varepsilon_{11}=\min\left(\frac{\beta^2}{2},\frac{1-\beta^2}{2},\frac{\varepsilon_1}{2}\right)\in\left(0,\beta^2\right)$, $n\in\mathbb{N}_0:$ $ n\ge N_1:= \max\left(2N+2,\left\lceil\frac{\displaystyle \beta^2+\frac{\displaystyle\varepsilon_1}{\displaystyle 2}+1}{\displaystyle \frac{\displaystyle\varepsilon_1}{\displaystyle 2}}+2\right\rceil,\frac{\displaystyle 4}{\displaystyle 1-\beta^2}\right)$ и $v\in \overline{R'}(w,\beta,n,\varepsilon,y,\varepsilon_3)$ при $y\in \overline{n}[\beta,\varepsilon_{11}](2,1)$.
        
        Наконец, пусть $a=\frac{y+1}{2}$, $b=\left\lfloor\frac{n-y}{2}\right\rfloor$, $x=v(y)$ ($v\in \overline{R'}(w,\beta,n,\varepsilon,y,\varepsilon_3)\subseteq K(n,y)\Longrightarrow v(y)$ существует).
        
        Тогда заметим, что 
        \begin{itemize}
            \item $$a= \frac{y+1}{2}\ge \frac{0+1}{2}=\frac{1}{2}>0 \Longrightarrow\left(\text{так как $a\in\mathbb{N}_0$}\right)\Longrightarrow a\ge 1;$$
            \item $$b=\left\lfloor\frac{n-y}{2}\right\rfloor \ge  \left(\text{так как $y\in \overline{n}[\beta,\varepsilon_{10}]\Longleftrightarrow y\in \left(\overline{n}\cap \left(n\left(\beta^2-\varepsilon_{10}\right),n\left(\beta^2+\varepsilon_{10}\right)\right)\right)$}\right) \ge$$
            $$\ge \left\lfloor\frac{n-n\left(\beta^2+\varepsilon_{10}\right)}{2}\right\rfloor\ge \left\lfloor\frac{n\left(1-\beta^2-\varepsilon_{10}\right)}{2}\right\rfloor\ge \left\lfloor\frac{n\left(1-\beta^2-\frac{1-\beta^2}{2}\right)}{2}\right\rfloor=$$
            $$=\left\lfloor n\frac{\left(1-\beta^2\right)}{4}\right\rfloor\ge \left\lfloor\frac{4}{1-\beta^2}\cdot\frac{\left(1-\beta^2\right)}{4}\right\rfloor=\left\lfloor1\right\rfloor=1;$$
            \item $$a+b=\frac{y+1}{2}+\left\lfloor\frac{n-y}{2}\right\rfloor\ge\frac{y}{2}+\left(\frac{n-y}{2}-1\right)=\frac{n}{2}-1\ge\frac{N_1}{2}-1\ge \frac{2N+2}{2}-1  =N;$$
            \item $$\frac{a}{a+b}=\frac{\frac{y+1}{2}}{\frac{y+1}{2}+\left\lfloor\frac{n-y}{2}\right\rfloor}=\frac{y+1}{y+1+2\left\lfloor\frac{n-y}{2}\right\rfloor}\ge \frac{y+1}{y+1+2\left(\frac{n-y}{2}\right)}\ge \frac{y}{y+2\left(\frac{n-y}{2}\right)}=\frac{y}{n}\ge$$
            $$\ge \left(\text{так как $y\in \overline{n}[\beta,\varepsilon_{11}]\Longleftrightarrow y\in \left(\overline{n}\cap \left(n\left(\beta^2-\varepsilon_{11}\right),n\left(\beta^2+\varepsilon_{11}\right)\right)\right)$}\right) \ge $$
            $$\ge \frac{n\left(\beta^2-\varepsilon_{11}\right)}{n}=\beta^2-\varepsilon_{11}\ge\beta^2-\frac{\varepsilon_{1}}{2}>\beta^2-{\varepsilon_{1}};$$
            \item $$\frac{a}{a+b}=\frac{\frac{y+1}{2}}{\frac{y+1}{2}+\left\lfloor\frac{n-y}{2}\right\rfloor}=\frac{y+1}{y+1+2\left\lfloor\frac{n-y}{2}\right\rfloor}\le \frac{y+1}{y+1+2\left(\frac{n-y}{2}-1\right)}=\frac{y+1}{n-1}\le$$
            $$\le \left(\text{так как $y\in \overline{n}[\beta,\varepsilon_{11}]\Longleftrightarrow y\in \left(\overline{n}\cap \left(n\left(\beta^2-\varepsilon_{11}\right),n\left(\beta^2+\varepsilon_{11}\right)\right)\right)$}\right) \le $$
            $$\le \frac{n\left(\beta^2+\varepsilon_{11}\right)+1}{n-1}=\frac{(n-1)\left(\beta^2+\varepsilon_{11}\right)}{n-1}+\frac{\left(\beta^2+\varepsilon_{11}\right)+1}{n-1}=\beta^2+\varepsilon_{11}+\frac{\left(\beta^2+\varepsilon_{11}\right)+1}{n-1}\le$$
            $$\le\beta^2+\frac{ \displaystyle\varepsilon_1}{\displaystyle 2}+\frac{\beta^2+\frac{ \displaystyle\varepsilon_1}{\displaystyle 2}+1}{n-1}\le \beta^2+\frac{ \displaystyle\varepsilon_1}{\displaystyle 2}+\frac{\beta^2+\frac{ \displaystyle\varepsilon_1}{\displaystyle 2}+1}{N_1-1}\le \beta^2+\frac{ \displaystyle\varepsilon_1}{\displaystyle 2}+\frac{\beta^2+\frac{ \displaystyle\varepsilon_1}{\displaystyle 2}+1}{\frac{\displaystyle \beta^2+\frac{\displaystyle\varepsilon_1}{\displaystyle 2}+1}{\displaystyle \frac{\displaystyle\varepsilon_1}{\displaystyle 2}}+2-1}<$$
            $$<\beta^2+\frac{ \displaystyle\varepsilon_1}{\displaystyle 2}+\frac{\beta^2+\frac{ \displaystyle\varepsilon_1}{\displaystyle 2}+1}{\frac{\displaystyle \beta^2+\frac{\displaystyle\varepsilon_1}{\displaystyle 2}+1}{\displaystyle \frac{\displaystyle\varepsilon_1}{\displaystyle 2}}}=\beta^2+\frac{ \displaystyle\varepsilon_1}{\displaystyle 2}+\frac{ \displaystyle\varepsilon_1}{\displaystyle 2}=\beta^2+\varepsilon_1;$$
            \item  Если $(n-y) \;mod\;{2} = 0$, то 
            $$|x|=n-y=2\frac{n-y}{2}=2\left\lfloor\frac{n-y}{2}\right\rfloor=2b;$$
            \item Если $(n-y) \;mod\; {2} = 1$, то 
            $$|x|=n-y=2\frac{n-y-1}{2}+1=2\left\lfloor\frac{n-y}{2}\right\rfloor+1=2b+1.$$
        \end{itemize}
        
        То есть $a,b\in\mathbb{N}_0,$ $x\in\mathbb{YF}:$ $a,b\ge 1,$ $(a,b)\in U(\beta,\varepsilon_1,N),$ $ |x|\in \{2b,2b+1\}$.
        
        А это по Утверждению \ref{abbalbisk1} значит, что 
        $$e(x)\ge d\text{ или } \beta-\varepsilon_3<\pi\left(x1^{2a-1}\right)<\beta+\varepsilon_3, $$
        то есть
        $$e(v(y))\ge d\text{ или } \beta-\varepsilon_3<\pi\left(v(y)1^{2a-1}\right)<\beta+\varepsilon_3\Longleftrightarrow $$
        $$\Longleftrightarrow e(v(y))\ge d\text{ или } \beta-\varepsilon_3<\pi\left(v(y)1^{2\frac{y+1}{2}-1}\right)<\beta+\varepsilon_3\Longleftrightarrow $$
        $$\Longleftrightarrow e(v(y))\ge d\text{ или } \beta-\varepsilon_3<\pi\left(v(y)1^{y}\right)<\beta+\varepsilon_3\Longleftrightarrow $$
        $$\Longleftrightarrow\text{(По определению $\pi_y(v)$)}\Longleftrightarrow$$
        $$\Longleftrightarrow e(v(y))\ge d\text{ или } \beta-\varepsilon_3<\pi_y(v)<\beta+\varepsilon_3\Longleftrightarrow $$
        $$\Longleftrightarrow\left(\text{так как $v\in\overline{R'}(w,\beta,n,\varepsilon,y,\varepsilon_3)$}\right)\Longleftrightarrow$$
        $$\Longleftrightarrow e(v(y))\ge d.$$
        
        Таким образом, мы поняли, что 
при наших $w\in\mathbb{YF}_\infty$, $\beta\in(0,1)$, $\varepsilon,\varepsilon_3\in\mathbb{R}_{>0}$ $\exists\varepsilon_{11}\in\left(0,\beta^2\right)$: $\forall d\in\mathbb{N}_0$ $\exists N_1\in\mathbb{N}_0:$ $\forall v\in \overline{R'}(w,\beta,n,\varepsilon,y,\varepsilon_3)$ при $n\in\mathbb{N}_0:$ $n\ge N'$, $y\in \overline{n}[\beta,\varepsilon_{11}](2,1)$
$$e(v(y))\ge d.$$

Итак, объединяем информацию про чётные и нечётные игреки:

Мы поняли, что 
при наших $w\in\mathbb{YF}_\infty$,  $\beta\in(0,1)$, $\varepsilon,\varepsilon_3\in\mathbb{R}_{>0}$ $\exists\varepsilon_1'=\min(\varepsilon_{10},\varepsilon_{11})\in \left(0,\beta^2\right)$: $\forall d\in\mathbb{N}_0$ $\exists N'=\max(N_0,N_1)\in\mathbb{N}_0:$ $\forall v\in \overline{R'}(w,\beta,n,\varepsilon,y,\varepsilon_3)$ при $n\in\mathbb{N}_0:$ $ n\ge N'$, $y\in \overline{n}[\beta,\varepsilon_{1}']$
$$e(v(y))\ge d.$$

Таким образом, при наших $w\in\mathbb{YF}_\infty$,  $\beta\in(0,1)$, $\varepsilon,\varepsilon_3\in\mathbb{R}_{>0}$ $\exists\varepsilon_{1}'\in\left(0,\beta^2\right):$ $\forall d\in\mathbb{N}_0$ $\exists N'\in\mathbb{N}_0:$ при $n\in\mathbb{N}_0:$ $ n\ge N'$
$$\max_{\begin{smallmatrix}y\in \overline{n}[\beta,\varepsilon_1'],\\{v\in \overline{R'}(w,\beta,n,\varepsilon,y,\varepsilon_3)}\end{smallmatrix}} \left(1-\beta^2\right)^{{\frac{e(v(y))}{2}}}\le \left(1-\beta^2\right)^{{\frac{d}{2}}}.$$

А значит при наших $w\in\mathbb{YF}_\infty$, $\beta\in(0,1)$, $\varepsilon,\varepsilon_3\in\mathbb{R}_{>0}$ $\exists\varepsilon_{1}'\in\left(0,\beta^2\right)$: $\forall d\in\mathbb{N}_0$ $\exists N'\in\mathbb{N}_0$: при $n\in\mathbb{N}_0:$ $ n\ge N'$
$$\max^0_{\begin{smallmatrix}y\in \overline{n}[\beta,\varepsilon_1'],\\{v\in \overline{R'}(w,\beta,n,\varepsilon,y,\varepsilon_3)}\end{smallmatrix}} \left(1-\beta^2\right)^{{\frac{e(v(y))}{2}}}\le \left(1-\beta^2\right)^{{\frac{d}{2}}}.$$

Ясно, что наших $\beta\in(0,1)$ и $\overline{\varepsilon'}\in\mathbb{R}_{>0}$ $\exists d\in\mathbb{N}_0:$
$$\left(1-\beta^2\right)^{{\frac{d}{2}}}<\frac{\overline{\varepsilon'}}{1+\frac{1}{\beta}}.$$

Зафиксируем это $d$.

Как мы поняли при наших $w\in\mathbb{YF}_\infty$, $\beta\in\mathbb{R}_{>0}$, $\varepsilon,\varepsilon_3\in\mathbb{R}_{>0}$ $\exists\varepsilon_{1}'\in\left(0,\beta^2\right)$: при только что зафиксированном $d$ $\exists N''\in\mathbb{N}_0$: при $n\in\mathbb{N}_0:$ $ n\ge N''$
$${\sum_{y\in \overline{n}[\beta,\varepsilon_1']}\left(\left({\sum_{v\in \overline{R'}(w,\beta,n,\varepsilon,y,\varepsilon_3)} T_{w,\beta,n}(v,y)}\right)+\left({\sum_{v\in \overline{R''}(w,\beta,n,\varepsilon,y)} T_{w,\beta,n}(v,y)}\right)\right)}\le$$
$$\le\left(1+\frac{1}{\beta}\right)\cdot\max^0_{\begin{smallmatrix}y\in \overline{n}[\beta,\varepsilon_1'],\\{v\in \overline{R'}(w,\beta,n,\varepsilon,y,\varepsilon_3)}\end{smallmatrix}} \left(1-\beta^2\right)^{{\frac{e(v(y))}{2}}}<\left(1+\frac{1}{\beta}\right)\cdot\frac{\overline{\varepsilon'}}{1+\frac{1}{\beta}}=\overline{\varepsilon'},$$
что и требовалось.

        Лемма доказана.
\end{proof}

\begin{Lemma} \label{hyde}
Пусть $w\in\mathbb{YF}_\infty$, $\beta\in(0,1)$, $\varepsilon,\varepsilon_1'\in\mathbb{R}_{>0}$. Тогда
$${\sum_{y\in \overline{n}\{\beta,\varepsilon_1'\}}\left({\sum_{v\in \overline{R}(w,\beta,n,\varepsilon)} T_{w,\beta,n}(v,y)}\right) \xrightarrow{n\to \infty}0}.$$
\end{Lemma}
\begin{proof}

Заметим, что функция $T$ неотрицательна, а также то, что $\forall w\in\mathbb{YF}_\infty$, $\beta\in(0,1),$ $n\in\mathbb{N}_0$, $\varepsilon\in\mathbb{R}_{>0}$ 
$$\overline{R}(w,\beta,n,\varepsilon)\subseteq \mathbb{YF}_n.$$

\renewcommand{\labelenumi}{\alph{enumi}$)$}
\renewcommand{\labelenumii}{\arabic{enumii}$^\circ$}
\renewcommand{\labelenumiii}{\arabic{enumi}.\arabic{enumii}.\arabic{enumiii}$^\circ$}

Это значит, что
$$0\le{\sum_{y\in \overline{n}\{\beta,\varepsilon_1'\}}\left({\sum_{v\in \overline{R}(w,\beta,n,\varepsilon)} T_{w,\beta,n}(v,y)}\right)}\le{\sum_{y\in \overline{n}\{\beta,\varepsilon_1'\}}\left({\sum_{v\in\mathbb{YF}_n} T_{w,\beta,n}(v,y)}\right)}. $$

Рассмотрим две подпоследовательности:
\begin{enumerate}
    \item Подпоследовательность $n\in\mathbb{N}_0:$ $n\;mod\;{2}=0$.
    
    Для начала рассмотрим чётные игреки.
        
    Зафиксируем какое-то $n\in\mathbb{N}_0:$ $n\;mod\;{2}=0$.
    
    Если $y\in\mathbb{N}_0:$ $y \le n$, то по Утверждению \ref{stolb} при $w\in\mathbb{YF}_\infty$, $\beta\in(0,1]$, $n,y\in\mathbb{N}_0$
    $$\sum_{x\in\mathbb{YF}_n} T_{w,\beta,n}(x,y)\le \left(\prod_{i=1}^{\left\lfloor \frac{n-y}{2} \right\rfloor} \frac{2i+y}{2i}\right)\beta^{y}\left(1-\beta^2\right)^{\left\lfloor \frac{n-y}{2} \right\rfloor}.$$
    
    Ясно, что мы можем просуммировать это выражение по $y\in{\overline{n}\{\beta,\varepsilon_1'\}(2,0)}$. Просуммируем:
    $$\sum_{{y\in\overline{n}\{\beta,\varepsilon_1'\}(2,0)}}\left(\sum_{x\in\mathbb{YF}_n} T_{w,\beta,n}(x,y)\right)\le$$
    $$\le\sum_{{y\in\overline{n}\{\beta,\varepsilon_1'\}(2,0)}}\left(\left( \prod_{i=1}^{\left\lfloor \frac{n-y}{2} \right\rfloor} \frac{2i+y}{2i}\right)\beta^{y}\left(1-\beta^2\right)^{\left\lfloor \frac{n-y}{2} \right\rfloor}\right)=$$
    $$=\left(\text{Так как ясно, что если $n \;mod\;{2}=0$ и $y\;mod\;{2}=0$, то $\left\lfloor \frac{n-y}{2} \right\rfloor=\frac{n-y}{2}$}\right)=$$
    $$=\sum_{{y\in\overline{n}\{\beta,\varepsilon_1'\}(2,0)}} \left(\left(\prod_{i=1}^{ \frac{n-y}{2}} \frac{2i+y}{2i}\right)\beta^{y}\left(1-\beta^2\right)^{ \frac{n-y}{2}}\right)=$$
    $$=\sum_{{y\in\overline{n}\{\beta,\varepsilon_1'\}(2,0)}} \left(\left(\prod_{i=1}^{ \frac{n-2\frac{y}{2}}{2}} \frac{2i+2\frac{y}{2}}{2i}\right)\beta^{2\frac{y}{2}}\left(1-\beta^2\right)^{ \frac{n-2\frac{y}{2}}{2}}\right).$$
    
    Ясно, что если $y$ пробегает все значения в множестве ${{\overline{n}\{\beta,\varepsilon_1'\}(2,0)}},$ при $n\in\mathbb{N}_0:$ $n\;mod\;{2}=0$, то $\frac{y}{2}$ пробегает все значения в множестве $\overline{n}_{00}\{\beta,\varepsilon_1'\}$ (просто по определению этого множества), то есть данное выражение равняется следующему: 
    $$\sum_{y'\in \overline{n}_{00}\{\beta,\varepsilon_1'\}} \left(\left(\prod_{i=1}^{ \frac{n-2y'}{2}} \frac{2i+2y'}{2i}\right)\beta^{2y'}\left(1-\beta^2\right)^{ \frac{n-2y'}{2}}\right)=$$
    $$=\sum_{y'\in \overline{n}_{00}\{\beta,\varepsilon_1'\}} \left(\left(\prod_{i=1}^{\frac{n}{2}-y' } \frac{i+y'}{i}\right)\beta^{2y'}\left(1-\beta^2\right)^{ \frac{n}{2}-y' }\right)=$$
    $$=\sum_{y'\in \overline{n}_{00}\{\beta,\varepsilon_1'\}} \left(\left(\frac{\displaystyle\prod_{i=y'+1}^{\frac{n}{2}}i}{\displaystyle\prod_{i=1}^{\frac{n}{2}-y'}i}\right)\left(\beta^2\right)^{y'}\left(1-\beta^2\right)^{ \frac{n}{2}-y'}\right)=$$
    $$=\sum_{y'\in \overline{n}_{00}\{\beta,\varepsilon_1'\}  } \left(\binom{\frac{n}{2}}{y'}\left(\beta^2\right)^{y'}\left(1-\beta^2\right)^{ \frac{n}{2} -y'}\right).$$
    
    Ясно, что 
    $$\lim_{n\to\infty}^{(2,0)}\left(\sum_{y'\in \overline{n}_{00}\{\beta,\varepsilon_1'\}  } \left(\binom{\frac{n}{2}}{y'}\left(\beta^2\right)^{y'}\left(1-\beta^2\right)^{ \frac{n}{2} -y'}\right)\right)=$$
    $$=\lim_{n\to\infty}\left(\sum_{y'\in \overline{2n}_{00}\{\beta,\varepsilon_1'\}} \left(\binom{n}{y'}\left(\beta^2\right)^{y'}\left(1-\beta^2\right)^{n -y'}\right)\right)=$$
    $$=\text{(просто по определению множества $\overline{n}_{00,2}\{\beta,\varepsilon_1'\}$)}=$$
    $$=\lim_{n\to\infty}\left(\sum_{y'\in \overline{n}_{00,2}\{\beta,\varepsilon_1'\}} \left(\binom{n}{y'}\left(\beta^2\right)^{y'}\left(1-\beta^2\right)^{n -y'}\right)\right).$$
    
    Мы знаем, что
   $$\overline{n}_{00,2}\{\beta,\varepsilon_1'\}= \overline{{n}}\textbackslash\left(n\left(\beta^2-\varepsilon_1'\right),n\left(\beta^2+\varepsilon_1'\right)\right).$$
    
    А значит при $\beta\in(0,1)$, по закону распределения биномиальных коэффициентов,
    $$\lim_{n\to\infty}\left(\sum_{y'\in \overline{n}_{00,2}\{\beta,\varepsilon_1'\}} \left(\binom{n}{y'}\left(\beta^2\right)^{y'}\left(1-\beta^2\right)^{n -y'}\right)\right)=0.$$
    
    А значит
    $$\lim_{n\to\infty}^{(2,0)}\sum_{y'\in \overline{n}_{00}\{\beta,\varepsilon_1'\}  } \left(\binom{\frac{n}{2}}{y'}\left(\beta^2\right)^{y'}\left(1-\beta^2\right)^{ \frac{n}{2} -y'}\right)=0.$$
    
    В силу доказанного выше, а также неотрицительности функции $T$, ясно, что при $n\in\mathbb{N}_0: n \; mod \; 2=0$
    $$0\le \sum_{{y\in\overline{n}\{\beta,\varepsilon_1'\}(2,0)}}\left(\sum_{x\in\mathbb{YF}_n} T_{w,\beta,n}(x,y)\right)\le \sum_{y'\in \overline{n}_{00}\{\beta,\varepsilon_1'\}  } \left(\binom{\frac{n}{2}}{y'}\left(\beta^2\right)^{y'}\left(1-\beta^2\right)^{ \frac{n}{2} -y'}\right),$$
    а значит, по Лемме о двух полицейских,
    $$\lim_{n\to\infty}^{(2,0)}\left(\sum_{{y\in\overline{n}\{\beta,\varepsilon_1'\}(2,0)}}\left(\sum_{x\in\mathbb{YF}_n} T_{w,\beta,n}(x,y)\right)\right)= 0.$$
    
    Теперь рассмотрим нечётные игреки.
    
    Зафиксируем какое-то $n\in\mathbb{N}_0:n\;mod \;2=0$.

    Если $y\in\mathbb{N}_0:$ $y \le n,$ то по Утверждению \ref{stolb} при $w\in\mathbb{YF}_\infty$, $\beta\in(0,1]$, $n,y\in\mathbb{N}_0$
    $$\sum_{x\in\mathbb{YF}_n} T_{w,\beta,n}(x,y)\le \left(\prod_{i=1}^{\left\lfloor \frac{n-y}{2} \right\rfloor} \frac{2i+y}{2i}\right)\beta^{y}\left(1-\beta^2\right)^{\left\lfloor \frac{n-y}{2} \right\rfloor}.$$
    Ясно, что мы можем просуммировать это выражение по $y\in{\overline{n}\{\beta,\varepsilon_1'\}(2,1)}$. Просуммируем:
    $$\sum_{y\in{\overline{n}\{\beta,\varepsilon_1'\}(2,1)}}\left(\sum_{x\in\mathbb{YF}_n} T_{w,\beta,n}(x,y)\right)\le$$
    $$\le\sum_{{y\in\overline{n}\{\beta,\varepsilon_1'\}(2,1)}}\left(\left( \prod_{i=1}^{\left\lfloor \frac{n-y}{2} \right\rfloor} \frac{2i+y}{2i}\right)\beta^{y}\left(1-\beta^2\right)^{\left\lfloor \frac{n-y}{2} \right\rfloor}\right)\le$$
    $$\le \sum_{{y\in\overline{n}\{\beta,\varepsilon_1'\}(2,1)}} \left(\left(\prod_{i=1}^{\left\lfloor \frac{n-y}{2} \right\rfloor} \frac{2i+y+1}{2i}\right)\beta^{y}\left(1-\beta^2\right)^{\left\lfloor \frac{n-y}{2} \right\rfloor}\right)=$$
    $$=\left(\text{Так как ясно, что если $n \;mod\;{2}=0$ и $y\;mod\;{2}=1$, то $\left\lfloor \frac{n-y}{2} \right\rfloor=\frac{n-y-1}{2}$}\right)=$$
    $$=\frac{1}{\beta}\sum_{{\overline{n}\{\beta,\varepsilon_1'\}(2,1)}} \left(\left(\prod_{i=1}^{ \frac{n-y-1}{2}} \frac{2i+y+1}{2i}\right)\beta^{y+1}\left(1-\beta^2\right)^{ \frac{n-y-1}{2}}\right)=$$
    $$=\frac{1}{\beta}\sum_{{\overline{n}\{\beta,\varepsilon_1'\}(2,1)}} \left(\left(\prod_{i=1}^{ \frac{n-2\frac{y+1}{2}}{2}} \frac{2i+2\frac{y+1}{2}}{2i}\right)\beta^{2\frac{y+1}{2}}\left(1-\beta^2\right)^{ \frac{n-2\frac{y+1}{2}}{2}}\right).$$
    
    Ясно, что если $y$ пробегает все значения в множестве ${{\overline{n}\{\beta,\varepsilon_1'\}(2,1)}},$ при $n\in\mathbb{N}_0:$ $n\;mod\;{2}=0$, то $\frac{y+1}{2}$ пробегает все значения в множестве $\overline{n}_{01}\{\beta,\varepsilon_1'\}\textbackslash \{0\}$ (просто по определению этого множества), то есть наше выражение равняется следующему: 
    $$\frac{1}{\beta}\sum_{y'\in\left( \overline{n}_{01}\{\beta,\varepsilon_1'\}\textbackslash\{0\}\right)} \left(\left(\prod_{i=1}^{ \frac{n-2y'}{2}} \frac{2i+2y'}{2i}\right)\beta^{2y'}\left(1-\beta^2\right)^{ \frac{n-2y'}{2}}\right)=$$
    $$=\frac{1}{\beta}\sum_{y'\in\left( \overline{n}_{01}\{\beta,\varepsilon_1'\}\textbackslash\{0\}\right)} \left(\left(\prod_{i=1}^{\frac{n}{2}-y' } \frac{i+y'}{i}\right)\beta^{2y'}\left(1-\beta^2\right)^{ \frac{n}{2}-y' }\right).$$
    
    Тут есть два случая:
    \begin{enumerate}
        \item $0\notin\overline{n}_{01}\{\beta,\varepsilon_1'\}$.
        
        В данном случае $\overline{n}_{01}\{\beta,\varepsilon_1'\}\textbackslash \{0\}=\overline{n}_{01}\{\beta,\varepsilon_1'\},$
        а значит
        $$\frac{1}{\beta}\sum_{y'\in\left( \overline{n}_{01}\{\beta,\varepsilon_1'\}\textbackslash\{0\}\right)} \left(\left(\prod_{i=1}^{\frac{n}{2}-y' } \frac{i+y'}{i}\right)\beta^{2y'}\left(1-\beta^2\right)^{ \frac{n}{2}-y' }\right)=$$
        $$=\frac{1}{\beta}\sum_{y'\in \overline{n}_{01}\{\beta,\varepsilon_1'\}} \left(\left(\prod_{i=1}^{\frac{n}{2}-y' } \frac{i+y'}{i}\right)\beta^{2y'}\left(1-\beta^2\right)^{ \frac{n}{2}-y' }\right).$$
    
        \item $0\in\overline{n}_{01}\{\beta,\varepsilon_1'\}$.
        
        В данном случае
        $$\frac{1}{\beta}\sum_{y'\in\left( \overline{n}_{01}\{\beta,\varepsilon_1'\}\textbackslash\{0\}\right)} \left(\left(\prod_{i=1}^{\frac{n}{2}-y' } \frac{i+y'}{i}\right)\beta^{2y'}\left(1-\beta^2\right)^{ \frac{n}{2}-y' }\right)\le \text{(Так как $\beta\in(0,1)$)}\le$$
        $$\le\frac{1}{\beta}\sum_{y'\in\left( \overline{n}_{01}\{\beta,\varepsilon_1'\}\textbackslash\{0\}\right)} \left(\left(\prod_{i=1}^{\frac{n}{2}-y' } \frac{i+y'}{i}\right)\beta^{2y'}\left(1-\beta^2\right)^{ \frac{n}{2}-y' }\right)+\frac{1}{\beta} \left(\prod_{i=1}^{\frac{n}{2} } \frac{i}{i}\right)\beta^{0}\left(1-\beta^2\right)^{ \frac{n}{2} }=$$
            $$=\frac{1}{\beta}\sum_{y'\in\left( \overline{n}_{01}\{\beta,\varepsilon_1'\}\textbackslash\{0\}\right)} \left(\left(\prod_{i=1}^{\frac{n}{2}-y' } \frac{i+y'}{i}\right)\beta^{2y'}\left(1-\beta^2\right)^{ \frac{n}{2}-y' }\right)+$$
            $$+\frac{1}{\beta}\sum_{y'=0}^{0} \left(\left(\prod_{i=1}^{\frac{n}{2}-y' } \frac{i+y'}{i}\right)\beta^{2y'}\left(1-\beta^2\right)^{ \frac{n}{2}-y' }\right)=$$
    $$=\frac{1}{\beta}\sum_{y'\in \overline{n}_{01}\{\beta,\varepsilon_1'\}} \left(\left(\prod_{i=1}^{\frac{n}{2}-y' } \frac{i+y'}{i}\right)\beta^{2y'}\left(1-\beta^2\right)^{ \frac{n}{2}-y' }\right).$$
    
    \end{enumerate}
    
    В обоих случаях (ясно, что других нет) наше выражение не превосходит следующее:
    $$\frac{1}{\beta}\sum_{y'\in \overline{n}_{01}\{\beta,\varepsilon_1'\}} \left(\left(\prod_{i=1}^{\frac{n}{2}-y' } \frac{i+y'}{i}\right)\beta^{2y'}\left(1-\beta^2\right)^{ \frac{n}{2}-y' }\right)=$$
    $$=\frac{1}{\beta}\sum_{y'\in \overline{n}_{01}\{\beta,\varepsilon_1'\}} \left(\left(\frac{\displaystyle\prod_{i=y'+1}^{\frac{n}{2}}i}{\displaystyle\prod_{i=1}^{\frac{n}{2}-y'}i}\right)\left(\beta^2\right)^{y'}\left(1-\beta^2\right)^{ \frac{n}{2}-y'}\right)=$$
    $$=\frac{1}{\beta}\sum_{y'\in \overline{n}_{01}\{\beta,\varepsilon_1'\}  } \left(\binom{\frac{n}{2}}{y'}\left(\beta^2\right)^{y'}\left(1-\beta^2\right)^{ \frac{n}{2} -y'}\right).$$
    
    Ясно, что
    $$\lim_{n\to\infty}^{(2,0)}\left(\frac{1}{\beta}\sum_{y'\in \overline{n}_{01}\{\beta,\varepsilon_1'\}  } \left(\binom{\frac{n}{2}}{y'}\left(\beta^2\right)^{y'}\left(1-\beta^2\right)^{ \frac{n}{2} -y'}\right)\right)=$$
    $$=\lim_{n\to\infty}\left(\frac{1}{\beta}\sum_{y'\in \overline{2n}_{01}\{\beta,\varepsilon_1'\}} \left(\binom{n}{y'}\left(\beta^2\right)^{y'}\left(1-\beta^2\right)^{n -y'}\right)\right)=$$
    $$=\left(\text{просто по определению множества $\overline{n}_{01,2}\{\beta,\varepsilon_1'\}$}\right)=$$
    $$=\lim_{n\to\infty}\left(\frac{1}{\beta}\sum_{y'\in \overline{n}_{01,2}\{\beta,\varepsilon_1'\}} \left(\binom{n}{y'}\left(\beta^2\right)^{y'}\left(1-\beta^2\right)^{n -y'}\right)\right).$$
    
        Ясно, что
    \begin{itemize}
        \item     $$\overline{n}_{01,2}\{\beta,\varepsilon_1'\}= \overline{{n}}\textbackslash\left(n\left(\beta^2-\varepsilon_1'\right)+\frac{1}{2},n\left(\beta^2+\varepsilon_1'\right)+\frac{1}{2}\right);$$
        \item Если $n\in\mathbb{N}_0:$ $n>\left\lceil\frac{1}{\varepsilon_1'}\right\rceil$, то
        $$ \frac{n\left(\beta^2-\varepsilon_1'\right)+\frac{1}{2}}{n}=\beta^2-\varepsilon_1'+\frac{1}{2n}<\beta^2-\varepsilon_1'+\frac{1}{2\left\lceil\frac{1}{\varepsilon_1'}\right\rceil}\le \beta^2-\varepsilon_1'+\frac{1}{2\frac{1}{\varepsilon_1'}}=\beta^2-\varepsilon_1'+\frac{\varepsilon_1'}{2}=\beta^2-\frac{\varepsilon_1'}{2}\Longrightarrow$$
        $$\Longrightarrow {n\left(\beta^2-\varepsilon_1'\right)+\frac{1}{2}}<n\left(\beta^2-\frac{\varepsilon_1'}{2}\right);$$
        
        \item Если $n\in\mathbb{N}_0:$ $n>\left\lceil\frac{1}{\varepsilon_1'}\right\rceil$, то
        $$ \frac{n\left(\beta^2+\varepsilon_1'\right)+\frac{1}{2}}{n}=\beta^2+\varepsilon_1'+\frac{1}{2n}>\beta^2+\frac{\varepsilon_1'}{2}\Longrightarrow$$
        $$\Longrightarrow {n\left(\beta^2+\varepsilon_1'\right)+\frac{1}{2}}>n\left(\beta^2+\frac{\varepsilon_1'}{2}\right).$$
        \end{itemize}
    
    А значит если $n\in\mathbb{N}_0:$ $n>\left\lceil\frac{1}{\varepsilon_1'}\right\rceil$, то 
         $$\overline{n}_{01,2}\{\beta,\varepsilon_1'\}\subset \overline{{n}}\textbackslash\left(n\left(\beta^2-\frac{\varepsilon_1'}{2}\right),n\left(\beta^2+\frac{\varepsilon_1'}{2}\right)\right)=\overline{n}\left\{\beta,\frac{\varepsilon_1'}{2}\right\}.$$

     А значит (так как $\beta\in(0,1)$), если $n\in\mathbb{N}_0:$ $n>\left\lceil\frac{1}{\varepsilon_1'}\right\rceil$, то
    $$\frac{1}{\beta}\sum_{y'\in \overline{n}\left\{\beta,\frac{\varepsilon_1'}{2}\right\}} \left(\binom{n}{y'}\left(\beta^2\right)^{y'}\left(1-\beta^2\right)^{n -y'}\right)\ge \frac{1}{\beta}\sum_{y'\in \overline{n}_{01,2}\{\beta,\varepsilon_1'\}} \left(\binom{n}{y'}\left(\beta^2\right)^{y'}\left(1-\beta^2\right)^{n -y'}\right)\ge 0.$$
     
    Ясно, что при $\beta\in(0,1)$ по закону распределения биномиальных коэффициентов
    $$\lim_{n\to\infty}\left(\frac{1}{\beta}\sum_{y'\in \overline{n}\left\{\beta,\frac{\varepsilon_1'}{2}\right\}} \left(\binom{n}{y'}\left(\beta^2\right)^{y'}\left(1-\beta^2\right)^{n -y'}\right)\right)=0.$$
    
     А значит, по Лемме о двух полицейских,
    $$\lim_{n\to\infty}\left(\frac{1}{\beta}\sum_{y'\in \overline{n}_{01,2}\{\beta,\varepsilon_1'\}} \left(\binom{n}{y'}\left(\beta^2\right)^{y'}\left(1-\beta^2\right)^{n -y'}\right)\right)=0.$$
    
    А значит
    $$\lim_{n\to\infty}^{(2,0)}\left(\frac{1}{\beta}\sum_{y'\in\overline{n}_{01}\{\beta,\varepsilon_1'\}} \left(\binom{\frac{n}{2}}{y'}\left(\beta^2\right)^{y'}\left(1-\beta^2\right)^{ \frac{n}{2} -y'}\right)\right)=0.$$
    
    В силу доказанного выше, а также неотрицительности функции $T$, ясно, что при $n\in\mathbb{N}_0: n \; mod \; 2=0$
    $$0\le \sum_{{y\in\overline{n}\{\beta,\varepsilon_1'\}(2,1)}}\left(\sum_{x\in\mathbb{YF}_n} T_{w,\beta,n}(x,y)\right)\le\frac{1}{\beta} \sum_{y'\in \overline{n}_{01}\{\beta,\varepsilon_1'\}  } \left(\binom{\frac{n}{2}}{y'}\left(\beta^2\right)^{y'}\left(1-\beta^2\right)^{ \frac{n}{2} -y'}\right),$$
    а значит, по Лемме о двух полицейских,
    $$\lim_{n\to\infty}^{(2,0)}\left(\sum_{{y\in\overline{n}\{\beta,\varepsilon_1'\}(2,1)}}\left(\sum_{x\in\mathbb{YF}_n} T_{w,\beta,n}(x,y)\right)\right)= 0.$$

   \item Подпоследовательность $n\in\mathbb{N}_0:$ $n\;mod\;{2}=1$.
    
    Для начала рассмотрим чётные игреки.
        
    Зафиксируем какое-то $n\in\mathbb{N}_0:$ $n\;mod\;{2}=1$.
    
    Если $y\in\mathbb{N}_0:$ $y \le n,$ то по Утверждению \ref{stolb} при $w\in\mathbb{YF}_\infty$, $\beta\in(0,1]$, $n,y\in\mathbb{N}_0$
    $$\sum_{x\in\mathbb{YF}_n} T_{w,\beta,n}(x,y)\le\left( \prod_{i=1}^{\left\lfloor \frac{n-y}{2} \right\rfloor} \frac{2i+y}{2i}\right)\beta^{y}\left(1-\beta^2\right)^{\left\lfloor \frac{n-y}{2} \right\rfloor}.$$
    
    Ясно, что мы можем просуммировать это выражение по $y\in{\overline{n}\{\beta,\varepsilon_1'\}(2,0)}$. Просуммируем:
    $$\sum_{y\in{\overline{n}\{\beta,\varepsilon_1'\}(2,0)}}\left(\sum_{x\in\mathbb{YF}_n} T_{w,\beta,n}(x,y)\right)\le$$
    $$\le\sum_{y\in{\overline{n}\{\beta,\varepsilon_1'\}(2,0)}}\left(\left( \prod_{i=1}^{\left\lfloor \frac{n-y}{2} \right\rfloor} \frac{2i+y}{2i}\right)\beta^{y}\left(1-\beta^2\right)^{\left\lfloor \frac{n-y}{2} \right\rfloor}\right)=$$
    $$=\left(\text{Так как ясно, что если $n \;mod\;{2}=1$ и $y\;mod\;{2}=0$, то $\left\lfloor \frac{n-y}{2} \right\rfloor=\frac{n-y-1}{2}$}\right)=$$
    $$=\sum_{y\in{\overline{n}\{\beta,\varepsilon_1'\}(2,0)}} \left(\left(\prod_{i=1}^{ \frac{n-y-1}{2}} \frac{2i+y}{2i}\right)\beta^{y}\left(1-\beta^2\right)^{ \frac{n-y-1}{2}}\right)=$$
    $$=\sum_{y\in{\overline{n}\{\beta,\varepsilon_1'\}(2,0)}} \left(\left(\prod_{i=1}^{ \frac{n-1-2\frac{y}{2}}{2}} \frac{2i+2\frac{y}{2}}{2i}\right)\beta^{2\frac{y}{2}}\left(1-\beta^2\right)^{ \frac{n-1-2\frac{y}{2}}{2}}\right).$$
    
    Ясно, что если $y$ пробегает все значения в множестве ${{\overline{n}\{\beta,\varepsilon_1'\}(2,0)}},$ при $n\in\mathbb{N}_0:$ $n\;mod\;{2}=1$, то $\frac{y}{2}$ пробегает все значения в множестве $\overline{n}_{10}\{\beta,\varepsilon_1'\}$ (просто по определению этого множества), то есть наше выражение равняется следующему: 
    $$\sum_{y'\in\overline{n}_{10}\{\beta,\varepsilon_1'\}} \left(\left(\prod_{i=1}^{ \frac{n-1-2y'}{2}} \frac{2i+2y'}{2i}\right)\beta^{2y'}\left(1-\beta^2\right)^{ \frac{n-1-2y'}{2}}\right)=$$
    $$=\sum_{y'\in\overline{n}_{10}\{\beta,\varepsilon_1'\}} \left(\left(\prod_{i=1}^{\frac{n-1}{2}-y' } \frac{i+y'}{i}\right)\beta^{2y'}\left(1-\beta^2\right)^{ \frac{n-1}{2}-y' }\right)=$$
    $$=\sum_{y'\in\overline{n}_{10}\{\beta,\varepsilon_1'\}} \left(\left(\frac{\displaystyle\prod_{i=y'+1}^{\frac{n-1}{2}}i}{\displaystyle\prod_{i=1}^{\frac{n-1}{2}-y'}i}\right)\left(\beta^2\right)^{y'}\left(1-\beta^2\right)^{ \frac{n-1}{2}-y'}\right)=$$
    $$=\sum_{y'\in\overline{n}_{10}\{\beta,\varepsilon_1'\}} \left(\binom{\frac{n-1}{2}}{y'}\left(\beta^2\right)^{y'}\left(1-\beta^2\right)^{ \frac{n-1}{2} -y'}\right).$$

    Ясно, что
    $$\lim_{n\to\infty}^{(2,1)}\sum_{y'\in\overline{n}_{10}\{\beta,\varepsilon_1'\}} \left(\binom{\frac{n-1}{2}}{y'}\left(\beta^2\right)^{y'}\left(1-\beta^2\right)^{ \frac{n-1}{2} -y'}\right)=$$
    $$=\lim_{n\to\infty}\left(\sum_{y'\in \overline{2n+1}_{10}\{\beta,\varepsilon_1'\}} \left(\binom{n}{y'}\left(\beta^2\right)^{y'}\left(1-\beta^2\right)^{n -y'}\right)\right)=$$
    $$=\left(\text{просто по определению множества $\overline{n}_{10,2}\{\beta,\varepsilon_1'\}$}\right)=$$
    $$=\lim_{n\to\infty}\left(\sum_{y'\in \overline{n}_{10,2}\{\beta,\varepsilon_1'\}} \left(\binom{n}{y'}\left(\beta^2\right)^{y'}\left(1-\beta^2\right)^{n -y'}\right)\right).$$
    
    Ясно, что
    \begin{itemize}
        \item     $$\overline{n}_{10,2}\{\beta,\varepsilon_1'\}= \overline{n}\textbackslash\left(\frac{(2n+1)\left(\beta^2-\varepsilon_1'\right)}{2},\frac{(2n+1)\left(\beta^2+\varepsilon_1'\right)}{2}\right);$$
        \item Если $n\in\mathbb{N}_0:$ $n>\left\lceil\frac{1}{\varepsilon_1'}\right\rceil$, то
        $$ \frac{\displaystyle\frac{(2n+1)\left(\beta^2-\varepsilon_1'\right)}{\displaystyle2}}{\displaystyle n}=\frac{{(2n+1)\left(\beta^2-\varepsilon_1'\right)}}{2n}=\beta^2-\varepsilon_1'+\frac{\left(\beta^2-\varepsilon_1'\right)}{2n}<\left(\text{Так как $\beta\in(0,1)$}\right)<$$
        $$< \beta^2-\varepsilon_1'+\frac{1}{2n}<\beta^2-\varepsilon_1'+\frac{1}{2\left\lceil\frac{1}{\varepsilon_1'}\right\rceil}\le \beta^2-\varepsilon_1'+\frac{1}{2\frac{1}{\varepsilon_1'}}=\beta^2-\varepsilon_1'+\frac{\varepsilon_1'}{2}=\beta^2-\frac{\varepsilon_1'}{2}\Longrightarrow$$
        $$\Longrightarrow {\displaystyle\frac{(2n+1)\left(\beta^2-\varepsilon_1'\right)}{\displaystyle2}}<n\left(\beta^2-\frac{\varepsilon_1'}{2}\right);$$
        \item Если $n\in\mathbb{N}_0:$ $n>\left\lceil\frac{1}{\varepsilon_1'}\right\rceil$, то
        $$ \frac{\displaystyle \frac{(2n+1)\left(\beta^2+\varepsilon_1'\right)}{\displaystyle 2}}{\displaystyle n}=\frac{\displaystyle {(2n+1)\left(\beta^2+\varepsilon_1'\right)}}{\displaystyle 2n}>\frac{\displaystyle {2n\left(\beta^2+\varepsilon_1'\right)}}{\displaystyle 2n}=\beta^2+\varepsilon_1'>\beta^2+\frac{\varepsilon_1'}{2}\Longrightarrow$$
        $$\Longrightarrow {\displaystyle\frac{(2n+1)\left(\beta^2+\varepsilon_1'\right)}{\displaystyle2}}>n\left(\beta^2+\frac{\varepsilon_1'}{2}\right);$$
        
        \end{itemize}
    
    А значит если $n\in\mathbb{N}_0:$ $n>\left\lceil\frac{1}{\varepsilon_1'}\right\rceil$, то 
         $$\overline{n}_{10,2}\{\beta,\varepsilon_1'\}\subset \overline{{n}}\textbackslash\left(n\left(\beta^2-\frac{\varepsilon_1'}{2}\right),n\left(\beta^2+\frac{\varepsilon_1'}{2}\right)\right)=\overline{n}\left\{\beta,\frac{\varepsilon_1'}{2}\right\}.$$
    
     А значит (так как $\beta\in(0,1)$), если $n\in\mathbb{N}_0:$ $n>\left\lceil\frac{1}{\varepsilon_1'}\right\rceil$, то
    $$\sum_{y'\in \overline{n}\left\{\beta,\frac{\varepsilon_1'}{2}\right\}} \left(\binom{n}{y'}\left(\beta^2\right)^{y'}\left(1-\beta^2\right)^{n -y'}\right)\ge \sum_{y'\in \overline{n}_{10,2}\{\beta,\varepsilon_1'\}} \left(\binom{n}{y'}\left(\beta^2\right)^{y'}\left(1-\beta^2\right)^{n -y'}\right)\ge 0.$$
     
    Ясно, что при $\beta\in(0,1)$ по закону распределения биномиальных коэффициентов
    $$\lim_{n\to\infty}\left(\sum_{y'\in \overline{n}\left\{\beta,\frac{\varepsilon_1'}{2}\right\}} \left(\binom{n}{y'}\left(\beta^2\right)^{y'}\left(1-\beta^2\right)^{n -y'}\right)\right)=0.$$
    
     А значит, по Лемме о двух полицейских,
    $$\lim_{n\to\infty}\left(\sum_{y'\in \overline{n}_{10,2}\{\beta,\varepsilon_1'\}} \left(\binom{n}{y'}\left(\beta^2\right)^{y'}\left(1-\beta^2\right)^{n -y'}\right)\right)=0.$$
    
    А значит
    $$\lim_{n\to\infty}^{(2,1)}\sum_{y'\in\overline{n}_{10}\{\beta,\varepsilon_1'\}} \left(\binom{\frac{n-1}{2}}{y'}\left(\beta^2\right)^{y'}\left(1-\beta^2\right)^{ \frac{n-1}{2} -y'}\right)=0.$$
    
    В силу доказанного выше, а также неотрицительности функции $T$, ясно, что при $n\in\mathbb{N}_0: n \; mod \; 2=1$
    $$0\le \sum_{{y\in\overline{n}\{\beta,\varepsilon_1'\}(2,0)}}\left(\sum_{x\in\mathbb{YF}_n} T_{w,\beta,n}(x,y)\right)\le \sum_{y'\in \overline{n}_{10}\{\beta,\varepsilon_1'\} } \left(\binom{\frac{n-1}{2}}{y'}\left(\beta^2\right)^{y'}\left(1-\beta^2\right)^{ \frac{n-1}{2} -y'}\right),$$
    а значит, по Лемме о двух полицейских,
    $$\lim_{n\to\infty}^{(2,1)}\left(\sum_{{y\in\overline{n}\{\beta,\varepsilon_1'\}(2,0)}}\left(\sum_{x\in\mathbb{YF}_n} T_{w,\beta,n}(x,y)\right)\right)= 0.$$

    Теперь рассмотрим нечётные игреки.
    
    Зафиксируем какое-то $n\in\mathbb{N}_0:n\;mod\;{2}=1$.
    
    Если $y\in\mathbb{N}_0:$ $y \le n,$ то по Утверждению \ref{stolb} при $w\in\mathbb{YF}_\infty$, $\beta\in(0,1]$, $n,y\in\mathbb{N}_0$
    $$\sum_{x\in\mathbb{YF}_n} T_{w,\beta,n}(x,y)\le \left(\prod_{i=1}^{\left\lfloor \frac{n-y}{2} \right\rfloor} \frac{2i+y}{2i}\right)\beta^{y}\left(1-\beta^2\right)^{\left\lfloor \frac{n-y}{2} \right\rfloor}.$$
    
    Ясно, что мы можем просуммировать это выражение по $y\in{\overline{n}\{\beta,\varepsilon_1'\}(2,1)}$. Просуммируем:
    $$\sum_{y\in{\overline{n}\{\beta,\varepsilon_1'\}(2,1)}}\left(\sum_{x\in\mathbb{YF}_n} T_{w,\beta,n}(x,y)\right)\le\sum_{y\in{\overline{n}\{\beta,\varepsilon_1'\}(2,1)}}\left(\left( \prod_{i=1}^{\left\lfloor \frac{n-y}{2} \right\rfloor} \frac{2i+y}{2i}\right)\beta^{y}\left(1-\beta^2\right)^{\left\lfloor \frac{n-y}{2} \right\rfloor}\right)\le$$
    $$\le \sum_{y\in{\overline{n}\{\beta,\varepsilon_1'\}(2,1)}} \left(\left(\prod_{i=1}^{\left\lfloor \frac{n-y}{2} \right\rfloor} \frac{2i+y+1}{2i}\right)\beta^{y}\left(1-\beta^2\right)^{\left\lfloor \frac{n-y}{2} \right\rfloor}\right)=$$
    $$=\left(\text{Так как ясно, что если $n \;mod\;{2}=1$ и $y\;mod\;{2}=1$, то $\left\lfloor \frac{n-y}{2} \right\rfloor=\frac{n-y}{2}$}\right)=$$
    $$=\frac{1}{\beta}\sum_{y\in{\overline{n}\{\beta,\varepsilon_1'\}(2,1)}} \left(\left(\prod_{i=1}^{ \frac{n-y}{2}} \frac{2i+y+1}{2i}\right)\beta^{y+1}\left(1-\beta^2\right)^{ \frac{n-y}{2}}\right)=$$
    $$=\frac{1}{\beta}\sum_{y\in{\overline{n}\{\beta,\varepsilon_1'\}(2,1)}} \left(\left(\prod_{i=1}^{ \frac{n+1-2\frac{y+1}{2}}{2}} \frac{2i+2\frac{y+1}{2}}{2i}\right)\beta^{2\frac{y+1}{2}}\left(1-\beta^2\right)^{ \frac{n+1-2\frac{y+1}{2}}{2}}\right).$$
    
    Ясно, что если $y$ пробегает все значения в множестве ${{\overline{n}\{\beta,\varepsilon_1'\}(2,1)}}$ при $n\in\mathbb{N}_0:$ $n\;mod\;{2}=1$, то $\frac{y+1}{2}$ пробегает все значения в множестве $\overline{n}_{11}\{\beta,\varepsilon_1'\}\textbackslash \{0\}$ (просто по определению этого множества), то есть наше выражение равняется следующему: 
    $$\frac{1}{\beta}\sum_{y'\in\left(\overline{n}_{11}\{\beta,\varepsilon_1'\}\textbackslash \{0\}\right)} \left(\left(\prod_{i=1}^{ \frac{n+1-2y'}{2}} \frac{2i+2y'}{2i}\right)\beta^{2y'}\left(1-\beta^2\right)^{ \frac{n+1-2y'}{2}}\right)=$$
    $$=\frac{1}{\beta}\sum_{y'\in\left(\overline{n}_{11}\{\beta,\varepsilon_1'\}\textbackslash \{0\}\right)} \left(\left(\prod_{i=1}^{\frac{n+1}{2}-y' } \frac{i+y'}{i}\right)\beta^{2y'}\left(1-\beta^2\right)^{ \frac{n+1}{2}-y' }\right).$$
    
        Тут есть два случая:
    \begin{enumerate}
        \item $0\notin\overline{n}_{11}\{\beta,\varepsilon_1'\}$.
        
        В данном случае $\overline{n}_{11}\{\beta,\varepsilon_1'\}\textbackslash \{0\}=\overline{n}_{11}\{\beta,\varepsilon_1'\},$
        а значит
        $$\frac{1}{\beta}\sum_{y'\in\left(\overline{n}_{11}\{\beta,\varepsilon_1'\}\textbackslash \{0\}\right)} \left(\left(\prod_{i=1}^{\frac{n+1}{2}-y' } \frac{i+y'}{i}\right)\beta^{2y'}\left(1-\beta^2\right)^{ \frac{n+1}{2}-y' }\right)= $$
        $$=\frac{1}{\beta}\sum_{y'\in\overline{n}_{11}\{\beta,\varepsilon_1'\}} \left(\left(\prod_{i=1}^{\frac{n+1}{2}-y' } \frac{i+y'}{i}\right)\beta^{2y'}\left(1-\beta^2\right)^{ \frac{n+1}{2}-y' }\right). $$
    
        \item $0\in\overline{n}_{11}\{\beta,\varepsilon_1'\}$.
        
        В данном случае
        $$\frac{1}{\beta}\sum_{y'\in\left(\overline{n}_{11}\{\beta,\varepsilon_1'\}\textbackslash \{0\}\right)} \left(\left(\prod_{i=1}^{\frac{n+1}{2}-y' } \frac{i+y'}{i}\right)\beta^{2y'}\left(1-\beta^2\right)^{ \frac{n+1}{2}-y' }\right)\le \text{(Так как $\beta\in(0,1)$)}\le$$
        $$\le\frac{1}{\beta}\sum_{y'\in\left(\overline{n}_{11}\{\beta,\varepsilon_1'\}\textbackslash \{0\}\right)} \left(\left(\prod_{i=1}^{\frac{n+1}{2}-y' } \frac{i+y'}{i}\right)\beta^{2y'}\left(1-\beta^2\right)^{ \frac{n+1}{2}-y' }\right)+\frac{1}{\beta} \left(\prod_{i=1}^{\frac{n+1}{2} } \frac{i}{i}\right)\beta^{0}\left(1-\beta^2\right)^{ \frac{n+1}{2} }=$$
    $$=\frac{1}{\beta}\sum_{y'\in\left(\overline{n}_{11}\{\beta,\varepsilon_1'\}\textbackslash \{0\}\right)} \left(\left(\prod_{i=1}^{\frac{n+1}{2}-y' } \frac{i+y'}{i}\right)\beta^{2y'}\left(1-\beta^2\right)^{ \frac{n+1}{2}-y' }\right)+$$
    $$+\frac{1}{\beta}\sum_{y'=0}^{0} \left(\left(\prod_{i=1}^{\frac{n+1}{2}-y' } \frac{i+y'}{i}\right)\beta^{2y'}\left(1-\beta^2\right)^{ \frac{n+1}{2}-y' }\right)=$$
    $$=\frac{1}{\beta}\sum_{y'\in\overline{n}_{11}\{\beta,\varepsilon_1'\}} \left(\left(\prod_{i=1}^{\frac{n+1}{2}-y' } \frac{i+y'}{i}\right)\beta^{2y'}\left(1-\beta^2\right)^{ \frac{n+1}{2}-y' }\right).$$
    \end{enumerate}

    В обоих случаях (ясно, что других нет) наше выражение не превосходит
    $$\frac{1}{\beta}\sum_{y'\in\overline{n}_{11}\{\beta,\varepsilon_1'\}} \left(\left(\prod_{i=1}^{\frac{n+1}{2}-y' } \frac{i+y'}{i}\right)\beta^{2y'}\left(1-\beta^2\right)^{ \frac{n+1}{2}-y' }\right)=$$
    $$=\frac{1}{\beta}\sum_{y'\in\overline{n}_{11}\{\beta,\varepsilon_1'\}} \left(\left(\frac{\displaystyle\prod_{i=y'+1}^{\frac{n+1}{2}}i}{\displaystyle\prod_{i=1}^{\frac{n+1}{2}-y'}i}\right)\left(\beta^2\right)^{y'}\left(1-\beta^2\right)^{ \frac{n+1}{2}-y'}\right)=$$
    $$=\frac{1}{\beta}\sum_{y'\in\overline{n}_{11}\{\beta,\varepsilon_1'\}} \left(\binom{\frac{n+1}{2}}{y'}\left(\beta^2\right)^{y'}\left(1-\beta^2\right)^{ \frac{n+1}{2} -y'}\right).$$

    Ясно, что 
    $$\lim_{n\to\infty}^{(2,1)}\left(\frac{1}{\beta}\sum_{y'\in\overline{n}_{11}\{\beta,\varepsilon_1'\}} \left(\binom{\frac{n+1}{2}}{y'}\left(\beta^2\right)^{y'}\left(1-\beta^2\right)^{ \frac{n+1}{2} -y'}\right)\right)=$$
    $$=\lim_{n\to\infty}\left(\frac{1}{\beta}\sum_{y'\in\overline{2n+1}_{11}\{\beta,\varepsilon_1'\}} \left(\binom{n+1}{y'}\left(\beta^2\right)^{y'}\left(1-\beta^2\right)^{ n+1 -y'}\right)\right)=$$
    $$=\lim_{n\to\infty}\left(\frac{1}{\beta}\sum_{y'\in \overline{2n-1}_{11}\{\beta,\varepsilon_1'\}} \left(\binom{n}{y'}\left(\beta^2\right)^{y'}\left(1-\beta^2\right)^{n -y'}\right)\right)=$$
    $$=\left(\text{просто по определению множества $\overline{n}_{11,2}\{\beta,\varepsilon_1'\}$}\right)=$$
    $$=\lim_{n\to\infty}\left(\frac{1}{\beta}\sum_{y'\in \overline{n}_{11,2}\{\beta,\varepsilon_1'\}} \left(\binom{n}{y'}\left(\beta^2\right)^{y'}\left(1-\beta^2\right)^{n -y'}\right)\right).$$
    
    Ясно, что
    \begin{itemize}
        \item     $$\overline{n}_{11,2}\{\beta,\varepsilon\}= \overline{n}\textbackslash\left(\frac{(2n-1)\left(\beta^2-\varepsilon\right)+1}{2},\frac{(2n-1)\left(\beta^2+\varepsilon\right)+1}{2}\right);$$
        
                \item Если $n\in\mathbb{N}_0:n>2\left\lceil\frac{1}{\varepsilon_1'}\right\rceil$, то
        $$ \frac{\displaystyle\frac{(2n-1)\left(\beta^2-\varepsilon_1'\right)+1}{\displaystyle2}}{\displaystyle n}=\frac{{(2n-1)\left(\beta^2-\varepsilon_1'\right)+1}}{2n}=\beta^2-\varepsilon_1'+\frac{-\beta^2+\varepsilon_1'+1}{2n}< \beta^2-\varepsilon_1'+\frac{\varepsilon_1'}{2n}+\frac{1}{2n}< $$
        $$<\left(\text{так как мы рассматриваем случай $n\in\mathbb{N}_0:$ $n>2\left\lceil\frac{1}{\varepsilon_1'}\right\rceil>2$}\right)<$$
        $$<\beta^2-\varepsilon_1'+\frac{\varepsilon_1'}{4}+\frac{1}{2n}<\beta^2-\varepsilon_1'+\frac{\varepsilon_1'}{4}+\frac{1}{4\left\lceil\frac{1}{\varepsilon_1'}\right\rceil}<\beta^2-\varepsilon_1'+\frac{\varepsilon_1'}{4}+\frac{1}{4\frac{1}{\varepsilon_1'}}=\beta^2+\varepsilon_1'+\frac{\varepsilon_1'}{4}+\frac{\varepsilon_1'}{4}=\beta^2-\frac{\varepsilon_1'}{2}\Longrightarrow$$
        $$\Longrightarrow {\displaystyle\frac{(2n-1)\left(\beta^2-\varepsilon_1'\right)+1}{\displaystyle2}}<n\left(\beta^2-\frac{\varepsilon_1'}{2}\right);$$
        \item Если $n\in\mathbb{N}_0:n>2\left\lceil\frac{1}{\varepsilon_1'}\right\rceil$, то
        $$ \frac{\displaystyle \frac{(2n-1)\left(\beta^2+\varepsilon_1'\right)+1}{\displaystyle 2}}{\displaystyle n}=\frac{\displaystyle {(2n-1)\left(\beta^2+\varepsilon_1'\right)+1}}{\displaystyle 2n}=\beta^2+\varepsilon_1'+\frac{-\beta^2-\varepsilon_1'+1}{2n}>$$
        $$>\left(\text{Так как $\beta\in(0,1)$}\right)>\beta^2+\varepsilon_1'-\frac{\varepsilon_1'}{2n}>$$
        $$>\left(\text{так как мы рассматриваем случай $n\in\mathbb{N}_0:$ $n>2\left\lceil\frac{1}{\varepsilon_1'}\right\rceil>1$}\right)>$$
        $$>\beta^2+\varepsilon_1'-\frac{\varepsilon_1'}{2}=\beta^2+\frac{\varepsilon_1'}{2}\Longrightarrow$$
        $$\Longrightarrow {\displaystyle\frac{(2n-1)\left(\beta^2+\varepsilon_1'\right)}{\displaystyle2}}>n\left(\beta^2+\frac{\varepsilon_1'}{2}\right).$$
        \end{itemize}
    
        А значит если $n\in\mathbb{N}_0:$ $n>2\left\lceil\frac{1}{\varepsilon_1'}\right\rceil$, то 
        $$\overline{n}_{11,2}\{\beta,\varepsilon_1'\}\subset \overline{{n}}\textbackslash\left(n\left(\beta^2-\frac{\varepsilon_1'}{2}\right),n\left(\beta^2+\frac{\varepsilon_1'}{2}\right)\right)=\overline{n}\left\{\beta,\frac{\varepsilon_1'}{2}\right\}.$$

     А значит $\left(\text{так как $\beta\in(0,1)$}\right)$, если $n\in\mathbb{N}_0:$ $n>2\left\lceil\frac{1}{\varepsilon_1'}\right\rceil$, то
    $$\frac{1}{\beta}\sum_{y'\in \overline{n}\left\{\beta,\frac{\varepsilon_1'}{2}\right\}} \left(\binom{n}{y'}\left(\beta^2\right)^{y'}\left(1-\beta^2\right)^{n -y'}\right)\ge \frac{1}{\beta}\sum_{y'\in \overline{n}_{11,2}\{\beta,\varepsilon_1'\}} \left(\binom{n}{y'}\left(\beta^2\right)^{y'}\left(1-\beta^2\right)^{n -y'}\right)\ge 0.$$
     
    Ясно, что при $\beta\in(0,1)$ по закону распределения биномиальных коэффициентов
    $$\lim_{n\to\infty}\left(\frac{1}{\beta}\sum_{y'\in \overline{n}\left\{\beta,\frac{\varepsilon_1'}{2}\right\}} \left(\binom{n}{y'}\left(\beta^2\right)^{y'}\left(1-\beta^2\right)^{n -y'}\right)\right)=0,$$
    а значит, по Лемме о двух полицейских,
    $$\lim_{n\to\infty}\left(\frac{1}{\beta}\sum_{y'\in \overline{n}_{11,2}\{\beta,\varepsilon_1'\}} \left(\binom{n}{y'}\left(\beta^2\right)^{y'}\left(1-\beta^2\right)^{n' -y'}\right)\right)=0.$$

    А значит
    $$\lim_{n\to\infty}^{(2,1)}\left(\frac{1}{\beta}\sum_{y'\in\overline{n}_{11}\{\beta,\varepsilon_1'\}} \left(\binom{\frac{n+1}{2}}{y'}\left(\beta^2\right)^{y'}\left(1-\beta^2\right)^{ \frac{n+1}{2} -y'}\right)\right)=0.$$
    
    В силу доказанного выше, а также неотрицительности функции $T$, ясно, что при $n\in\mathbb{N}_0: n \; mod \; 2=1$
    $$0\le \sum_{{y\in\overline{n}\{\beta,\varepsilon_1'\}(2,1)}}\left(\sum_{x\in\mathbb{YF}_n} T_{w,\beta,n}(x,y)\right)\le\frac{1}{\beta} \sum_{y'\in \overline{n}_{11}\{\beta,\varepsilon_1'\} } \left(\binom{\frac{n+1}{2}}{y'}\left(\beta^2\right)^{y'}\left(1-\beta^2\right)^{ \frac{n+1}{2} -y'}\right).$$
    
    а значит, по Лемме о двух полицейских,
    $$\lim_{n\to\infty}^{(2,1)}\left(\sum_{{y\in\overline{n}\{\beta,\varepsilon_1'\}(2,1)}}\left(\sum_{x\in\mathbb{YF}_n} T_{w,\beta,n}(x,y)\right)\right)= 0.$$
        \end{enumerate}

Итак, подытожив написанное выше, получаем, что мы разбиваем последовательность $n\in\mathbb{N}_0$ на две подпоследовательности, а также, что при наших $w\in\mathbb{YF}_\infty$, $\beta\in(0,1)$, $\varepsilon,\varepsilon_1'\in\mathbb{R}_{>0}$
\begin{itemize}
    \item $\forall n\in\mathbb{N}_0$
    $$0\le \sum_{y\in \overline{n}\{\beta,\varepsilon_1'\}}\left({\sum_{v\in \overline{R}(w,\beta,n,\varepsilon)} T_{w,\beta,n}(v,y)}\right)\le\sum_{y\in \overline{n}\{\beta,\varepsilon_1'\}}\left({\sum_{x\in \mathbb{YF}_n} T_{w,\beta,n}(v,y)}\right)=$$
    $$=\left(\sum_{y\in \overline{n}\{\beta,\varepsilon_1'\}(2,0)}\left(\sum_{x\in\mathbb{YF}_n} T_{w,\beta,n}(x,y)\right)\right)+\left(\sum_{y\in \overline{n}\{\beta,\varepsilon_1'\}(2,1)}\left(\sum_{x\in\mathbb{YF}_n} T_{w,\beta,n}(x,y)\right)\right);$$
\item $$\lim_{n\to\infty}^{(2,0)}\left(\left(\sum_{y\in \overline{n}\{\beta,\varepsilon_1'\}(2,0)}\left(\sum_{x\in\mathbb{YF}_n} T_{w,\beta,n}(x,y)\right)\right)+\left(\sum_{y\in \overline{n}\{\beta,\varepsilon_1'\}(2,1)}\left(\sum_{x\in\mathbb{YF}_n} T_{w,\beta,n}(x,y)\right)\right)\right)=$$
$$=\lim_{n\to\infty}^{(2,0)}\left(\sum_{y\in \overline{n}\{\beta,\varepsilon_1'\}(2,0)}\left(\sum_{x\in\mathbb{YF}_n} T_{w,\beta,n}(x,y)\right)\right)+\lim_{n\to\infty}^{(2,0)}\left(\sum_{y\in \overline{n}\{\beta,\varepsilon_1'\}(2,1)}\left(\sum_{x\in\mathbb{YF}_n} T_{w,\beta,n}(x,y)\right)\right)=0;$$
\item $$\lim_{n\to\infty}^{(2,1)}\left(\left(\sum_{y\in \overline{n}\{\beta,\varepsilon_1'\}(2,0)}\left(\sum_{x\in\mathbb{YF}_n} T_{w,\beta,n}(x,y)\right)\right)+\left(\sum_{y\in \overline{n}\{\beta,\varepsilon_1'\}(2,1)}\left(\sum_{x\in\mathbb{YF}_n} T_{w,\beta,n}(x,y)\right)\right)\right)=$$
$$=\lim_{n\to\infty}^{(2,1)}\left(\sum_{y\in \overline{n}\{\beta,\varepsilon_1'\}(2,0)}\left(\sum_{x\in\mathbb{YF}_n} T_{w,\beta,n}(x,y)\right)\right)+\lim_{n\to\infty}^{(2,1)}\left(\sum_{y\in \overline{n}\{\beta,\varepsilon_1'\}(2,1)}\left(\sum_{x\in\mathbb{YF}_n} T_{w,\beta,n}(x,y)\right)\right)=0.$$
\end{itemize}
А значит, по Лемме о двух полицейских
\begin{itemize}
    \item $$\lim_{n\to\infty}^{(2,0)}\left(\sum_{y\in \overline{n}\{\beta,\varepsilon_1'\}}\left({\sum_{v\in \overline{R}(w,\beta,n,\varepsilon)} T_{w,\beta,n}(v,y)}\right)\right)= 0;$$
    \item $$\lim_{n\to\infty}^{(2,1)}\left(\sum_{y\in \overline{n}\{\beta,\varepsilon_1'\}}\left({\sum_{v\in \overline{R}(w,\beta,n,\varepsilon)} T_{w,\beta,n}(v,y)}\right)\right)= 0.$$
\end{itemize}
а из этого ясно, что если $w\in\mathbb{YF}_\infty$, $\beta\in(0,1)$, $\varepsilon,\varepsilon_1'\in\mathbb{R}_{>0}$, то
$$\sum_{y\in \overline{n}\{\beta,\varepsilon_1'\}}\left({\sum_{v\in \overline{R}(w,\beta,n,\varepsilon)} T_{w,\beta,n}(v,y)}\right)\xrightarrow{n\to\infty} 0,$$
что и требовалось.

Лемма доказана.

\end{proof}

Вернёмся к доказательству Теоремы. Вначале вспомним, что мы вообще доказываем:
$\forall w\in \mathbb{YF}_\infty^+$, $\beta \in(0,1),$ $ \varepsilon\in\mathbb{R}_{>0}$
\begin{enumerate}
    \item $$\lim_{n \to \infty}{\sum_{v\in \overline{R}(w,\beta,n,\varepsilon)}\mu_{w,\beta}(v)}=0;$$
    \item $$\lim_{n \to \infty}{\sum_{v\in R(w,\beta,n,\varepsilon)}\mu_{w,\beta}(v)}=1.$$
\end{enumerate}

Давайте доказывать: 

При наших $w\in \mathbb{YF}_\infty^+$, $\beta \in(0,1)$, $\varepsilon\in\mathbb{R}_{>0}$ и произвольном $n\in\mathbb{N}_0$
$$0\le \left(\text{По Следствию \ref{neotr} при наших $w\in \mathbb{YF}_\infty^+$, $\beta \in(0,1]$ и всех $v\in\overline{R}(w,\beta,n,\varepsilon)$}\right) \le{\sum_{v\in \overline{R}(w,\beta,n,\varepsilon)}\mu_{w,\beta}(v)}\le$$
$$\le \left(\text{По Утверждению \ref{zabe} при наших $w\in \mathbb{YF}_\infty$, $\beta \in(0,1)$, $n\in\mathbb{N}_0$ и всех $v\in\overline{R}(w,\beta,n,\varepsilon)\subseteq \mathbb{YF}$}\right) \le$$
$$\le \sum_{v\in \overline{R}(w,\beta,n,\varepsilon)}\left(\sum_{y=0}^{|v|} T_{w,\beta,n}(v,y)\right)=\sum_{v\in \overline{R}(w,\beta,n,\varepsilon)}\left(\sum_{y=0}^{n} T_{w,\beta,n}(v,y)\right)=\sum_{y=0}^{n}\left(\sum_{v\in \overline{R}(w,\beta,n,\varepsilon)} T_{w,\beta,n}(v,y)\right).$$

Зафиксируем произвольный $\overline{\varepsilon}\in\mathbb{R}>0$.

Пусть $\overline{\varepsilon'}=\frac{\overline{\varepsilon}}{3}$.

Заметим, что
\begin{itemize}
    \item (Лемма \ref{pohoronil}) При наших $w\in\mathbb{YF}_\infty^+$, $\beta\in(0,1)$, $\varepsilon\in\mathbb{R}_{>0}$ $\exists\varepsilon_3\in\mathbb{R}_{>0}:$ $\forall \varepsilon_1'\in\left(0,\beta^2\right),$ $ \overline{\varepsilon'}\in\mathbb{R}_{>0}$ $\exists N'\in\mathbb{N}_0$: $\forall n\in\mathbb{N}_0:$ $n\ge N'$
    $${\sum_{y\in \overline{n}[\beta,\varepsilon_1']}\left({\sum_{v\in \overline{R}(w,\beta,n,\varepsilon,y,\varepsilon_3)} T_{w,\beta,n}(v,y)}\right) <\overline{\varepsilon'}}=\frac{\overline{\varepsilon}}{3}.$$

    Зафиксируем данный $\varepsilon_3\in\mathbb{R}_{>0}$.
    
    \item (Лемма \ref{planedead}) При наших $w\in\mathbb{YF}_\infty$, $\beta\in(0,1)$, $\varepsilon,\varepsilon_3\in\mathbb{R}_{>0}$ $\exists\varepsilon_1'\in\left(0,\beta^2\right)$: $\forall \overline{\varepsilon'}\in\mathbb{R}_{>0}$ $\exists N''\in\mathbb{N}_0:$ $\forall n\in\mathbb{N}_0:$ $ n\ge N''$
    $${\sum_{y\in \overline{n}[\beta,\varepsilon_1']}\left(\left({\sum_{v\in \overline{R'}(w,\beta,n,\varepsilon,y,\varepsilon_3)} T_{w,\beta,n}(v,y)}\right)+\left({\sum_{v\in \overline{R''}(w,\beta,n,\varepsilon,y)} T_{w,\beta,n}(v,y)}\right)\right) <\overline{\varepsilon'}}=\frac{\overline{\varepsilon}}{3}.$$

    Зафиксируем данный $\varepsilon_1'\in\left(0,\beta^2\right)$.

\end{itemize}

Таким образом, сложив эти два факта, получаем, что при наших $w\in\mathbb{YF}_\infty^+$, $\beta\in(0,1)$, $\varepsilon,\varepsilon_3,\varepsilon_1'\in\mathbb{R}_{>0}$ $\exists N'''=\max(N',N'')\in\mathbb{N}_0:$ $\forall n\in\mathbb{N}_0:$ $n\ge N'''$ 
    $${\sum_{y\in \overline{n}[\beta,\varepsilon_1']}\left(\sum_{v\in \overline{R}(w,\beta,n,\varepsilon,y,\varepsilon_3)} T_{w,\beta,n}(v,y)+{\sum_{v\in \overline{R'}(w,\beta,n,\varepsilon,y,\varepsilon_3)} T_{w,\beta,n}(v,y)}+{\sum_{v\in \overline{R''}(w,\beta,n,\varepsilon,y,\varepsilon_3)} T_{w,\beta,n}(v,y)}\right) }< \frac{2\overline{\varepsilon}}{3}\Longleftrightarrow$$
$$\Longleftrightarrow(\text{По Замечанию $\ref{zhopa}$ при $w\in\mathbb{YF}_\infty$, $\beta\in(0,1]$, $n\in\mathbb{N}_0$, $\varepsilon\in\mathbb{R}_{>0}$, $y\in\mathbb{N}_0$, $\varepsilon_3\in\mathbb{R}_{>0}:$ $y\le n$})\Longleftrightarrow$$    
    $$\Longleftrightarrow{\sum_{y\in \overline{n}[\beta,\varepsilon_1']}\left(\sum_{v\in \overline{R}(w,\beta,n,\varepsilon)} T_{w,\beta,n}(v,y)\right) }< \frac{2\overline{\varepsilon}}{3}.$$

Таким образом, мы поняли, что при наших $w\in\mathbb{YF}_\infty^+$, $\beta\in(0,1)$, $\varepsilon,\varepsilon_1'\in\mathbb{R}_{>0}$ $\exists N'''\in\mathbb{N}_0:$ $\forall n\in\mathbb{N}_0:$ $n\ge N'''$ 
$${\sum_{y\in \overline{n}[\beta,\varepsilon_1']}\left(\sum_{v\in \overline{R}(w,\beta,n,\varepsilon)} T_{w,\beta,n}(v,y)\right) }< \frac{2\overline{\varepsilon}}{3}.$$

По Лемме \ref{hyde} при наших $w\in\mathbb{YF}_\infty$, $\beta\in(0,1)$, $\varepsilon,\varepsilon_1'\in\mathbb{R}_{>0}$
$${\sum_{y\in \overline{n}\{\beta,\varepsilon_1'\}}\left({\sum_{v\in \overline{R}(w,\beta,n,\varepsilon)} T_{w,\beta,n}(v,y)}\right) \xrightarrow{n\to \infty}0},$$
то есть, по определению предела, при наших $w\in\mathbb{YF}_\infty$, $\beta\in(0,1)$, $\varepsilon,\varepsilon_1'\in\mathbb{R}_{>0}$ $\exists N''''\in\mathbb{N}_0:$ $\forall n\in\mathbb{N}_0:$ $n\ge N''''$
$${\sum_{y\in \overline{n}\{\beta,\varepsilon_1'\}}\left({\sum_{v\in \overline{R}(w,\beta,n,\varepsilon)} T_{w,\beta,n}(v,y)}\right)}<\frac{\overline{\varepsilon}}{3}.$$

Итак, мы поняли, что
\begin{itemize}
    \item (Из Лемм \ref{pohoronil} и \ref{planedead}) При наших $w\in\mathbb{YF}_\infty^+$, $\beta\in(0,1)$, $\varepsilon,\varepsilon_1'\in\mathbb{R}_{>0}$ $\exists N'''\in\mathbb{N}_0:$ $\forall n\in\mathbb{N}_0:$ $n\ge N'''$
    $${\sum_{y\in \overline{n}[\beta,\varepsilon_1']}\left(\sum_{v\in \overline{R}(w,\beta,n,\varepsilon)} T_{w,\beta,n}(v,y)\right) }< \frac{2\overline{\varepsilon}}{3};$$

    \item (Из Леммы \ref{hyde}) При наших $w\in\mathbb{YF}_\infty$, $\beta\in(0,1)$, $\varepsilon,\varepsilon_1'\in\mathbb{R}_{>0}$ $\exists N''''\in\mathbb{N}_0:$ $\forall n\in\mathbb{N}_0:$ $n\ge N''''$
    $${\sum_{y\in \overline{n}\{\beta,\varepsilon_1'\}}\left({\sum_{v\in \overline{R}(w,\beta,n,\varepsilon)} T_{w,\beta,n}(v,y)}\right)}<\frac{\overline{\varepsilon}}{3}.$$
\end{itemize}

Таким образом, сложив эти два факта, получаем, что при наших $w\in\mathbb{YF}_\infty$, $\beta\in(0,1)$, $\varepsilon,\varepsilon_1'\in\mathbb{R}_{>0}$ $\exists N=\max(N''',N'''')\in\mathbb{N}_0:$ $\forall n\in\mathbb{N}_0:$ $n\ge N$
$$\left({\sum_{y\in \overline{n}[\beta,\varepsilon_1']}\left(\sum_{v\in \overline{R}(w,\beta,n,\varepsilon)} T_{w,\beta,n}(v,y)\right) }\right)+\left(
{\sum_{y\in \overline{n}\{\beta,\varepsilon_1'\}}\left({\sum_{v\in \overline{R}(w,\beta,n,\varepsilon)} T_{w,\beta,n}(v,y)}\right)}\right)<{\overline{\varepsilon}}\Longleftrightarrow$$
$$\Longleftrightarrow(\text{По Замечанию \ref{popa} при $n\in\mathbb{N}_0$, $\beta\in(0,1]$, $\varepsilon_1'\in\mathbb{R}_{>0}$})\Longleftrightarrow$$
$$\Longleftrightarrow {\sum_{y=0}^n\left(\sum_{v\in \overline{R}(w,\beta,n,\varepsilon)} T_{w,\beta,n}(v,y)\right) }<{\overline{\varepsilon}}.$$

Таким образом, мы поняли, что при наших $w\in\mathbb{YF}_\infty^+$, $\beta\in(0,1)$, $\varepsilon\in\mathbb{R}_{>0}$ $\exists N:$ $\forall n\in\mathbb{N}_0:$ $n\ge N$ 
$${\sum_{y=0}^n\left(\sum_{v\in \overline{R}(w,\beta,n,\varepsilon)} T_{w,\beta,n}(v,y)\right) }<{\overline{\varepsilon}}.$$

То есть в силу неотрицательности функции $T$
$$\lim_{n\to\infty}\left({\sum_{y=0}^n\left(\sum_{v\in \overline{R}(w,\beta,n,\varepsilon)} T_{w,\beta,n}(v,y)\right) }\right)= 0.$$

Мы уже поняли, что при наших $w\in\mathbb{YF}_\infty^+$, $\beta\in(0,1)$, $\varepsilon\in\mathbb{R}_{>0}$ и произвольном $ n\in\mathbb{N}_0$
$$0\le {\sum_{v\in \overline{R}(w,\beta,n,\varepsilon)}\mu_{w,\beta}(v)}\le\sum_{y=0}^{n}\left(\sum_{v\in \overline{R}(w,\beta,n,\varepsilon)} T_{w,\beta,n}(v,y)\right),$$
а значит, по Лемме о двух полицейских ясно, что при наших $w\in\mathbb{YF}_\infty^+$, $\beta\in(0,1)$, $\varepsilon\in\mathbb{R}_{>0}$
$$\lim_{n\to\infty} {\sum_{v\in \overline{R}(w,\beta,n,\varepsilon)}\mu_{w,\beta}(v)}=0,$$
что доказывает первый пункт.

Кроме того, ясно, что
\begin{itemize}
    \item $$\overline{R}\left(w,\beta,n,\varepsilon\right)\cup R\left(w,\beta,n,\varepsilon\right)=$$
    $$=\left\{v\in\mathbb{YF}_n:\; \pi(v)\notin(\pi(w)(\beta-\varepsilon),\pi(w)(\beta+\varepsilon)) \right\}
    \cup$$
    $$\cup \left\{v\in\mathbb{YF}_n:\; \pi(v)\in(\pi(w)(\beta-\varepsilon),\pi(w)(\beta+\varepsilon)) \right\}
    =\mathbb{YF}_n;$$
    \item $$\overline{R}\left(w,\beta,n,\varepsilon\right)\cap R\left(w,\beta,n,\varepsilon\right)=$$
    $$=\left\{v\in\mathbb{YF}_n: \pi(v)\notin(\pi(w)(\beta-\varepsilon),\pi(w)(\beta+\varepsilon)) \right\}
    \cap$$
    $$\cap \left\{v\in\mathbb{YF}_n: \pi(v)\in(\pi(w)(\beta-\varepsilon),\pi(w)(\beta+\varepsilon)) \right\}
    =\varnothing;$$
    \item (Следствие \ref{mera1}) $\forall w\in\mathbb{YF}_\infty^+,$ $\beta\in(0,1]$, $n\in\mathbb{N}_0$
    $${\sum_{v\in \mathbb{YF}_n}\mu_{w,\beta}(v)}=1.$$
\end{itemize}

А из этого очевидно, что
$$\lim_{n \to \infty}\sum_{v\in R(w,\beta,n,\varepsilon)}\mu_{w,\beta}(v)=1,$$
что доказывает второй пункт.

Таким образом, оба пункта доказаны.

Теорема доказана.
\end{proof}

\newpage

\section{Завершение доказательства гипотезы}

\begin{Col}\label{supermain1}
Пусть $\{w_i'\}_{i=1}^\infty\in\left(\mathbb{YF}\right)^\infty$, $w\in\mathbb{YF}_\infty^+,$ $\beta\in(0,1],$ $l\in\mathbb{N}_0:$  $\{w_i'\}\xrightarrow{i\to\infty}w$ и при этом существует предел
$$\lim_{m\to\infty}\frac{\pi(w'_m)}{\pi(w)}=\beta.$$
Тогда
\begin{enumerate}
    \item $$\lim_{n \to \infty}\sum_{v\in \overline{Q}(w,n,l)}\mu_{\{w_i'\}}(v)=0;$$
    \item $$\lim_{n \to \infty}\sum_{v\in {Q}(w,n,l)}\mu_{\{w_i'\}}(v)=1.$$
\end{enumerate}
\end{Col}
\begin{proof}

Начнём с первого пункта.

По обозначению
$$\lim_{n \to \infty}\sum_{v\in \overline{Q}(w,n,l)}\mu_{\{w_i'\}}(v)=\lim_{n \to \infty}\left(\sum_{v\in \overline{Q}(w,n,l)}\left(\lim_{m\to\infty}\mu_{\{w_i'\}}(v,m)\right)\right)=$$
$$=\left(\text{По Лемме \ref{yavsyo} при наших $\{w_i'\}_{i=1}^\infty\in\left(\mathbb{YF}\right)^\infty$, $w\in\mathbb{YF}_\infty^+,$ $\beta\in(0,1]$ и всех  $v\in\overline{Q}(w,n,l)\subseteq\mathbb{YF}$}\right)=$$
$$=\lim_{n \to \infty}\sum_{v\in \overline{Q}(w,n,l)}\mu_{w,\beta}(v).$$
Далее рассмотрим два случая:

\renewcommand{\labelenumi}{\arabic{enumi}$^\circ$}

\begin{enumerate}
    \item $\beta=1$.
    
    В данном случае 
    $$\lim_{n \to \infty}\sum_{v\in \overline{Q}(w,n,l)}\mu_{\{w_i'\}}(v)=\lim_{n \to \infty}\sum_{v\in \overline{Q}(w,n,l)}\mu_{w,\beta}(v)=\lim_{n \to \infty}\sum_{v\in \overline{Q}(w,n,l)}\mu_{w,1}(v)=$$
    $$=\left(\text{По Следствию \ref{granatakerambita} при нашем $w\in\mathbb{YF}_\infty$ и всех  $v\in\overline{Q}(w,n,l)\subseteq\mathbb{YF}$}\right)=\lim_{n \to \infty}\sum_{v\in \overline{Q}(w,n,l)}\mu_{w}(v)=$$
    $$=\text{(По Теореме \ref{main1} при наших $w\in\mathbb{YF}_\infty^+$, $l\in\mathbb{N}_0$)}=0,$$
    что доказывает первый пункт в данном случае.
    \item $\beta\in(0,1)$.
    
    В данном случае 
    $$\lim_{n \to \infty}\sum_{v\in \overline{Q}(w,n,l)}\mu_{\{w_i'\}}(v)=\lim_{n \to \infty}\sum_{v\in \overline{Q}(w,n,l)}\mu_{w,\beta}(v)=$$
    $$=\text{(По Теореме \ref{t2} при наших $w\in\mathbb{YF}_\infty^+$, $\beta\in(0,1)$, $l\in\mathbb{N}_0$)}=0,$$
    что доказывает первый пункт в данном случае.
    \end{enumerate}
    
    Ясно, что все случаи разобраны, первый пункт доказан.
    
    Перейдём ко второму пункту:
    
    По обозначению
$$\lim_{n \to \infty}\sum_{v\in {Q}(w,n,l)}\mu_{\{w_i'\}}(v)=\lim_{n \to \infty}\left(\sum_{v\in {Q}(w,n,l)}\left(\lim_{m\to\infty}\mu_{\{w_i'\}}(v,m)\right)\right)=$$
$$=\left(\text{По Лемме \ref{yavsyo} при наших $\{w_i'\}_{i=1}^\infty\in\left(\mathbb{YF}\right)^\infty$, $w\in\mathbb{YF}_\infty^+,$ $\beta\in(0,1]$ и всех  $v\in{Q}(w,n,l)\subseteq\mathbb{YF}$}\right)=$$
$$=\lim_{n \to \infty}\sum_{v\in {Q}(w,n,l)}\mu_{w,\beta}(v).$$
Далее рассмотрим два случая:
\begin{enumerate}
    \item $\beta=1$.
    
    В данном случае 
    $$\lim_{n \to \infty}\sum_{v\in {Q}(w,n,l)}\mu_{\{w_i'\}}(v)=\lim_{n \to \infty}\sum_{v\in {Q}(w,n,l)}\mu_{w,\beta}(v)=\lim_{n \to \infty}\sum_{v\in {Q}(w,n,l)}\mu_{w,1}(v)=$$
    $$=\left(\text{По Следствию \ref{granatakerambita} при нашем $w\in\mathbb{YF}_\infty$ и всех  $v\in {Q}(w,n,l)\subseteq\mathbb{YF}$}\right)=\lim_{n \to \infty}\sum_{v\in {Q}(w,n,l)}\mu_{w}(v)=$$
    $$=\text{(По Теореме \ref{main1} при наших $w\in\mathbb{YF}_\infty^+$, $l\in\mathbb{N}_0$)}=1,$$
    что доказывает второй пункт в данном случае.
    \item $\beta\in(0,1)$.
    
    В данном случае 
    $$\lim_{n \to \infty}\sum_{v\in {Q}(w,n,l)}\mu_{\{w_i'\}}(v)=\lim_{n \to \infty}\sum_{v\in {Q}(w,n,l)}\mu_{w,\beta}(v)=$$
    $$=\text{(По Теореме \ref{t2} при $w\in\mathbb{YF}_\infty^+$, $\beta\in(0,1)$, $l\in\mathbb{N}_0$)}=1,$$
    что доказывает второй пункт в данном случае.
    \end{enumerate}
    
    Ясно, что все случаи разобраны, второй пункт доказан.

    Следствие доказано.
    
    \end{proof}
\begin{Col}\label{supermain2}
Пусть $\{w_i'\}_{i=1}^\infty\in\left(\mathbb{YF}\right)^\infty$, $w\in\mathbb{YF}_\infty^+,$ $\beta\in(0,1]$, $\varepsilon\in\mathbb{R}_{>0}:$  $\{w_i'\}\xrightarrow{i\to\infty}w$ и при этом существует предел
$$\lim_{m\to\infty}\frac{\pi(w'_m)}{\pi(w)}=\beta.$$
Тогда
\begin{enumerate}
    \item $$\lim_{n \to \infty}\sum_{v\in \overline{R}(w,\beta,n,\varepsilon)}\mu_{\{w_i'\}}(v)=0;$$
    \item $$\lim_{n \to \infty}\sum_{v\in {R}(w,\beta,n,\varepsilon)}\mu_{\{w_i'\}}(v)=1.$$
\end{enumerate}
\end{Col}
\begin{proof}
Начнём с первого пункта.

По обозначению
$$\lim_{n \to \infty}\sum_{v\in \overline{R}(w,\beta,n,\varepsilon)}\mu_{\{w_i'\}}(v)=\lim_{n \to \infty}\left(\sum_{v\in \overline{R} (w,\beta,n,\varepsilon)}\left(\lim_{m\to\infty}\mu_{\{w_i'\}}(v,m)\right)\right)=$$
$$=\left(\text{По Лемме \ref{yavsyo} при наших $\{w_i'\}_{i=1}^\infty\in\left(\mathbb{YF}\right)^\infty$, $w\in\mathbb{YF}_\infty^+,$ $\beta\in(0,1]$, и всех  $v\in\overline{R} (w,\beta,n,\varepsilon)\subseteq\mathbb{YF}$}\right)=$$
$$=\lim_{n \to \infty}\sum_{v\in \overline{R} (w,\beta,n,\varepsilon)}\mu_{w,\beta}(v).$$
Далее рассмотрим два случая:

\renewcommand{\labelenumi}{\arabic{enumi}$^\circ$}

\begin{enumerate}
    \item $\beta=1$.
    
    В данном случае 
    $$\lim_{n \to \infty}\sum_{v\in \overline{R} (w,\beta,n,\varepsilon)}\mu_{\{w_i'\}}(v)=\lim_{n \to \infty}\sum_{v\in \overline{R} (w,\beta,n,\varepsilon)}\mu_{w,\beta}(v)=\lim_{n \to \infty}\sum_{v\in \overline{R} (w,\beta,n,\varepsilon)}\mu_{w,1}(v)=$$
    $$=\left(\text{По Следствию \ref{granatakerambita} при нашем $w\in\mathbb{YF}_\infty$ и всех  $v\in\overline{R}(w,\beta,n,l)\subseteq\mathbb{YF}$}\right)=\lim_{n \to \infty}\sum_{v\in \overline{R} (w,\beta,n,\varepsilon)}\mu_{w}(v)=$$
    $$=\left(\text{По Замечанию \ref{final} при наших $w\in\mathbb{YF}_\infty,$ $\varepsilon\in\mathbb{R}_{>0}$ и всех $n\in\mathbb{N}_0$}\right)=\lim_{n \to \infty}\sum_{v\in \overline{R} (w,n,\varepsilon)}\mu_{w}(v)=$$
    $$=\text{(По Теореме \ref{main2} при наших $w\in\mathbb{YF}_\infty^+$, $\varepsilon\in\mathbb{R}_{>0}$)}=0,$$
    что доказывает первый пункт в данном случае.
    \item $\beta\in(0,1)$.
    
    В данном случае 
    $$\lim_{n \to \infty}\sum_{v\in \overline{R} (w,\beta,n,\varepsilon)}\mu_{\{w_i'\}}(v)=\lim_{n \to \infty}\sum_{v\in \overline{R} (w,\beta,n,\varepsilon)}\mu_{w,\beta}(v)=$$
    $$=\text{(По Теореме \ref{t3} при наших $w\in\mathbb{YF}_\infty^+$, $\beta\in(0,1)$, $\varepsilon\in\mathbb{R}_{>0}$)}=0,$$
    что доказывает первый пункт в данном случае.
    \end{enumerate}
    
    Ясно, что все случаи разобраны, первый пункт доказан.
    
    Перейдём ко второму пункту:
    
    По обозначению
$$\lim_{n \to \infty}\sum_{v\in {R} (w,\beta,n,\varepsilon)}\mu_{\{w_i'\}}(v)=\lim_{n \to \infty}\left(\sum_{v\in {R} (w,\beta,n,\varepsilon)}\left(\lim_{m\to\infty}\mu_{\{w_i'\}}(v,m)\right)\right)=$$
$$=\left(\text{По Лемме \ref{yavsyo} при наших $\{w_i'\}_{i=1}^\infty\in\left(\mathbb{YF}\right)^\infty$, $w\in\mathbb{YF}_\infty^+,$ $\beta\in(0,1]$, и всех  $v\in {R} (w,\beta,n,\varepsilon)\subseteq\mathbb{YF}$}\right)=$$
$$=\lim_{n \to \infty}\sum_{v\in {R} (w,\beta,n,\varepsilon)}\mu_{w,\beta}(v).$$
Далее рассмотрим два случая:
\begin{enumerate}
    \item $\beta=1$.
    
    В данном случае 
    $$\lim_{n \to \infty}\sum_{v\in {R} (w,\beta,n,\varepsilon)}\mu_{\{w_i'\}}(v)=\lim_{n \to \infty}\sum_{v\in {R} (w,\beta,n,\varepsilon)}\mu_{w,\beta}(v)=\lim_{n \to \infty}\sum_{v\in {R} (w,\beta,n,\varepsilon)}\mu_{w,1}(v)=$$
    $$=\left(\text{По Следствию \ref{granatakerambita} при нашем $w\in\mathbb{YF}_\infty$ и всех  $v\in {R}(w,\beta,n,l)\subseteq\mathbb{YF}$}\right)=\lim_{n \to \infty}\sum_{v\in {R} (w,\beta,n,\varepsilon)}\mu_{w}(v)=$$
    $$=\left(\text{По Замечанию \ref{final} при наших $w\in\mathbb{YF}_\infty,$ $\varepsilon\in\mathbb{R}_{>0}$ и всех $n\in\mathbb{N}_0$}\right)=\lim_{n \to \infty}\sum_{v\in {R} (w,n,\varepsilon)}\mu_{w}(v)=$$
    $$=\text{(По Теореме \ref{main2} при наших $w\in\mathbb{YF}_\infty^+$, $\varepsilon\in\mathbb{R}_{>0}$)}=1,$$
    что доказывает второй пункт в данном случае.
    
    \item $\beta\in(0,1)$.
    
    В данном случае 
    $$\lim_{n \to \infty}\sum_{v\in {R} (w,\beta,n,\varepsilon)}\mu_{\{w_i'\}}(v)=\lim_{n \to \infty}\sum_{v\in {R}(w,\beta,n,\varepsilon)}\mu_{w,\beta}(v)=$$
    $$=\text{(По Теореме \ref{t3} при наших $w\in\mathbb{YF}_\infty^+$, $\beta\in(0,1)$, $\varepsilon\in\mathbb{R}_{>0}$)}=1,$$
    что доказывает второй пункт в данном случае.
    \end{enumerate}
    
    Ясно, что все случаи разобраны, второй пункт доказан.

    Следствие доказано.

\end{proof}
\begin{Col}[Из Следствий \ref{supermain1} и \ref{supermain2}]
Любая мера с границы Мартина графа Юнга--Фибоначчи эргодична.
\end{Col}
\begin{proof}
Рассмотрим центральную меру $\mu_{w_i'}=\mu_{w,\beta}$ при некоторых $w\in\mathbb{YF}_\infty^+,$ $\beta\in(0,1]$ на пространстве путей в графе Юнга--Фибоначчи: мера цилиндрического множества, соответствующего данному начальному отрезку пути от вершины $\varepsilon$ до $v$ равна $\frac{\mu_{w,\beta}(v)}{d(\varepsilon,v)}$.  

Мы доказали такое свойство этой меры: для любого $k$ существует такое $n_k$, что мера тех путей $u_0u_1\ldots$, у которых вершина $u_{n_k}$ имеет последние $k$ цифр не такие как у слова $w$ либо $|\pi(u_{n_k})-\beta|>\frac{1}{k},$ меньше чем $1/2^k$.

Пусть $A_m$ -- объединение множеств путей из предыдущего абзаца по $k=m,m+1,\ldots$; $B_m$ -- дополнение $A_m$. Тогда мера $A_m$ не больше чем $2/2^m$. Значит, пересечение $A_m$ имеет меру $0$ и почти все пути по нашей мере сосредоточены на множестве $B=\cup_m B_m$.

С другой стороны, по каждой из остальных мер множество $A_m$ имеет меру $1$: для мер вида $\mu_{w,\beta}$ это следует из того же утверждения, а для меры Планшереля из работы Керова — Гнедина\cite{KerGned}. Стало быть, множество $B$ имеет меру $0$.

Таким образом, если мера $\mu_{w,\beta}$ является смесью других мер, сужая на множество $B$ получаем противоречие.
\end{proof}

\newpage

\section{Благодарности}

\begin{itemize}
    \item Работа поддержана грантом в форме субсидий из федерального бюджета на создание и развитие международных математических центров мирового уровня, соглашение между МОН и ПОМИ РАН № 075-15-2019-1620 от 8 ноября 2019 г., а также грантом фонда поддержки теоретической физики и математики "БАЗИС", договор No 19-7-2-39-1 от 1 сентября 2019 г.
    \item Я признателен Фёдору Владимировичу Петрову за постановку задачи, помощь в публикации статьи и моральную поддержку на протяжении всего периода работы, Ивану Алексеевичу Бочкову за помощь в работе над первой частью цикла, а также Павлу Андреевичу Ходунову за проявленное при проверке доказательства терпение.
\end{itemize}

\newpage


\addcontentsline{toc}{section}{Список литературы}

\end{document}